\newtheorem{lemma}{Lemma}[section]
\newtheorem{proposition}[lemma]{Proposition}
\newtheorem{corollary}[lemma]{Corollary}
\newtheorem{theorem}[lemma]{Theorem}
\newtheorem*{theorem_int}{Theorem}
\newtheorem{example}[lemma]{Example}
\newtheorem{definition}[lemma]{Definition}
\newtheorem{remark}[lemma]{Remark}
\newtheorem*{Acknowledgement}{Acknowledgements}
\newtheorem{Theorem}{Theorem}
\newtheorem*{Remark}{Remark}
\newtheorem{Corollary}[Theorem]{Corollary}
\newcommand\cf{cf\@. }
\newcommand\pa{ \partial}
\newcommand\bbC{\mathbb C}
\newcommand\bbN{\mathbb N}
\newcommand\bbP{\mathbb P}
\newcommand\bbQ{\mathbb Q}
\newcommand\bbR{\mathbb R}
\newcommand\bbS{\mathbb S}
\newcommand\bbZ{\mathbb Z}
\renewcommand\Im{\operatorname{Im}}
\newcommand\tX{\widetilde{X}}
\newcommand\hX{\widehat{X}}
\newcommand\hH{\widehat{H}}
\newcommand\hD{\widehat{D}}
\newcommand\hY{\widehat{Y}}
\newcommand\CI{\mathcal{C}^{\infty}}
\newcommand\Diff{\operatorname{Diff}}
\newcommand\Ric{\operatorname{Ric}}
\newcommand\cC{\mathcal{C}}
\newcommand\cE{\mathcal{E}}
\newcommand\db{\overline{\pa}}
\newcommand\cV{\mathcal{V}}
\newcommand\cU{\mathcal{U}}
\newcommand\cI{\mathcal{I}}
\newcommand\tV{\widetilde{V}}
\newcommand\pr{\operatorname{pr}}
\newcommand\Id{\operatorname{Id}}
\newcommand\homega{\widehat{\omega}}
\renewcommand\sc{\operatorname{sc}}
\newcommand\cH{\mathcal{H}}
\newcommand\hphi{\hat{\phi}}
\newcommand\cN{\mathcal{N}}
\newcommand\AC{\operatorname{AC}}
\newcommand\SU{\operatorname{SU}}
\newcommand\Sp{\operatorname{Sp}}
\newcommand\St{^{\mathsf{S}}\!}
\newcommand\QAC{\operatorname{QAC}}
\newcommand\QCyl{\operatorname{Qb}}
\newcommand\QFB{\operatorname{QFB}}
\newcommand\ALE{\operatorname{ALE}}
\newcommand\QALE{\operatorname{QALE}}
\newcommand\bV{\overline{V}}
\newcommand\bz{\overline{z}}
\newcommand\tomega{\widetilde{\omega}}
\newcommand\sing{\operatorname{sing}}
\newcommand\Hess{\operatorname{Hess}}
\newcommand\Fix{\operatorname{Fix}}
\newcommand\vol{\operatorname{vol}}
\newcommand\tSigma{\widetilde{\Sigma}}
\newcommand\tg{\widetilde{g}}
\newcommand\cK{\mathcal{K}}
\newcommand\GL{\operatorname{GL}}
\newcommand\cX{\mathcal{X}}
\newcommand\hvarphi{\widehat{\varphi}}
\begin{document}
\title[QAC Calabi-Yau manifolds]
{Quasi-asymptotically conical Calabi-Yau manifolds}

\author{Ronan J.~Conlon}
\address{Department of Mathematics and Statistics, Florida International University, Miami, FL 33199, USA}
\email{rconlon@fiu.edu}

\author{Anda Degeratu}
\address{University of Stuttgart}
\email{anda.degeratu@mathematik.uni-stuttgart.de
}

\author{Fr\'ed\'eric Rochon}
\address{Département de Mathématiques, Universit\'e du Qu\'ebec \`a Montr\'eal}
\email{rochon.frederic@uqam.ca}

\maketitle
\begin{center}
\textit{with an appendix by Ronan J.~Conlon, Frédéric Rochon, and Lars Sektnan}
\end{center}

\begin{abstract}
We construct new examples of quasi-asymptotically conical ($\QAC$) Calabi-Yau manifolds that are not quasi-asymptotically locally Euclidean ($\QALE$).  We do so by first providing a natural compactification of $\QAC$-spaces by manifolds with fibred corners and by giving a definition of $\QAC$-metrics in terms of an associated Lie algebra of smooth vector fields on this compactification.  Thanks to this compactification and the Fredholm theory for elliptic operators on $\QAC$-spaces developed by the second author and Mazzeo, we can in many instances obtain Kähler $\QAC$-metrics having Ricci potential decaying sufficiently fast at infinity.  This allows us to obtain  $\QAC$ Calabi-Yau metrics in the Kähler classes of these metrics by solving a corresponding complex Monge-Ampère equation.
\end{abstract}

\tableofcontents

\section*{Introduction}

A complete Kähler manifold $(X,g,J)$ of complex dimension $m$ is Calabi-Yau if it is Ricci-flat and has a nowhere vanishing parallel holomorphic volume form $\Omega_X\in H^0(X;K_X)$.  In this case, the holonomy of $(X,g)$ is contained in $\SU(m)$ and we say that $g$ is a Calabi-Yau metric.  Since the resolution of the Calabi conjecture by Yau \cite{Yau1978}, we know that a compact Kähler manifold admits a Calabi-Yau metric if and only if its canonical line bundle is trivial, in which case every Kähler class contains a unique Calabi-Yau metric obtained by solving a complex Monge-Ampère equation.  As subsequently shown by Tian and Yau in \cite{Tian-Yau1990,Tian-Yau1991}, on non-compact complete Kähler manifolds, one can also obtain many examples of complete Calabi-Yau metrics by solving a corresponding complex Monge-Ampère equation, but the triviality of the canonical line bundle is not the only required condition.  One also needs to take into account the asymptotic behavior of the metric at infinity. 

For example, if $\Gamma\subset \SU(m)$ is a finite subgroup acting freely on $\bbC^m\setminus \{0\}$ and if $X\to \bbC^m/\Gamma$ is a crepant resolution, then, as pointed out by Joyce \cite{Joyce2001}, the results of Tian-Yau or of Bando-Kobayashi \cite{BK1988,BK1990}, combined with \cite{Bando_et_al}, allow one to construct examples of Calabi-Yau metrics on $X$ that are Asymptotically Locally Euclidean ($\ALE$ for short).  In fact, Joyce, in \cite{Joyce2001,Joyce}, gave a more direct and self-contained proof of the existence of these metrics by using a Moser iteration with weights which yields better control of the solution of the complex Monge-Ampère equation at infinity.  His approach was subsequently generalized by various authors \cite{vanC, vanC2011,Goto,CH2013,CH2015} to obtain examples of Asymptotically Conical ($\AC$ for short) Calabi-Yau manifolds.

In \cite{Joyce,Joyce2001b}, Joyce generalized this approach in another direction by considering a crepant resolution $\pi:X\to \bbC^m/\Gamma$ with $\Gamma\subset \SU(m)$ not acting freely on $\bbC^m\setminus \{0\}$, that is, with $\bbC^m/\Gamma$ having non-isolated singularities going off to infinity.  For that purpose, Joyce introduced the notion of Quasi-Asymptotically Locally Euclidean metrics ($\QALE$-metrics for short).  As the name suggests, away from the singularities, these metrics resemble $\ALE$-metrics.  However, near rays of singularities going off to infinity, the crepant resolution introduces some topology and we can no longer use the Euclidean metric as a local model.   More precisely, if $p\in (\bbC^m/\Gamma)\setminus \{0\}$ is a singular point, then there is a neighborhood $V$  of $p$  of the form
$$
        V=\{  (z,\lambda)\in \bbC^{m-1}/\Gamma_p \times \bbC ; \;\; |\lambda|>\delta, \; \theta< \arg \lambda < \theta+\delta, \; |z|<\delta |\lambda|\}\subset  \bbC^{m-1}/\Gamma_p \times \bbC
$$
with $p$ corresponding to the point $(0,\zeta)\in \bbC^{m-1}/\Gamma_p\times \bbC$ for some $\zeta\in\bbC^*$, where $\Gamma_p \subset \SU(m-1)$ is the stabilizer of this point in $\Gamma$, $\delta>0$ and $\theta\in\bbR$.  Now, the type of crepant resolutions $\pi:X\to \bbC^m/\Gamma$ that Joyce considers are local product resolutions in the sense that $\pi^{-1}(V)$ corresponds to a subset of 
$$
             Y_p\times \bbC
$$    
with $Y_p$ a (local product) crepant resolution of $\bbC^{m-1}/\Gamma_p$.  Suppose now for simplicity  that $\Gamma_p$ acts freely on $\bbC^{m-1}\setminus \{0\}$, which is automatic if $m= 3$.  Then in this case, an example of a $\QALE$-metric on $\pi^{-1}(V)$ is given by the restriction to $\pi^{-1}(V)$ of the Cartesian product of an $\ALE$-metric $g_{Y_p}$ on $Y_p$ and the Euclidean metric $g_E$ on $\bbC$, that is,
\begin{equation}
   g_{\QALE}= g_{Y_p} + g_E.
\label{int.1}\end{equation}
More generally, if $\Gamma_p$ does not act freely on $\bbC^{m-1}$, we can assume inductively that $\QALE$-metrics have been defined in dimension $m-1$, so that one can still use \eqref{int.1} as a model of a $\QALE$-metric on $V$, this time however with $g_{Y_p}$ a $\QALE$-metric on $Y_p$ instead of an $\ALE$-metric.  Using local models as in  \eqref{int.1}, one can then define the quasi-isometric class of $\QALE$-metrics as the class of complete metrics which, outside a compact set, are quasi-isometric to an $\ALE$-metric away from the singularities and quasi-isometric to the model \eqref{int.1} in the neighborhood $\pi^{-1}(V)$ near a given singular point $p\in \bbC^m/\Gamma$.   However, to solve the complex Monge-Ampère equation and construct Calabi-Yau $\QALE$-metrics, it is important to have some control on the derivatives of the metric.  For this reason, in his definition of $\QALE$-metrics, Joyce also imposes some control on the asymptotic behavior of the derivatives of a $\QALE$-metric with respect to the local model \eqref{int.1}.    With these extra assumptions, Joyce \cite[Theorem~9.3.3]{Joyce} proves the following theorem.

\begin{theorem_int}[Joyce \cite{Joyce}]
Let $\Gamma$ be a finite subgroup of $\SU(m)$ and $X$ a crepant resolution of $\bbC^m/\Gamma$.  Then each Kähler class of $\QALE$-metrics on $X$ contains a unique Kähler Ricci-flat $\QALE$-metric. 
\end{theorem_int}  

Since the complex Monge-Ampère equation is used to obtain these Calabi-Yau metrics, the form of these metrics is not explicit, but Joyce in \cite[\S9.3]{Joyce} expressed the hope that $\QALE$-metrics with holonomy $\Sp(m)$ should also admit an explicit construction using hyperKähler quotients.  A first example in this direction was obtained by Carron \cite{Carron2011}, who showed that the Nakajima metric, constructed by Nakajima \cite{Nakajima} via hyperKähler quotients on the Hilbert scheme of  $n$ points on $\bbC^2$, is a $\QALE$-metric in the sense of Joyce.

Besides the Moser iteration with weights that generalizes almost immediately to $\QALE$-metrics, one of the key ingredients in the proof of \cite[Theorem~9.3.3]{Joyce} is the bijectivity of the Laplacian of a $\QALE$-metric when acting on some suitable weighted Hölder space, a result that Joyce \cite[\S~9]{Joyce} obtained using the maximum principle and barrier functions.  Joyce also made a more general conjecture \cite[Conjecture~9.5.16]{Joyce} on the mapping properties of the Laplacian of a $\QALE$-metric.  This conjecture has recently been proven by the second author and Mazzeo \cite{DM2014}  by obtaining heat kernel estimates via the methods of Grigor'yan and Saloff-Coste \cite{GSC2005}.  In fact, in \cite{DM2014}, the second author and Mazzeo introduce a much wider class of Riemannian metrics for which their results hold, namely the class of Quasi-Asymptotically Conical metrics ($\QAC$-metrics for short) which generalizes the class of $\QALE$-metrics in the same way that the class of $\AC$-metrics  generalizes the class of $\ALE$-metrics.  For example, a Cartesian product of $\ALE$-metrics is a $\QALE$-metric, and likewise, a Cartesian product of $\AC$-metrics is a $\QAC$-metric (see Example~\ref{mwfc.6} below).

The goal of the present paper is to extend Joyce's program \cite[\S~9]{Joyce} to the wider setting of $\QAC$-metrics and exhibit new examples of Calabi-Yau $\QAC$-metrics, in particular, examples of Calabi-Yau $\QAC$-metrics that are neither $\QALE$-metrics nor Cartesian products of $\AC$-metrics.  To achieve this, one of the key ingredients is to introduce a natural compactification of $\QAC$-manifolds  by  manifolds with fibred corners in the sense of \cite{ALMP2012,DLR}.  On the one hand, as in several works of Melrose and collaborators for other types of geometries \cite{MelroseAPS, MelroseGST,Mazzeo-Melrose, EMM, MazzeoEdge, Mazzeo-MelrosePhi}, this allows one to give a simple description of $\QAC$-metrics in terms of a natural Lie algebra of vector fields on the compactification.  More importantly, when it comes to solving the complex Monge-Ampère equation, it allows us to solve iteratively the equation on each face of the compactification, which in turn allows us to reduce the equation to a situation where the Ricci potential decays fast enough at infinity so that the methods of Tian-Yau \cite{Tian-Yau1990,Tian-Yau1991} can be applied.  

\begin{Remark}
For the Nakajima metric on the Hilbert scheme of $n$ points on $\bbC^2$, such a compactification has been independently obtained by Melrose \cite{Melrose_CIRM} using the hyperK\"ahler quotient construction of the metric.  
\end{Remark}
 
Postponing to \S~\ref{mwfc.0} a detailed description of the compactification, let us begin by explaining how to construct it for a $\QALE$-metric on a crepant resolution $X$ of $\bbC^m/\Gamma$ in the simple case where the singularities going off to infinity are all of complex codimension $k\ge 2$.  In this situation, we first radially compactify $\bbC^m/\Gamma$ to an orbifold with boundary $\overline{\bbC^m/\Gamma}$ by adding a boundary $\pa \overline{\bbC^m/\Gamma}\cong \bbS^{2m-1}/\Gamma$  at infinity as in \cite[\S~1.8]{MelroseGST}.   The boundary itself is then an orbifold, but by our simplifying assumption, its singularities correspond to a disjoint union $S= \bigcup S_i$ of singular edges $S_1, \ldots, S_{\ell}$.   A first step in constructing the compactification is to blow up in the sense of Melrose \cite{Melrose1992,MelroseMWC} each singular edge of the boundary within $\overline{\bbC^m/\Gamma}$, that is,
\begin{equation}
     \tX_{\sc}:=[\overline{\bbC^m/\Gamma};S]=[\overline{\bbC^m/\Gamma};S_1,\ldots,S_{\ell}],
\label{int.1b}\end{equation}
with blow-down map 
$$
\beta:\tX_{\sc} \to \overline{\bbC^m/\Gamma}.
$$
As illustrated in Figure~\ref{fig.1} below, this yields an orbifold with corners  with one boundary hypersurface $H_i:=\beta^{-1}(S_i)$ for each singular edge $S_i$ and one boundary hypersurface $H_{\ell+1}=\overline{\beta^{-1}(\pa \overline{\bbC^m/\Gamma})\setminus S}$ corresponding to the lift of the boundary of $ \overline{\bbC^m/\Gamma}$.  

\begin{figure}[h]
\begin{tikzpicture}
\draw (2.4791,5.9544) arc [radius=3, start angle=100, end angle=170];
\draw (0.0456,2.4791) arc [radius=3, start angle=190, end angle=260];
\draw  (3.5209,0.0456) arc [radius=3, start angle=280, end angle=350];
\draw (5.9544,3.5209) arc [radius=3, start angle=10, end angle=80];

\draw (2.4791,5.9544) arc [radius=0.5229, start angle=190, end angle=350];
\draw (0.0456,2.4791) arc [radius=0.5229, start angle=-80, end angle=80];
\draw  (3.5209,0.0456) arc [radius=0.5229, start angle=10, end angle=170];
\draw  (5.9544,3.5209) arc [radius=0.5229, start angle=100, end angle=260];
\draw (12,3) circle [radius=3];

\draw [dashed] (12,0)--(12,6);
\draw [dashed] (9,3)--(15,3);
\draw [dashed] (3,0.5229)--(3,5.4771);
\draw [dashed] (0.5229,3)--(5.4771,3);
\draw[fill] (15,3) circle [radius=0.1]; 
\draw[fill] (12,6) circle [radius=0.1]; 
\draw[fill] (9,3) circle [radius=0.1]; 
\draw[fill] (12,0) circle [radius=0.1]; 

\node at (6,3) {$H_1$}; 
\node at (3,6) {$H_2$};
\node at (0,3) {$H_3$};  
\node at (3,0) {$H_4$}; 
\node at (4,4) {$\tX_{\sc}$};
\node at (0.2,5.5) {$H_{\ell+1}=H_{5}$};
\draw[->] (7,3)--(8,3);
\node[above] at (7.5,3) {$\beta$};
\node[above right] at (15,3) {$S_1$}; 
\node[above right] at (12,6) {$S_2$};
\node[above right] at (9,3) {$S_3$};
\node[above right] at (12,0) {$S_4$};
\node at (13,4) {$\overline{\bbC^m/\Gamma}$};

\end{tikzpicture}
\caption{The blow-down map $\beta:\tX_{\sc} \to \overline{\bbC^m/\Gamma}$ in a case where $\ell=4$ with the dotted lines corresponding to the singularities of $\bbC^m/\Gamma$ }
\label{fig.1}\end{figure}
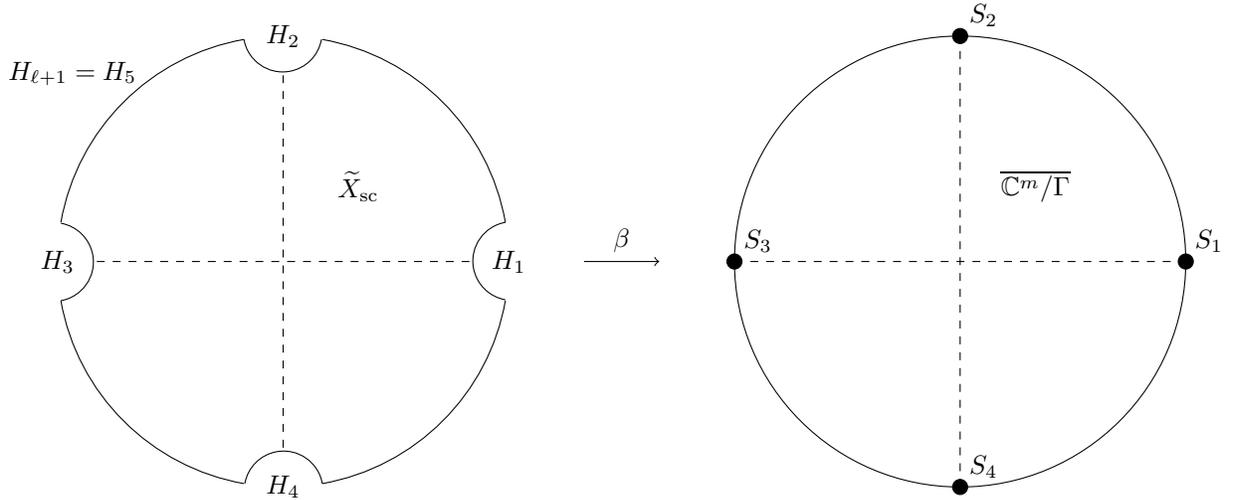

Moreover, for $i\le \ell$, the blow-down map $\beta$ restricts on $H_i$  to induce a fibre bundle structure 
\begin{equation}
\xymatrix{  \overline{\bbC^k/\Gamma_i} \ar[r] & H_i  \ar[d]^{\phi_i} \\ & S_i
}    
\label{int.2}\end{equation}
with base $S_i$ smooth, but with fibres singular at the origin, where $\Gamma_i\subset \SU(k)$ is some finite subgroup acting freely on $\bbC^k\setminus \{0\}$.  

The manifold with corners $\hX_{\QAC}$ compactifying $X$ is then obtained by observing that the crepant resolution $\pi:X\to \bbC^m/\Gamma$ naturally extends to a resolution $\pi:\hX_{\QAC}\to \overline{\bbC^m/\Gamma}$ with \eqref{int.2} replaced by    
\begin{equation}
\xymatrix{  \overline{Y}_i \ar[r] & \hH_i  \ar[d]^{\hphi_i} \\ & S_i,
}    
\label{int.3}\end{equation}
where $\overline{Y}_i$ is the radial compactification of a crepant resolution $\pi_i:Y_i\to \bbC^k/\Gamma_i$.  Given a suitable Kähler $\QAC$-metric $g$ on $X$, our strategy to construct a Calabi-Yau $\QAC$-metric is to first solve the complex Monge-Ampère equation on each fibre $\overline{Y}_i$ of $\hphi_i$, which amounts to finding a Calabi-Yau $\ALE$-metric on $Y_i$.  Using these solutions, one can then replace the metric $g$ with another Kähler $\QAC$-metric $g'$ in the same Kähler class, but with the extra property that its Ricci potential, given by 
\begin{equation}
      r'= \log \left( \frac{(\omega')^m}{c\Omega_{\hX_{\QAC}}\wedge\overline{\Omega}_{\hX_{\QAC}}} \right),
\label{rp.1}\end{equation}
decays sufficiently fast at infinity, where $\omega'$ is the Kähler class of $g'$, $\Omega_{\hX_{\QAC}}$ is some nowhere vanishing holomorphic volume form on $\hX_{\QAC}$ and $c$ is a non-zero constant.  One can then use standard techniques to solve the complex Monge-Ampère equation
$$
    \log\left(\frac{(\omega'+ \sqrt{-1}\pa\db u)^m}{(\omega')^m}  \right)= -r'
$$
and obtain a Calabi-Yau $\QAC$-metric with Kähler form $\omega' + \sqrt{-1}\pa \db u$.   

Of course, as long as we are considering $\QALE$-metrics, this is essentially the approach of Joyce rephrased in terms of the compactification $\hX_{\QAC}$.  In particular, the compactification is not really needed, since in this simpler setting the bundles \eqref{int.2} and \eqref{int.3} are in fact trivial and to construct a K\"ahler $\QALE$-metric near $\hH_i$ with Ricci potential decaying at infinity, one can simply glue directly the Cartesian product
\begin{equation}
g_{E_i}+ g_{Y_i} \quad \mbox{on} \quad (\bbR^+\times S_i)\times Y_i
\label{int.4}\end{equation}
to the Euclidean metric on $\bbC^n/\Gamma$, where
$$
g_{E_i}= dr^2 + r^2 g_{S_i}
$$   
is the Euclidean metric on $\bbR^+\times S_i$ and $g_{Y_i}$ is a Calabi-Yau $\ALE$-metric on $Y_i$. 

Now, if we replace $\bbC^m/\Gamma$ with the Euclidean metric by a more general orbifold Calabi-Yau cone $(C,g_C)$, gluing the Cartesian product \eqref{int.4} in a neighborhood of the singularities corresponding to $H_i$ can still be done directly.  However, if $g_C$ is not Euclidean, then the $\QAC$-metric introduced in this way will usually fail to have a Ricci potential that decays at the boundary face $\hH_{\ell+1}$.  This is illustrated in Figure~\ref{fig.2} below,  where the region in bold on $\hH_{\ell+1}$ corresponds to the directions in which the Ricci potential fails to decay.

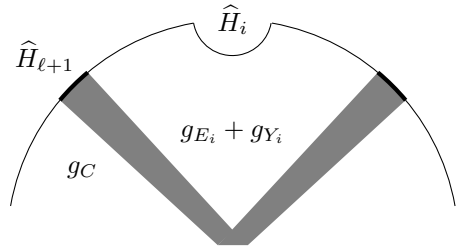
\begin{figure}[h]
\begin{tikzpicture}

\draw (2.4791,5.9544) arc [radius=3, start angle=100, end angle=170];
\draw (5.9544,3.5209) arc [radius=3, start angle=10, end angle=80];

\draw (2.4791,5.9544) arc [radius=0.5229, start angle=190, end angle=350];

\node at (3,6) {$\widehat{H}_i$};
\node at (0.5,5.5) {$\widehat{H}_{\ell+1}$};
\node at (1,4) {$g_C$};
\node at (3,4.5) {$g_{E_i}+ g_{Y_i}$};
\path [fill=gray]   (5.2981,4.9284) arc [radius=3, start angle=40, end angle=50] (5.2981,4.9284)--(3.2,3)--(2.8,3)--(4.9284,5.2981);
\path [fill=gray]   (1.0716,5.2981) arc [radius=3, start angle=130, end angle=140] (1.0716,5.2981)--(3,3.2)--(2.8,3)--(0.7019,4.9284);
\draw[ultra thick] (5.2981,4.9284) arc [radius=3, start angle=40, end angle=50];
\draw[ultra thick] (1.0716,5.2981) arc [radius=3, start angle=130, end angle=140];
 \end{tikzpicture}
\caption{Gluing of $g_{E_i}+ g_{Y_i}$ to $g_C$ with the gluing region in grey}
\label{fig.2}\end{figure}

One could instead try to glue $g_{E_i}+ g_{Y_i}$ to $g_C$ as illustrated in Figure~\ref{fig.3} below, but then the Ricci potential fails to decay on $\hH_i$, in this latter case even if $g_C$ is the Euclidean metric.  
\begin{figure}[h]
\begin{tikzpicture}
\draw (2.4791,5.9544) arc [radius=3, start angle=100, end angle=170];
\draw (5.9544,3.5209) arc [radius=3, start angle=10, end angle=80];

\draw (2.4791,5.9544) arc [radius=0.5229, start angle=190, end angle=350];

\path[fill=gray] (2.5472, 5.7386)--(1.5,3)--(1.7,3)--(2.7386,5.57);
\path[fill=gray] (3.4528, 5.76)--(4.5,3)--(4.3,3)--(3.2614,5.57);
\draw[ultra thick] (2.5472, 5.7386)--(2.7386,5.57);
\draw[ultra thick] (3.4528, 5.76)--(3.2614,5.57);
\node at (3,6) {$\widehat{H}_i$};
\node at (0.5,5.5) {$\widehat{H}_{\ell+1}$};
\node at (1,4) {$g_C$};
\node at (3,4) {$g_{E_i}+ g_{Y_i}$};
\end{tikzpicture}
\caption{A second way of gluing $g_{E_i}+ g_{Y_i}$ to $g_C$ with the gluing region in grey}
\label{fig.3}\end{figure}
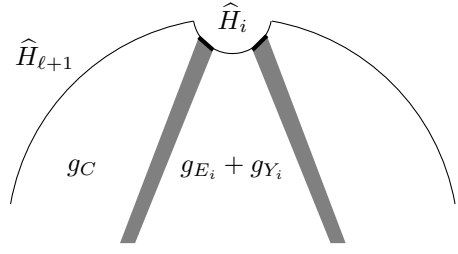

Instead, if one uses the compactification, it suffices to put the model \eqref{int.4} on $\hH_i$ and keep the model given by $g_C$ on $H_{\ell+1}$.  Provided that the two models agree on $H_i\cap H_{\ell+1}$, one can then extend the K\"ahler form in the interior as a closed $(1,1)$-form.  By continuity, this form will then be positive-definite in a neighborhood of $\hH_i$ and $\hH_{\ell+1}$.  More precisely, it will be the K\"ahler form of a $\QAC$-metric with Ricci potential decaying both at $\hH_i$ and $\hH_{\ell+1}$.

Thanks to this natural compactification of $\QAC$-manifolds by manifolds with fibred corners, we prove the following result, where we refer the reader to Definition~\ref{gid.8a} for some of the terminology and to Theorem~\ref{ma.10} and  Corollary~\ref{ma.11} for a more precise and slightly more general statement. 
\begin{Theorem}
Let $Z$ be a compact orbifold of real dimension $2n+1$ and $(C,g_C,J_C)$ a quasi-regular Calabi-Yau cone on $C=\bbR^+\times Z$ with parallel holomorphic volume form $\Omega_C$.  Let $X$ be a K\"ahler orbifold with nowhere vanishing holomorphic volume form $\Omega_X$ and with singular set of complex codimension $\mu\ge 2$. Suppose that  there is a compact set $\cK\subset X$ such that $X\setminus \cK$ is biholomorphic to $((\kappa,\infty)\times Z, J_C)$ for some $\kappa>0$.  Finally, suppose that there is a crepant resolution $\pi:\hX\to X$ with $\Omega_X$ admitting a lift $\Omega_{\hX}\in H^0(\hX;K_{\hX})$.  Then for any K\"ahler $\QAC$-metric $g$ asymptotic to $g_C$ with rate $\alpha$ such that $4\le \alpha\le 2\mu$, the complex Monge-Amp\`ere equation 
$$
\log\left(\frac{(\omega+ \sqrt{-1}\pa\db u)^{n+1}}{(\omega)^{n+1}}  \right)= -r
$$
 has a solution $u$ such that $\omega+\sqrt{-1}\pa\db u$ is the K\"ahler form of a Calabi-Yau $\QAC$-metric asymptotic to $g_C$ with rate $\alpha$, where $\omega$ is the K\"ahler form of $g$ and $r$ is its Ricci potential defined in terms of $\Omega_{\hX}$ as in \eqref{rp.1}.          
\label{int.5}\end{Theorem}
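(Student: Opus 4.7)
The plan is to implement the strategy outlined in the introduction on the fibred-corner compactification $\hX_{\QAC}$ of $\hX$: first modify $\omega$ within its K\"ahler class so that its Ricci potential decays fast at every boundary hypersurface, then solve the resulting complex Monge-Amp\`ere equation by a continuity method, relying on the $\QAC$ Fredholm theory of Degeratu-Mazzeo \cite{DM2014} and on a priori estimates in a weighted H\"older scale.

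Concretely, the boundary hypersurfaces of $\hX_{\QAC}$ split into $\hH_1,\ldots,\hH_\ell$, fibered by $\hphi_i$ over smooth strata $S_i$ with fibre the radial compactification of a crepant resolution $Y_i\to \bbC^{k_i}/\Gamma_i$ of an $\AC$ singularity (with $k_i\ge\mu\ge 2$), and the boundary at infinity $\hH_{\ell+1}$, where $g$ is asymptotic to $g_C$ at rate $\alpha$. On each $\hH_i$ with $i\le\ell$, the complex Monge-Amp\`ere equation restricts fibrewise to an $\AC$ Calabi-Yau problem on $Y_i$, which is solvable by the existence theorems of Joyce, van Coevering and Conlon-Hein. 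Assembling these fibrewise solutions into a global smooth potential via the fibration $\hphi_i$ and a partition of unity on $\hX_{\QAC}$ produces $\omega'=\omega+\sqrt{-1}\pa\db\varphi$, still a K\"ahler $\QAC$-metric in the same class, whose Ricci potential $r'$ vanishes to arbitrary order along each $\hH_i$, $i\le\ell$. At $\hH_{\ell+1}$, the rate-$\alpha$ asymptotic hypothesis combined with the fact that $g_C$ is Ricci-flat with parallel $\Omega_C$ already forces $r'$ to decay at rate $\alpha$ there.

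With $r'$ decaying fast at every boundary hypersurface, one then solves
\[
\log\left(\frac{(\omega'+ \sqrt{-1}\pa\db u')^{n+1}}{(\omega')^{n+1}}\right) = -r'
\]
by the continuity method along the family $-tr'$, $t\in [0,1]$. Openness comes from the isomorphism properties of $\Delta_{g'}$ between appropriate weighted $\QAC$ H\"older spaces on $\hX_{\QAC}$ established in \cite{DM2014}. Closedness reduces to a priori estimates in these weighted spaces: the $C^0$ bound by a weighted Moser iteration along the lines of Joyce's $\QALE$ argument, using the Sobolev inequality supplied by the $\QAC$ heat-kernel estimates of Degeratu-Mazzeo; the $C^2$ bound by Yau's classical Laplacian computation together with uniform Ricci bounds for $g'$; and higher regularity by Schauder bootstrap in the weighted scale. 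Setting $u=\varphi+u'$ then produces a solution of the original equation.

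The main obstacle will be to run the continuity method within a weight range such that $u'$ decays at least at rate $\alpha-2$ in the $\QAC$ sense, so that $\sqrt{-1}\pa\db u'$ is of order $\alpha$ at infinity and the new Calabi-Yau metric remains asymptotic to $g_C$ with the prescribed rate $\alpha$. This requires careful use of the indicial roots of $\Delta_{g'}$ at each face $\hH_i$ and a bootstrap of the asymptotic expansion of $u'$ face by face. The fibred-corner compactification is essential here: it is precisely the framework in which the boundary-wise contributions to the decay of $u'$ can be disentangled, which is the conceptual innovation over the $\QALE$ case treated by Joyce.
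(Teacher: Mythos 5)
Your overall strategy is the same as the paper's: first correct $\omega$ within its K\"ahler class so that the Ricci potential decays at every non-maximal boundary hypersurface (using fibrewise Calabi--Yau solutions along each $\hphi_i:\hH_i\to S_i$), then solve the remaining Monge--Amp\`ere equation by a continuity method based on the Degeratu--Mazzeo Fredholm isomorphism, a weighted Moser iteration, Yau's $C^2$ estimate and Schauder bootstrapping, finishing with the linear isomorphism to pin down the decay rate $\alpha-2$ of the solution. However, there is a genuine gap in your first step. You assert that on each $\hH_i$, $i\le\ell$, the fibrewise problem is ``an $\AC$ Calabi--Yau problem on $Y_i$'' solvable by the existence theorems of Joyce, van Coevering and Conlon--Hein. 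This is true only for boundary hypersurfaces of relative depth one, where $\Gamma_i$ acts freely on the sphere and the fibres are $\ALE$. At a face of relative depth two or more, the fibre is the $\QAC$-compactification of a crepant resolution of $\bbC^{m_i}/\Gamma_i$ with $\Gamma_i$ \emph{not} acting freely away from the origin, so the fibrewise equation is itself a $\QAC$ (not $\AC$) Monge--Amp\`ere problem and none of the cited $\AC$ results apply. The paper resolves this by a recursion: the non-maximal faces are treated one at a time in order of increasing relative depth, and at a deep face one invokes the $\QAC$ existence result (Corollary~\ref{ma.7}, i.e.\ Lemma~\ref{ma.2} plus the Tian--Yau-type Theorem~\ref{ma.5}) applied to the lower-depth fibres, whose right-hand side already decays at all of their faces thanks to the previously treated deeper hypersurfaces. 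Without this inductive structure your argument only covers the case where all singularities going off to infinity have depth one.

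Two secondary points. First, assembling the fibrewise potentials ``via a partition of unity'' over all faces simultaneously requires compatibility of the fibrewise solutions on the corners $\hH_i\cap\hH_j$ and a positivity argument for the extended $(1,1)$-form; the paper avoids the compatibility issue precisely by the sequential recursion (each step restricts the already-corrected form to the next face), and positivity of the extension is obtained by an explicit two-parameter cut-off argument ($\psi(Nx_i)$, with the convexity of K\"ahler forms) in the proof of Proposition~\ref{gid.9}, which you should not gloss over. Second, your claim that the corrected Ricci potential ``vanishes to arbitrary order'' along each $\hH_i$ is more than the construction gives (one only gets one order of vanishing in each $x_i$); this is harmless, since the paper then improves the decay in $x_{\max}$ and $x_{\sing}$ by two applications of the linear theory (Lemma~\ref{ma.2}) before running the continuity method, but it is also where the restriction $4\le\alpha\le 2\mu$ and the weight window $2-2\mu\le\tau\le 0$ of Theorem~\ref{ma.1b} enter, which your sketch does not account for.
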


When one takes $X=\bbC^{n+1}/\Gamma$ in Theorem~\ref{int.5} for a choice of finite subgroup $\Gamma\subset \SU(n+1)$ for which $X$ admits a local product K\"ahler crepant resolution, a subtle point hidden in Definition~\ref{gid.8a} is that Theorem~\ref{int.5} is not quite the same as the result of Joyce \cite[Theorem~9.6.1]{Joyce}.  Indeed, in \cite[Theorem~9.6.1]{Joyce}, one of the hypotheses on the initial $\QALE$-metric $g$ is that its Ricci potential decays at each boundary hypersurface of $\hX_{\QAC}$, whereas in Theorem~\ref{int.5}, our result is stronger in that the Ricci potential only has to decay at the maximal boundary hypersurface ($\hH_{\ell+1}$ in the simpler setting described above), but weaker in that the rate of decay $\alpha$ at that face has to be at least $4$ (\cite[Theorem~9.6.1]{Joyce} only requires that $\alpha$ be strictly larger than $2$).    

  More interestingly, Theorem~\ref{int.5} applies to situations where the geometry at infinity is $\QAC$, but not $\QALE$.   As described in Example~\ref{exa.1} below, our main source of examples is given by applying the Calabi ansatz \cite{Calabi,LeBrun} to a K\"ahler-Einstein Fano orbifold $D$, which yields an orbifold Calabi-Yau cone metric $g_C$ on $C=K_D\setminus D$.  One can then take $X=K_D$ in Theorem~\ref{int.5} provided that $K_D$ admits a local product K\"ahler  crepant resolution in the sense of \cite{Joyce}.  This is the case for instance if $D$ itself admits a local product K\"ahler crepant resolution $\hD$, in which case $K_{\hD}$ is automatically a local product K\"ahler crepant resolution of $K_D$.  

Nevertheless, given such a $D$, one still needs to find a suitable K\"ahler $\QAC$-metric in order to apply Theorem~\ref{int.5}.  For $\AC$-metrics, it is very easy to produce examples of K\"ahler $\AC$-metrics thanks to a standard trick comprising cutting off the model metric at infinity by using a convex function.  In Lemma~\ref{kqac.1}, we do in fact use this trick to construct orbifold K\"ahler $\AC$-metrics on $X$.  Unfortunately though, this trick does not seem to generalize to $\QAC$-metrics or $\QALE$-metrics, the reason being that  the crepant resolution $\pi: \hX\to X$ introduces some topology at infinity, so that there are typically no model $\QAC$-metrics with K\"ahler form near infinity given by $\sqrt{-1}\pa\db u$ for some smooth real-valued function $u$.  In particular, just knowing that $\hX$ is K\"ahler is not sufficient to conclude that $\hX$ admits K\"ahler $\QAC$-metrics.

To overcome this difficulty, we develop a method that allows one to construct a K\"ahler $\QAC$-metric from an orbifold $\AC$-metric $g$ on $X$ by gluing suitable local models near each singularity, effectively implementing geometrically the crepant resolution.  In simple cases, this can be done directly by using cut-off functions.  However, in order to be able to tackle situations where the singularities  have arbitrary depth relatively easily, we have chosen to proceed with a systematic approach similar to what Kottke and Singer \cite{Kottke-Singer} do for gluing monopoles, that is, the gluing is implemented by using a manifold with corners with the various faces describing the metrics that need to be glued.  

Deferring to \S~\ref{kqac.0} a detailed description of this manifold with corners, let us for the moment give the main intuitive idea behind its construction by restricting to the simple setting of \eqref{int.1b}.  To simplify even further, suppose moreover that the crepant resolution of $\bbC^m/\Gamma$ is obtained by first blowing up the origin (in the sense of algebraic geometry) to obtain $K_D$ with 
$D:= \bbC\bbP^{m-1}/\Gamma'$ for some finite subgroup $\Gamma'\subset\SU(m)$, so that one can replace \eqref{int.1b} with 
\begin{equation}
      \tX_{\sc}:=[\overline{K}_D;S]=[\overline{K}_D;S_1,\ldots,S_{\ell}],
\label{int.6}\end{equation}     
where $\overline{K}_D$ is the radial compactification of $K_D$ using the Calabi-Yau cone metric.               
In this case, the singularity $S_i$ on the boundary of $\tX_{\sc}$ corresponds to the boundary  $\pa \Sigma_i$ of a singular edge $\Sigma_i$ of complex codimension $k$ in $\tX_{\sc}$, and the singular set of $\tX_{\sc}$ is given by the disjoint union
$$
        \Sigma= \bigcup_i \Sigma_i.
$$
Introducing a parameter of deformation $\varepsilon\in [0,1)$, one can then consider the orbifold with corners
$$
    \cX:=[\tX_{\sc}\times [0,1); \Sigma_1\times \{0\},\ldots, \Sigma_{\ell}\times \{0\}].
$$   
This orbifold with corners has one boundary hypersurface $\cH_i$ coming from the lift of $H_i\times [0,1)$ to $\cX$.  For each $i$, the blow-up of $\Sigma_i\times \{0\}$ also introduces a boundary hypersurface $B_i$.  Notice in particular that the blow-down map $\cX\to \tX_{\sc}\times [0,1)$ naturally induces a fibre bundle structure
\begin{equation}
\xymatrix{  \overline{\bbC^k/\Gamma_i} \ar[r] & B_i  \ar[d]^{\varphi_i} \\ & \Sigma_i\times \{0\}.
}    
\label{int.7}\end{equation}
  Finally, as illustrated in Figure~\ref{fig.4} below, the lift of the boundary hypersurface $\tX_{\sc}\times \{0\}$ induces the boundary hypersurface $B_{\ell+1}$ on $\cX$.  
\begin{figure}[h]
\begin{tikzpicture}
\draw (2.4791,5.9544) arc [radius=3, start angle=100, end angle=170];
\draw (5.9544,3.5209) arc [radius=3, start angle=10, end angle=80];
\draw (2.4791,5.9544) arc [radius=0.5229, start angle=190, end angle=250];
\draw (3.5209,5.9544) arc [radius=0.5229, start angle=-10, end angle=-70];
\draw (3.1788,5.55) arc [radius=0.3576, start angle=60, end angle=120];
\draw (2.4791,5.9544)--(2.9791,7.9544);
\draw (3.5209, 5.9544)--(4.0209, 7.9544);
\draw (2.9791,7.9544) arc [radius=0.5229, start angle=190, end angle=350];
\draw (3.1788,5.55)--(3.1788,3);
\draw (2.8212,5.55)--(2.8212,3);
\draw (0.0456,3.5209)--(1.0456,7.5209);
\draw (5.9544,3.5209)--(6.9544,7.5209);

\draw[dashed] (3,3)--(3,5.6);
\draw[dashed] (3,5.6)--(3.5,7.6);

\node at (3,2.8) {$B_i$};
\node at (1.5,4.5) {$B_{\ell+1}$};
\node at (3,6.8) {$\cH_i$};
\node at (1.8,7) {$\cH_{\ell+1}$};
\draw[->] (8,5)--(8.2,5.8);
\node[above right] at (8.2,5.8) {$\varepsilon$};

\end{tikzpicture}
\caption{The orbifold with corners $\cX$}
\label{fig.4}\end{figure}

  The manifold with corners implementing the gluing is then obtained by observing that the resolution $\pi: \widehat{X}_{\QAC}\to \tX_{\sc}$ naturally extends to a resolution $\pi: \widehat{\cX}\to \cX$.  In particular, the resolution extends in such a way that the fibre bundle \eqref{int.7} is replaced by
\begin{equation}
\xymatrix{  \overline{Y}_i \ar[r] & \widehat{B}_i  \ar[d]^{\widehat{\varphi}_i} \\ & \Sigma_i
}    
\label{int.8}\end{equation}  
with $\overline{Y}_i$ the radial compactification of a crepant resolution $Y_i$ of $\bbC^k/\Gamma_i$.  To see how $\widehat{\cX}$ can be used to implement the gluing, notice that an orbifold K\"ahler $\AC$-metric $g$ on $K_D$ naturally induces a metric on the boundary hypersurface $B_{\ell+1}$ of $\cX$.  It also induces an orbifold K\"ahler metric $g_i$ on $B_i$ with K\"ahler form 
\begin{equation}
    \omega_i= \varphi_i^*\omega_{\Sigma_i} + \omega_{\varphi_i},
\label{int.9}\end{equation}
where $\omega_{\Sigma_i}$ is a K\"ahler form on $\Sigma_i$ and $\omega_{\varphi_i}$ is a $(1,1)$-form which restricts on each fibre of \eqref{int.7} to a Euclidean metric.  On $\widehat{\cX}$ and $\widehat{B}_i$, one can therefore replace \eqref{int.9} with
\begin{equation}
    \widehat{\omega}_i= \varphi_i^*\omega_{\Sigma_i} + \omega_{\widehat{\varphi}_i},
\label{int.10}\end{equation}  
 where $\omega_{\widehat{\varphi}_i}$ is a $(1,1)$-form which restricts on each fibre of \eqref{int.8} to the K\"ahler form of an $\ALE$-metric asymptotic to the corresponding Euclidean metric induced by $\omega_{\varphi_i}$.    In other words, the metric $\widehat{g}_i $ with K\"ahler form \eqref{int.10} is the local model that we want to glue to $g$ at the singularity $\Sigma_i$.  In terms of the manifold $\widehat{\cX}$, this gluing can be implemented by considering a closed $(1,1)$-form $\widehat{\omega}$ on (each level set of $\varepsilon$ in) $\widehat{\cX}$ which restricts to $\widehat{\omega}_i$ on $\widehat{B}_i$ and to the K\"ahler form of $g$ on $\widehat{B}_{\ell+1}=B_{\ell+1}$.  By continuity, for $\delta>0$ sufficiently small, $\left.  \widehat{\omega}\right|_{\varepsilon=\delta}$ will then be positive-definite and will induce the desired K\"ahler $\QAC$-metric on $\left. \widehat{\cX} \right|_{\varepsilon=\delta}\cong \hX_{\QAC}$.  Strictly speaking, to make this continuity argument rigorous, one must also introduce a suitable vector bundle on $\widehat{\cX}$.  We refer the reader to \S~\ref{kqac.0} for its detailed description.  
 
The biggest advantage of gluing using the manifold with corners $\widehat{\cX}$ is that it is relatively easy to generalize to singularities of arbitrary depth.  Referring to Theorem~\ref{kqac.24} below for the precise statement, let us mention some examples of the K\"ahler $\QAC$-metrics that  it produces.

\begin{Theorem}[Corollary~\ref{kqac.26} below]
Let $(D_1,g_1),\ldots, (D_q,g_q)$ be K\"ahler-Einstein orbifolds with at worst isolated singularities of complex codimension at least $2$.  Suppose that each $(D_i,g_i)$ admits a K\"ahler crepant resolution $\hD_i$.  Consider the Cartesian products
$$
      D:=D_1\times \cdots \times D_q  \quad \mbox{and} \quad \hD:= \hD_1\times \cdots \times \hD_q.
$$
Let $g_C$ be the orbifold  Calabi-Yau cone metric on $K_D\setminus D$ given by the Calabi ansatz.  Then on $K_{\hD}$, there exist K\"ahler $\QAC$-metrics that are asymptotic to $g_C$ at any rate $\alpha>0$.   
\label{int.11}\end{Theorem}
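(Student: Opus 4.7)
The plan is to derive Corollary~\ref{kqac.26} from the general gluing existence result Theorem~\ref{kqac.24} by verifying its hypotheses in the product setup and choosing the fibrewise K\"ahler $\ALE$ data so as to produce arbitrarily fast decay to $g_C$.

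First, I would verify the ambient setup. After rescaling so that the Einstein constants of the $g_i$ coincide, $(D,g_D):=(D_1\times\cdots\times D_q,\sum_i\pi_i^*g_i)$ is a K\"ahler-Einstein Fano orbifold, and the Calabi ansatz applied to $(D,g_D)$ as in Example~\ref{exa.1} produces the orbifold Calabi-Yau cone metric $g_C$ on $K_D\setminus D$. The singular set of $D$ decomposes into the product strata $\prod_i S_i$ where each $S_i$ is either $D_i$ or an isolated singular point of $D_i$; since $\mathrm{sing}(D_i)$ has complex codimension at least $2$ in $D_i$, every such stratum has complex codimension at least $2$ in $D$. Because each $\hD_i\to D_i$ is a K\"ahler crepant resolution, the product $\pi\colon\hD\to D$ is a local product K\"ahler crepant resolution in the sense of Joyce, and hence $K_{\hD}\to K_D$ is as well, carrying a nowhere-vanishing holomorphic volume form $\Omega_{K_{\hD}}$ lifting the one on $K_D$.

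Next, I would produce an orbifold K\"ahler $\AC$-metric on $K_D$ asymptotic to $g_C$ via the convex-cutoff construction of Lemma~\ref{kqac.1}, and feed it as the seed metric into the gluing construction of \S~\ref{kqac.0}. On the face of the resolved fibred-corners space $\widehat{\cX}_{\QAC}$ corresponding to the singular stratum indexed by a subset $I\subseteq\{1,\ldots,q\}$ and a choice of singular points $p_i\in D_i$ for $i\in I$, the resolution splits as a product of the local $\ALE$ crepant resolutions $Y_{i,p_i}\to\bbC^{d_i}/\Gamma_{i,p_i}$, so the natural model K\"ahler form there decomposes as
\begin{equation*}
\widehat{\omega}_{I,\mathbf{p}}=\varphi^*\omega_{\Sigma_{I,\mathbf{p}}}+\sum_{i\in I}\omega_{Y_{i,p_i}},
\end{equation*}
where $\omega_{\Sigma_{I,\mathbf{p}}}$ is a K\"ahler form on the smooth base and each $\omega_{Y_{i,p_i}}$ is the K\"ahler form of a K\"ahler $\ALE$-metric on $Y_{i,p_i}$. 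Passing to a deeper stratum by enlarging $I$ only appends further $\ALE$ summands and leaves the existing ones untouched, so the compatibility conditions between models on adjacent faces required by Theorem~\ref{kqac.24} follow automatically from this product structure.

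Applying Theorem~\ref{kqac.24} then yields a closed $(1,1)$-form $\widehat{\omega}$ on $\widehat{\cX}_{\QAC}$ restricting to the prescribed model on each face; the restriction $\widehat{\omega}|_{\varepsilon=\delta}$ for small $\delta>0$ is positive-definite by continuity and gives the desired K\"ahler $\QAC$-metric on $K_{\hD}$. The rate $\alpha>0$ can be taken arbitrary because no Monge-Amp\`ere step is invoked at this stage: each fibrewise K\"ahler $\ALE$ form $\omega_{Y_{i,p_i}}$ can be chosen to coincide with the Euclidean quotient form on $\bbC^{d_i}/\Gamma_{i,p_i}$ outside any prescribed compact region by modifying it with a compactly-supported $\sqrt{-1}\pa\db$ term supported near the exceptional set, and the seed orbifold $\AC$-metric on $K_D$ can itself be prescribed to approach $g_C$ at any polynomial rate.

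The main obstacle I anticipate is precisely the inductive verification, stratum by stratum in order of increasing depth, that the local product models glue into a genuinely closed $(1,1)$-form on the entire fibred-corners compactification. At a depth-one stratum only a straightforward matching with the ambient $\AC$ model is required, but at intersections of deeper strata one must exploit the K\"unneth-type decomposition to control how newly introduced $\ALE$ summands interact with those already present on shallower faces, and to show that the transition between faces remains smooth in the sense of the appropriate Lie algebra of vector fields on $\widehat{\cX}_{\QAC}$. Once this combinatorial bookkeeping on the fibred-corners resolution is consistently handled, the restriction at $\varepsilon=\delta$ delivers the sought K\"ahler $\QAC$-metric for every rate $\alpha>0$.
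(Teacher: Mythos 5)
Your proposal is correct and follows essentially the same route as the paper's proof of Corollary~\ref{kqac.26}: a seed K\"ahler metric on $K_D$ equal to $g_C$ outside a compact set (the paper routes this through Proposition~\ref{kqac.3}, whose proof is precisely the convex-cutoff trick of Lemma~\ref{kqac.1} you invoke), fibrewise K\"ahler forms obtained by applying Lemma~\ref{kqac.1} factor by factor on the product crepant resolutions of $\bbC^{n_1}/\Gamma_1\times\cdots\times\bbC^{n_r}/\Gamma_r$, consistent choices to secure the face-compatibility hypothesis, and then Theorem~\ref{kqac.24}, with the arbitrary rate $\alpha>0$ coming from the fact that the glued form can be taken equal to $\omega_C$ near the maximal boundary hypersurface. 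The only slip is cosmetic: the fibrewise $\ALE$ forms are not the Euclidean quotient form plus a compactly supported $\sqrt{-1}\pa\db$-exact term, but rather, as in Lemma~\ref{kqac.1}, the cutoff Euclidean form plus a small multiple of a compactly supported closed $(1,1)$-form that is positive near the exceptional set and in general cohomologically nontrivial.
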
    

Combining with Theorem~\ref{int.5} yields the following.

\begin{Corollary}
On $K_{\hD}$ as in Theorem~\ref{int.11}, there are Calabi-Yau $\QAC$-metrics asymptotic to $g_C$ with rate $\alpha=2\mu$, where $\mu$ is the complex codimension of the singular set of $D$.  
\end{Corollary}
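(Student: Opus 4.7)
The plan is to verify the hypotheses of Theorem~\ref{int.5} with $X = K_D$ and $\hX = K_{\hD}$, then feed in a K\"ahler $\QAC$-metric produced by Theorem~\ref{int.11}. No new ideas are needed; the proof is essentially a bookkeeping exercise matching conventions between the two preceding results.

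First I would check the orbifold hypotheses of Theorem~\ref{int.5} for $K_D$. The total space of the canonical bundle of any K\"ahler orbifold carries a tautological nowhere vanishing holomorphic volume form, which gives the required $\Omega_X\in H^0(K_D;K_{K_D})$. The singular set of $K_D$ is the preimage of the singular set of $D$ under $K_D\to D$; since each $D_i$ has only isolated singularities of complex codimension at least $2$, the singular set of the product $D$ has complex codimension $\mu\ge 2$. The identification $K_D\setminus \cK \cong ((\kappa,\infty)\times Z,J_C)$ comes directly from the Calabi ansatz: $C = K_D\setminus D$ with its cone metric $g_C$ has link $Z$ equal to the unit circle bundle of $K_D$ for a Hermitian metric on the fibres induced by the K\"ahler-Einstein metric, and the cone is quasi-regular because its Reeb vector field generates the natural $S^1$-action rotating the fibres of $K_D\to D$. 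Removing a compact neighborhood of the zero section produces the required identification with $\kappa>0$.

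Next I would verify that $\pi:K_{\hD}\to K_D$ is a crepant resolution with $\Omega_X$ lifting to some $\Omega_{\hX}\in H^0(K_{\hD};K_{K_{\hD}})$. This is immediate: each $\hD_i\to D_i$ is crepant by hypothesis, so the product $\hD\to D$ is crepant, hence so is the induced map on total spaces of canonical bundles, and pulling back the tautological holomorphic volume form from $K_D$ produces the desired $\Omega_{\hX}$.

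With these hypotheses checked, the core observation is that $\mu\ge 2$ forces $2\mu\ge 4$, so the rate $\alpha=2\mu$ falls inside the admissible window $4\le \alpha\le 2\mu$ required by Theorem~\ref{int.5}. By Theorem~\ref{int.11}, $K_{\hD}$ admits a K\"ahler $\QAC$-metric $g$ asymptotic to $g_C$ at this rate, and applying Theorem~\ref{int.5} to $g$ yields the claimed Calabi--Yau $\QAC$-metric asymptotic to $g_C$ at rate $2\mu$. There is no genuine obstacle; the only points demanding care are the quasi-regularity of the cone (guaranteed by the product structure and the Calabi ansatz) and the ability to set the rate in Theorem~\ref{int.11} precisely to $2\mu$ rather than merely strictly less, which the wording "any rate $\alpha>0$" in Theorem~\ref{int.11} permits.
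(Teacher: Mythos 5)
Your proposal is correct and takes essentially the same route as the paper, which obtains this corollary simply by combining Theorem~\ref{int.11} (i.e.\ Corollary~\ref{kqac.26}) with Theorem~\ref{int.5} (i.e.\ Theorem~\ref{ma.10} and Corollary~\ref{ma.11}); your verification of the hypotheses for $X=K_D$, $\hX=K_{\hD}$ matches what the paper records in Example~\ref{exa.1} and the Calabi ansatz discussion of \S\ref{cr.0}. The one fine point, which your argument effectively accommodates because the metrics of Theorem~\ref{int.11} are actually equal to $g_C$ near the maximal boundary hypersurface (so their Ricci potential decays at every rate), is that the precise version (Theorem~\ref{ma.10}, Corollary~\ref{ma.11}) excludes the borderline decay rate exactly $2\mu$, so one should feed in a rate $\beta>2\mu$ and obtain $\alpha=\min\{2\mu,\beta\}=2\mu$ rather than invoke the endpoint $\alpha=2\mu$ of the interval in the introductory statement of Theorem~\ref{int.5}.
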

As explained in Appendix~\ref{ap.0}, it is possible to push this construction further, giving even more examples of Calabi-Yau $\QAC$-metrics; see Theorem~\ref{ap.1} and the discussion that follows for details.

 The paper is organized as follows.  In \S~\ref{mwfc.0}, we give a definition of $\QAC$-metrics in terms of manifolds with fibred corners and we review some of their properties.  In \S~\ref{cr.0}, we describe how orbifold Calabi-Yau cones can be compactified by an orbifold with fibred corners and how one obtains a corresponding manifold with fibred corners when there is a local product K\"ahler crepant resolution.  In \S~\ref{gi.0}, we explain how an orbifold Calabi-Yau cone metric can be seen as a $\QAC$-metric on the corresponding orbifold with fibred corners.  In \S~\ref{kqac.0}, we introduce the manifold with corners $\widehat{\cX}$ and explain how it can be used to produce examples of K\"ahler $\QAC$-metrics.  Finally, in \S~\ref{ma.0}, we prove our main result, Theorem~\ref{int.5}, by solving the corresponding complex Monge-Amp\`ere equation.

\begin{Acknowledgement}
The authors would like to thank the Isaac Newton Institute for Mathematical Sciences, Cambridge, for support and hospitality during the program \emph{Metric and Analytic Aspects of Moduli Spaces}  where the work on this paper commenced.

Part of this work was also carried out while the first author was supported by the National Science Foundation under Grant No.~DMS-1440140 while in residence at the Mathematical Sciences Research Institute (MSRI) in Berkeley, California, during the Spring 2016 semester. He wishes to thank MSRI for their hospitality during this time.  The second author is grateful to the Beijing International Center for Mathematical Research for support and hospitality in the Summer 2015.  The third author was supported by NSERC and a Canada Research chair.  

Finally, the authors wish to thank Vestislav Apostolov, Rafe Mazzeo, Cristiano Spotti, Song Sun and Gang Tian for helpful discussions, as well as an anonymous referee for helpful suggestions.
\end{Acknowledgement}

\numberwithin{equation}{section}

\section{Manifolds with fibred corners} \label{mwfc.0}

Let $M$ be a compact manifold with corners in the sense of Melrose \cite{Melrose1992,MelroseMWC} with boundary hypersurfaces $H_1,\ldots, H_k$.  In particular, we assume that each boundary hypersurface of $M$ is embedded in $M$ and we denote by $\pa M$ the union of all the boundary hypersurfaces of $M$.  Suppose that each boundary hypersurface  $H_i$ comes endowed with a fibre bundle structure $\phi_i: H_i\to S_i$ with base $S_i$ and with each fibre a manifold with corners.  We denote by $\phi=(\phi_1,\ldots,\phi_k)$ the collection of all fibre bundle maps.

\begin{definition}[\cite{AM2011,ALMP2012, DLR}]  We say that $(M,\phi)$ is a \textbf{manifold with fibred corners} if there is a partial order on the boundary hypersurfaces such that:
\begin{itemize}
\item  Any subset $I$ of boundary hypersurfaces such that $\bigcap_{i\in I}H_i\ne \emptyset$ is totally ordered;

\item If $H_i<H_j$, then $H_i\cap H_j\ne \emptyset$, $\left. \phi_i\right|_{H_i\cap H_j}: H_i\cap H_j\to S_i$ is a surjective submersion and $S_{ji}:= \phi_j(H_i\cap H_j)$ is one of the boundary hypersurfaces of the manifold with corners $S_j$.  Moreover, there is a surjective submersion $\phi_{ji}: S_{ji}\to S_i$ such that $\phi_{ji}\circ \phi_j=\phi_i$ on $H_i\cap H_j$.

\item The boundary hypersurfaces of $S_j$ are given by the $S_{ji}$ for $H_i<H_j$.
\end{itemize}
\label{mwfc.1}\end{definition}
It follows directly from this definition that each base $S_j$ has a natural manifold with fibred corners structure induced by the maps $\phi_{ji}: S_{ji}\to S_i$ for each $i$ with $H_i< H_j$. Similarly, each fibre of $\phi_i: H_i\to S_i$ has a natural manifold with fibred corners structure.  Moreover, $S_i$ is a smooth closed manifold whenever $H_i$ is minimal with respect to the partial order, whereas the fibres of $\phi_i$ are smooth closed manifolds whenever $H_i$ is maximal.  This allows one to prove many assertions by proceeding by induction on the \textbf{depth} of $(M,\phi)$, that is, the largest codimension that a corner of $M$ may have, or by induction on the relative depth of a boundary hypersurface, where we recall that the \textbf{relative depth}  of a boundary hypersurface $H_i$ in $M$ is the largest integer $j$ such that there exist $j-1$ boundary hypersurfaces $H_{\nu_1}, \ldots , H_{\nu_{j-1}}$ with
$$
               H_i< H_{\nu_1}<\cdots < H_{\nu_{j-1}}.
$$

The notion of a manifold with fibred corners is intimately related to the notion of a stratified space, so let us recall briefly what is meant by this latter term.

\begin{definition}
A \textbf{stratified space} of dimension $n$ is a locally compact separable metrizable space $X$ together with a \textbf{stratification}, which is a locally finite partition $\mathsf{S}=\{s_i\}$ into locally closed subsets of $X$, called the \textbf{strata}, which are smooth manifolds of dimension $\dim s_i\le n$ such that at least one stratum is of dimension $n$ and
$$
       s_i\cap \overline{s}_j\ne \emptyset   \quad \Longleftrightarrow \quad s_i \subset \overline{s}_j.
$$
In this case we write that $s_i\le s_j$ and $s_i< s_j$ if $s_i\ne s_j$.  A stratification induces a filtration
$$
       \emptyset \subset X_0\subset \cdots \subset X_n =X,
$$
where $X_j$ is the union of all strata of dimension at most $j$.  The strata included in $X^{\bullet}:= X\setminus X_{n-1}$ are said to be \textbf{regular}, whereas the strata included in $X_{n-1}$ are said to be \textbf{singular}.  Given a stratified space, notice that the closure of each of its strata is also naturally a stratified space.
\label{ss.1}\end{definition}
\begin{remark}
In the present paper, it is crucial for us to impose no restrictions on the codimension of the singular strata. Notice however that in other situations, it is quite common and natural to require that the singular strata are always at least of codimension $2$; see for instance \cite{GM1980}.
\label{ss.2}\end{remark}
A good measure of the complexity of a stratified space is given by its depth, a notion which we now recall.
\begin{definition}
The \textbf{depth} of a stratified space $(X,\mathsf{S})$ is the largest $k$ such that one can find $k+1$ different strata with
$$
         s_1<s_2<\cdots<s_k<s_{k+1}.
$$
On the other hand, the \textbf{relative depth} of a stratum $s$ in $X$ is the largest $k$ such that there exists $k$ strata with
$$
     s<s_1<\cdots< s_{k}.
$$
More generally, the \textbf{relative depth} of a point $p\in X$ is the relative depth of the unique stratum $s$ containing $p$.
\label{ss.2b}\end{definition}

As observed by Melrose and described in \cite{ALMP2012,DLR},  a manifold with fibred corners $M$ always arises as a resolution of a stratified space $\St M$ given by
$\St M= M/\sim$,  where $\sim$ is the relation
$$
          p\sim q \quad \Longleftrightarrow   \quad p=q \quad \mbox{or} \quad p,q\in H_{i} \quad \mbox{with} \quad \phi_i(p)=\phi_i(q) \quad \mbox{for some hypersurface} \; H_i.
$$
In terms of the quotient map
$$
  \beta: M\to \St \! M,
$$
which we call the \textbf{blow-down map}, the regular stratum is given by $\beta(M\setminus \pa M)$.  More importantly, the blow-down map gives a one-to-one correspondence between the boundary hypersurfaces $H_i$ of $M$ and the closure of the singular strata $\overline{s}_i:=\beta(H_i)$ of $\St M$.  The base $S_i$ of the fibre bundle $\phi_i: H_i\to S_i$ is in fact itself a resolution of the stratified space $\overline{s}_i$ and we have that $s_i= \beta(\phi_i^{-1}(S_i\setminus \pa S_i))$. Moreover, the correspondence between boundary hypersurfaces and singular strata is compatible with the partial orders, i.e.,
$$
        H_i< H_j \quad \Longleftrightarrow   \quad s_i<s_j,
$$
so that the relative depth of a boundary hypersurface is equal to the relative depth of the corresponding stratum.  Notice also that the depth of $\St M$ as a stratified space is equal to the depth of $M$ as a manifold with corners.

\begin{definition}
The stratified space $\St M$ is said to be the \textbf{blow-down} of the manifold with fibred corners $(M,\phi)$.  Conversely, the manifold with fibred corners $(M, \phi)$ is said to be a \textbf{resolution} of the stratified space $\St M$.  More generally, a stratified space which admits a resolution by a manifold with fibred corners is said to be \textbf{smoothly stratified}.
\label{ss.3}\end{definition}
 \begin{remark}
 Not all stratified spaces are smoothly stratified, but the property of being smoothly stratified can be described intrinsically on the stratified space without referring to a manifold with fibred corners; see for instance \cite{ALMP2012,DLR}.
 \end{remark}
 \begin{remark}
 The notion of resolution in Definition~\ref{ss.3} should not be confused with the notion of crepant resolution discussed in the introduction.  In the first case, the singularity is resolved using a manifold with corners and can be done quite generally, while the latter case is much more specific, since the singularity must then be of a particular complex geometric nature and is resolved by a smooth complex manifold with suitable properties.  
 \end{remark}
 \begin{example}
 An orbifold is naturally a smoothly stratified space, see for instance   \cite[\S~4.4.10]{Pflaum2001} or \cite[p.210-211]{GR2013}, and the corresponding manifold with fibred corners is obtained by blowing up the singular strata in an order compatible with the partial order.
 \label{ss.4}\end{example}

Let $x_1,\ldots, x_k$ be boundary defining functions for the boundary hypersurfaces $H_1,\ldots, H_k$ of $M$, that is, $x_i$ is positive on $M\setminus H_i$, $x_i=0$ on $H_i$ and $dx_i$ is nowhere zero on $H_i$.   For each $i$, we will usually assume, unless otherwise stated, that the boundary defining function $x_i$ is identically equal to $1$ outside some tubular neighborhood of $H_i$.  This assumption is not restrictive, but turns out to be very convenient.   We also assume that the boundary defining functions $x_1,\ldots, x_k$ are compatible with $\phi$ in the following sense.
\begin{definition}
We say that the boundary defining functions $x_1,\ldots,x_k$ are \textbf{compatible} with the collection of fibre bundle maps $\phi$ if for each $i$ and $j$ with $H_i<H_j$, the restriction of $x_i$ to $H_j$ is constant along the fibres of $\phi_j: H_j\to S_j$.
\label{mwfc.1b}\end{definition}
The assumption of compatibility with $\phi$ certainly imposes some restrictions on the choice of boundary defining functions.     However, it imposes no restriction on the type of manifold with fibred corners, since by \cite[Lemma~1.4]{DLR}, we know that manifolds with fibred corners always admit compatible boundary defining functions.  One important advantage of this compatibility condition is that for each $i$, the other boundary defining functions $x_j$ naturally define boundary defining functions for $S_i$ that are compatible with the induced manifold with fibred corners structure.
Another advantage, as the next lemma shows, is that it gives a nice local product decomposition of the manifold with fibred corners structure near a boundary hypersurface.  
\begin{lemma}
If $x_1,\ldots, x_k$ are boundary defining functions compatible with $\phi$, then for each $i$, there exists a tubular neighborhood
$$
       c_i: H_i\times [0,\epsilon)\hookrightarrow M
$$
such that
\begin{enumerate}
\item[(i)] $c_i^*x_i= \pr_2$;
\item[(ii)] $c^*_i x_j= x_j\circ \pr_1$ for $j\ne i$;
\item[(iii)] $c_i^{-1}\circ \phi_j\circ c_i (h,t)= \phi_j(h)$ for $h\in H_i\cap H_j$ and $H_j<H_i$;
\item[(iv)] $c_i^{-1}\circ \phi_j\circ c_i (h,t)= (\phi_j(h),t)$ for $h\in H_i\cap H_j$ and $H_j>H_i$;
\end{enumerate}
where $\pr_1: H_i\times [0,\epsilon)\to H_i$ and $\pr_2: H_i\times [0,\epsilon) \to [0,\epsilon)$ are the projections on the first and second factors.
\label{mwfc.1c}\end{lemma}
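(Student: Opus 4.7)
My plan is to construct a vector field $V$ near $H_i$ whose flow yields $c_i$, and to do so by induction on the depth of $(M,\phi)$. First I observe that a diffeomorphism $c_i\colon H_i\times[0,\epsilon)\hookrightarrow M$ satisfying (i)--(iv) is precisely the flow of a vector field $V$ on a neighborhood $\cU$ of $H_i$ in $M$ with the following properties: (a) $Vx_i\equiv 1$ on $\cU$ and $V$ is inward-pointing along $H_i$; (b) $Vx_k\equiv 0$ on $\cU$ for all $k\ne i$; (c) for each $H_j<H_i$, $V$ is tangent to $H_j$ and to the fibres of $\phi_j$ on $H_j\cap\cU$; (d) for each $H_j>H_i$, $V$ is tangent to $H_j$ and $V|_{H_j\cap\cU}$ is $\phi_j$-related to a vector field $W_j$ on $S_j$ itself satisfying the analogues of (a)--(d) relative to $S_{ji}$ and the boundary defining functions of $S_j$ induced from the $x_k$, $H_k<H_j$. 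Given such a $V$, setting $c_i(h,t)=\Phi^V_t(h)$ and verifying (i)--(iv) is a direct computation, using the identification of a neighborhood of $S_{ji}$ in $S_j$ with $S_{ji}\times[0,\epsilon)$ provided by the inductive hypothesis to make sense of (iv).

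I then induct on $\depth(M,\phi)$. The base case $\depth=0$ reduces to the classical tubular neighborhood theorem for a compact manifold with boundary. For the inductive step, each base $S_j$ inherits a manifold with fibred corners structure whose depth is strictly less than $\depth(M,\phi)$, since any chain of boundary hypersurfaces of $S_j$ comes from a chain $H_{k_1}<\cdots<H_{k_r}<H_j$ in $M$. The compatibility of the $x_k$ with $\phi$ (Definition~\ref{mwfc.1b}) ensures that each $x_k$ with $H_k<H_j$ descends to a boundary defining function on $S_j$, and these descents are themselves compatible in the sense of Definition~\ref{mwfc.1b}. In particular, when $H_j>H_i$, the descent of $x_i$ is a defining function for $S_{ji}\subset S_j$, and the inductive hypothesis applied to $(S_j,\phi^{(j)})$ yields the required $W_j$.

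To build $V$ on $\cU$ I would choose Ehresmann connections on each $\phi_j$ with $H_j>H_i$, horizontally lift $W_j$ to a vector field on a neighborhood of $H_j$ in $M$, and extend into the interior by a partition of unity. The conditions (a)--(d) each cut out affine subspaces of vector fields which are preserved under convex combinations provided the cutoff functions are pulled back from the bases $S_j$, so that $\phi_j$-relatedness survives the averaging. Compatibility of the $x_k$ with $\phi$ then guarantees that horizontal lifts of the $W_j$ automatically satisfy (b), since $W_j$ annihilates the descents of the $x_k$ by construction. Near points of $H_i\cap H_j$ with $H_j<H_i$ the fibre-tangency condition is imposed directly; fibre-tangency is preserved under the same convex combinations.

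The main technical obstacle is arranging compatibility of the Ehresmann connections and cutoff functions at multi-corners $H_i\cap H_{j_1}\cap\cdots\cap H_{j_r}$ where several $H_{j_\alpha}$ with $H_{j_\alpha}>H_i$ meet. By the first bullet of Definition~\ref{mwfc.1} these $H_{j_\alpha}$ are totally ordered, and the third bullet provides a tower of submersions $\phi_{j_\beta j_\alpha}\colon S_{j_\beta j_\alpha}\to S_{j_\alpha}$ between the bases. Proceeding inductively along this tower one can select the Ehresmann connection on $\phi_{j_\beta}$ so that over $H_{j_\alpha}\cap H_{j_\beta}$ it is the pullback via $\phi_{j_\beta j_\alpha}$ of the connection already chosen on $\phi_{j_\alpha}$, and likewise arrange the cutoffs to be pulled back from the appropriate base. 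With such compatible choices, the local horizontal lifts patch together to produce $V$, and the time-$t$ flow of $V$ from $H_i$ then produces the desired tubular neighborhood $c_i$.
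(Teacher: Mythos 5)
Your argument is correct in substance and rests on the same basic mechanism as the paper's proof -- the collar is generated by flowing a vector field transverse to $H_i$ -- but you take a genuinely more elaborate route. The paper simply chooses one vector field $\xi$ on $M$ with $dx_i(\xi)>0$ on $H_i$, $dx_j(\xi)=0$ near $H_j$ for $j\ne i$, and $\xi|_{H_j}$ tangent to the fibres of $\phi_j$ for $H_j<H_i$, normalizes it to $\eta=\xi/dx_i(\xi)$ so that $dx_i(\eta)\equiv 1$ near $H_i$, and flows; (i)--(iii) are then immediate, and (iv) is attributed in one line to the compatibility of the boundary defining functions with $\phi$. You instead make the content of (iv) explicit by demanding, for $H_j>H_i$, that $V|_{H_j}$ be $\phi_j$-related to a vector field $W_j$ on $S_j$ obtained from an induction on depth, and you assemble $V$ from horizontal lifts for Ehresmann connections. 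What your route buys is a complete justification of (iv): the relation $\phi_j\circ\Phi^V_t=\Phi^{W_j}_t\circ\phi_j$ specifies exactly in which collar of $S_{ji}$ the identity $(\phi_j(h),t)$ is to be read, whereas the compatibility of the $x_k$ by itself only guarantees that the descended $x_i$-coordinate of $\phi_j(c_i(h,t))$ equals $t$, not that the $S_{ji}$-component is $\phi_j(h)$; so your extra condition (d) is precisely what substantiates the step the paper treats tersely. The price is the bookkeeping at multiple corners: for $V$ to be simultaneously $\phi_{j_1}$- and $\phi_{j_2}$-related to $W_{j_1}$ and $W_{j_2}$ on $H_{j_1}\cap H_{j_2}$ you need $W_{j_2}$ to be $\phi_{j_2 j_1}$-related to $W_{j_1}$ over $S_{j_2 j_1}$, so the inductive statement must be strengthened to produce a coherent family of $W_j$'s and collars along the tower of submersions, not one for each base chosen independently; you gesture at this through compatible connections, but it should be written into the induction itself. (Two minor points: relatedness to a fixed $W_j$ is preserved under arbitrary partitions of unity, so restricting to cutoffs pulled back from the bases is unnecessary; and your base case is depth one, a manifold with fibred boundary, rather than depth zero.) With that strengthening your proof goes through, and it is in fact more detailed on point (iv) than the published argument.
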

\begin{proof}
Let $\xi\in \CI(TM)$ be a vector field such that
\begin{itemize}
\item $dx_i(\xi)>0$ everywhere on $H_i$;
\item $dx_j(\xi)=0$ in a neighborhood of $H_j$ in $M$ for $j\ne i$;
\item $\left. \xi\right|_{H_j}$ is tangent to the fibres of $\phi_j: H_j\to S_j$ for $H_j<H_i$.
\end{itemize}
Then in a sufficiently small neighborhood $\cN_i$ of $H_i$, the vector field $\eta:= \frac{\xi}{dx_i(\xi)}$ is well-defined and satisfies the same properties as $\xi$ with the extra feature that $dx_i(\eta)\equiv 1$ in $\cN_i$.  By construction then,  the flow of $\eta$ generates the desired tubular neighborhood, the last condition being satisfied thanks to the fact that the boundary defining functions $x_1,\ldots,x_k$ are compatible with $\phi$.

\end{proof}

 Recall from \cite{Melrose1992} that
$$
     \cV_b(M):= \{  \xi\in TM \; | \; \xi x_i \in x_i\CI(M) \; \forall i \}
$$
is the Lie algebra of $b$-vector fields on $M$, that is, smooth vector fields on $M$ which are tangent to all boundary hypersurfaces.  Notice that this definition does not depend on the choice of boundary defining functions $x_i$.

\begin{definition}
A \textbf{quasi fibred boundary vector field}, or $\QFB$-vector field for short, is a $b$-vector field $\xi$ such that for each $i$,
\begin{itemize}
\item $\left. \xi \right|_{H_i}$ is tangent to the fibres of $\phi_i$;
\item $\xi v_i\in v_i^2\CI(M)$, where $\displaystyle v_i= \prod_{H_j\ge H_i} x_j$.
\end{itemize}
We denote the space of $\QFB$-vector fields by $\cV_{\QFB}(M)$.
\label{mwfc.2}\end{definition}
\begin{remark}
This is closely related to the definition of an iterated fibred corners vector field given in \cite{DLR}, the difference being that in \cite{DLR}, one requires that $\xi x_i\in x_i^2\CI(M)$ for each $i$ instead of asking that  
$$
\xi v_i\in v_i^2\CI(M).
$$
\end{remark}

\begin{example}
If $M$ is a manifold with boundary, $\phi: \pa M\to S$ is a fibre bundle and $x\in \CI(M)$ is a boundary defining function, then $\cV_{\QFB}(M)$ is the Lie algebra of fibred boundary vector fields (or $\phi$-vector fields) introduced by Mazzeo and Melrose \cite{Mazzeo-MelrosePhi}. If in fact $S=\pa M$ and $\phi=\Id$, then $\cV_{\QFB}(M)$ is the Lie algebra of scattering vector fields (or asymptotically conical vector fields) introduced by Melrose \cite{MelroseGST}.
\end{example}

When there are corners of codimension 2 and higher, we can give a simple description of $\QFB$-vector fields in terms of coordinates adapted to the fibred corners structure.  If $p\in \pa M$ is contained in the corner $H_1\cap\cdots \cap H_{\ell}$, we can for simplicity  label the boundary  hypersurfaces in such a way that
$$
     H_1<\cdots <H_{\ell}.
$$
Let $x_1,\ldots, x_{\ell}$ be the corresponding boundary defining functions.  In a neighborhood of $p$ where each fibre bundle $\phi_i$ is trivial, consider tuples of functions
$y_i=(y_i^1,\ldots,y_i^{k_i})$ for $i\in\{1,\ldots, \ell\}$ and $z=(z_1,\ldots,z_q)$, such that
\begin{equation}
(x_1,y_1,\ldots, x_{\ell}, y_{\ell},z)
\label{mwfc.2b}\end{equation}
defines coordinates near $p$ with the property that on $H_i$, $(x_1,y_1,\ldots, x_{i-1}, y_{i-1}, y_i)$ induces coordinates on the base $S_i$ with $\phi_i$ corresponding to the map
$$
(x_1,y_1,\ldots,\widehat{x}_i, y_i,\ldots, x_{\ell}, y_{\ell},z)\mapsto (x_1,y_1,\ldots, x_{i-1}, y_{i-1}, y_i),
$$
where the notation ``\;\;$\widehat{}$\;\;'' above the variable $x_i$ denotes its omission.  In these coordinates, one can check that the Lie algebra $\cV_{\QFB}(M)$ is locally spanned over $\CI(M)$ by
\begin{equation}
 v_1x_1\frac{\pa}{\pa x_1}, v_1 \frac{\pa}{\pa y_1^{n_1}},  v_2x_2\frac{\pa}{\pa x_2}- v_1\frac{\pa}{\pa x_1}, v_2\frac{\pa}{\pa y_2^{n_2}}, \ldots,  v_{\ell}x_{\ell}\frac{\pa}{\pa x_{\ell}}- v_{\ell-1}\frac{\pa}{\pa x_{\ell-1}}, v_{\ell}\frac{\pa}{\pa y_{\ell}^{n_{\ell}}}, \frac{\pa}{\pa z_p}\label{mwfc.2c}\end{equation}
for $p\in\{1,\ldots,q\}$ and $n_i\in\{1,\ldots,k_i\}$, where $x=\prod_{i=1}^{\ell} x_i$ and $v_i=\prod_{m=i}^{\ell} x_m$.  In other words, in these coordinates, a $\QFB$-vector field $\xi\in \cV_{\QFB}(M)$ is of the form
\begin{equation}
a_1v_1x_1\frac{\pa}{\pa x_1} +\sum_{i=2}^{\ell}a_i\left(v_ix_i\frac{\pa}{\pa x_i}- v_{i-1}\frac{\pa}{\pa x_{i-1}}\right) + \sum_{i=1}^{\ell}\sum_{j=1}^{k_i} b_{ij}v_i\frac{\pa}{\pa y_i^{j}} +\sum_{p=1}^{q} c_p\frac{\pa}{\pa z_p}\label{mwfc.2d}\end{equation}
with $a_i, b_{ij}, c_p\in \CI(M)$, \cf \cite[equation (2.6)]{DLR}.

\begin{definition}
When the manifold with fibred corners $(M,\phi)$ is such that $S_i=H_i$ and $\phi_i=\Id$ for each maximal boundary hypersurface $H_i$, we say that a $\QFB$-vector field is a \textbf{quasi-asymptotically conical vector field} ($\QAC$-vector field for short) and $(M,\phi)$ is a $\QAC$-manifold with fibred corners.
\label{mwfc.2a}\end{definition}

The space $\cV_{\QFB}(M)$ clearly depends on the fibre bundle structure of each boundary hypersurface $H_i$.  It also depends on the choice of boundary defining functions.
\begin{definition}
If $H_1,\ldots,H_k$ is an exhaustive list of all the boundary hypersurfaces of the manifold with fibred corners $(M,\phi)$, then two different choices $x_1,\ldots, x_k$ and $x_1',\ldots,x_k'$ of boundary defining functions are said to be \textbf{$\QFB$-equivalent} if they yield  the same Lie algebra of $\QFB$-vector fields.  If $(M,\phi)$ is a $\QAC$-manifold with fibred corners, then we will also say that they are \textbf{$\QAC$-equivalent} when they are $\QFB$-equivalent.
\label{mwfc.2ee4}\end{definition}
The next lemma gives a criterion to determine when two collections of boundary defining functions are $\QFB$-equivalent.

\begin{lemma}
If $H_1,\ldots,H_k$ is an exhaustive list of all the boundary hypersurfaces of the manifold with fibred corners $(M,\phi)$, then two different choices $x_1,\ldots, x_k$ and $x_1',\ldots,x_k'$ of boundary defining functions compatible with $\phi$ are $\QFB$-equivalent if and only if for all $i$, the function
$$
f_i:= \log\left(  \frac{v_i'}{v_i}\right)= \sum_{H_j\ge H_i} \log\left(  \frac{x_j'}{x_j}\right) \in \CI(M)
$$
is such that for all $H_j\ge H_i$, $\displaystyle  \left. f_i\right|_{H_j}=\phi_j^* h_{ij}$ for some $h_{ij}\in \CI(S_j)$.

\label{mwfc.2ee}\end{lemma}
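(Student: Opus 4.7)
Proof plan.

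The plan is to reduce $\QFB$-equivalence of the two collections of boundary defining functions to a pointwise condition on the functions $f_i$, and then to identify that condition with the stated restriction condition. Writing $x_j' = a_j x_j$ with $a_j\in \CI(M)$ smooth and positive, one has $v_i' = b_i v_i$ with $b_i = \prod_{H_k\ge H_i} a_k$ smooth and positive, and $f_i = \log b_i$. A direct calculation gives
$$
\xi v_i' \;=\; b_i\bigl( v_i\, \xi f_i + \xi v_i\bigr),
$$
and $(v_i')^2 = b_i^2 v_i^2$. Being a $b$-vector field and having $\left.\xi\right|_{H_i}$ tangent to the fibres of $\phi_i$ are conditions independent of the choice of defining functions, so $\QFB$-equivalence reduces to the following: given $\xi v_i\in v_i^2\CI(M)$, one should also have $\xi v_i'\in(v_i')^2\CI(M)$. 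Reading this off the displayed identity and using that $b_i$ is smooth nonvanishing, the equivalence becomes $\xi f_i\in v_i\CI(M)$ for every $\xi\in\cV_{\QFB}(M)$ and every $i$.

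I would then translate this into a face-by-face condition. Since $v_i=\prod_{H_k\ge H_i}x_k$ cuts out $\bigcup_{H_k\ge H_i} H_k$, a Hadamard-type fact (a smooth function on $M$ vanishing on a finite family of transverse boundary hypersurfaces is divisible by the product of their defining functions) identifies $\xi f_i\in v_i\CI(M)$ with the conjunction $\left.\xi f_i\right|_{H_j}=0$ for all $H_j\ge H_i$. Because $\left.\xi\right|_{H_j}$ is tangent to the fibres of $\phi_j$, this in turn says exactly that every restriction of a $\QFB$-vector field to $H_j$ annihilates $\left.f_i\right|_{H_j}$ along the fibres of $\phi_j$.

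The main technical step is to verify that such restrictions are rich enough to detect constancy along fibres. Using adapted local coordinates as in \eqref{mwfc.2b} at a corner through $H_j$, and telescoping the generators $v_{i+1}x_{i+1}\pa_{x_{i+1}} - v_i\pa_{x_i}$ from the local spanning set \eqref{mwfc.2c}, one sees that on the interior of a generic fibre the restrictions generate, up to multiplication by smooth nonvanishing functions, the vector fields $x_i\pa_{x_i}$, $\pa_{y_i^n}$ and $\pa_{z_p}$ in the fibre variables. Since $x_i\pa_{x_i}$ already forces constancy in $x_i$ on $\{x_i>0\}$, and hence on $\{x_i\ge 0\}$ by smooth extension, this forces $\left.f_i\right|_{H_j}$ to be constant along the fibres of $\phi_j$, i.e., $\left.f_i\right|_{H_j}=\phi_j^*h_{ij}$ for some $h_{ij}\in\CI(S_j)$. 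Conversely, if this pullback condition holds, then for any $\xi\in\cV_{\QFB}(M)$ the tangency of $\left.\xi\right|_{H_j}$ to the fibres of $\phi_j$ forces $\left.\xi f_i\right|_{H_j}=0$ on every $H_j\ge H_i$, and the divisibility fact returns $\xi f_i\in v_i\CI(M)$. The main obstacle is only the local coordinate bookkeeping in the telescoping step, which is routine given \eqref{mwfc.2c}.
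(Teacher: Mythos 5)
Your argument is correct and is essentially the paper's proof: your identity $\xi v_i' = e^{f_i}\left(v_i\,\xi f_i + \xi v_i\right)$ is the paper's \eqref{mwfc.2ee3} in vector-field form, and both arguments reduce $\QFB$-equivalence to the condition $\xi f_i\in v_i\CI(M)$ for all $b$-vector fields tangent to the fibres, and then to fibrewise constancy of $\left. f_i\right|_{H_j}$ on each $H_j\ge H_i$. The only difference is that you make explicit, via the local frame \eqref{mwfc.2c}, the richness of restricted $\QFB$-vector fields along the fibres of $\phi_j$, which the paper simply asserts when it produces a test vector field with $\left. df_i(\xi)\right|_{H_j}\neq 0$.
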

\begin{proof}
Consider the Lie algebra of vector fields
$$
  \cV_{b,\phi}(M):= \{ \xi \in \cV_b(M) \; | \; (\phi_i)_*(\left. \xi\right|_{H_i})=0 \;\forall i\}.
$$
Clearly then, by Definition~\ref{mwfc.2}, the two choices of boundary defining functions yield  the same space of $\QFB$-vector fields if and only if for all $\xi\in \cV_{b,\phi}(M)$,
\begin{equation}
  \frac{dv_i}{v_i^2}(\xi)\in \CI(M) \; \forall i  \; \Longleftrightarrow \;   \frac{dv_i'}{(v_i')^2}(\xi)\in \CI(M) \; \forall i.
\label{mwfc.2ee2}\end{equation}
Now, by definition of $f_i\in \CI(M)$, we have that $v_i'= e^{f_i}v_i$, so that
\begin{equation}
       \frac{dv_i'}{(v_i')^2}=  e^{-f_i}\left(  \frac{dv_i}{v_i^2}  + \frac{df_i}{v_i}\right).
\label{mwfc.2ee3}\end{equation}
In particular, if for all $i$ and all $H_j\ge H_i$, $\displaystyle  \left. f_i\right|_{H_j}=\phi_j^* h_{ij}$ for some $h_{ij}\in \CI(S_j)$, then we see from \eqref{mwfc.2ee3} that \eqref{mwfc.2ee2} holds.  Conversely, if for some $H_i$ and some $H_j\ge H_i$, $\displaystyle  \left. f_i\right|_{H_j}$ is not the pull-back of some element of $\CI(S_j)$, then we can find $\xi\in \cV_{b,\phi}(M)$ such that $\left. df_i(\xi)\right|_{H_j}\ne 0$ and
$$
        \frac{dv_{\ell}}{v_{\ell}^2}(\xi) \in \CI(M) \; \forall \ell,
$$
so that by \eqref{mwfc.2ee3},  $\frac{dv_i'}{(v_i')^2}(\xi)$ is not bounded near $H_j$, implying in particular that \eqref{mwfc.2ee2} does not hold.
\end{proof}

It is clear from the definition that $\cV_{\QFB}(M)$ is in fact a Lie subalgebra of $\cV_b(M)$.  In particular, we can define the space $\Diff_{\QFB}^*(M)$ of $\QFB$-differential operators as the universal enveloping algebra of $\cV_{\QFB}(M)$ over $\CI(M)$.  Thus, $\Diff^q_{\QFB}(M)$ is the space of operators generated by multiplication by elements of $\CI(M)$ and the action of up to $q$ $\QFB$-vector fields.

Now, as described in \eqref{mwfc.2c}, the Lie algebra $\cV_{\QFB}(M)$ is a locally free sheaf of rank $\dim M$ over $\CI(M)$. Hence, by the Serre-Swan theorem, there exists a natural smooth vector bundle, the \textbf{$\QFB$-tangent bundle}, which we shall denote by ${}^{\phi}\!TM\to M$, and a natural map $\iota_{\phi}: {}^{\phi}\!TM \to TM$ restricting to an isomorphism on $M\setminus \pa M$, such that
$$
      \cV_{\QFB}(M) = (\iota_{\phi})_* \CI(M;{}^{\phi}\!TM).
$$
More precisely, at a point $p\in M$, the fibre of ${}^{\phi}\!TM$ is given by ${}^{\phi}\! T_pM= \cV_{\QFB}(M)/ \cI_p \cdot \cV_{\QFB}(M)$, where $\cI_p$ is the ideal of smooth functions vanishing at $p$.   It is then natural to define the \textbf{$\QFB$-cotangent bundle} to be the dual ${}^{\phi}\!T^*M$  of the $\QFB$-tangent bundle ${}^{\phi}\!TM$.  In terms of the coordinates \eqref{mwfc.2b} near $H_1\cap\cdots \cap H_{\ell}$, a local basis of sections of the $\QFB$-cotangent bundle is given by
\begin{equation}
  \frac{d v_1}{v_1^2}, \frac{dy_1^{n_1}}{v_1}, \ldots,  \frac{d v_{\ell}}{v_{\ell}^2}, \frac{dy_{\ell}^{n_{\ell}}}{v_{\ell}}, dz_k
 \label{mwfc.2e}\end{equation}
for $k\in\{1,\ldots,q\}$ and $n_i\in\{1,\ldots,k_i\}$.

\begin{definition}
A \textbf{quasi fibred boundary metric} ($\QFB$-metric for short) is a choice of Euclidean metric $g_{\phi}$ for the vector bundle ${}^{\phi}\!TM$.  A \textbf{smooth $\QFB$-metric} on $M\setminus \pa M$ is a Riemannian metric on $M\setminus \pa M$ induced by some $\QFB$-metric $g_{\phi}$ via the map $\iota_{\phi}: {}^{\phi}\!TM \to TM$.   Hoping this will lead to no confusion, we will also denote by $g_{\phi}$ the smooth $\QFB$-metric induced by a $\QFB$-metric $g_{\phi}\in \CI(M;{}^{\phi}\!T^*M\otimes {}^{\phi}\!T^*M)$.
\label{mwfc.2ffb}
\end{definition}
\begin{remark}
Because $M$ is a compact space and all of the Euclidean metrics of a vector bundle on a compact space are quasi-isometric, notice that all smooth $\QFB$-metrics are automatically quasi-isometric among themselves.  Thus, more generally, if a Riemannian metric on $M\setminus \pa M$ is quasi-isometric to a smooth $\QFB$-metric $g_{\QFB}$ and if all of its derivative taken with respect to the covariant derivative of $g_{\QFB}$ are bounded, then we say that it is a \textbf{$\QFB$-metric}. Similarly, we say that a $\QFB$-metric $g_{\phi}$ is \textbf{polyhomogeneous} if $g_{\phi}$ is induced by a Euclidean metric on ${}^{\phi}\!TM$ which is polyhomogeneous on $M$.
\label{mwfc.2g}\end{remark}

\begin{example}
In the local basis \eqref{mwfc.2e}, an example of a $\QFB$-metric is given by
\begin{equation}
     \sum_{i=1}^{\ell} \frac{dv_i^2}{v_i^4} + \sum_{i=1}^{\ell} \sum_{j=1}^{k_i} \frac{(dy_i^j)^2}{v_i^2} + \sum_{k=1}^q  dz_k^2.
\label{mwfc.2f}\end{equation}
\label{mwfc.4}\end{example}
\begin{example}
If $M$ is a manifold with fibred boundary $\phi:\pa M\to S$, then a $\QFB$-metric is a fibred boundary metric (or $\phi$-metric) in the sense of \cite{Mazzeo-MelrosePhi}.  If moreover $S=\pa M$ and $\phi=\Id$, then a $\QFB$-metric is a scattering metric (also called an asymptotically conical metric) in the sense of \cite{MelroseGST}.
\label{mwfc.5}\end{example}

Notice that given a smooth $\QFB$-metric $g_{\QFB}$, there is an  alternative description of the Lie algebra $\cV_{\QFB}(M)$, namely, it is given by the smooth vector fields on $M$ which are uniformly bounded with respect to $g_{\QFB}$:
\begin{equation}
  \cV_{\QFB}(M)=\{  \xi\in \CI(M;TM) \; | \; \sup_{M\setminus \pa M} g_{\QFB}(\xi,\xi)<\infty \}.
\label{mwfc.5a}\end{equation}

In this paper, we are interested in the following particular example of a $\QFB$-metric first considered in \cite{DM2014}.
\begin{definition}
A \textbf{quasi-asymptotically conical metric} ($\QAC$ for short) is a $\QFB$-metric on a manifold with fibred corners such that $S_i=H_i$ and $\phi_i=\Id$ for each maximal boundary hypersurface $H_i$ with respect to the partial order.
\label{mwfc.3}\end{definition}

\begin{example}(\cf \cite[\S2.3.5]{DM2014})
Let $M_1$ and $M_2$ be two smooth manifolds with boundary and consider the manifold with corners $M=[M_1\times M_2; \pa M_1\times \pa M_2 ]$ obtained by blowing up the corner of $M_1\times M_2$ in the sense of Melrose \cite{MelroseAPS}.  Let $\beta: M\to M_1\times M_2$ denote the blow-down map.  As illustrated in Figure~\ref{pac.1} below, $M$ has three boundary hypersurfaces, $H_1$ and $H_2$  coming respectively from the old faces $\pa M_1\times M_2$ and $M_1\times \pa M_2$, and $H_3$ coming from the blown up corner. 
\begin{figure}[h]
\begin{tikzpicture}
\draw (0,1)--(0,3);
\draw (1,0)--(5,0);
\draw (1,4)--(5,4);
\draw (6,1)--(6,3);

\draw (8.5,0)--(8.5,4);
\draw (8.5,0)--(14.5,0);
\draw (8.5,4)--(14.5,4);
\draw (14.5,0)--(14.5,4);

\draw (1,0) arc [radius=1, start angle=0, end angle=90];
\draw (0,3) arc [radius=1, start angle=-90, end angle=0];
\draw (6,1) arc [radius=1, start angle=90, end angle=180];
\draw (5,4) arc [radius=1, start angle=180, end angle=270];

\node at (0.5,0.5) {$H_3$};
\node at (0.5,3.5) {$H_3$};
\node at (5.5,0.5) {$H_3$};
\node at (5.5,3.5) {$H_3$};
\node at (3,-0.5) {$H_2$};
\node at (3,4.5) {$H_2$};
\node at (0.5,2) {$H_1$};
\node at (5.5,2) {$H_1$};
\node at (3,2) {$M$};
\draw[->] (6.5,2)--(8,2);
\node[above] at (7.25,2) {$\beta$};
\node at (11.5,-0.5) {$M_1\times \pa M_2$};
\node at (11.5,4.5) {$M_1\times \pa M_2$};
\node at (9.5,2) {$\pa M_1\times M_2$};
\node at (13.5,2) {$\pa M_1\times M_2$};
\end{tikzpicture}
\caption{The blow-down map  $\beta:M\to M_1\times M_2$}
\label{pac.1}\end{figure}
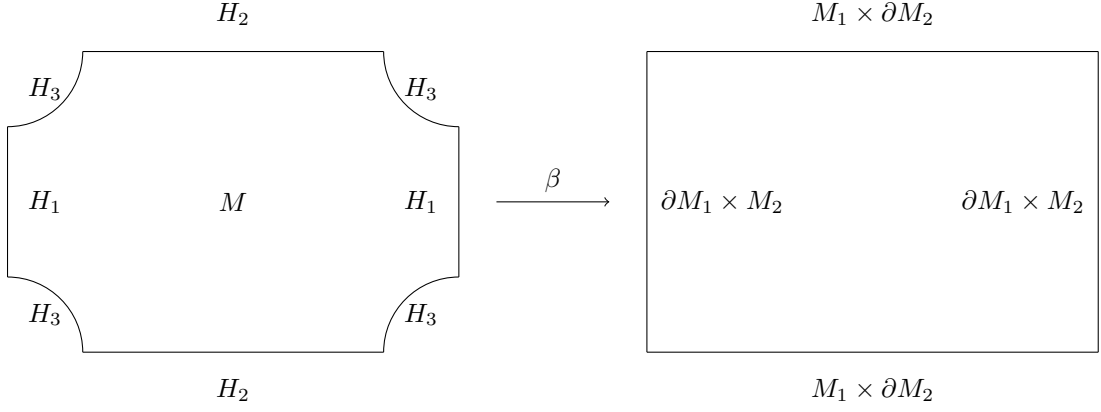

\noindent These faces come naturally with fibre bundle structures
$$
           \phi_1: \pa M_1\times M_2\to \pa M_1, \quad \phi_2: M_1\times \pa M_2\to \pa M_2,  \quad \phi_3=\Id: H_3\to H_3
$$
given respectively by the projections on the left and right factors for $H_1=\pa M_1\times M_2$ and $H_2=M_1\times \pa M_2$, and by the identity map on $H_3$.  These fibre bundles $\phi=(\phi_1,\phi_2,\phi_3)$ endow $(M,\phi)$ with a manifold with fibred corners structure with partial order given by $H_1<H_3$ and $H_2<H_3$.  Let $u_1$ and $u_2$ be boundary defining functions on $M_1$ and $M_2$ respectively and denote also by $u_1$ and $u_2$ their pull-back to $M_1\times M_2$ via the projections on the left and right factors.  On $M$, consider the polar coordinates
$u_1=r\cos \theta$, $u_2=r\sin\theta$, so that $x_1=\cos\theta$, $x_2= \sin\theta$ and $x_3=r$ are boundary defining functions for $H_1$, $H_2$ and $H_3$ respectively. In terms of these choices, the functions $v_i$ of Definition~\ref{mwfc.2} are given by
$$
      v_1= x_1x_3=u_1 , \quad v_2=x_2x_3= u_2, \quad v_3=x_3=r.
$$
In terms of the local basis of sections \eqref{mwfc.2e} of the $\QAC$-cotangent bundle, we thus see that any pair of asymptotically conical metrics $g_i\in \CI(M_i; {}^{\sc}TM_i)$, $i=1,2$ naturally induces a $\QAC$-metric $g_{\QAC}$ on $M$ by taking their Cartesian product:
$$
      g_{\QAC}= \beta^*(g_1\times g_2).
$$
\label{mwfc.6}\end{example}

\begin{remark}
On a manifold with boundary $M$ with fibre bundle structure on $\pa M$ given by the identity map, all choices of boundary defining functions lead to the same Lie algebra of scattering vector fields.  In Example~\ref{mwfc.6}, this is reflected by the fact that the Lie algebra of $QAC$-vector fields does not depend on the choice of $u_1$ and $u_2$.  However, choosing boundary defining functions $x_i$ not induced by the choices of $u_1$ and $u_2$ as in the previous example may change the Lie algebra of $QAC$-vector fields.
\label{mwfc.7}\end{remark}

\begin{proposition}
Definition~\ref{mwfc.3} is a particular case of the notion of a $\QAC$-metric introduced in \cite{DM2014}.
\label{mwfc.8}\end{proposition}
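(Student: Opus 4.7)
The plan is to verify directly that the local form of a $\QAC$-metric coming from Definition~\ref{mwfc.3} fits into the iterated cone-like description used by the second author and Mazzeo in \cite{DM2014}. Recall that in \cite{DM2014} a $\QAC$-metric is defined inductively on the depth of the underlying smoothly stratified space: near each singular stratum the metric is required to be quasi-isometric to a warped product in which the normal directions carry a conical factor and the transverse directions carry a $\QAC$-metric on the resolved link of one lower depth, with the base stratum itself equipped with a $\QAC$-metric of the same inductive type.  The condition that $\phi_i=\Id$ on maximal boundary hypersurfaces is precisely what ensures that the deepest inductive step reduces to asymptotically conical geometry rather than to an honest fibration.

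First I would record the local model: in the coordinates \eqref{mwfc.2b} near a corner $H_1\cap\cdots\cap H_\ell$, the basis \eqref{mwfc.2e} of ${}^\phi T^*M$ produces the model $\QFB$-metric \eqref{mwfc.2f}, and by the quasi-isometry remark following Definition~\ref{mwfc.2} and equation~\eqref{mwfc.5a} it suffices to check the property for this model. Introducing the radial-type variable $\rho_i = 1/v_i$ converts $dv_i^2/v_i^4$ into $d\rho_i^2$ and rewrites the transverse block $v_i^{-2}\sum_j (dy_i^j)^2$ as a conical piece at the $i$-th level, exhibiting the nested cone structure explicitly.

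Second I would run an induction on the depth of $(M,\phi)$. At depth zero, both definitions reduce to the scattering (asymptotically conical) case of Example~\ref{mwfc.5}, and there is nothing to prove. For the inductive step, pick a boundary hypersurface $H$ of minimal relative depth and apply Lemma~\ref{mwfc.1c} to identify a tubular neighborhood with $H\times[0,\epsilon)$ in which the compatible boundary defining function for $H$ is the projection onto the second factor and the other boundary defining functions are pulled back from $H$. In this collar the model $\QFB$-metric takes the shape
\begin{equation*}
   g_\phi \;=\; \frac{dv_H^2}{v_H^4} \;+\; \frac{\phi_H^{*}g_{S_H}}{v_H^2} \;+\; g',
\end{equation*}
where $g_{S_H}$ is the induced $\QFB$-metric on the base $S_H$ (of strictly smaller depth) and $g'$ restricts on each fibre of $\phi_H\colon H\to S_H$ to a $\QFB$-metric on a $\QAC$-manifold with fibred corners of strictly smaller depth.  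Writing $\rho=1/v_H$ turns this into $d\rho^2 + \rho^2\phi_H^{*}g_{S_H} + g'$, which is exactly the model form required by \cite{DM2014} at the stratum $\beta(H)\subset\St M$. The inductive hypothesis then applies to $g_{S_H}$ and to the fibrewise metric $g'$, completing the step.

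The point requiring care is the consistency at higher codimension corners: at $H_i\cap H_j$ with $H_i<H_j$, one has to know that the scaling $v_j=\prod_{H_k\ge H_j}x_k$ arising from Definition~\ref{mwfc.2} agrees with the scaling dictated by iterating the above decomposition first along $H_i$ and then along the induced boundary hypersurface of $S_i$ corresponding to $S_{ji}$. This is guaranteed by the compatibility of the boundary defining functions with $\phi$ (Definition~\ref{mwfc.1b}) together with the commutative diagram $\phi_{ji}\circ\phi_j=\phi_i$ from Definition~\ref{mwfc.1}: under these, the transition between the two orderings of the inductive unfolding is captured by a change of boundary defining functions that is $\QFB$-equivalent in the sense of Lemma~\ref{mwfc.2ee}, so it does not alter the quasi-isometry class of the metric. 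This bookkeeping, rather than any genuine geometric difficulty, is the main thing to verify, after which the proposition follows.
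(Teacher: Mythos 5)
There is a genuine gap in your inductive step: you peel off a boundary hypersurface $H$ of \emph{minimal} relative depth, i.e.\ one that is maximal with respect to the partial order, and claim that in its collar the model metric is quasi-isometric to $\frac{dv_H^2}{v_H^4}+\frac{\phi_H^*g_{S_H}}{v_H^2}+g'$ with $g_{S_H}$ the full (complete) $\QFB$-metric induced on the base $S_H$. This is false near the corners $H\cap H_i$ with $H_i<H$, and the failure is not a change of boundary defining functions, so Lemma~\ref{mwfc.2ee} cannot repair it. Concretely, take depth $2$ with $H_1<H_2$ and $H=H_2$, so $v_1=x_1x_2$, $v_2=x_2$. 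The model \eqref{mwfc.2f} contains
\begin{equation*}
\frac{dv_1^2}{v_1^4}=\frac{(x_2\,dx_1+x_1\,dx_2)^2}{x_1^4x_2^4},
\end{equation*}
whose $dx_2\otimes dx_2$ part is $\frac{dx_2^2}{x_1^2x_2^4}$; hence the vector field $x_2^2\frac{\pa}{\pa x_2}$ has length comparable to $1/x_1$, which is unbounded near the corner, whereas in your proposed form $\frac{dx_2^2}{x_2^4}+\frac{1}{x_2^2}g_{S_{H_2}}$ the same vector field has bounded length. So the two metrics are not quasi-isometric, and moreover a cone over a complete $\QFB$-manifold is not the inductive local model of \cite{DM2014} in any case: their model (Lemma~2.9 there) is a product of an exact cone over (a region of) a \emph{closed} base with a $\QAC$-space of strictly lower depth, restricted to a suitable region; this is precisely the subtlety recorded in Remark~\ref{mwfc.10}. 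In a $\QAC$-structure the fibres of $\phi_H$ for maximal $H$ are points, so your statement that the induction applies to the fibrewise metric $g'$ is also vacuous in the case you chose.

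The paper's proof makes the opposite choice, and this is what makes the argument work: one takes $H_1$ of \emph{maximal} relative depth (minimal in the partial order), so that there is no $H_i<H_1$ and hence no cross terms of the above type; then $S_1$ is a smooth closed manifold, the collar model is exactly \eqref{mwfc.9} with all the lower-depth structure confined to the fibres of $\phi_1$, and after restricting to level sets of $v_1$ one identifies the metric with the product of the cone $\frac{dv_1^2}{v_1^4}+\frac{g_{S_1}}{v_1^2}$ over $S_1$ with a fibrewise $\QAC$-metric on the region $\{v_2>c/\epsilon\}$, which is literally the local inductive model of \cite[Lemma~2.9]{DM2014}. If what you intended was this choice of hypersurface, then your parenthetical ``$S_H$ of strictly smaller depth'' and the claim that the inductive hypothesis applies to $g_{S_H}$ are off (no induction is needed on the base, which is closed), and you would still need the level-set/region identification to match the cited model; as written, however, the decomposition you assert is the incorrect one, and the proof does not go through.
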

\begin{proof}
Let $(M,\phi)$ be a manifold with fibred corners such that $S_i=H_i$ and $\phi_i=\Id$ for each maximal hypersurface. The simplest way to see that Definition~\ref{mwfc.3} is a special case of \cite{DM2014} is to proceed by recurrence on the depth of $M$.  If the depth of $M$ is one, that is, if $M$ is a manifold with boundary, then Definition~\ref{mwfc.3} just corresponds to the notion of a scattering metric in the sense of \cite{MelroseGST}, so that the result is obvious in this case.

Suppose then that the result holds for all $\QAC$-manifolds with fibred corners of depth $k\ge 1$ and smaller and let $(M,\phi)$ be a $\QAC$-manifold with fibred corners of depth $k+1$.  Let $H_1$ be a boundary hypersurface of relative depth $k+1$ in $M$.
  Using a tubular neighborhood $\mathcal{N}_1\cong H_1\times [0,\epsilon)_{x_1}$ of $H_1$ as in Lemma~\ref{mwfc.1c}, we see from Example~\ref{mwfc.4} that in $\mathcal{N}_1$, an example of a smooth $\QAC$-metric is given by
\begin{equation}
   \frac{dv_1^2}{v_1^4} + \frac{\phi_1^{*}g_{S_1}}{v_1^2} + h,
\label{mwfc.9}\end{equation}
where $h$ is a symmetric 2-tensor which restricts to a smooth $\QAC$-metric on each fibre of $\phi_1:H_1\to S_1$ and $g_{S_1}$ is a Riemannian metric on the closed manifold $S_1$.  Here, we think of $h$ as being constant in $x_1$, that is, defined on $H_1$ and pulled back to the tubular neighborhood of $H_1$ given by Lemma~\ref{mwfc.1c}.

Alternatively, we can look at the restriction of $h$ to each level set of $v_1$.  For $c>0$, the region of the level set $v_1=c$ contained in the tubular neighborhood $\mathcal{N}_1$ corresponds to the open set $\cU_c= \{ p\in H_1\; | \; v_2(p)>\frac{c}{\epsilon}\}$ in $H_1$ under the retraction $r_1:\mathcal{N}_1\to H_1$. In fact, we have that
$$
      H_1\setminus \pa H_1= \bigcup_{0<c<\epsilon} \cU_c.
$$
Thus, in terms of the level sets of $v_1$, and restricting to a region $\mathcal{W}$ of $S_1$ where the fibre bundle $\phi_1:H_1\to S_1$ is trivial, we see that the metric \eqref{mwfc.9} can be assumed to be the Cartesian product of the cone metric $\frac{dv_1^2}{v_1^4} + \frac{g_{S_1}}{v_1^2}$ with a $\QAC$-metric $h$ on $Z_1$, but restricted to the region
$$
         \{ (c,v,z) \in [0,\epsilon)_{v_1}\times \mathcal{W}\times Z_1\; | \;  v_2(z)> \frac{c}{\epsilon} \}\subset [0,\epsilon)_{v_1}\times \mathcal{W}\times Z_1,
$$
where $Z_1$ is a typical fibre of $\phi_1$ so that $\phi_1^{-1}(\mathcal{W})\cong \mathcal{W}\times Z_1$.  This is precisely the local inductive model given in \cite[Lemma~2.9]{DM2014}. This completes the proof.
\end{proof}

\begin{remark}
Using directly \cite[Lemma~2.9]{DM2014}, we did not have to deal with the notion of resolution blow-ups introduced in \cite{DM2014}.  Since this is intuitively useful, let us nevertheless quickly explain how resolution blow-ups arise in terms of Definition~\ref{mwfc.3}. Thus, let $(M,\phi)$ be a $\QAC$-manifold with fibred corners and let $D$ be the (disjoint) union of all maximal boundary hypersurfaces of $M$. Then $D$ naturally inherits a manifold with fibred corners structure. As such, there is an associated smoothly stratified space $Y_0$ with singular strata in one-to-one correspondence with the boundary hypersurfaces of $D$.  Let $x=\prod_{H_i\subset \pa M} x_i$ be the product of all boundary defining functions of $M$. Then for $\epsilon>0$ small, the level set
$$
Y_{\epsilon}:=  \{ p\in M \; | \; x(p)=\epsilon\}
$$
of $x$ corresponds to a smooth desingularization of $Y_0$ in the sense of \cite[\S~2.3]{DM2014}.  Let $\iota_{\epsilon}: Y_{\epsilon}\to M$ be the natural inclusion and consider the metric $g_{\epsilon}:= \iota_{\epsilon}^*(x^2g_{\QAC})$ for  $g_{\QAC}$  a choice of $\QAC$-metric on $(M,\phi)$.   Then for $\epsilon>0$, the family $(Y_{\epsilon}, g_{\epsilon})$ is a resolution blow-up of $(Y_0,g_0)$ for some  incomplete iterated edge metric $g_0$   on $Y_0$, the stratified space associated to $D$.  A subtle point however is that $g_0$ is not equal to $\iota_0^* x^2 g_{\QAC}$ for  $\iota_0: D\to M$ the natural inclusion.  For instance, if $x_{\max}$ denotes the boundary defining function for $D$, then terms of the form $\iota_{\epsilon}^*x^2\frac{dx^2_{\max}}{x_{\max}^4}$ lead to  non-zero contributions in the limit $\epsilon\searrow 0$, but these contributions are clearly not captured by $\iota_0^* x^2 g_{\QAC}$, which in fact is not an incomplete iterated edge metric.  See Remark~\ref{kqac.16} below for a related phenomenon.          

\label{mwfc.10}\end{remark}

\begin{remark}
One advantage of using $\QAC$-manifolds with fibred corners is that the weight functions of \cite[\S~2.4]{DM2014} admit a simple description in terms of boundary defining functions.  If $x=\prod_{H_i\subset \pa M} x_i$ is the product of all of the boundary defining functions of $M$, then the radial function of \cite{DM2014} can  be taken to be $\rho=\frac1{x}$.  Moreover, if $(M,\phi)$ is of depth $\ell$, then for $1\le j\le \ell$, the weight function $w_j$ of \cite[\S~2.4]{DM2014} is simply the product of all boundary defining functions associated to boundary hypersurfaces of relative depth at least $j$.  In particular, in the situation where the boundary hypersurfaces of $M$ are totally ordered, say $H_1<\cdots <H_{\ell}$, the weight function $w_j$ is simply given by
$$
    w_j= \prod_{i\le j} x_i= \frac{x}{v_{j+1}}.
$$
 \label{mwfc.11}\end{remark}

Using a similar approach as in the proof of Proposition~\ref{mwfc.8}, we can obtain the following general result about  $\QFB$-metrics.
\begin{proposition}
Each $\QFB$-metric is a complete metric of infinite volume with bounded geometry.
\label{bg.1}\end{proposition}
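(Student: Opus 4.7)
The plan is to establish all three assertions simultaneously by induction on $\depth(M,\phi)$, following the reduction to a local product model employed in the proof of Proposition~\ref{mwfc.8}. By Remark~\ref{mwfc.2g}, every $\QFB$-metric is quasi-isometric to a smooth one with all covariant derivatives of the comparison uniformly bounded in the smooth metric, so the difference of the two Levi-Civita connections is a bounded tensor with bounded covariant derivatives. Consequently completeness and infinite volume (both bi-Lipschitz invariants) and bounded geometry (bounded curvature tensor with bounded covariant derivatives, and positive injectivity radius) all pass between equivalent $\QFB$-metrics, so it suffices to verify the conclusion for the explicit smooth model of Example~\ref{mwfc.4}.

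When $\depth M = 1$ the model is a scattering or fibred boundary metric in the sense of \cite{MelroseGST, Mazzeo-MelrosePhi}, for which the three properties are classical. Assume the statement in depth at most $k$, and let $(M,\phi)$ have depth $k+1$. Fix a boundary hypersurface $H_i$ of relative depth $1$ and use Lemma~\ref{mwfc.1c} to get a tubular neighborhood $\cN_i \cong H_i \times [0,\epsilon)_{v_i}$. Over a local trivialization $\phi_i^{-1}(W)\cong W\times Z_i$ of $\phi_i$, the model \eqref{mwfc.2f} takes the form $v_i^{-4}dv_i^2 + v_i^{-2}g_W + h_{Z_i}$ with $h_{Z_i}$ a smooth $\QFB$-metric on the manifold with fibred corners $Z_i$ of depth at most $k$, and the change of variable $t = v_i^{-1}$ identifies this metric with the Cartesian product
$$
\bigl(dt^2 + t^2 g_W\bigr) + h_{Z_i}
$$
on $[1/\epsilon,\infty)_t \times W \times Z_i$. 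The first factor is an exterior end of a metric cone, hence a scattering metric, which is complete, of infinite volume, and of bounded geometry. The second factor falls under the inductive hypothesis. Cartesian products preserve completeness, infinite volume (inherited from any infinite-volume factor), and bounded geometry (curvature and its derivatives are block-diagonal, and the injectivity radius of the product is bounded below by the minimum of those of the factors together with $\pi$ over the square root of any common upper bound for the sectional curvatures).

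Globalizing, $M$ is covered by finitely many such charts together with a compact interior region on which the metric is smooth on a compact manifold, so the local estimates assemble into uniform global bounds on curvature and injectivity radius. Completeness follows from the observation that $dv_i/v_i^2$ has unit length in $g_{\QFB}$ with $\int_0^\epsilon dv_i/v_i^2 = \infty$, so the radial distance to each boundary hypersurface is infinite and every closed metric ball is a compact subset of $M\setminus \partial M$; infinite volume similarly comes from $\int_0^\epsilon v_i^{-(\dim S_i + 2)}\,dv_i = \infty$. The main technical obstacle is the corner geometry: at a point of $H_{i_1}\cap\cdots\cap H_{i_j}$ with $j\ge 2$, the fibre $Z_{i_1}$ above the chosen hypersurface itself carries a fibred boundary, so the product decomposition above is only a first-order picture and must be iterated by a secondary induction on the relative depth of the stratum through the point, applying Lemma~\ref{mwfc.1c} and the compatibility condition of Definition~\ref{mwfc.1b} at each step to arrive at a genuine local product of a scattering factor and a lower-depth $\QFB$-factor to which the inductive hypothesis applies.
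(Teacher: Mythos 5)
There is a genuine gap, and it sits exactly where the paper has to do real work: the injectivity radius. First, a structural mix-up: you take $H_i$ of relative depth $1$, which is a \emph{maximal} boundary hypersurface, so its fibres are closed manifolds and its base $S_i$ is the manifold with (fibred) corners; the local form you write, with $h_{Z_i}$ a $\QFB$-metric on a depth-$\le k$ fibre and a cone over $W\subset S_i$, is the model \eqref{mwfc.9} at a hypersurface of \emph{maximal relative depth} $k+1$ (minimal in the partial order), which is what the paper uses in Proposition~\ref{mwfc.8}. Near a maximal hypersurface the terms $dv_j^2/v_j^4$ with $H_j<H_i$ involve $x_i$, so the metric does not split as $v_i^{-4}dv_i^2+v_i^{-2}g_W+h_{Z_i}$. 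Second, and more seriously, even at the correct hypersurfaces the model \eqref{mwfc.2f} is a product metric only on a \emph{non-product} region: as spelled out at the end of the proof of Proposition~\ref{mwfc.8}, the neighborhood corresponds to $\{(c,v,z)\;:\; v_2(z)>c/\epsilon\}\subset[0,\epsilon)\times\mathcal{W}\times Z_1$, so the cone variable is coupled to the fibre variable. Your promised ``secondary induction \ldots to arrive at a genuine local product'' cannot deliver that, because a genuine product of complete factors is simply not the structure of $\QFB$-geometry at corners. Completeness, infinite volume and curvature bounds survive this (they are local or monotone, and your integral computations for completeness and volume are fine), but the injectivity radius does not: it is a global quantity, and ``local estimates assemble into uniform global bounds on \ldots injectivity radius'' is precisely the step that requires an argument you do not supply. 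Your opening claim that positivity of the injectivity radius transfers between quasi-isometric metrics with mutually bounded derivatives is also true but not free (it is essentially a Cheeger--Gromov--Taylor/volume-comparison statement, or \cite[Corollary~4.20]{ALN04}).

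The paper's proof avoids both problems by regarding a $\QFB$-metric as a metric with a Lie structure at infinity: completeness, infinite volume, and bounds on the curvature and all its covariant derivatives come directly from \cite{ALN04}, and \cite[Corollary~4.20]{ALN04} reduces positivity of the injectivity radius to exhibiting a \emph{single} metric, for the fixed Lie algebra of $\QFB$-vector fields, with positive injectivity radius. That example is then constructed by induction on the depth, using Lemma~\ref{mwfc.1c} to impose the model \eqref{mwfc.9} near each hypersurface of maximal relative depth (closed base, fibres of depth $\le k$ with positive injectivity radius by the inductive hypothesis) and extending arbitrarily over the remaining, lower-depth region. To repair your hands-on route you would need either this ``one example suffices'' mechanism or a volume-comparison argument (uniform curvature bound plus a uniform lower bound on volumes of unit balls) to pass from your local models to a global injectivity radius bound; as written, the proof has a gap.
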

\begin{proof}
A $\QFB$-metric is a particular example of a metric with Lie structure at infinity in the sense of \cite{ALN04}.  As such, it is complete and of infinite volume by \cite[Proposition~4.1 and Corollary~4.9]{ALN04}.  By \cite[Corollary~4.3]{ALN04}, its curvature is bounded, as well as all of its covariant derivatives.  By \cite[Corollary~4.20]{ALN04} and Remark~\ref{mwfc.2g}, for a fixed Lie algebra of $\QFB$-vector fields, it suffices to find one example of a $\QFB$-metric with positive injectivity radius to conclude that all $\QFB$-metrics have positive injectivity radius.  Now, it is well-known that closed Riemannian manifolds have a positive injectivity radius.  Thus, proceeding by induction on the depth, we can assume that $\QFB$-metrics on a manifold with fibred corners of depth $k$ have a positive injectivity radius.  On a manifold with fibred corners $(M,\phi)$ of depth $k+1$, let $H_1$ be one of the boundary hypersurfaces of relative depth $k+1$.  Using a tubular neighborhood $\mathcal{N}_1\cong H_1\times [0,\epsilon)_{x_1}$ of $H_1$ as in Lemma~\ref{mwfc.1c}, we can consider a $\QFB$-metric as in \eqref{mwfc.9}, this time however with $h$ restricting to a $\QFB$-metric on each fibre.  In particular, each fibre has positive injectivity radius by our induction hypothesis.    Clearly then, by the discussion at the end of the proof of Proposition~\ref{mwfc.8},  the points contained in the smaller tubular neighborhood $\mathcal{N}_1\cong H_1\times [0,\frac{\epsilon}{2})_{x_1}$ have positive injectivity radii uniformly bounded from below by a positive constant.  Considering similar metrics near each boundary hypersurface of relative depth $k+1$, we can then extend in an arbitrary way to obtain a global $\QFB$ metric on $(M,\phi)$, which, thanks to our induction hypothesis, will have positive injectivity radius.  Hence, by our earlier observation, all $\QFB$-metrics have positive injectivity radius.  Combined with the control on the curvature and on its derivatives mentioned previously, this shows that all $\QFB$-metrics have bounded geometry.
\end{proof}

For $\QAC$-metrics, we also have another important property, namely the Sobolev inequality.

\begin{lemma}[Sobolev inequality]
Given a $\QAC$-metric $g$ on a manifold with fibred corners $M$ of dimension $m$, there exists a constant $C>0$ such that
\begin{equation}
   \left( \int_{M\setminus \pa M} |u|^{\frac{2m}{m-2}} d\vol(g)\right)^{\frac{m-2}{m}}\le C \int |\nabla u|^2_{g} d\vol(g)    \quad \forall u\in \CI_c(M\setminus \pa M).
\label{ma.4b}\end{equation}
\label{si.1}\end{lemma}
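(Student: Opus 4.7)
The plan is to deduce the Sobolev inequality from an on-diagonal heat kernel upper bound via the classical equivalence theorem of Varopoulos. Recall that for a complete Riemannian manifold of dimension $m$, the Sobolev inequality of the stated form is equivalent to the bound $p_t(x,x)\le C t^{-m/2}$ holding uniformly in $x\in M\setminus \pa M$ and $t>0$ for the heat kernel of the Laplace--Beltrami operator. Hence it suffices to establish such a heat kernel upper bound for $\QAC$-metrics.

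Such a bound, together with full Gaussian upper and lower bounds, is precisely what the second author and Mazzeo prove in \cite{DM2014} for the class of $\QAC$-metrics. Their strategy, which I would invoke here, is to apply the Grigor'yan--Saloff-Coste characterization: the conjunction of a global volume doubling property and a scale-invariant $L^2$ Poincar\'e inequality on balls is equivalent to the parabolic Harnack inequality, which in turn yields two-sided Gaussian estimates on the heat kernel. Both hypotheses are verified in \cite{DM2014} for $\QAC$-metrics on all scales, using the inductive local product description of the metric near each boundary hypersurface obtained in the proof of Proposition~\ref{mwfc.8}, together with the bounded geometry given by Proposition~\ref{bg.1}. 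The critical exponent is $m$ because $\QAC$-metrics enjoy maximal volume growth $\Vol(B_r(x))\sim r^m$ at infinity, which is exactly what forces the on-diagonal decay $t^{-m/2}$.

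The main technical obstacle, and the content that one genuinely borrows from \cite{DM2014}, is the verification of the scale-invariant Poincar\'e inequality uniformly across all scales and at every boundary face of the compactification. This is proved by induction on the depth of $(M,\phi)$: for depth one the metric is asymptotically conical and the statement is standard, while the inductive step uses the fibred-corners product model near a maximal boundary hypersurface $H_i$ described in \eqref{mwfc.9} to reduce Poincar\'e balls to products of balls in the cone base with $\QAC$-balls of lower depth in the fibres.

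Granting the heat kernel upper bound from \cite{DM2014}, Varopoulos' theorem then yields the inequality $\|u\|_{L^{2m/(m-2)}}\le C \|\nabla u\|_{L^2}$ for all $u\in \CI_c(M\setminus \pa M)$, with constant $C$ depending only on the $\QAC$-structure and the dimension. Since this deduction requires no further structure beyond completeness and the scale-invariant heat kernel bound, Proposition~\ref{bg.1} ensures the hypotheses of Varopoulos' theorem are met, and the lemma follows.
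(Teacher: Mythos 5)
Your proposal is correct and follows essentially the same route as the paper: the paper's proof simply cites the Gaussian heat kernel bounds for $\QAC$-metrics from \cite{DM2014} (obtained there via the Grigor'yan--Saloff-Coste machinery of volume doubling plus scale-invariant Poincar\'e inequalities) and then invokes the classical equivalence between the on-diagonal bound $p_t(x,x)\le Ct^{-m/2}$ and the Sobolev inequality, exactly as you do via Varopoulos. The extra detail you supply about how \cite{DM2014} verifies the hypotheses is consistent with, but not needed for, the argument.
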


\begin{proof}
By \cite[(3.20) and (4.16)]{DM2014}, the heat kernel of $g$ satisfies a Gaussian bound, which is well-known to be equivalent to the existence of a constant $C>0$ such that \eqref{ma.4b} holds; see for instance \cite{Grigoryan}.
\end{proof}

Given an asymptotically conical metric, one can define corresponding H\"older spaces.  However, in some situations, see for instance \cite{CH2013,CMR2015}, it is more convenient to work with the weighted H\"older spaces associated to a conformally related metric, namely a b-metric in the sense of Melrose \cite{MelroseAPS}.  The same phenomenon arises for $\QAC$-metrics, since the H\"older spaces introduced and used in \cite{Joyce,DM2014} are really those associated to a metric conformal to a $\QAC$-metric.

\begin{definition}
Let $(M,\phi)$ be a $\QAC$-manifold with fibred corners and let $x_{\max}$ be the product of the boundary defining functions associated to all maximal boundary hypersurfaces of $M$.  A smooth quasi b-metric on $M$ ($\QCyl$-metric for short) is a metric $g_{\QCyl}$ of the form
$$
        g_{\QCyl}= x_{\max}^2 g_{\QAC}
$$
for some smooth $\QAC$-metric $g_{\QAC}$.
\label{mwfc.12}\end{definition}
As for a $\QAC$-metric, the space
\begin{equation}
  \cV_{\QCyl}(M)=\{ \xi \in \CI(M;TM) \; | \; \sup_{M\setminus \pa M} g_{\QCyl}(\xi,\xi)< \infty\}
\label{mwfc.13}\end{equation}
of smooth vector fields on $M$ uniformly bounded  with respect to $g_{\QCyl}$ is in fact a Lie algebra.  Indeed, we see from Definition~\ref{mwfc.2} and \eqref{mwfc.5a} that this space can alternatively be defined   as the space of  $b$-vector fields $\xi$ such that for each $i$,
\begin{itemize}
\item $\left. \xi \right|_{H_i}$ is tangent to the fibres of $\phi_i$ if $H_i$ is not maximal;
\item $\xi v_i\in \frac{v_i^2}{x_{\max}}\CI(M)$,
\end{itemize}
which is clearly closed under the Lie bracket.  In particular, the Lie algebra $\cV_{\QCyl}(M)$ does not depend on the choice of the $\QCyl$-metric for a fixed $\QAC$-manifold with fibred corners and for a fixed choice of boundary defining functions.  Looking at the corresponding universal enveloping algebra over $\CI(M)$, one can then define the space $\Diff^k_{\QCyl}(M)$ of $\QCyl$-differential operators of order $k$ to be the space of differential operators generated by $\CI(M)$ and products of up to $k$ elements of $\cV_{\QCyl}(M)$.  In particular, we obtain the following analog of Proposition~\ref{bg.1}.
\begin{proposition}
Each $\QCyl$-metric is a complete metric of infinite volume with bounded geometry.  
\label{bg.2}\end{proposition}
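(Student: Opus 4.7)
The plan is to follow the argument of Proposition~\ref{bg.1} almost verbatim. First, I would show that $\cV_{\QCyl}(M)$ defines a Lie structure at infinity on $M$ in the sense of \cite{ALN04}. That $\cV_{\QCyl}(M)$ is a Lie subalgebra of $\cV_b(M)$ already follows from the alternative characterization given right after Definition~\ref{mwfc.12}. To obtain the required local freeness of rank $\dim M$ over $\CI(M)$, I would exhibit explicit local generators analogous to \eqref{mwfc.2c}: in the coordinates \eqref{mwfc.2b} near a corner $H_1\cap\cdots\cap H_{\ell}$, a $\QCyl$-vector field is spanned over $\CI(M)$ by essentially the same vector fields as in the $\QFB$ case, but rescaled by $x_{\max}^{-1}$ in the scattering directions corresponding to the maximal boundary hypersurfaces meeting the corner. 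By Serre--Swan, this yields a smooth vector bundle ${}^{\QCyl}TM\to M$ whose sections are precisely $\cV_{\QCyl}(M)$, and any Euclidean metric on this bundle induces a smooth $\QCyl$-metric.

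Once $\cV_{\QCyl}(M)$ is recognized as a Lie structure at infinity, \cite[Proposition~4.1 and Corollary~4.9]{ALN04} immediately yield completeness and infinite volume, while \cite[Corollary~4.3]{ALN04} yields bounded curvature together with all of its covariant derivatives. For the positive injectivity radius, I would invoke \cite[Corollary~4.20]{ALN04}, which (as in Remark~\ref{mwfc.2g}) reduces the question to exhibiting a single $\QCyl$-metric with positive injectivity radius, and then construct such a metric by induction on the depth of $(M,\phi)$. In depth zero the manifold is closed and the claim is trivial; for the inductive step, Lemma~\ref{mwfc.1c} furnishes a tubular neighborhood of each boundary hypersurface $H_i$ in which to write an explicit local model. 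If $H_i$ is maximal, the model is the cylindrical metric $\tfrac{dx_i^2}{x_i^2}+g_{H_i}$ with $g_{H_i}$ a $\QCyl$-metric on $H_i$, which has strictly smaller depth so the induction hypothesis applies. If $H_i$ is not maximal, with fibration $\phi_i:H_i\to S_i$, then the appropriate local model is the conformal rescaling by $x_{\max}^2$ of the $\QAC$-model \eqref{mwfc.9}, whose fibrewise restriction is again a $\QCyl$-metric of strictly smaller depth, hence of positive injectivity radius by induction. In either case the resulting model has injectivity radius bounded below near $H_i$, and patching these models together yields a global $\QCyl$-metric with positive injectivity radius.

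The main technical point, and essentially the only step that requires any care, is verifying that the local models written down really do fit the $\QCyl$-structure: this amounts to unpacking the alternative characterization of $\cV_{\QCyl}(M)$ in the chosen coordinates and checking that the local basis \eqref{mwfc.2e} transforms correctly under the conformal rescaling by $x_{\max}^2$. This is bookkeeping rather than a genuine obstacle, and once it is done the same chain of deductions from \cite{ALN04} that established Proposition~\ref{bg.1} delivers the completeness, infinite volume, and bounded geometry asserted in the statement.
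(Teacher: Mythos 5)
Your proposal is correct and takes essentially the same route as the paper: view $\cV_{\QCyl}(M)$ as defining a Lie structure at infinity, invoke \cite{ALN04} for completeness, infinite volume and bounded curvature with all covariant derivatives, and obtain the positive injectivity radius by reducing (via \cite[Corollary~4.20]{ALN04}) to one explicit model metric constructed by induction on the depth. The only minor imprecision is in your model near a maximal hypersurface: the cross-sectional metric $g_{H_i}$ there is a $\QFB$-metric for the induced fibred-corners structure on $H_i$ (so its positive injectivity radius comes from Proposition~\ref{bg.1}) rather than literally a $\QCyl$-metric, but this does not affect the argument.
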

\begin{proof}
A $\QCyl$-metric is a particular example of a metric with Lie structure at infinity in the sense of \cite{ALN04}, so we can proceed as in the proof of Proposition~\ref{bg.1} to conclude that it  is complete of infinite volume with bounded curvature, as well as all of its covariant derivatives.  To show that it has positive injectivity radius, we can proceed by induction on the depth as in the proof of Proposition~\ref{bg.1}.  
\end{proof}

There are various functional spaces that we can associate to $\QAC$-metrics and $\QCyl$-metrics.  We will be particularly interested in Hölder spaces.  Recall that to a given complete metric $g$ on $M\setminus \pa M$, a Euclidean vector bundle $E\to M\setminus \pa M$ with a compatible choice of connection  and $k\in \bbN_0$, one can associate the space $\cC^k_g(M\setminus \pa M;E)$ comprising  continuous sections $f: M\setminus \pa M\to E$ such that
\begin{equation}
   \nabla^j f\in \cC^0(M\setminus \pa M; T_j^0(M\setminus \pa M)\otimes E)\quad \mbox{and}\quad \sup_{p\in M\setminus \pa M} |\nabla^j f(p)|_g<\infty, \quad \forall j\in \{0,\ldots,k\},
\label{mwfc.14}\end{equation}
where $\nabla$ denotes the covariant derivative induced by the Levi-Civita connection of $g$ and the connection on $E$, $|\cdot|_g$ is the norm induced by the metric $g$ and the Euclidean structure on $E$, and
$$
T_j^0(M\setminus \pa M)= \underset{\mbox{$j$-times}}{\underbrace{T^*(M\setminus \pa M)\otimes \cdots \otimes T^*(M\setminus \pa M)}}.
$$
The space $\cC^k_g(M\setminus \pa M;E)$ is in fact a Banach space with norm given by
\begin{equation}
 \| f\|_{g,k} := \sum_{j=0}^k \sup_{p\in M\setminus \pa M} |\nabla^jf(p)|_g.
\label{mwfc.15}\end{equation}
Taking the intersection over all $k$, we also get the Fréchet space
$$
    \CI_g(M\setminus \pa M;E)= \bigcap_{k\in \bbN_0} \cC^k_g(M\setminus \pa M;E).
$$
For $\alpha\in (0,1]$ and $k\in \bbN_0$, we can also consider the H\"older space $\cC^{k,\alpha}_g(M\setminus \pa M;E)$ of functions $f\in \cC^k_g(M\setminus \pa M;E)$ such that
$$
  [\nabla^k f]_{g,\alpha}:= \sup \left\{  \frac{\left| P_{\gamma}(\nabla^k f (\gamma(0)))- \nabla^kf(\gamma(1)) \right|}{\ell(\gamma)^{\alpha}} \quad  | \quad \gamma\in \CI([0,1]; M\setminus \pa M), \; \gamma(0)\ne \gamma(1)\right\}<\infty,
$$
where $P_{\gamma}: \left. T^0_k(M\setminus \pa M)\otimes E\right|_{\gamma(0)}\to \left. T^0_k(M\setminus \pa M)\otimes E\right|_{\gamma(1)}$ is the parallel transport along $\gamma$ and $\ell(\gamma)$ is the length of $\gamma$ with respect to the metric $g$.  This is also a Banach space with norm given by
\begin{equation}
   \| f\|_{g,k,\alpha}:= \| f\|_{g,k} + [\nabla^k f]_{g,\alpha}.
\label{mwfc.16}\end{equation}
For $\rho\in \CI(M\setminus \pa M)$ a positive function, we can also consider the weighted version
\begin{equation}
  \rho\cC^{k,\alpha}_{g}(M\setminus \pa M;E) := \{   f \; | \; \frac{f}{\rho} \in \cC^{k,\alpha}_g(M\setminus \pa M;E)\} \quad \mbox{with norm} \quad \|f\|_{\rho\cC^{k,\alpha}_g}:= \left\| \frac{f}{\rho} \right\|_{g,k,\alpha}.
   \label{mwfc.17}\end{equation}

By choosing $g=g_{\QAC}$ to be a smooth $\QAC$-metric, we get in particular the Banach spaces $$
\cC^k_{\QAC}(M\setminus \pa M;E) \quad  \mbox{and} \quad \cC^{k,\alpha}_{\QAC}(M\setminus\pa M;E).
$$  Notice however that they do not  correspond to the Hölder spaces of \cite[\S~9]{Joyce} and \cite{DM2014}.  Indeed, \cite[\S~9]{Joyce} and \cite{DM2014} consider instead weighted versions of  the Banach spaces  $\cC^k_{\QCyl}(M\setminus \pa M;E)$ and $\cC^{k,\alpha}_{\QCyl}(M\setminus\pa M;E)$  obtained by choosing $g=g_{\QCyl}= x_{\max}^2g_{\QAC}$ to be a smooth $\QCyl$-metric on $M$.  The reason for this choice is that one can obtain nicer mapping properties for elliptic $\QAC$-operators when acting on weighted $\QCyl$-Hölder spaces.  Results stated in terms of $\QCyl$-Hölder spaces are also more precise, since, as one can easily check, there are continuous strict inclusions
\begin{equation}
       \cC^k_{\QCyl}(M\setminus \pa M;E) \subset \cC^k_{\QAC}(M\setminus \pa M;E), \quad \cC^{k,\alpha}_{\QCyl}(M\setminus \pa M;E) \subset \cC^{k,\alpha}_{\QAC}(M\setminus \pa M;E).
\label{mwfc.18}\end{equation}
As in \cite[\S~9]{Joyce} and \cite{DM2014}, we will mostly deal with $\QCyl$-Hölder spaces and their weighted counterparts.  Still, we will also have to deal with $\QAC$-Hölder spaces. The following lemma will be helpful in these cases in bringing the discussion back to $\QCyl$-Hölder spaces.

\begin{lemma}
For $0<\delta<1$, there is a continuous inclusion $x_{\max}^{\delta}\cC^{0,1}_{\QAC}(M\setminus \pa M;E)\subset \cC^{0,\alpha}_{\QCyl}(M\setminus \pa M;E)$ for $\alpha\le \delta$.
\label{mwfc.19}\end{lemma}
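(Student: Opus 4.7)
My approach rests on the observation that the inclusion quantifies the conformal relation $g_{\QCyl}=x_{\max}^2 g_{\QAC}$: near the maximal faces where $x_{\max}\to 0$, $g_{\QCyl}$-distances can be dramatically shorter than $g_{\QAC}$-distances, so a generic Lipschitz-$\QAC$ section fails to be H\"older-$\QCyl$, and the weight $x_{\max}^\delta$ is precisely the factor that absorbs this discrepancy once $\delta\ge\alpha$. Writing $f=x_{\max}^\delta h$ with $h\in\cC^{0,1}_{\QAC}(M\setminus\pa M;E)$, the sup-norm bound $\|f\|_\infty\le\|h\|_\infty$ is immediate from $x_{\max}\le 1$, so all the work is in the H\"older seminorm.

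First I would establish two geometric comparisons. From the local basis \eqref{mwfc.2e}, and from the fact that distinct maximal boundary hypersurfaces of a $\QAC$-manifold with fibred corners must be disjoint (otherwise they would have to be comparable in the partial order, contradicting maximality), $x_{\max}$ coincides locally with some $v_i$, so $|dx_{\max}|_{g_{\QAC}}\le Cx_{\max}^2$, equivalently $|d\log x_{\max}|_{g_{\QCyl}}\le C$. Integrating along a short $g_{\QCyl}$-geodesic (which exists by Proposition~\ref{bg.2}) yields $|\log(x_{\max}(p)/x_{\max}(q))|\le Cd_{\QCyl}(p,q)$, so once $d_{\QCyl}(p,q)<r_0$ for some small fixed $r_0>0$, the values of $x_{\max}$ at $p$, at $q$, and along the geodesic are mutually comparable to $x_{\max}(p)$. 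On such a short geodesic $g_{\QCyl}\asymp x_{\max}(p)^2 g_{\QAC}$, which gives both
\[
d_{\QAC}(p,q)\le \frac{C\, d_{\QCyl}(p,q)}{x_{\max}(p)}
\qquad\text{and}\qquad
|x_{\max}(p)^\delta-x_{\max}(q)^\delta|\le C x_{\max}(p)^\delta d_{\QCyl}(p,q),
\]
the latter by the mean value theorem applied to $t\mapsto t^\delta$.

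Next I would use the telescoping bound (inserting the parallel transport $P_\gamma$ in $E$)
\[
|f(p)-f(q)|_E\le x_{\max}(p)^\delta |P_\gamma h(p)-h(q)|_E + |h(q)|_E\,|x_{\max}(p)^\delta-x_{\max}(q)^\delta|
\]
and split on $d_{\QCyl}(p,q)$. For $d_{\QCyl}(p,q)\ge r_0$ the ratio is trivially bounded by $2\|h\|_\infty/r_0^\alpha$. For $d_{\QCyl}(p,q)<r_0$, the second term contributes at most $C\|h\|_\infty x_{\max}(p)^\delta d_{\QCyl}(p,q)^{1-\alpha}\le C\|h\|_\infty$. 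For the first term I would choose the better of $|P_\gamma h(p)-h(q)|\le 2\|h\|_\infty$ or $\le[h]_{\QAC,1}d_{\QAC}(p,q)$, according to whether $d_{\QCyl}(p,q)$ is smaller or larger than $x_{\max}(p)$: in the first subcase the Lipschitz bound gives ratio $\le Cx_{\max}(p)^{\delta-1}d_{\QCyl}(p,q)^{1-\alpha}\le Cx_{\max}(p)^{\delta-\alpha}$, while in the second the sup-norm bound gives $\le Cx_{\max}(p)^\delta d_{\QCyl}(p,q)^{-\alpha}\le Cx_{\max}(p)^{\delta-\alpha}$. Since $\delta\ge\alpha$ and $x_{\max}\le 1$, both are bounded by $1$, completing the estimate.

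The main obstacle is the apparent blow-up of $x_{\max}(p)^{\delta-1}$ that arises from a naive application of the $\QAC$-Lipschitz bound; the dichotomy $d_{\QCyl}(p,q)$ versus $x_{\max}(p)$ is exactly what pairs each regime with the estimate keeping the weight in the allowed range $x_{\max}^{\delta-\alpha}\le 1$. The passage from scalar functions to sections of $E$ is routine: one replaces $h(p)-h(q)$ by $P_\gamma h(p)-h(q)$ for a nearly minimizing curve $\gamma$, and works in a local frame over $g_{\QCyl}$-balls of bounded size, where the connection $1$-form is uniformly bounded by the bounded geometry of Proposition~\ref{bg.2}.
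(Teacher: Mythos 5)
Your argument is sound in substance and reaches the result by a route organized differently from the paper's. Both proofs rest on the same geometric inputs: that $dx_{\max}/x_{\max}^2$ has bounded $g_{\QAC}$-norm (equivalently $\log x_{\max}$ is Lipschitz for $g_{\QCyl}$, so $x_{\max}$ varies by a bounded factor along any path of small $\QCyl$-length), and the resulting comparison $\ell_{\QAC}(\gamma)\le C\,\ell_{\QCyl}(\gamma)/x_{\max}(\gamma(0))$ for such paths. The paper works with $f/x_{\max}^{\delta}$ and controls the weight difference by a H\"older-inequality interpolation, $|x_{\max}(\gamma(0))^{-\delta}-x_{\max}(\gamma(1))^{-\delta}|\le \delta K\,\ell_{\QAC}(\gamma)^{\delta}\ell_{\QCyl}(\gamma)^{1-\delta}$, then absorbs everything using $\ell_{\QCyl}\ge x_{\max}\ell_{\QAC}$ and the comparability of $x_{\max}$ along short paths; the factor $\min\{\ell_{\QAC}(\gamma),1\}$ there plays exactly the role of your ``better of the sup bound or the Lipschitz bound''. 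You instead telescope $f=x_{\max}^{\delta}h$ directly, bound the weight difference by the mean value theorem, and make the interpolation explicit through a dichotomy against $x_{\max}$, avoiding the H\"older-inequality step; the exponent bookkeeping ($x_{\max}^{\delta-\alpha}\le 1$ in both regimes, and $x_{\max}^{\delta}\ell^{1-\alpha}\le C$ for the weight term, using $\alpha\le\delta$) is correct, as is your observation that distinct maximal hypersurfaces are disjoint, so that $x_{\max}$ behaves locally like a single $v_i$.

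One point needs repair. The H\"older seminorms here are defined as suprema over \emph{all} smooth paths $\gamma$, with parallel transport along that same path; so the quantity to bound is $|P_{\gamma}f(\gamma(0))-f(\gamma(1))|/\ell_{\QCyl}(\gamma)^{\alpha}$ for arbitrary $\gamma$, and the hypothesis on $h$ only gives $|P_{\gamma}h(\gamma(0))-h(\gamma(1))|\le \|h\|_{g_{\QAC},0,1}\,\ell_{\QAC}(\gamma)$ for that same $\gamma$ --- not a bound in terms of $d_{\QAC}(p,q)$ with transport along a $\QCyl$-geodesic. Your closing suggestion to estimate ``$P_\gamma h(p)-h(q)$ for a nearly minimizing curve'' would force a comparison of parallel transports along two different curves, which is not controlled without curvature bounds on the connection of $E$ (none are assumed, and the bounded geometry of Proposition~\ref{bg.2} concerns $(M\setminus\pa M,g_{\QCyl})$, not $E$). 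The fix is immediate and simplifies your write-up: run every estimate along the given path $\gamma$ itself. For $\ell_{\QCyl}(\gamma)\ge r_0$ use the sup norm; for $\ell_{\QCyl}(\gamma)<r_0$ the bound $|d\log x_{\max}|_{g_{\QCyl}}\le C$ gives comparability of $x_{\max}$ along $\gamma$, hence $\ell_{\QAC}(\gamma)\le C\ell_{\QCyl}(\gamma)/x_{\max}(\gamma(0))$ together with the mean value estimate for $|x_{\max}(\gamma(0))^{\delta}-x_{\max}(\gamma(1))^{\delta}|$, and the dichotomy should be taken between $\ell_{\QCyl}(\gamma)\le x_{\max}(\gamma(0))$ and $\ell_{\QCyl}(\gamma)>x_{\max}(\gamma(0))$ rather than between $d_{\QCyl}(p,q)$ and $x_{\max}(p)$. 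With these replacements no geodesics, distances, local frames or transport comparisons are needed, and the proof is complete.
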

\begin{proof}
Let $g_{\QAC}$ be a choice of smooth $\QAC$-metric  and consider the conformally related $\QCyl$-metric $g_{\QCyl}:=x^2_{\max}g_{\QAC}$.  Given $\gamma\in \CI([0,1];M\setminus \pa M)$, let $\ell_{\QAC}(\gamma)$ and $\ell_{\QCyl}(\gamma)$ denote the length of $\gamma$ with respect to the metrics $g_{\QAC}$ and $g_{\QCyl}$.  Then using the Hölder inequality with $p=\frac1{\delta}$ and $q=\frac1{1-\delta}$, observe that 
\begin{equation}
\begin{aligned}
    \left| \frac1{x_{\max}(\gamma(0))^{\delta}}-\frac{1}{x_{\max}(\gamma(1))^{\delta}} \right| & = \delta\left| \int_{\gamma}\frac{ dx_{\max}}{x_{\max}^{1+\delta}} \right|= \delta\left| \int_{\gamma}\frac{ dx_{\max}}{x_{\max}^{2\delta}x_{\max}^{1-\delta}} \right|  \le \delta \left| \int_{\gamma} \frac{dx_{\max}}{x_{\max}^2} \right|^{\delta} \left| \int_{\gamma} \frac{dx_{\max}}{x_{\max}} \right|^{1-\delta} \\
    &\le\delta (K \ell_{\QAC}(\gamma))^{\delta}(K\ell_{\QCyl}(\gamma))^{1-\delta} \\
    &=  \delta K \ell_{\QAC}(\gamma)^{\delta}\ell_{\QCyl}(\gamma)^{1-\delta} 
    \end{aligned}   
  \label{ineq.1}\end{equation}
for some constant $K>0$ depending on $g_{\QAC}$ and $g_{\QCyl}$, but not on the path $\gamma$.  
 
Now, for $f\in x_{\max}^{\delta}\cC^{0,1}_{\QAC}(M\setminus \pa M;E)$,  consider the positive constant $C:= \left\| \frac{f}{x_{\max}^{\delta}}\right\|_{g_{\QAC},0,1}$.  Then, for any path $\gamma\in \CI([0,1];M\setminus \pa M)$, we have that
\begin{equation}
\begin{aligned}
     2 C\min \{\ell_{\QAC}(\gamma),1\} &\ge \left| \frac{P_{\gamma}(f(\gamma(0)))}{x_{\max}(\gamma(0))^{\delta}}- \frac{f(\gamma(1))}{x_{\max}(\gamma(1))^{\delta}} \right| \\
     &= \left| \frac{P_{\gamma}(f(\gamma(0)))}{x_{\max}(\gamma(0))^{\delta}}-\frac{f(\gamma(1))}{x_{\max}(\gamma(0))^{\delta}}+ \frac{f(\gamma(1))}{x_{\max}(\gamma(0))^{\delta}} -  \frac{f(\gamma(1))}{x_{\max}(\gamma(1))^{\delta}} \right| \\
     &\ge  \frac{|P_{\gamma}(f(\gamma(0)))-f(\gamma(1))|}{x_{\max}(\gamma(0))^{\delta}} - \|f\|_{g_{\QAC},0} \left| \frac1{x_{\max}(\gamma(0))^{\delta}}-\frac{1}{x_{\max}(\gamma(1))^{\delta}} \right| \\
     &\ge \frac{|P_{\gamma}(f(\gamma(0)))-f(\gamma(1))|}{x_{\max}(\gamma(0))^{\delta}} -C K\delta \ell_{\QAC}(\gamma)^{\delta}\ell_{\QCyl}(\gamma)^{1-\delta},
 \end{aligned}     
 \label{ineq.2}\end{equation} 
where we have used \eqref{ineq.1} and   the inequality $\|f\|_{g_{\QAC},0}\le  \left\| \frac{f}{x_{\max}^{\delta}}\right\|_{g_{\QAC},0,1}=C$ in the last step.  Thus, we infer from \eqref{ineq.2} that for all $\gamma\in \CI([0,1];M\setminus \pa M)$, 
\begin{equation}
    |P_{\gamma}(f(\gamma(0)))-f(\gamma(1))|\le 2C x_{\max}(\gamma(0))^{\delta}\min \{\ell_{\QAC}(\gamma),1\}  +C K\delta x_{\max}(\gamma(0))^{\delta} \ell_{\QAC}(\gamma)^{\delta}\ell_{\QCyl}(\gamma)^{1-\delta}.
\label{ineq.4}\end{equation}
Thus, if $\ell_{\QCyl}(\gamma)\ge 1$, then
\begin{equation}
\frac{|P_{\gamma}(f(\gamma(0)))-f(\gamma(1))|}{\ell_{\QCyl}(\gamma)^{\alpha}}\le |P_{\gamma}(f(\gamma(0)))-f(\gamma(1))| \le 2\|f\|_{g_{\QAC},0}\le 2\left\|\frac{f}{x_{\max}^{\delta}}\right\|_{g_{\QAC},0,1}=2C\le 2C +CK\delta.
\label{ineq.3}\end{equation}
If instead $\ell_{\QCyl}(\gamma)\le 1$,  let $t_{\min}\in[0,1]$ be such that 
$$
     x_{\max}(\gamma(t_{\min}))= \min\{x_{\max}(\gamma(t))\; ; \; t\in [0,1]\},
$$ 
so that 
$$
\left| \log \left( \frac{x_{\max}(\gamma(t_{\min}))}{x_{\max}(\gamma(0))} \right) \right| \le \left| \int_0^{t_{\min}} \frac{dx_{\max}}{x_{\max}}\circ \gamma \right| \le K \ell_{\QCyl}(\gamma)\le K \quad \Longrightarrow \quad \frac{x_{max}(\gamma(0))}{x_{\max}(\gamma(t_{\min}))}\le e^{\ell_{K\QCyl}(\gamma)}\le e^K,
$$
where $K>0$ is the constant occurring in \eqref{ineq.1}.
Then \eqref{ineq.4} yields
\begin{equation}
\begin{aligned}
\frac{|P_{\gamma}(f(\gamma(0)))-f(\gamma(1))|}{\ell_{\QCyl}(\gamma)^{\alpha}} &\le \frac{2Cx_{\max}(\gamma(0))^{\delta} \min\{\ell_{\QAC}(\gamma),1\}}{\ell_{\QCyl}(\gamma)^{\alpha}} +
   \frac{CK\delta x_{\max}(\gamma(0))^{\delta} \ell_{\QAC}(\gamma)^{\delta}\ell_{\QCyl}(\gamma)^{1-\delta}}{\ell_{\QCyl}(\gamma)^{\alpha}} \\
     & \le  \frac{2Cx_{\max}(\gamma(0))^{\delta} \min\{\ell_{\QAC}(\gamma),1\}}{(x_{\max}(\gamma(t_{\min})))^{\alpha}\ell_{\QAC}(\gamma)^{\alpha}} 
     +\frac{CK\delta x_{\max}(\gamma(0))^{\delta} \ell_{\QAC}(\gamma)^{\delta}\ell_{\QCyl}(\gamma)^{1-\alpha}}{(x_{\max}(\gamma(t_{\min})))^{\delta}\ell_{\QAC}(\gamma)^{\delta}}  \\
     &\le 2Ce^{K\alpha} +CK\delta \ell_{\QCyl}(\gamma)^{1-\alpha}e^{K\delta} \le (2 +K\delta)Ce^{K\delta},
\end{aligned}
\label{mwfc.20}\end{equation}
since $\alpha\le \delta$.   
Hence, combining  \eqref{ineq.3} with \eqref{mwfc.20} and  taking the supremum over $\gamma$ yields
$$
     [f]_{g_{\QCyl},0,\alpha} \le (2+K\delta)C e^{K\delta}= (2+K\delta)e^{K\delta} \left\| \frac{f}{x_{\max}^{\delta}}\right\|_{g_{\QAC},0,1} ,
$$
from which the result follows.
\end{proof}

\section{Manifolds with fibred corners coming from Sasaki-Einstein orbifolds}  \label{cr.0}

Let $Z$ be a closed orbifold of real dimension $2n+1$.  A \textbf{Ricci-flat Kähler cone metric} on the Cartesian product $C:=\bbR^+ \times Z$ is a cone metric 
$$
      g_C= dr^2+ r^2 g_Z,
$$
where $g_Z$ is some Riemannian metric on $Z$, together with a complex structure $J_C$ on $C$ such that $g_C$ is Ricci-flat and Kähler with respect to $J_C$ with Kähler form given by
$$
     \omega_C= \frac{\sqrt{-1}}{2}\pa \db r^2.
$$
In particular, when such a metric $g_C$ and complex structure $J_C$ exist, the corresponding canonical line bundle $K_C$ of $C$ is flat with respect to the Chern connection induced from $g_C$ and $J_C$.

Notice then that the metric $g_Z$ is  Sasakian and that there is a corresponding \textbf{Reeb vector field} on $Z$ given by
$$
     \xi= J_C \frac{\pa}{\pa r}.
$$ 
The orbits of the flow of this vector field induce a foliation called the \textbf{Reeb foliation}.  In the present paper, we will always assume that the Ricci-flat Kähler cone $(C,g_C,J_C)$ is \textbf{quasi-regular} in the following sense.
\begin{definition}
The Ricci-flat Kähler cone $(C,g_C,J_C)$ is \textbf{quasi-regular} if there exists a Kähler-Einstein Fano orbifold $(D,g_D)$ and a holomorphic line bundle $L$ over $D$ with Hermitian metric $h_L$ such that $L\setminus D$ is biholomorphic to $(C,J_C)$  with the properties that on $L\setminus D,$
\begin{enumerate}
\item The radial function $r$ is given by 
$$
         r=\|\cdot\|^{\frac1q}_{h_L} \quad \mbox{ on } \;L\setminus D
 $$
for some $q\in \bbN$ and $Z= r^{-1}(1)$;
 \item  The Reeb foliation corresponds to the fibres of the unit circle bundle of $(L,h_L)$, that is,
\begin{equation}
\xymatrix{
\bbS^1 \ar[r]  & Z \ar[d]^{\nu_L} \\
                       & D;
}
\label{cr.4}\end{equation}
\item The map $\nu_L$ in \eqref{cr.4} is an orbifold  Riemannian submersion from $(Z,g_Z)$ to $(D,g_D)$;
\item For some $p\in \bbN$, the line bundle $K_C^p$ has a nowhere vanishing holomorphic section $\Omega_C^p\in H^0(C;K_C^p)$ which is parallel with respect to the induced Chern connection from $K_C$ and $J_C$.  In particular, there is a non-zero constant $c_p$ such that   
\begin{equation}
    (\omega_C^{n+1})^p= c_p\Omega_C^p\wedge \overline{\Omega^p_C},
\label{cr.3}\end{equation}
where $(\omega_C^{n+1})^p$ is the tensorial product of $\omega_C^{n+1}$ with itself $p$ times seen as a section of $K_C\otimes \overline{K}_C$.
\end{enumerate}
Furthermore, when $p=1$ in the previous assertion, we say that $(C,g_C,J_C)$ is a \textbf{quasi-regular Calabi-Yau cone}.  
\label{qr.1}\end{definition}
 As a simple computation shows, the Kähler-Einstein constant of the metric $g_D$ is completely determined by the previous definition, namely
$$
    \Ric(g_D)= 2(n+1)g_D.
$$
Furthermore, the Kähler form of $g_D$ is such that 
\begin{equation}
     \sqrt{-1} \Theta_{h_L}= -2q \omega_D,
\label{qr.2}\end{equation}
where $\Theta_{h_L}$ is the curvature of $(L, h_L)$ with respect to the Chern connection.
\begin{remark}
Let us clarify that we assume that  $L$ is locally trivial in the sense that in a neighborhood of any point $p$ on $D$, there are orbifold charts $\psi: \cU\to  \bbC^n/\Gamma$ and   $\psi_L: p_{L}^{-1}(\cU)\to (\bbC^n\times \bbC)/\Gamma$ inducing a commutative diagram
\begin{equation}
\xymatrix{
   p_L^{-1}(\cU)\ar[d]^{p_L} \ar[r]^-{\psi_L} & (\bbC^n\times \bbC)/\Gamma \ar[d]^{\pr_1} \\
   \cU \ar[r]^{\psi}  &  \bbC^n/\Gamma,
}
\label{sf.1b}\end{equation}
where $\pr_1$ is the projection on the first factor.  In particular, although we do assume that $C$ and $Z$ may have orbifold singularities, not all singularities of $D$ correspond to singularities of $Z$ and $C$.  Indeed, even if $Z$ were smooth, as a space of leaves, $D$ could still be singular.  Also, notice that a point $p\in Z$ is singular if and only if all points of $\nu_L^{-1}(\nu_L(p))$ are singular.       
\label{sf.1}\end{remark} 

In fact, a simple way to construct examples of quasi-regular Calabi-Yau cone metrics is to start off with a K\"ahler-Einstein Fano orbifold $(D,g_D)$ and to apply the Calabi ansatz \cite{Calabi, LeBrun}, which yields  a Calabi-Yau cone
$(K_D\setminus D, g_C)$ with the zero section $D \subset K_D$ corresponding to the apex of the cone, where $Z$ is then the total space of the unit circle bundle of $K_D$ over $D$.  The Kähler form $\omega_C$ of $g_C$ can then be written explicitly in terms of the Kähler-Einstein metric $g_D$ as
\begin{equation}
   \omega_C= \frac{\sqrt{-1}}2 \pa \db \| \cdot \|^{\frac{2}{n+1}}_{g_D},
\label{cr.2}\end{equation}
where $\|\cdot \|_{g_D}$ is the Hermitian metric on $K_D$ induced by $g_D$ seen as a function on $K_D\setminus D$.  

To describe the natural holomorphic volume form associated to this Calabi-Yau cone, recall that the space of sections $H^0(K_D; \pi^*(K_D))$ has a tautological element $\varpi$ given by
$\varpi_p= \pi^*p $ for $p\in K_D$, where $\pi: K_D\to D$ is the canonical projection.   For any choice of local coordinates $(z_1,\ldots, z_n)$ on $D$, we have a local section $dz_1\wedge\cdots\wedge dz_n$ of $K_D$, and hence local coordinates $(z_1,\ldots,z_n,v)$ on $K_D$ with $(z_1,\ldots,z_n,v)$ corresponding to $vdz_1\wedge\cdots \wedge dz_n\in \left. K_D\right|_{(z_1,\ldots,z_n)}$.  In these local coordinates, $\varpi$ is simply given by
$$
   \varpi = v dz_1\wedge \cdots \wedge dz_n.
$$
Taking the exterior derivative of the tautological element $\varpi$ yields the canonical holomorphic volume form $\Omega_C$ of $K_D$, namely,
$$
     \Omega_C := d\varpi= \pa \varpi \in H^0(K_D; K_{K_D}).
$$
The fact that the cone metric $g_C$ is Calabi-Yau amounts to the fact  that
$$
     \omega^{n+1}_C= c\Omega_C\wedge \overline{\Omega}_C
$$
for some fixed non-zero constant $c$.

More generally, if $p_L:L\to D$ is a $q$th root of $K_D$, namely if  $L^{\otimes q}= K_D$, then the natural map
\begin{equation}
 \begin{array}{lccc} Q : &L\setminus D &\to & K_D\setminus D \\
                                               &    \sigma   & \mapsto & \sigma^{\otimes q}
                                                   \end{array}
\label{cr.3b}\end{equation}
 is a $\bbZ_q$-cover of $K_D\setminus D$, so that  pulling back $\Omega_C$ and $\omega_C$ to $L\setminus D$ endows $L\setminus D$ with an orbifold Calabi-Yau cone structure.  Conversely, given such a line bundle $L$,  we have that $\bbZ_p$, seen as the group of $p$th-roots of unity, naturally acts by isometry  on $(L\setminus D, Q^*g_C)$ via the natural $\bbS^1$-action, and the quotient, which is naturally identified with $L^p\setminus D$, naturally inherits the structure of a quasi-regular Ricci-flat Kähler cone since the $\bbZ_p$-action is trivial on $(Q^*\Omega_C)^{ p}\in H^0(L\setminus D; K_{L\setminus D}^p)$.

Now, suppose that $(C,g_C)$ is a quasi-regular Ricci-flat Kähler cone with holomorphic parallel section $\Omega_C^p\in H^0(C;K_C^p)$ as above and suppose that $X$ is a Kähler orbifold with a nowhere vanishing holomorphic section $\Omega_X^{p}\in H^0(X;K_X^p)$  such that for some compact subset $\cK\subset X$, there is a biholomorphism
\begin{equation}
X\setminus \cK\cong (\kappa,\infty)\times Z \subset C
\label{bih.1}\end{equation}
for some $\kappa\ge 0$ identifying $\Omega_X^p$ with $\Omega_C^p$.  Assume further that the singular set of $X$ has complex codimension at least $2$ and that $X$ admits a local product Kähler crepant resolution $\hX\to X$ in the sense of \cite{Joyce} such that $\Omega_X^p$ lifts to a nowhere vanishing holomorphic section $\Omega_{\hX}^p\in H^0(\hX; K^p_{\hX})$. 
In the remainder of this section, we will explain how the resolution $\hX$ can then be naturally compactified as a manifold with fibred corners.  However, before doing that, let us give some examples of such a space $X$.  
\begin{example}
We can take $X= \bbC^{n+1}/\Gamma$ for some finite subgroup $\Gamma\subset \SU(n+1)$.  It is not automatic that $X$ will admit a local product Kähler crepant resolution, but for many choices of $\Gamma$ it will  \cite[\S 6.4-6.6]{Joyce}.  For instance, if $\Gamma\subset \SU(2)\subset \SU(n+1)$, there is automatically a crepant resolution by \cite{Kronheimer1989}.  On the other hand, we can take $(C,g_C)=((\bbC^{n+1}/\Gamma)\setminus \{0\},g_{E}),$ where $g_E$ is the Euclidean metric on $\bbC^{n+1}/\Gamma$ and $\cK=\{0\}$, to obtain trivially that $X\setminus \{0\}$ is biholomorphic to $C$.   
\end{example} 

\begin{example}
Let $(D,g_D)$ be a Kähler-Einstein Fano orbifold of complex dimension $n$ and assume that $D$ admits a local product (Kähler) crepant resolution $\hD$.  This automatically implies that the total space $K_{\hD}$ of the canonical line bundle of $\hD$ is a local product (Kähler) crepant resolution of $K_D$ and that the following diagram commutes,
\begin{equation}
\xymatrix{
      K_{\hD} \ar[r]^{\beta_K} \ar[d] & K_D \ar[d] \\
      \hD \ar[r]^{\beta} & D,
}
\label{cr.1}\end{equation}
where the vertical maps are the canonical projections and the horizontal maps are the blow-down maps of the local product resolutions.  Thus, in this setting, we can take $X=K_D$ and the Calabi-Yau cone $C=K_D\setminus D$ with Kähler metric given by \eqref{cr.2}. 
\label{exa.1}\end{example}
\begin{remark}
In the previous example, we could for instance take $D=\bbC\bbP^n/\Gamma$ with $\Gamma\subset \SU(n+1)$ a finite subgroup.  In this case, the Fubini-Study metric descends to a Kähler-Einstein metric $g_D$ on $D$. On the other hand, $D$ does not always admit a local product Kähler crepant resolution, but in some cases it does, for instance in Example~\ref{ap.3} in the appendix or when $n=2$ and $\Gamma\subset \SU(3)$ is the subgroup generated by the diagonal matrix with diagonal entries given by $e^{\frac{2\pi i}3}, e^{-\frac{2\pi i}3}$ and $1$.     We could also take $D$ to be a Kähler-Einstein  log del Pezzo surface with canonical singularities of degree at most $4$; see \cite{OSS2016} for examples.  According to \cite[p.165]{OSS2016}, $D$ then has only singularities of type $D_4$ and $A_k$ for $k\le 7$, so automatically admits a local product Kähler crepant resolution by proceeding as in \cite{Kronheimer1989}.  Finally, notice that if $(D_1,g_{D_1})$ and $(D_2,g_{D_2})$ are two examples of Kähler-Einstein Fano orbifolds admitting local product Kähler crepant resolutions, then after scaling the metrics, we can assume without loss of generality that $\Ric(g_{D_i})= g_{D_i}$ for $i=1,\,2$, so that the Cartesian product $(D_1\times D_2, g_{D_1}\times g_{D_2})$ is another example of a Kähler-Einstein Fano orbifold admitting a local Kähler crepant resolution.  
\label{exa.2} \end{remark}

To describe the natural compactification of $\hX$, observe first that we can compactify $X$ into an orbifold with boundary $X_{\sc}$ such that the biholomorphism \eqref{bih.1} extends to a diffeomorphism
$$
          X_{\sc}\setminus {\cK}\cong  (\kappa,\infty]\times Z.
$$
Here, we mean that $X_{\sc}$ is an \textbf{orbifold with boundary} in the sense that it is locally modeled by charts of the form $\bbR^{2n+2}/\Gamma_1$ or $(\bbR^{2n+1}/\Gamma_2)\times [0,\infty)$ with $\Gamma_1\subset \GL(2n+2,\bbR)$ and $\Gamma_2\subset \GL(2n+1,\bbR)$ finite subgroups.  Notice in particular that with this definition, the orbifold singularities are always transversal to the boundary.   Clearly, we have that $\pa X_{\sc}\cong Z$ and that the function $x:=\frac1{r}$ defined near $\pa X_{\sc}$ is a natural choice of boundary defining function.

As an orbifold, $\pa X_{\sc}$ has the structure of a smoothly stratified space with strata corresponding to the various types of orbifold singularities.  In particular, the strata are partially ordered by the relation
$$
        s_1\le s_2  \Longleftrightarrow s_1\subset \overline{s}_2.
$$
  In fact, by Remark~\ref{sf.1}, each stratum $\overline{s}_i$ of $\pa X_{\sc}$ is naturally equipped with an $\bbS^1$-orbibundle structure
\begin{equation}
\xymatrix{
\bbS^1 \ar[r]  & \overline{s}_i \ar[d]^{\nu_i} \\
                       & \overline{\sigma}_i,
}
\label{cr.5}\end{equation}
where $\overline{\sigma}_i:= \nu_i(\overline{s}_i)$.  
The partial order on the strata of $\pa X_{\sc}$ gives us a systematic way of blowing up the strata of $\pa X_{\sc}$ in $X_{\sc}$ in the sense of \cite{ALMP2012}.  Namely, let $\{s_1,\ldots, s_k\}$ be an exhaustive list of all the strata of $\pa X_{\sc}$ compatible with the partial order in the sense that
$$
      s_i\le s_j \Longrightarrow i\le j.
$$
In particular, $s_k$ will be the regular stratum, so that $\pa X_{\sc}= \overline{s}_k$.  One constructs a natural space $\tX_{\sc}$  out of $X_{\sc}$ by blowing up all of the strata of $\pa X_{\sc}$ in $X_{\sc}$ except the regular one; that is,
\begin{equation}
   \tX_{\sc} = [X_{\sc}; \overline{s}_1, \overline{s}_2, \ldots, \overline{s}_{k-1}] \quad \mbox{with blow-down map} \quad \beta_{\sc}: \tX_{\sc}\to X_{\sc}.
\label{cr.6b}\end{equation}
Each of these blow-ups creates a new boundary hypersurface, so that $\tX_{\sc}$ is not an orbifold with boundary, but rather an \textbf{orbifold with corners}, by which we mean that $\tX_{\sc}$ is locally modeled on charts of the form $\bbR^{2n+2-q}/\Gamma\times [0,\infty)^q$ for $q\in\{0,1,\ldots, 2n+2\}$ and $\Gamma\subset \GL(2n+2-q,\bbR)$ a finite subgroup.
By construction, $\tX_{\sc}$ has a boundary hypersurface $H_i$ for each stratum $s_i$ of $\pa X_{\sc}$.  Since we blow up each $\overline{s}_i$ in $X_{\sc}$ instead of in $\pa X_{\sc}$, notice that each boundary hypersurface
$H_i$ remains an orbifold for each $i<k$, whereas $H_k$ is the manifold with fibred corners that resolves the stratified space $\pa X_{\sc}$, hence contains no orbifold singularities.  As we are about to show, there is a natural fibre bundle structure on each boundary hypersurface, making $\tX_{\sc}$ an orbifold with fibred corners in the following sense.

\begin{definition}
Let $M$ be an orbifold with corners and suppose that each of its boundary hypersurfaces $H_i$ is a fibre bundle $\phi_i: H_i\to S_i$ whose base $S_i$ is a manifold with corners (hence has no orbifold singularities) and whose fibres are all orbifolds with fibred corners.  If $\phi$ denotes the collection of fibre bundle maps $\phi_i$, then we say that $(M,\phi)$ is an \textbf{orbifold with fibred corners} if there exists a partial order on the boundary hypersurfaces such that the same conditions as in Definition~\ref{mwfc.1} are satisfied, namely:
\begin{itemize}
\item  Any subset $I$ of boundary hypersurfaces such that $\bigcap_{i\in I}H_i\ne \emptyset$ is totally ordered;

\item If $H_i<H_j$, then $H_i\cap H_j\ne \emptyset$, $\left. \phi_i\right|_{H_i\cap H_j}: H_i\cap H_j\to S_i$ is a surjective submersion and $S_{ji}:= \phi_j(H_i\cap H_j)$ is one of the boundary hypersurfaces of the manifold with corners $S_j$.  Moreover, there is a surjective submersion $\phi_{ji}: S_{ji}\to S_i$ such that $\phi_{ji}\circ \phi_j=\phi_i$ on $H_i\cap H_j$.

\item The boundary hypersurfaces of $S_j$ are given by the $S_{ji}$ for $H_i<H_j$.
\end{itemize}
\label{owfc.1}\end{definition}

\begin{proposition}
The orbifold with corners $\tX_{\sc}$ itself has a natural orbifold with fibred corners structure.
\label{cr.6a}\end{proposition}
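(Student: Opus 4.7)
The plan is to construct the fibre bundle structure on each boundary hypersurface of $\tX_{\sc}$ directly from the iterated blow-up \eqref{cr.6b}, and then verify the axioms of Definition~\ref{owfc.1}. First I define the partial order on the boundary hypersurfaces of $\tX_{\sc}$ by declaring $H_i<H_j$ if and only if $s_i<s_j$ as strata of $\pa X_{\sc}$. Since any two strata of $\pa X_{\sc}$ with intersecting closures are related by $\le$, the corresponding boundary hypersurfaces satisfy the first axiom of Definition~\ref{owfc.1}, and $H_k$ (corresponding to the regular stratum $s_k$) is the unique maximal element.

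To construct $\phi_i:H_i\to S_i$ for each $i<k$, I proceed by induction on the relative depth of $s_i$ in $\pa X_{\sc}$. For a minimal stratum $s_i$, the closure $\overline{s}_i=s_i$ is smooth, and general blow-up theory identifies the front face of the blow-up of $\overline{s}_i$ in $X_{\sc}$ as the spherical normal bundle, giving an orbifold fibre bundle
$$
\xymatrix{  \overline{\bbC^{k_i}/\Gamma_i} \ar[r] & H_i \ar[d]^{\phi_i} \\ & S_i:= \overline{s}_i,}
$$
where $k_i$ is the complex codimension of $s_i$ and $\Gamma_i\subset \SU(k_i)$ is the isotropy subgroup along $s_i$, acting holomorphically on the normal fibres (with the complex structure of the fibre arising from combining the Reeb $\bbS^1$-direction with the radial direction introduced by the blow-up, as in the discussion of Definition~\ref{qr.1}). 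For a non-minimal $s_i$, the deeper strata $\{s_j\}_{s_j<s_i}$ have already been blown up at the stage of \eqref{cr.6b} when $\overline{s}_i$ is blown up, so the proper transform of $\overline{s}_i$ in $[X_{\sc};\overline{s}_1,\ldots,\overline{s}_{i-1}]$ is (by the induction hypothesis) a manifold with fibred corners; I define $S_i$ to be this proper transform, and $\phi_i: H_i\to S_i$ the projection of the front face, again with fibre $\overline{\bbC^{k_i}/\Gamma_i}$. For $i=k$, the proper transform of $\pa X_{\sc}$ after all these blow-ups is a manifold with fibred corners, and I take $\phi_k=\Id: H_k\to S_k=H_k$.

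The remaining axioms of Definition~\ref{owfc.1} follow from the nested structure of the iterated blow-up: if $H_i<H_j$, the boundary hypersurface $S_{ji}=\phi_j(H_i\cap H_j)$ is by construction the proper transform of $\overline{s}_i\cap \overline{s}_j$ inside $S_j$, and the iterated blow-up naturally produces a surjective submersion $\phi_{ji}:S_{ji}\to S_i$ satisfying $\phi_{ji}\circ \phi_j=\phi_i$ on $H_i\cap H_j$; moreover the boundary hypersurfaces of $S_j$ are exhausted by the $S_{ji}$ for $H_i<H_j$. The main obstacle is to show that each base $S_i$ is genuinely a manifold with corners, as opposed to a merely orbifold with corners. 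The key point is that in an orbifold chart around $s_i$, the isotropy $\Gamma_i$ acts trivially on $\overline{s}_i$ itself and transversally by rotation in the normal fibre; hence each blow-up absorbs the transverse isotropy into the spherical fibre direction rather than into the base. Combined with the inductive hypothesis applied to $\overline{s}_i$ viewed as a stratified space, this confirms that its resolution by the iterated blow-up carries no remaining orbifold singularities, completing the verification.
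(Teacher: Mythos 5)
Your strategy is essentially the paper's: identify each front face as a compactified normal bundle over the lift $S_i$ of $\overline{s}_i$, use the order of the blow-ups to see that the bases carry no orbifold singularities, and then check the axioms of Definition~\ref{owfc.1}. However, there is a genuine gap in the middle step: you describe $H_i$ as the front face created at the moment $\overline{s}_i$ is blown up and assert that $\phi_i:H_i\to S_i$ has fibre $\overline{\bbC^{k_i}/\Gamma_i}$. That identification is only valid at the intermediate stage $[X_{\sc};\overline{s}_1,\ldots,\overline{s}_i]$. Every singular stratum $s_j>s_i$ satisfies $s_i\subset\overline{s}_j$, so its lift meets this front face, and the subsequent blow-ups in \eqref{cr.6b} modify the face associated to $s_i$. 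In the final space $\tX_{\sc}$ the fibres of $\phi_i$ are therefore not $\overline{\bbC^{m_i}/\Gamma_i}$ but the spaces $\tV_i$ obtained from $\overline{\bbC^{m_i}/\Gamma_i}$ by blowing up the singular strata of its boundary in an order compatible with the partial order (your description is correct only when no singular stratum lies strictly between $s_i$ and the regular stratum). What is missing is precisely the verification that these later blow-ups are compatible with the normal-bundle projection, i.e.\ that they modify the face associated to $s_i$ only within its fibres, so that $\phi_i$ is still well defined on the final $H_i$ and its fibres are orbifolds with fibred corners, as Definition~\ref{owfc.1} requires; this is the key structural point in the paper's proof of \eqref{cr.6}--\eqref{cr.7}.

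This omission also undercuts your verification of the remaining axioms: the corners $H_i\cap H_j$ with $H_j>H_i$ are created exactly by those later blow-ups, so with your static description of $H_i$ (a front face whose fibres are orbifolds with boundary $\overline{\bbC^{k_i}/\Gamma_i}$, with no corners in the fibre direction) the statements that $\left.\phi_i\right|_{H_i\cap H_j}$ is a surjective submersion onto $S_i$ and that $S_{ji}=\phi_j(H_i\cap H_j)$ is a boundary hypersurface of $S_j$ cannot even be formulated correctly. Once you record that each blow-up of a larger stratum acts fibrewise on the already created faces, both the correct fibre $\tV_i$ and the compatibility maps $\phi_{ji}$ with $\phi_{ji}\circ\phi_j=\phi_i$ fall out, and the rest of your argument---in particular your correct observation that the transverse isotropy is absorbed into the fibre direction, so that each $S_i$ is a genuine manifold with corners resolving $\overline{s}_i$---goes through as in the paper.
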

\begin{proof}
Before we perform the blow-ups for larger strata, the closure of the stratum $s_i$ lifts to a submanifold $S_i$ of $[X_{\sc}; \overline{s}_1,\ldots, \overline{s}_{i-1}]$, so that the blow-up face associated to $s_i$ is just the radial compactification $\overline{NS_i}$ of the normal bundle of $S_i$ (as a suborbifold of a boundary hypersurface of  $[X_{\sc}; \overline{s}_1,\ldots, \overline{s}_{i-1}]$), whose fibres are orbifolds of the form $\overline{V}_i= \overline{\bbC^{m_i}/\Gamma_i}$ with $m_i=n-\dim_{\bbC}\sigma_i$ and $\Gamma_i\subset \SU(n_i)$ a finite subgroup, that is,
\begin{equation}
\xymatrix{
\overline{V}_i \ar[r]  & \overline{NS_i} \ar[d]^{\phi_i}  \\
                       & S_i.
}
\label{cr.6}\end{equation}
The subsequent blow-ups of strata modify the face associated to $s_i$, but only in the fibres of \eqref{cr.6}, so that ultimately $H_i$ comes equipped with a fibre bundle structure
\begin{equation}
\xymatrix{
\tV_{i} \ar[r]  & H_i \ar[d]^{\phi_i}  \\
                       & S_i
}
\label{cr.7}\end{equation}
with $\tV_{i}$ obtained from $\overline{V}_i$ by blowing up the singular strata of $\pa\overline{V}_i$ in $\overline{V}_i$ in an order compatible with the partial order of the strata of $\pa \overline{V}_i$.
In other words, $\overline{V}_i$ is naturally an orbifold with boundary and $\tV_{i}$ is obtained from $\overline{V}_i$ in the same way that $\tX_{\sc}$ is obtained from $X_{\sc}$.

Notice that thanks to the order in which we do the blow-ups, $S_i$ is the natural manifold with fibred corners that resolves the closure $\overline{s}_i$ seen as a stratified space.  In particular, it has no orbifold singularities and the singularities of the face $H_i$ are all in the fibres of \eqref{cr.7}.  Now, clearly, since each boundary hypersurface of $\tX_{\sc}$ is associated to a stratum of $\pa X_{\sc}$, the partial order of the strata of $\pa X_{\sc}$ induces a partial order on the boundary hypersurfaces of $\tX_{\sc}$.  Moreover, we clearly have that
$$
     H_i\cap H_j\ne \emptyset \Longleftrightarrow s_i\le s_j \; \mbox{or} \; s_j\le s_i,
$$
and we see that the partial order on the boundary hypersurfaces of $\tX_{\sc}$ totally orders any subset $I$ of the boundary hypersurfaces with $\bigcap_{i\in I} H_i\ne \emptyset$. Finally, if $H_i< H_j$, then it follows from the order in which we blew up the strata that $\phi_j$ restricts to a surjective submersion on $H_i\cap H_j$ onto the boundary hypersurface $S_{ji}$ of $S_j$, and that the fibre bundle structure $\phi_{ji}: S_{ji}\to S_i$ coming from the orbifold with fibred corners of $S_j$ is such that $\phi_{ji}\circ \phi_j= \phi_i$.
\end{proof}

The space $\tX_{\sc}$ provides a compactification of $X$, but it still has orbifold singularities.  On the other hand, since we only blew up strata of $\pa X_{\sc}$, we still have a natural identification
$\tX_{\sc}\setminus \pa \tX_{\sc}= X$, so we can still use the local product Kähler crepant resolution of $X$ to remove the orbifold singularities.

\begin{theorem}
 The local product Kähler crepant resolution $\beta_X: \hX\to X$ naturally extends to give a resolution $\beta_{\QAC}: \hX_{\QAC}\to \tX_{\sc}$ of the orbifold with fibred corners $\tX_{\sc}$ by a manifold with fibred corners $\hX_{\QAC}$, inducing the following commutative diagram
$$
\xymatrix{ \hX \ar[d]^-{\beta_X}\ar @{^{(}->}[rr] & & \hX_{\QAC}\ar[d]^-{\beta_{\QAC}} \\
     X \ar @{{(}->}[r] & X_{\sc} & \ar[l]^-{\beta_{sc}} \tX_{\sc}.
}    
$$  
The space $\hX_{\QAC}$ is called the \textbf{$\QAC$-compactification of $\hX$} and we say that it is a \textbf{$\QAC$-resolution of $\tX_{\sc}$.}
\label{cr.8}\end{theorem}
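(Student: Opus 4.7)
The plan is to construct $\hX_{\QAC}$ by induction on the complex dimension of $X$, replacing the fibres of the bundles $\phi_i: H_i \to S_i$ produced in Proposition~\ref{cr.6a} with $\QAC$-compactifications of the crepant resolutions $Y_i$ of $\bbC^{m_i}/\Gamma_i$. The essential input is that the local product hypothesis on $\beta_X: \hX\to X$ ensures that near each singular stratum $s_i$ of $\pa X_{\sc}$ the resolution $\hX$ is locally a product $Y_i \times U$ for $U \subset S_i$ open, and moreover that each $Y_i \to \bbC^{m_i}/\Gamma_i$ is itself a local product K\"ahler crepant resolution in the sense of Joyce.

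For the base case each $\Gamma_i$ acts freely on $\bbC^{m_i}\setminus\{0\}$, so each $Y_i$ is smooth with radial compactification $\overline{Y}_i$ a smooth manifold with boundary. Patching the local product models $Y_i\times U$ along a cover of $S_i$ produces a fibre bundle $\hphi_i: \hH_i \to S_i$ with fibre $\overline{Y}_i$ that glues to $\hX$ along its interior using tubular neighborhoods coming from the $\bbS^1$-orbibundle structure \eqref{cr.5}. For the inductive step, induction on complex dimension furnishes a $\QAC$-compactification $\hY_i$ of $Y_i$, namely a manifold with fibred corners equipped with a resolution map $\hY_i \to \tV_i$. Replacing $\tV_i$ by $\hY_i$ as the fibre of $\phi_i$ yields $\hphi_i: \hH_i \to S_i$; the local product hypothesis guarantees that these fibrewise replacements glue consistently, producing a manifold with corners $\hX_{\QAC}$ together with a natural map $\beta_{\QAC}: \hX_{\QAC} \to \tX_{\sc}$ that agrees with $\beta_X$ on $\hX$ and with the fibrewise crepant resolution on each $\hH_i$.

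The main obstacle is to verify that $\hX_{\QAC}$ together with the collection $\hat{\phi}$ of the maps $\hphi_i$ satisfies the three conditions of Definition~\ref{mwfc.1}. The boundary hypersurfaces of $\hX_{\QAC}$ split into two families: \emph{horizontal} faces $\hH_i$, one per singular stratum $s_i$ of $\pa X_{\sc}$, partially ordered as the $\{s_i\}$ are ordered; and \emph{vertical} faces contributed by the inductive compactifications $\hY_i$, ordered by the partial order on each $\hY_i$ supplied by induction. The combined partial order is lexicographic, comparing horizontal faces first. Total ordering of intersecting subsets, surjective submersivity of the restrictions of $\hphi_i$ to intersections, and the tower relations $\phi_{ji}\circ \hphi_j=\hphi_i$ all reduce in local coordinates adapted to the fibred corners structure to the analogous properties for $\tX_{\sc}$ from Proposition~\ref{cr.6a} and for each $\hY_i$ from the inductive hypothesis. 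The commutativity of the displayed diagram, and the fact that $\beta_{\QAC}$ restricts to $\beta_X$ on $\hX$, are then immediate from the construction, with the local product hypothesis supplying the matching across overlaps.
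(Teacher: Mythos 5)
Your constructive plan---replace each fibre $\tV_i$ of $\phi_i:H_i\to S_i$ by its resolution, use the local product hypothesis to glue the fibrewise replacements, and induct (you induct on complex dimension, the paper on the depth of $X_{\sc}$; either works, since the fibres have strictly smaller dimension and depth)---is essentially the paper's argument. However, the verification that the result is a manifold with fibred corners rests on an incorrect description of its boundary faces. The compactified fibres $\hY_i$ do \emph{not} contribute new ``vertical'' boundary hypersurfaces of $\hX_{\QAC}$, and the partial order is not a lexicographic combination of two families. The boundary hypersurfaces of $\hX_{\QAC}$ are in bijection with those of $\tX_{\sc}$, that is, with the strata of $\pa X_{\sc}$---one face $\hH_i$ per stratum $s_i$, fibred over the \emph{same} base $S_i$ as in \eqref{cr.10}---and the partial order is the one induced from $\tX_{\sc}$. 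The boundary hypersurfaces of a fibre $\hY_i$ of $\hphi_i:\hH_i\to S_i$ sit inside the corners $\hH_i\cap\hH_j$ for $\hH_j>\hH_i$ (while $\hphi_i^{-1}(\pa S_i)$ accounts for the corners with $\hH_j<\hH_i$); they are codimension-two corners of $\hX_{\QAC}$, not boundary hypersurfaces. The reason is that the crepant resolution only modifies the orbifold singularities, which have complex codimension at least two inside the interior and inside the fibres of the $\phi_i$, so the exceptional sets it introduces are interior divisors (of $\hX_{\QAC}$ or of the faces $\hH_i$), never boundary faces. As a concrete check, for the cone $\bbC^3/\bbZ_2$ with $\bbZ_2$ acting on the first two coordinates, $\tX_{\sc}$ has exactly two boundary hypersurfaces and so does $\hX_{\QAC}$; the boundary of the compactified Eguchi--Hanson fibre lies in their intersection.

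This matters beyond bookkeeping: the theorem asserts the fibration \eqref{cr.10} over the same bases $S_i$ with the order inherited from $\tX_{\sc}$, and the sequel (the boundary defining functions \eqref{cr.17}, Definition~\ref{gid.8a}, and the weight functions) relies on the one-to-one correspondence between faces of $\hX_{\QAC}$ and faces of $\tX_{\sc}$; a space with extra hypersurfaces and a lexicographic order would not be the object claimed. With the face structure corrected, the verification of Definition~\ref{mwfc.1} for $\hX_{\QAC}$ with the collection of maps $\hphi_i$ becomes straightforward and is how the paper argues: since $\beta_{\QAC}$ only alters the interiors of the fibres of each $\phi_i$, the combinatorics of the faces, the bases $S_i$, and the connecting maps $\phi_{ji}$ are literally those of $\tX_{\sc}$ established in Proposition~\ref{cr.6a}, while the inductive hypothesis (applied to $\tV_i$, which has smaller depth/dimension) supplies the fibred-corners structure of the new fibres $\hY_i$ and the compatibility of their boundary faces with the corners $\hH_i\cap\hH_j$.
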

\begin{proof}
If $\sigma_1=\nu_1(s_1)$ is a point, then $S_1=s_1=\bbS^1$ and  $H_1=\bV_1\times\bbS^1$ with $\phi_1: \bV_1\times \bbS^1\to \bbS^1$ given by the projection on the right factor and $\overline{V}_1= \overline{\bbC^{n}/\Gamma_1}$ for a certain finite subgroup $\Gamma_1\subset \SU(n)$ acting freely on $\bbC^n\setminus \{0\}$.    Here, $\bbC^n/\Gamma_1$ can be seen as an orbifold chart for the corresponding stratum $\sigma_1$ in $D$.  Clearly then, the crepant resolution of $X$ extends to one for $\tX_{\sc}$ and $H_1$.      If $\hH_1$ denotes the induced resolution of $H_1$, then $\hH_1=\hY_1\times \bbS^1$, where $\hY_1$ is the radial compactification of  the crepant resolution $Y_1$ of $\bbC^n/\Gamma_1$, so that we still have a natural fibre bundle $\hphi_1: \hH_1\to S_1$ which is just the projection $\hY_1\times \bbS^1\to \bbS^1$ on the right factor.
If $\sigma_1$ is not a point but $\overline{s}_1$ is still a singularity of relative depth one,  then we can proceed in the same way, that is, the crepant resolution of $X$ clearly extends to give a resolution of $\tX_{\sc}$ near $H_1$ with resolution $\hH_1$ of $H_1$ obtained from \eqref{cr.6} by replacing each fibre $\overline{V}_1$ by its crepant resolution $\hY_1$, that is,
 \begin{equation}
\xymatrix{
 \hY_1 \ar[r]  & \hH_1 \ar[d]^{\hphi_1}  \\
                       & S_1.
}
\label{cr.10}\end{equation}
More generally, using the fact that the resolution $\hX\to X$ is a local product resolution, we see by induction on the depth of $X_{\sc}$ that $\hH_i$ will be the total space of a fibre bundle
\begin{equation}
\xymatrix{
 \hY_i \ar[r]  & \hH_i \ar[d]^{\hphi_i}  \\
                       & S_i
}
\label{cr.10}\end{equation}
with $\hY_i$ the $\QAC$-resolution of $\tV_i$.  Indeed, the local product Kähler crepant resolution of $X$ naturally induces one on $V_i$, and since $\tV_i$ is an orbifold with  fibred corners of smaller depth than $X_{\sc}$, we can assume by induction that the theorem already holds true for $\tV_i$.  

Since at each step the local product resolution $\hX\to X$ is only used fibrewise in the fibre orbibundle $\phi_i:H_i\to S_i$, we see that $\hX_{\QAC}$ is naturally a manifold with fibred corners with fibre bundle structure on the boundary hypersurface $\hH_i$ given by \eqref{cr.10} and with partial order on the boundary hypersurfaces of $\hX_{\QAC}$ induced by the one on the boundary hypersurface of $\tX_{\sc}$.
\end{proof}

Notice that for the face $\hH_k$ associated to the regular stratum $s_k$,  we have that $S_k=H_k=\hH_k$ and that the fibre bundle $\hphi_k:\hH_k\to S_k$ is just the identity map.  Since the only maximal stratum with respect to the partial order is the regular stratum, we see that $\hH_k$ is the only maximal boundary hypersurface of $\hX_{\QAC}$.

Using Lemma~\ref{mwfc.2ee}, we can also specify a natural Lie algebra of $\QAC$-vector fields on $\tX_{\sc}$, that is, a natural choice of $\QAC$-equivalence class of boundary defining functions. Indeed, let $\beta_0:=\beta_{\sc}:\tX_{\sc}\to X_{\sc}$ be the blow-down map and for each $i\ge 1$, consider the partial blow-down maps
\begin{equation}
  \beta_i: \tX_{\sc}\to [X_{\sc};\overline{s}_1,\ldots,\overline{s}_{i}] \quad \mbox{and} \quad \beta_i': [X_{\sc};\overline{s}_1,\ldots,\overline{s}_{i}]\to X_{\sc},
 \label{cr.12}\end{equation}
so that $\beta_0=\beta_i'\circ \beta_i$ for each $i$.  For each $i\ge 1$, consider the lift of $\pa X_{\sc}$ to $[X_{\sc};\overline{s}_1,\ldots,\overline{s}_{i}]$, namely
$$
            B_i:= \overline{(\beta_i')^{-1}(\pa X_{\sc}\setminus(\overline{s}_1\cup \cdots \cup \overline{s}_i))}.
$$
Let also $W_i$ be the boundary hypersurface of $[X_{\sc};\overline{s}_1,\ldots,\overline{s}_{i}]$ coming from the blow-up of $\overline{s}_i$ and let $\rho_0$ be any choice of boundary defining function for $B_0:=\pa X_{\sc}$ in $X_{\sc}$.  More generally,  proceeding recursively on $i$, choose a boundary defining function $\rho_i\in\CI([X_{\sc};\overline{s}_1,\ldots,\overline{s}_{i}])$ of $B_i$ such that $\rho_i$ is identically equal to the pull-back of $\beta_{i,i-1}^*\rho_{i-1}$ outside a small neighborhood $\cU_i$ of $W_i$ not intersecting the boundary hypersurfaces of $[X_{\sc};\overline{s}_1,\ldots,\overline{s}_{i}]$ disjoint from $W_i$, where
$$
\beta_{i,i-1}:[X_{\sc};\overline{s}_1,\ldots,\overline{s}_{i}]\to [X_{\sc};\overline{s}_1,\ldots,\overline{s}_{i-1}]
$$
is the blow-down map.  Then on $\tX_{\sc}$, the functions
\begin{equation}
      x_{i}= \frac{\beta_{i-1}^*\rho_{i-1}}{\beta_i^*\rho_i}, \; i<k \quad \mbox{and}\quad  x_k:= \rho_k
\label{cr.13}\end{equation}
are such that $x_i\in \CI(\tX_{\sc})$ is a boundary defining function for $H_i$ for each $i$.
\begin{lemma}
On $(\tX_{\sc},\phi)$, the Lie algebra of $\QAC$-vector fields specified by the choice of the $\QAC$-equivalence class of the boundary defining functions \eqref{cr.13} does not depend on the choice of the functions $\rho_i$, hence yields  a natural Lie algebra of $\QAC$-vector fields.
\label{cr.14}\end{lemma}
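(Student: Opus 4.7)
The plan is to verify the criterion of Lemma~\ref{mwfc.2ee}. Given two valid choices $\{\rho_i\}$ and $\{\rho_i'\}$ satisfying the recipe preceding \eqref{cr.13}, set $u_i:=\rho_i'/\rho_i\in\CI([X_{\sc};\overline{s}_1,\ldots,\overline{s}_i])$, which is smooth and positive. First, I would exploit the telescoping built into \eqref{cr.13} to deduce that $\prod_{\ell=1}^{k}x_\ell=\beta_0^*\rho_0$ on $\tX_{\sc}$, and hence that
$$
  v_i \;=\; \beta_0^*\rho_0\cdot\prod_{H_\ell\not\geq H_i}x_\ell^{-1},
  \qquad
  \frac{v_i'}{v_i} \;=\; \beta_0^*u_0\cdot\prod_{H_\ell\not\geq H_i}\frac{\beta_\ell^*u_\ell}{\beta_{\ell-1}^*u_{\ell-1}}.
$$
By Lemma~\ref{mwfc.2ee}, it then suffices to show that, for each $i$ and each $H_j\geq H_i$, the restriction of $f_i=\log(v_i'/v_i)$ to $H_j$ lies in $\phi_j^*\CI(S_j)$.

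The key geometric input is the following factorization: for every index $m<j$ (in the chosen total extension of the partial order), the restriction $\beta_m|_{H_j}\colon H_j\to [X_{\sc};\overline{s}_1,\ldots,\overline{s}_m]$ factors through $\phi_j\colon H_j\to S_j$. I would prove this by tracking the iterated blow-ups: the blow-down $\beta_{j,j-1}$ collapses each fibre of $W_j\to S_j$ onto a point of $S_j=\overline{s}_j^{(j-1)}$, while all subsequent blow-ups (at steps $\ell>j$) correspond to strata interior to the fibre $\bV_j$, so they refine $\bV_j$ to $\tV_j$ without altering the base $S_j$; iterating the collapse through stages $j-1,\ldots,m$ then gives the factorization.

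Granted this factorization, I would check the required pullback property factor by factor. The term $\beta_0^*u_0|_{H_j}$ is a pullback from $S_j$ by the factorization with $m=0$. For each $H_\ell$ with $H_\ell\not\geq H_i$ there are two subcases. If $H_\ell$ is incomparable to $H_j$, then $H_\ell\cap H_j=\emptyset$, and the requirement that $\cU_\ell$ avoid the boundary hypersurfaces of $[X_{\sc};\overline{s}_1,\ldots,\overline{s}_\ell]$ disjoint from $W_\ell$ forces $x_\ell\equiv x_\ell'\equiv 1$ on a neighborhood of $H_j$, so the term is trivial. Otherwise $H_\ell$ is comparable to $H_j$; transitivity with $H_j\geq H_i$ excludes $H_\ell\geq H_j$, so $H_\ell<H_j$, and compatibility of the total extension with the partial order yields $\ell<j$. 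Then both $\beta_{\ell-1}|_{H_j}$ and $\beta_\ell|_{H_j}$ factor through $\phi_j$ by the factorization, so $\beta_\ell^*u_\ell/\beta_{\ell-1}^*u_{\ell-1}$ restricts to $\phi_j^*$ of a smooth positive function on $S_j$. Combining these observations, $v_i'/v_i|_{H_j}$ is a product of pullbacks from $S_j$, and taking logarithms yields the desired conclusion via Lemma~\ref{mwfc.2ee}.

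The main obstacle is justifying the factorization of $\beta_m|_{H_j}$ through $\phi_j$ for all $m<j$; this requires careful bookkeeping of how each successive blow-up interacts with the previously constructed bundles $W_\nu\to S_\nu$, and of the fact that the base $S_j$ is already attained at stage $j-1$. Once this geometric fact is secured, the rest of the argument reduces to the case analysis above, driven entirely by the telescoping structure of \eqref{cr.13}.
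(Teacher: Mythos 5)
Your proposal is correct and follows essentially the same route as the paper: both reduce the statement to the criterion of Lemma~\ref{mwfc.2ee}, and your two key inputs—the factorization of the partial blow-downs $\beta_m|_{H_j}$ through $\phi_j$ for $m<j$, and the support condition on the neighborhoods $\cU_\ell$—are exactly what the paper's assertions \eqref{cr.15} and \eqref{cr.16} encode, with your telescoping in the $u_\ell=\rho_\ell'/\rho_\ell$ just a more explicit bookkeeping of the same decomposition. One small remark: in the subcase where $H_\ell$ is incomparable to $H_j$ with $\ell<j$, the stated condition on $\cU_\ell$ (avoiding only boundary hypersurfaces disjoint from $W_\ell$) does not by itself force $x_\ell\equiv x_\ell'\equiv 1$ near $H_j$, since at stage $\ell$ the image of $H_j$ is the lift of $\overline{s}_j$, which is not a boundary hypersurface; this is harmless because your factorization argument (valid for all $\ell<j$) already shows the corresponding factor restricts to a pullback from $S_j$.
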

\begin{proof}
Suppose that for each $i$, $\rho_i$ and $\rho_i'$ are two different choices of boundary defining functions and suppose recursively that $\rho_i$, respectively $\rho_i'$, is identically equal to $\beta_{i,i-1}^*\rho_{i-1}$, respectively $\beta_{i,i-1}^* \rho_{i-1}'$,
outside a small neighborhood $\cU_i$ of $W_i$ not intersecting the boundary hypersurfaces of $[X_{\sc};\overline{s}_1,\ldots,\overline{s}_{i}]$ disjoint from $W_i$.  In this case, essentially by definition of the blow-down map, we have that for $j>i$,
\begin{equation}
\left.  \frac{\beta_i^*\rho_i}{\beta^{*}_i \rho_i'} \right|_{H_j}= \phi_j^*h_{ij}  \quad \mbox{for some}  \; h_{ij}\in \CI(S_j).
\label{cr.15}\end{equation}
On the other hand, thanks to the identification of $\rho_i$ with $\beta_{i,i-1}^*\rho_{i-1}$ outside $\cU_i$, we also have that for $H_j\ge H_i$,
\begin{equation}
         \left.  \frac{\beta_{i-1}^*\rho_{i-1}}{v_i} \right|_{H_j}= \phi_j^*f_{ij}  \quad \mbox{for some}  \; f_{ij}\in \CI(S_j), \; \mbox{where} \; v_i=\prod_{H_j\ge H_i} x_i.
\label{cr.16}\end{equation}
Of course, there is a similar statement for $\rho_i'$, so the combination of \eqref{cr.15} and \eqref{cr.16} allows us to apply Lemma~\ref{mwfc.2ee}, from which the result follows.
\end{proof}

Provided that we can choose the functions $\rho_i$ in such a way that their lifts $\beta_{\QAC}^*\beta_i^*\rho_i$ to $\hX_{\QAC}$ are smooth, we obtain  corresponding boundary defining functions on $\hX_{\QAC}$,
\begin{equation}
      \widehat{x}_{i}= \beta_{\QAC}^* x_i=\frac{ \beta_{\QAC}^*\beta_{i-1}^*\rho_{i-1}}{ \beta_{\QAC}^*\beta_i^*\rho_i}, \; i<k \quad \mbox{and}\quad   \widehat{x}_k:=\beta_{\QAC}^*x_k=  \beta_{\QAC}^*\rho_k.
\label{cr.17}\end{equation}

\begin{lemma}
The $\QAC$-equivalence class of the boundary defining functions \eqref{cr.17} does not depend on the choice of the functions $\rho_i$ that lift to be smooth on $\hX_{\QAC}$. 
\label{cr.18}\end{lemma}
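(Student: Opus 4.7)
The plan is to reduce the statement to Lemma~\ref{mwfc.2ee} applied on $\hX_{\QAC}$, transferring the identity established in the proof of Lemma~\ref{cr.14} along the blow-down map $\beta_{\QAC}:\hX_{\QAC}\to \tX_{\sc}$. The key observation is that the construction in Theorem~\ref{cr.8} makes $\beta_{\QAC}$ compatible with the fibred corner structures on both ends: the partial order on boundary hypersurfaces of $\hX_{\QAC}$ is induced from that of $\tX_{\sc}$, the base $S_i$ of $\hphi_i:\hH_i\to S_i$ coincides with that of $\phi_i:H_i\to S_i$, and $\beta_{\QAC}$ restricts on each $\hH_j$ to a map making the square
$$
\phi_j\circ (\beta_{\QAC}|_{\hH_j}) \;=\; \hphi_j
$$
commute. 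In particular, the weight functions $\widehat{v}_i = \prod_{\hH_j\ge \hH_i}\widehat{x}_j$ on $\hX_{\QAC}$ are simply the pullbacks $\beta_{\QAC}^* v_i$ of the corresponding weights on $\tX_{\sc}$.

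Given two choices $\rho_i$ and $\rho_i'$, both lifting smoothly to $\hX_{\QAC}$ and satisfying the recursive identity-outside-$\cU_i$ condition from the discussion preceding \eqref{cr.13}, the argument of Lemma~\ref{cr.14} (entirely in terms of $\tX_{\sc}$) already yields, for every $H_j\ge H_i$,
$$
\left. \log\frac{v_i'}{v_i}\right|_{H_j} \;=\; \phi_j^* h_{ij} \quad \text{for some } h_{ij}\in \CI(S_j).
$$
Pulling back via $\beta_{\QAC}$, restricting to $\hH_j$, and using the commutation $\phi_j\circ (\beta_{\QAC}|_{\hH_j})=\hphi_j$ therefore gives
$$
\left. \log\frac{\widehat{v}_i'}{\widehat{v}_i}\right|_{\hH_j} \;=\; \hphi_j^* h_{ij},
$$
which is exactly the criterion of Lemma~\ref{mwfc.2ee} needed to conclude that the boundary defining functions $\widehat{x}_i$ and $\widehat{x}_i'$ define the same Lie algebra of $\QAC$-vector fields on $\hX_{\QAC}$, that is, they are $\QAC$-equivalent.

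The only mildly technical point, and the place where the smoothness hypothesis on the lifts is used, is to ensure that $\beta_{\QAC}^*\log(v_i'/v_i)\in \CI(\hX_{\QAC})$ so that Lemma~\ref{mwfc.2ee} truly applies: outside a neighborhood of $\beta_{\QAC}^{-1}(W_i)$ the ratio $v_i'/v_i$ is a positive smooth function on $\tX_{\sc}$ by the recursive construction, and the assumption that the $\rho_i$ and $\rho_i'$ both lift smoothly to $\hX_{\QAC}$ takes care of smoothness near $\beta_{\QAC}^{-1}(W_i)$. Since the functions $h_{ij}$ live on $S_j$, which is the same smooth base for both $\phi_j$ and $\hphi_j$, no regularity is lost under restriction to $\hH_j$ and the conclusion follows.
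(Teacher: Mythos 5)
Your proof is correct and follows essentially the same route as the paper: the paper's argument likewise reduces to Lemma~\ref{mwfc.2ee} on $\hX_{\QAC}$ by replacing \eqref{cr.15} and \eqref{cr.16} with their pullbacks \eqref{cr.15b} and \eqref{cr.16b} under $\beta_{\QAC}$, using $\widehat{v}_i=\beta_{\QAC}^*v_i$ and $\hphi_j=\phi_j\circ\beta_{\QAC}|_{\hH_j}$. The only (immaterial) difference is that you combine the two identities downstairs on $\tX_{\sc}$ before pulling back, whereas the paper pulls back the two ingredients and combines them upstairs.
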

\begin{proof}
Given two different choices $\rho_i$ and $\rho_i'$, we can proceed as in the proof of Lemma~\ref{cr.14} with \eqref{cr.15} and \eqref{cr.16} replaced respectively by
\begin{equation}
\left.  \frac{\beta^*_{\QAC}\beta_i^*\rho_i}{\beta_{\QAC}^*\beta^{*}_i \rho_i'} \right|_{\hH_j}= \hphi_j^*h_{ij}  \quad \mbox{for some}  \; h_{ij}\in \CI(S_j)
\label{cr.15b}\end{equation}
and
\begin{equation}
         \left.  \frac{\beta_{\QAC}^*\beta_{i-1}^*\rho_{i-1}}{\widehat{v}_i} \right|_{\hH_j}= \hphi_j^*f_{ij}  \quad \mbox{for some}  \; f_{ij}\in \CI(S_j), \; \mbox{where} \; \widehat{v}_i=\prod_{\hH_j\ge \hH_i} \widehat{x}_i.
\label{cr.16b}\end{equation}
\end{proof}

Thus, to see that $\hX_{\QAC}$ comes endowed with a natural Lie algebra of $\QAC$-vector fields, we need to show that the functions $\rho_i$ can be chosen in such way that they lift to be smooth on $\hX_{\QAC}$, a discussion that we postpone until Lemma~\ref{bdfcr.1} in the next section.

\section{The Ricci-flat Kähler cone metric seen as a $\QAC$-metric}\label{gi.0}

Continuing with the setup of the previous section, we will show in this section how an orbifold Ricci-flat Kähler cone metric can be seen as a $\QAC$-metric on $\tX_{\sc}$ in a neighborhood of $\pa \tX_{\sc}$. Let $p\in C$ be a singular point. Then by Remark~\ref{sf.1}, $C=L\setminus D$ where $L$ is an orbifold holomorphic line bundle over a Kähler-Einstein Fano orbifold $D$, and we can find orbifold charts as in \eqref{sf.1b}.    However, since $p$ and the entire fibre of $L$ containing $p$ is singular, we know by averaging that the holomorphic line bundle $\pr_1:\bbC^n\times \bbC\to \bbC^n$ in \eqref{sf.1b} has a $\Gamma$-invariant holomorphic section that does not vanish near $0\in \bbC^n$.  Thus, this means that without loss of generality, we can assume that there is  a finite subgroup $\Gamma_1\subset \SU(n)$ acting linearly on $\bbC^n$   together with  orbifold charts $\psi: \cU\to \bbC^n/\Gamma_1$ and $\psi_L:p_L^{-1}(\cU)\to \bbC^n/\Gamma_1\times \bbC$ inducing the commutative diagram
\begin{equation}
\xymatrix{
   p_L^{-1}(\cU)\ar[d]^{p_L} \ar[r]^-{\psi_L} & \bbC^n/\Gamma_1 \times \bbC \ar[d]^{\pr_1} \\
   \cU \ar[r]^{\psi}  &  \bbC^n/\Gamma_1,
}
\label{gi.1a}\end{equation}
and such that $\psi_L(p)=(0,1)$.
Let $z=(z_1,\ldots,z_n)$ be the coordinates on $\bbC^n\setminus \Gamma_1$ and $v$ the coordinate on $L$.  Then the Kähler form of the Ricci-flat Kähler cone metric $g_C$ takes the form
\begin{equation}
  \omega_C= \frac{\sqrt{-1}}{2} \pa\db |v|^{\frac{2}{q}}f(z,\bz) 
\label{gi.2}\end{equation}
for some positive smooth function $f$ and some positive $q\in \bbQ$.  
Let $W_1\subset \bbC^n$ be the subspace of points fixed by each element of $\Gamma_1$ and suppose without loss of generality that it corresponds to the subspace $z_1=\cdots=z_{m_1}=0$, so that we have the decomposition $\bbC^n/\Gamma_1= \bbC^{m_1}/\Gamma_1\times W_1$ with $W_1=\bbC^{n-m_1}$.  In terms of the orbifold chart \eqref{gi.1a}, $\psi^{-1}(W_1)$ corresponds to the singular stratum in which $p_L(p)$ lies.

\begin{lemma}
The differential of $f$ is such that $\displaystyle \left. \frac{\pa f}{\pa z^i}  \right|_{W_1}=\left. \frac{\pa f}{\pa \overline{z}^i}  \right|_{W_1}=0$ for $i\in\{1,\ldots, m_1\}$.
\label{gi.3}\end{lemma}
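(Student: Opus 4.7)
The plan is to exploit the $\Gamma_1$-invariance of the radial function in the orbifold chart $\psi_L$. Since $r^2 = \|\cdot\|_{h_L}^{2/q}$ is globally defined on $C$, its pull-back $|v|^{2/q}f(z,\bar z)$ to the cover $\bbC^n\times\bbC$ must be $\Gamma_1$-invariant. The chart \eqref{gi.1a} has been set up precisely so that $\Gamma_1$ acts on the $\bbC^n$ factor but trivially on the fibre coordinate $v$---this was arranged a few lines earlier by averaging a non-vanishing holomorphic section of $L$ over $\Gamma_1$, using that $p$ and its entire fibre are singular so that the $\Gamma_1$-action preserves the line bundle structure. Consequently $|v|^{2/q}$ is $\Gamma_1$-invariant, and since $v(p)=1\neq 0$ it is nowhere zero in a neighbourhood of $p$; dividing, $f(z,\bar z)$ itself is $\Gamma_1$-invariant there.

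Next I would linearise transversally to $W_1$. Fix $p_0\in W_1$; this is a fixed point of $\Gamma_1$, so the tangent space $T_{p_0}\bbC^n$ splits $\Gamma_1$-equivariantly as $\bbC^{m_1}\oplus W_1$, with $\Gamma_1$ acting trivially on the second summand and, by definition of $W_1$ as the full fixed subspace, without non-zero fixed vectors on the first. The $\Gamma_1$-invariance of $f$ forces the transverse differential
\[
 \sum_{i=1}^{m_1}\frac{\pa f}{\pa z^i}(p_0)\, dz^i + \sum_{i=1}^{m_1}\frac{\pa f}{\pa \bar z^i}(p_0)\, d\bar z^i
\]
to be a $\Gamma_1$-invariant element of $(\bbC^{m_1})^*\oplus \overline{(\bbC^{m_1})^*}$.

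The proof would then conclude via the elementary representation-theoretic observation that if $V$ is a finite-dimensional complex representation of a finite group $\Gamma_1$ with $V^{\Gamma_1}=0$, then also $(V^*)^{\Gamma_1}=0$ and $(\overline{V^*})^{\Gamma_1}=0$ (decompose via Maschke into irreducibles and note that the dual and conjugate of a non-trivial irreducible remain non-trivial). Applied to $V=\bbC^{m_1}$ this forces the transverse differential displayed above to vanish at every $p_0\in W_1$, which is exactly the claim. The main subtlety---really the only non-routine point---is the first step: justifying that $f$, and not merely the full potential $|v|^{2/q}f$, is $\Gamma_1$-invariant. This is where the special form of the chart \eqref{gi.1a}, in which $\Gamma_1$ acts solely on the base and not on the fibre, is decisive.
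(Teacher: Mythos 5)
Your argument is correct and follows essentially the same route as the paper: the paper's proof likewise observes that $f$ is $\Gamma_1$-invariant and that invariance of $\left. df\right|_{W_1}$, combined with $W_1$ being the full fixed-point set, forces the transverse derivatives to vanish. Your additional details (why $f$ itself, not just $|v|^{2/q}f$, is invariant, and the fact that the dual and conjugate of a fixed-point-free representation have no invariants) merely spell out steps the paper leaves implicit.
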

\begin{proof}
The function $f$ is invariant under the action of $\Gamma_1$.  In particular, $\left. df\right|_{W_1}$ is invariant under the action of $\Gamma_1$, which by definition of $W_1$, holds if and only if the statement of the lemma holds.
\end{proof}

To distinguish between the factors $\bbC^{m_1}/\Gamma_1$ and $W_1$, let us set $u^i=z^i$ for $i\le m_1$ and $\eta_1^j=z^j$ for $j>m_1$.  Then by the previous lemma, the Taylor expansion of $f$ at $W_1$ is of the form
\begin{equation}
  f(u,\overline{u},\eta_1,\overline{\eta}_1)= f_0(\eta_1,\overline{\eta}_1)+ \Hess(f)_{W_1}(u,\overline{u}) + \mathcal{O}(|u|^3),
\label{gi.3b}\end{equation}
where $\Hess(f)_{W_1}$ is the Hessian of $f$ restricted to $W_1$ and only applied to the normal bundle of $W_1$.

Instead of the coordinates $(z,v)$, one can then consider the holomorphic coordinates $(\zeta,\lambda)$ related to $(z,v)$ by
\begin{equation}
 v=\lambda_1^{q}, \; z^i=\frac{\zeta_1^i}{\lambda_1},\quad i\le m_1, \; \eta_1^j=z^j, \; j>m_1.
\label{gi.4}\end{equation}
Away from $\lambda_1=0$, this is a valid change of coordinates for $\theta< \arg \zeta_1^i< \theta+ \frac{2\pi}{q}$ for all $i$ for some fixed choice of $\theta$.

In these new coordinates, the Kähler form of the Ricci-flat Kähler cone metric takes the form
\begin{equation}
\begin{aligned}
    \omega_C &= \frac{\sqrt{-1}}{2} \pa \db \left( |\lambda_1|^2 f(\frac{\zeta_1}{\lambda_1}, \frac{\overline{\zeta_1}}{\overline{\lambda_1}}, \eta_1, \overline{\eta_1})\right) \\
                        &= \frac{\sqrt{-1}}{2}\left( \pa\db |\lambda_1|^2f_E(\frac{\zeta_1}{\lambda_1}, \frac{\overline{\zeta_1}}{\overline{\lambda_1}}, \eta_1, \overline{\eta_1})+    \pa\db \mathcal{P}\right),
\end{aligned}
\label{gi.4c}\end{equation}
where
$$
f_E(u,\overline{u},\eta_1,\overline{\eta_1}):=  f_0(\eta_1,\overline{\eta_1})+ \Hess(f)_{W_1}(u,\overline{u})
$$
and
 \begin{equation}
\begin{aligned}
   \mathcal{P} &= |\lambda_1|^2   f(\frac{\zeta_1}{\lambda_1}, \frac{\overline{\zeta_1}}{\overline{\lambda_1}},\eta_1,\overline{\eta_1}) -|\lambda_1|^2f_0(\eta_1,\overline{\eta_1})- \Hess(f)_{W_1}(\zeta_1,\overline{\zeta_1})  \\
   &= \mathcal{O}(\frac{|\zeta|^3}{|\lambda_1|}) \; \mbox{as} \; |\lambda_1|\to \infty, \; \frac{|\zeta|}{|\lambda_1|}+ |\eta_1|\le C.
\end{aligned}
 \label{gi.7}\end{equation}

\subsection{The Ricci-flat Kähler cone metric seen as a $\QAC$-metric when $\tX_{\sc}$ is of depth one}

We now suppose that $\tX_{\sc}$ is an orbifold with fibred corners of depth one.  In this case,
if $p$ lies in a singularity of relative depth 1, then the action of $\Gamma_1$ on $\bbC^{m_1}\setminus\{0\}$ is free.
Let $H_1$ denote the boundary hypersurface of  $\tX_{\sc}$ corresponding to $p$ and $H_{\max}$ the boundary hypersurface corresponding to the maximal stratum.   In terms of the coordinates \eqref{gi.4}, notice that
$x_{\max}=\frac{1}{\sqrt{1+|\zeta_1|^2}}$ is a boundary defining function for $H_{\max}$ and $x_{1}=\frac{\sqrt{1+|\zeta_1|^2}}{|\lambda_1|}$ is a boundary defining function for $H_1$.  Moreover, $\zeta_1^1,\ldots,\zeta_1^{m_1}$ are holomorphic coordinates in the interior of the fibres of $\phi_1: H_1\to S_1$ and $\arg \lambda_1, \eta_1, \overline{\eta_1}$ are coordinates on the interior of $S_1$.

With this interpretation, we can replace \eqref{gi.7} with the more precise estimate
\begin{equation}
   \mathcal{P}\in x_{\max}^{-2}x_1\CI(\tX_{\sc}) \quad \Longrightarrow \quad \pa\db \mathcal{P} \in x_1\CI(\tX_{\sc}; {}^{\phi}T^*\tX_{\sc}\wedge {}^{\phi}T^*\tX_{\sc}).
\label{gi.7pr}\end{equation}
In particular, we deduce that $\omega_E:= \frac{\sqrt{-1}}{2}\pa\db |\lambda_1|^2f_E(\frac{\zeta_1}{\lambda_1}, \frac{\overline{\zeta_1}}{\overline{\lambda_1}}, \eta_1, \overline{\eta_1})$ is also a K\"ahler form near $H_1$.

\begin{proposition} The K\"ahler metric $g_E$ associated to the K\"ahler form $\omega_E$ is a $QAC$-metric with respect to the Lie algebra of $\QAC$-vector fields induced by the choice of $x_1$ and $x_{\max}$.
\label{gi.7b}\end{proposition}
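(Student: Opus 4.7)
The plan is to verify directly, working in the coordinates $(\lambda_1,\zeta_1,\eta_1,\overline{\eta}_1)$ of \eqref{gi.4}, that $\omega_E$ extends to a smooth section of $\Lambda^{1,1}({}^{\phi}T^*\tX_{\sc})$ in a neighborhood of $H_1$ and that the induced bilinear form $g_E$ is positive definite there. Following the decomposition of the K\"ahler potential, write $\omega_E = \omega_E^{\mathrm{b}} + \omega_E^{\mathrm{f}}$ with $\omega_E^{\mathrm{b}} = \tfrac{\sqrt{-1}}{2}\pa\db(|\lambda_1|^2 f_0(\eta_1,\overline{\eta}_1))$ and $\omega_E^{\mathrm{f}} = \tfrac{\sqrt{-1}}{2}\pa\db(H_{ij}(\eta_1,\overline{\eta}_1)\,\zeta_1^i\overline{\zeta}_1^j)$, where $H_{ij} := \pa_{z^i}\pa_{\overline{z}^j}f|_{W_1}$ is a positive-definite Hermitian matrix (inherited from the nondegeneracy of $g_C$ along $W_1$).

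The potential $|\lambda_1|^2 f_0(\eta_1,\overline{\eta}_1)$ depends only on $(\lambda_1,\eta_1)$, which parametrize an orbifold chart of the total space of the line bundle $L$ over a chart $W_1$ of the stratum $\sigma_1 \subset D$. Its $\pa\db$ is therefore the K\"ahler form of a (not necessarily Ricci-flat) K\"ahler cone metric on this chart, with radial function $r = |\lambda_1|$; near $r = \infty$ it extends smoothly to a scattering (asymptotically conical) K\"ahler form on the radial compactification. Pulled back to a neighborhood of $H_1$ in $\tX_{\sc}$, it is a smooth linear combination of the $\QAC$-coframe elements $dv_1/v_1^2 = -d|\lambda_1|$ and $dy_1^n/v_1 = |\lambda_1|\,dy_1^n$ with $y_1^n \in \{\arg\lambda_1,\Re\eta_1,\Im\eta_1\}$, representing the cone metric $dr^2 + r^2 g_{S_1,f_0}$ on the base directions of $\phi_1$, which is positive definite since $f_0 > 0$.

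Expanding $\pa\db$ on the fibre potential, the principal term $\tfrac{\sqrt{-1}}{2}H_{ij}(\eta_1)\,d\zeta_1^i\wedge d\overline{\zeta}_1^j$ restricts on each fibre of $\phi_1$ to a flat Hermitian K\"ahler form on $\bbC^{m_1}/\Gamma_1$, hence an asymptotically conical K\"ahler form on the radial compactification $\tV_1$, which corresponds to a smooth section of $\Lambda^{1,1}({}^{\phi}T^*\tX_{\sc})$ in the fibre directions of the coframe (smooth even across the corner $H_1\cap H_{\max}$). The remaining cross terms are schematically $(\text{smooth})\cdot\zeta_1\cdot d\eta_1\wedge d\overline{\zeta}_1$, its conjugate, and $(\text{smooth})\cdot\zeta_1\overline{\zeta}_1\cdot d\eta_1\wedge d\overline{\eta}_1$. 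Writing $d\eta_1^\ell = v_1\cdot(dy_1^\ell/v_1)$ and using the identity $x_1 = \sqrt{1+|\zeta_1|^2}/|\lambda_1|$, which gives $|\zeta_1|v_1\le x_1$ uniformly on $\tX_{\sc}$, these terms are smooth sections of $\Lambda^{1,1}({}^{\phi}T^*\tX_{\sc})$ vanishing at $H_1$ to order $x_1$ (respectively $x_1^2$ for the last one).

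Assembling the base and fibre pieces, $g_E$ extends to a smooth section of the symmetric square of ${}^{\phi}T^*\tX_{\sc}$ near $H_1$. Positive-definiteness in a sufficiently small neighborhood of $H_1$ follows since the two leading contributions—the base cone metric and the fibrewise flat Hermitian metric—are each positive, and the cross terms are $O(x_1)$. The main obstacle, and the reason we chose this decomposition, is the uniform control of the cross terms near the corner $H_1\cap H_{\max}$, where both $|\lambda_1|$ and $|\zeta_1|$ tend to infinity; this is handled precisely by the bound $|\zeta_1|v_1 = O(x_1)$, which encodes the interaction of the two boundary defining functions $x_1$ and $x_{\max}$ dictated by the $\QAC$-structure.
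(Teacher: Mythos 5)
Your strategy is essentially the paper's: work in the coordinates \eqref{gi.4}, expand $\omega_E$ in the $\QAC$-coframe attached to $x_1$ and $x_{\max}$ (the basis \eqref{gi.7b2}), observe that the coefficients are smooth with the mixed $\zeta_1$--$\eta_1$ terms vanishing to order $x_1$ (your bound $|\zeta_1|v_1=|\zeta_1|/|\lambda_1|\le x_1$ is exactly what produces the $a_{ij},b_{ij},c_{ij}\in x_1\CI(\tX_{\sc})$ in \eqref{gi.7bb}), and compare with the model metric of Example~\ref{mwfc.4}. Your identification of $H_{ij}=\left.\tfrac{\pa^2 f}{\pa z^i\pa \overline{z}^j}\right|_{W_1}$ as positive definite is also fine: up to the positive factor $|v|^{\frac2q}$ it is a principal block of the Hermitian matrix of $\omega_C$ at points of $W_1$, so it inherits positivity from $\omega_C$ (the paper instead deduces this positivity \emph{after} the proposition, from \eqref{gi.7bb}).

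The genuine gap is in the positivity of your base piece $\omega_E^{\mathrm{b}}=\tfrac{\sqrt{-1}}{2}\pa\db\bigl(|\lambda_1|^2 f_0(\eta_1,\overline{\eta}_1)\bigr)$, which you justify by ``positive definite since $f_0>0$''. That implication is false: the $(\lambda_1,\eta_1)$-block of this form has entries $f_0$, $\lambda_1\tfrac{\pa f_0}{\pa \eta_1^i}$, $|\lambda_1|^2\tfrac{\pa^2 f_0}{\pa \eta_1^i\pa\overline{\eta}_1^j}$, and its positivity requires a convexity condition on $f_0$ beyond positivity (e.g.\ $f_0=e^{-|\eta_1|^2}$ is positive, yet $\pa\db(|\lambda_1|^2f_0)$ is negative in the $\eta_1$-direction at $\eta_1=0$). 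Note also that these off-diagonal $d\lambda_1$--$\lambda_1 d\eta_1$ terms are \emph{not} $O(x_1)$, so you genuinely need positivity of the whole block, not just of its diagonal. The fix is short and of the same nature as your argument for $H_{ij}$: the potential $|\lambda_1|^2 f_0$ is the restriction of the cone potential $|\lambda_1|^2 f$ to the fixed locus $\{\zeta_1=0\}$, a complex suborbifold, so $\omega_E^{\mathrm{b}}$ is the pullback of the restriction of the K\"ahler form $\omega_C$ to that suborbifold and hence positive on the $(\lambda_1,\eta_1)$-directions. Alternatively, one can avoid splitting positivity altogether, as the paper does: by \eqref{gi.7pr}, $\omega_E-\omega_C=-\tfrac{\sqrt{-1}}{2}\pa\db\mathcal{P}$ lies in $x_1\CI(\tX_{\sc};\Lambda^2({}^{\phi}T^*\tX_{\sc}))$, so $\omega_E$ is already known to be K\"ahler near $H_1$, and the proof of the proposition only needs the coframe expansion \eqref{gi.7bb}. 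With that repair your argument is complete and coincides with the paper's.
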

\begin{proof}
Notice first that in terms of the boundary defining functions $x_1$ and $x_{\max}$,
\begin{equation}
         d\lambda_1, d\overline{\lambda}_1, \lambda_1d\eta_1, \overline{\lambda}_1 d\overline{\eta}_1, d\zeta_{1}, d\overline{\zeta}_{1}
\label{gi.7b2}\end{equation}
is a local basis of $\QAC$-forms.   Now, the K\"ahler form $\omega_E$ is of the form
\begin{multline}
                      \omega_E= \frac{\sqrt{-1}}{2}\left( f_E d\lambda_1\wedge d\overline{\lambda_1} +  \sum_{i=1}^{m_1}\sum_{j=1}^{m_1} \left.\frac{\pa^2f}{\pa u^i\pa\overline{u}^j }\right|_{W_1} d\zeta_1^i \wedge d\overline{\zeta}_1^j  + |\lambda_1|^2 \sum_{i=m_1+1}^n\sum_{j=m_1+1}^n \frac{\pa^2 f_0}{\pa \eta_1^i \pa \overline{\eta}_1^j}d\eta_1^i\wedge d\overline{\eta}_1^j  \right. \\
                      \left. + \sum_{i=1}^{m_1} \left(  \lambda_1\frac{\pa f_0}{\pa \eta_1^i}d\eta_1^i\wedge d\overline{\lambda}_1  + \overline{\lambda}_1\frac{\pa f_0}{\pa \overline{\eta}_1^i}d\lambda_1\wedge d\overline{\eta}_1^i \right)  +   \nu  \right),
\label{gi.7bb}\end{multline} where
$$
\nu= \sum_{i=1}^{m_1} \sum_{j=m_1+1}^n \left( a_{ij} d\zeta_1^i\wedge \overline{\lambda_1}d\overline{\eta}_1^j +b_{ij} \lambda_1 d\eta_1^j\wedge d\overline{\zeta}_1^i\right) + \sum_{i=m_1}^n \sum_{j=m_1+1}^n c_{ij} |\lambda_1|^2d\eta_1^i\wedge d\overline{\eta}^j_1
$$
with
$$
     a_{ij},  b_{ij}, c_{ij}\in x_1\CI(\tX_{\sc})
$$
and with the convention that $d\eta_1^{m_1}:= d\log \lambda_1$.
By Example~\ref{mwfc.4} and the local basis of $\QAC$-forms \eqref{gi.7b2}, we therefore see that the metric $g_E$ is a $\QAC$-metric with respect to the boundary defining functions $x_1$ and $x_{\max}$.
\end{proof}
In particular, we see from \eqref{gi.7bb} that $(\Hess(f)_{W_1})_{ij}=\left.\frac{\pa^2f}{\pa u^i\pa\overline{u}^j }\right|_{W_1}$ is positive-definite.  In fact,
$$
       g_{\phi_1}:=\Hess(f)_{W_1}
$$
is a family of Euclidean metrics on the interior of the fibres of the fibre bundle $\phi_1: H_1\to S_1$, that is, a Euclidean metric for the vector bundle $\phi_1: H_1\setminus\pa H_1\to S_1$.

\begin{corollary}
The metric $g_C$ is a $\QAC$-metric which has the same restriction as $g_E$ on $H_1$, namely
\begin{multline}
f_E d\lambda_1\otimes d\overline{\lambda_1} +  \sum_{i=1}^{m_1}\sum_{j=1}^{m_1} \left.\frac{\pa^2f}{\pa u^i\pa\overline{u}^j }\right|_{W_1} d\zeta_1^i \otimes d\overline{\zeta}_1^j   + |\lambda_1|^2 \sum_{i=m_1+1}^n\sum_{j=m_1+1}^n \frac{\pa^2 f_0}{\pa \eta_1^i \pa \overline{\eta}^j_1}d\eta_1^i\otimes d\overline{\eta}^j_1 \\
+ \sum_{i=1}^{m_1} \left(  \lambda_1\frac{\pa f_0}{\pa \eta_1^i}d\eta_1^i\otimes d\overline{\lambda}_1  + \overline{\lambda}_1\frac{\pa f_0}{\pa \overline{\eta}_1^i}d\lambda_1\otimes d\overline{\eta}_1^i \right).
\end{multline}
\label{gi.7c}\end{corollary}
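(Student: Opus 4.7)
The plan is to extract the corollary directly from the decomposition \eqref{gi.4c} of $\omega_C$ as $\omega_E$ plus the correction $\frac{\sqrt{-1}}{2}\pa\db \mathcal{P}$, combined with the refined estimate \eqref{gi.7pr} on the correction term. First, I would note that $g_C$ and $g_E$ are related by $g_C = g_E + h$, where $h$ is the symmetric $(0,2)$-tensor (the real part of the Hermitian tensor) associated to the $(1,1)$-form $\frac{\sqrt{-1}}{2}\pa\db \mathcal{P}$. Since \eqref{gi.7pr} gives $\pa\db\mathcal{P} \in x_1 \CI(\tX_{\sc};{}^{\phi}T^*\tX_{\sc}\wedge {}^{\phi}T^*\tX_{\sc})$, the corresponding tensor $h$ lies in $x_1 \CI(\tX_{\sc};{}^{\phi}T^*\tX_{\sc}\otimes {}^{\phi}T^*\tX_{\sc})$, that is, it is a smooth symmetric tensor in the $\QAC$-cotangent bundle that vanishes at $H_1$.

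Second, to establish that $g_C$ really is a $\QAC$-metric near $H_1$, I would invoke Proposition~\ref{gi.7b}, which provides a smooth positive-definite Euclidean metric $g_E\in \CI(\tX_{\sc};{}^{\phi}T^*\tX_{\sc}\otimes {}^{\phi}T^*\tX_{\sc})$ on ${}^{\phi}T\tX_{\sc}$ near $H_1$. Since $h$ vanishes at $H_1$ and is smooth in the $\QAC$ sense, for a sufficiently small neighborhood $\cU$ of $H_1$ the perturbation bound $|h|_{g_E} \le C x_1$ holds, and by continuity $g_E + h$ is still positive-definite on $\cU$ as a Euclidean metric on ${}^{\phi}T\tX_{\sc}$. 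Hence $g_C$ is a smooth $\QAC$-metric near $H_1$ in the sense of Definition~\ref{mwfc.2}. This proves the first assertion.

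Third, for the claim about the restriction, I would simply restrict $g_C = g_E + h$ to $H_1 = \{x_1=0\}$. Since $h\in x_1\CI(\tX_{\sc};{}^{\phi}T^*\tX_{\sc}\otimes {}^{\phi}T^*\tX_{\sc})$ vanishes identically on $H_1$, we have $\left. g_C\right|_{H_1}=\left. g_E\right|_{H_1}$, and reading off the latter from \eqref{gi.7bb} gives precisely the explicit expression stated in the corollary.

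The only step that requires some care is the verification that the $(0,2)$-tensor $h$ genuinely lies in $x_1\CI(\tX_{\sc};{}^{\phi}T^*\tX_{\sc}\otimes {}^{\phi}T^*\tX_{\sc})$ and not merely in the larger class of smooth $b$-tensors vanishing at $H_1$. This is what \eqref{gi.7pr} is designed to provide, and it rests on a careful accounting of the behavior of the derivatives of $\mathcal{P}=\mathcal{O}(|\zeta|^3/|\lambda_1|)$ against the local basis \eqref{gi.7b2} of $\QAC$-forms near $H_1\cap H_{\max}$; the main point is that each factor of $\lambda_1^{-1}$ produced by differentiating $\mathcal{P}$ is absorbed by writing the resulting expression in the $\QAC$ frame, leaving one extra factor of $x_1$ to spare. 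This is a routine but essential bookkeeping step, and aside from this the corollary is an immediate consequence of Proposition~\ref{gi.7b}.
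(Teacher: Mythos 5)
Your argument is correct and follows the same route as the paper, whose proof of Corollary~\ref{gi.7c} simply cites \eqref{gi.7pr} and Proposition~\ref{gi.7b}: you decompose $g_C=g_E+h$ via \eqref{gi.4c}, use \eqref{gi.7pr} to place $h$ in $x_1\CI(\tX_{\sc};{}^{\phi}T^*\tX_{\sc}\otimes{}^{\phi}T^*\tX_{\sc})$, and conclude both positivity near $H_1$ and the equality of restrictions. Your write-up merely makes explicit the bookkeeping that the paper leaves implicit.
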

\begin{proof}
This is a direct consequence of  \eqref{gi.7pr} and Proposition~\ref{gi.7b}.
\end{proof}

\subsection{The Ricci-flat Kähler cone metric seen as a $\QAC$-metric when $\tX_{\sc}$ is of arbitrary depth}

More generally, if $\tX_{\sc}$ is of arbitrary depth, then given a singular point $p$, we can still use the coordinates \eqref{gi.4}, but with the difference that this time the action of $\Gamma_1$ on $\bbC^{m_1}\setminus \{0\}$ is not necessarily free.    However, to see that this model agrees with models at boundary hypersurfaces of lower relative depth in $\tX_{\sc}$, we will need to introduce more refined coordinates.

Let us first describe these more refined coordinate systems. Suppose that the action of $\Gamma_1$ on $\bbC^{m_1}\setminus\{0\}$ is not free.  Let $p_2\in (\bbC^{m_1}/\Gamma_1)\setminus\{0\}$ be given and let $\ell_2$ be the corresponding complex line passing through $p_2$ and the origin.  If $p_2$ is not singular, then it suffices to use the coordinates \eqref{gi.4} to describe the behavior at infinity of the metric $g_C$ in the direction of $\ell_2$. In this case, the discussion is essentially as before.  If instead $p_2$ is singular, then the coordinates \eqref{gi.4} are no longer appropriate to describe the metric $g_C$ at infinity in the direction of $\ell_2$.  Near $p_2\in \bbC^{m_1}/\Gamma_1$, where it is understood that we are using the coordinates $\zeta^i_1=\lambda_1 z^i$, we can introduce an orbifold chart of the form
$$
  \varphi_2: \bbC^{m_2}/\Gamma_2\times \bbC\times \bbC^{\nu_2-1} \to \cU_2\subset \bbC^{m_1}/\Gamma_1,  \quad \nu_2= m_1-m_2,
$$
with $p_2= \varphi_2(0,1,0)$ and with $\Gamma_2\subset \SU(m_2)$ a finite subgroup such that $\Fix(\Gamma_2)=\{0\}$, where
$$
      \Fix(\Gamma_2)= \{ q \in \bbC^{m_2} \; | \; \gamma\cdot q=q \;\forall \gamma\in \Gamma_2\}.
$$
In particular, notice that $\Gamma_2$ can be identified with a subgroup of $\Gamma_1$, namely, with the stabilizer of a lift $\widetilde{p}_2$ of $p_2$ to $\bbC^{m_1}$ under the action of $\Gamma_1$.  We denote by $(\zeta_2,\lambda_2, z_2)$ the complex linear coordinates on each factor of $\bbC^{m_2}\times \bbC\times \bbC^{\nu_2-1}$. However, in terms of $\QAC$-geometry, the coordinates that will actually be useful are instead given by
$$
             \zeta_2,  \lambda_2, \eta_2:= \frac{z_2}{\lambda_2}.
$$
If the action of $\Gamma_2$ on $\bbC^{m_2}\setminus \{0\}$ is free, then these coordinates combined with $\lambda_1,\eta_1$ in \eqref{gi.4} will be what we need.  If the action is not free, then these coordinates will still suffice away from the singularities of $(\bbC^{m_2}/\Gamma_2)\setminus \{0\}$, but for $p_3\in (\bbC^{m_2}/\Gamma_2)\setminus \{0\}$ a singular point, the coordinates must be refined again in the direction of the complex line $\ell_3$ passing through $p_3$ and the origin. Even once these coordinates have been introduced, we may still have to repeat this step finitely many times before we have an adequate description of $g_C$ in all directions at infinity. More precisely, if $\ell$ is the relative depth of the point $p$, then we might have to apply this procedure up to $\ell-1$ times to the coordinates \eqref{gi.4} depending on the direction at infinity in which we wish to look.

Thus, in general, we will have a finite sequence of subgroups
$$
        \Gamma_\ell\subset \Gamma_{\ell-1}\subset \cdots \subset \Gamma_2\subset \Gamma_1
$$
with $\Gamma_i\subset \SU(m_i)$ acting linearly on $\bbC^{m_i}$ in such a way that $\Fix(\Gamma_i)=\{0\}$, and orbifold charts
$$
     \varphi_i: \bbC^{m_i}/\Gamma_i\times \bbC\times \bbC^{\nu_i-1}\to \cU_i\subset \bbC^{m_{i-1}}/\Gamma_{i-1}.
$$
For such a chart $(\zeta_i,\lambda_i, z_i)\in \bbC^{m_i}/\Gamma_i\times \bbC\times \bbC^{\nu_i-1}$, we consider instead the projectivised coordinates
$$
         \zeta_i, \lambda_i, \eta_i:= \frac{z_i}{\lambda_i}
$$
with the recursive relation $\zeta_{i-1}= \varphi_i(\zeta_i,\lambda_i, \lambda_i\eta_i)$.  Combining these yields the holomorphic coordinates
\begin{equation}
 (\zeta_{\ell}, \lambda_{\ell}, \eta_{\ell}, \lambda_{\ell-1},\eta_{\ell-1},\ldots,\lambda_1, \eta_1)\in \bbC^{m_{\ell}}/\Gamma_{\ell}\times(\bbC\times \bbC^{\nu_{\ell-1}})\times \cdots \times (\bbC\times \bbC^{\nu_1-1}).
\label{gid.1}\end{equation}
Suppressing the reference to the map $\varphi_i$ of the orbifold chart to lighten notation, we relate these new coordinates to the coordinates $(\zeta_1,\lambda_1,\eta_1)$ by
$$
      \zeta_1=(\zeta_{\ell},\lambda_{\ell}, \lambda_{\ell}\eta_{\ell},\lambda_{\ell-1}, \lambda_{\ell-1}\eta_{\ell-1},\ldots, ,\lambda_{2}, \lambda_{2}\eta_{2}), \; \lambda_1=\lambda_1, \; \eta_1=\eta_1.
$$
This should be compared with the real coordinate system of \eqref{mwfc.2b} with $\frac{1}{|\lambda_i|}$ playing the role of $v_i$, $\arg(\lambda_i),\eta_i$ playing the role of $y_i$, and since the function $\frac{1}{|\zeta_{\ell}|}$ can be used as a boundary defining function $x_{\ell+1}$ of the maximal boundary hypersurface $H_{\ell+1}:= H_{\max}$, $\frac{\zeta_{\ell}}{|\zeta_{\ell}|}$ plays the role of $y_{\ell+1}$.  Notice that this labeling of the boundary hypersurfaces is compatible with the partial order given by the orbifold with fibred corners structure in that
$$
         H_{i}< H_{j} \; \Longrightarrow \; i<j.
$$

\noindent We can thus pick
$$
    x_{\ell}= \frac{v_{\ell}}{x_{\ell+1}}=\frac{|\zeta_{\ell}|}{|\lambda_{\ell}|}, \quad x_i= \frac{v_i}{v_{i+1}}=\frac{|\lambda_{i+1}|}{|\lambda_{i}|}, \; i<\ell,
$$
as boundary defining functions for the other boundary hypersurfaces $H_{\ell}, \ldots, H_{1}$ involved in this coordinate system.  

Moreover, the coordinate system \eqref{gid.1} induces the coordinates 
$$
(\zeta_{\ell}, \lambda_{\ell},\eta_{\ell}, \ldots, \lambda_{i+1},\eta_{i+1}, \arg{\lambda_i},\eta_i,\frac{\lambda_{i-1}}{|\lambda_i|},\eta_{i-1},\ldots, \frac{\lambda_1}{|\lambda_i|}, \eta_1)
$$ on $H_i$, and in terms of these coordinates
 the projection $\phi_i: H_i\to S_i$ is given by
$$
  \phi_i(\zeta_{\ell}, \lambda_{\ell},\eta_{\ell}, \ldots, \lambda_{i+1},\eta_{i+1}, \arg{\lambda_i},\eta_i,\frac{\lambda_{i-1}}{|\lambda_i|},\eta_{i-1},\ldots, \frac{\lambda_1}{|\lambda_i|}, \eta_1)= (\arg{\lambda_i},\eta_i,\frac{\lambda_{i-1}}{|\lambda_i|},\eta_{i-1},\ldots, \frac{\lambda_1}{|\lambda_i|}, \eta_1)\in S_i.
$$

Notice moreover that in the coordinates \eqref{gid.1}, the crepant resolution $\beta_X: \hX\to X$ is local product in the sense that it is described entirely in terms of the coordinates $\zeta_{\ell}$, so that the other coordinates of \eqref{gid.1} naturally lift to holomorphic coordinates on the crepant resolution $\hX$.  This allows us to complete the discussion of \S~\ref{cr.0} about the existence of a natural Lie algebra of $\QAC$-vector fields on $\hX_{\QAC}$.  
\begin{lemma}
The functions $\rho_i$ introduced in \eqref{cr.13} can be chosen in such a way that they lift to be smooth on $\hX_{\QAC}$.  In particular, by Lemma~\ref{cr.18}, the boundary defining functions \eqref{cr.17} induce a well-defined Lie algebra of $\QAC$-vector fields on $\hX_{\QAC}$.  
\label{bdfcr.1}\end{lemma}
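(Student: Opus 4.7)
My plan is to construct each $\rho_i$ locally in terms of the modulus coordinates $|\lambda_j|$ of the chart system \eqref{gid.1}; by the local product nature of the crepant resolution, these are transverse to the directions affected by $\beta_{\QAC}$ and therefore lift to smooth functions on $\hX_{\QAC}$.

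In a chart \eqref{gid.1} of local relative depth $\ell$, a direct unwinding of the relations \eqref{cr.13} starting from the local choice $\rho_0=1/|\lambda_1|$ yields the identifications $\beta_i^*\rho_i=1/|\lambda_{i+1}|$ for $i<\ell$ and $\beta_\ell^*\rho_\ell=1/|\zeta_\ell|$. Since the local product K\"ahler crepant resolution $\hX\to X$ in this chart touches only the $\zeta$-factors of the iterated product, each function $|\lambda_j|$ depends only on coordinates untouched by $\beta_{\QAC}$, so $1/|\lambda_j|$ lifts to a smooth positive function on the region of $\hX_{\QAC}$ where $|\lambda_j|$ is bounded below. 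For the remaining $1/|\zeta_\ell|$, I will use that $|\zeta_\ell|^2$ is the restriction of a $\Gamma_\ell$-invariant polynomial on $\bbC^{m_\ell}$, hence descends to a function on $\bbC^{m_\ell}/\Gamma_\ell$ whose pullback by the crepant blowdown $Y_\ell\to\bbC^{m_\ell}/\Gamma_\ell$ is smooth; near $W_\ell$, where $|\zeta_\ell|$ is bounded below, $1/|\zeta_\ell|$ is thus smooth on $\hX_{\QAC}$.

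To globalize, I will fix a partition of unity $\{\chi_\alpha\}$ on $D$ subordinate to a cover by domains of orbifold charts of the form \eqref{sf.1b}, lift it via the composition of the projection $C=L\setminus D\to D$ with the fibre bundle maps $\hphi_j$, and define each $\rho_i$ as the resulting sum $\sum_\alpha\chi_\alpha\rho_{i,\alpha}$ of the local models $\rho_{i,\alpha}$, patched with an arbitrary fixed boundary defining function on a compact subset away from the singular set.  The transition maps between overlapping charts \eqref{gid.1} are compositions of biholomorphisms of the orbifold charts $\varphi_i$ with changes of trivialization of the line bundle $L$, and hence preserve the $\lambda_j$-variables up to smooth nonvanishing multiplicative factors; this ensures that the globally patched $\rho_i$ remains a boundary defining function for $B_i$ whose pullback $\beta_{\QAC}^*\beta_i^*\rho_i$ is smooth on $\hX_{\QAC}$. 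Applying Lemma~\ref{cr.18} then yields the well-defined Lie algebra of $\QAC$-vector fields.

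The main difficulty I anticipate is verifying recursive compatibility at the higher-codimension corners $\hH_{i_1}\cap\cdots\cap\hH_{i_r}$, where the iterated blowup sequence defining $\tX_{\sc}$ interacts with the local product structure of the crepant resolution at all depths simultaneously. This should follow inductively from the local product hypothesis of \cite{Joyce}, which at each stratum identifies the resolution with the identity map along directions normal to lower-depth strata, so that the coordinate-wise local models $1/|\lambda_{i+1}|$ and $1/|\zeta_\ell|$ are consistent with $\beta_{\QAC}$ on overlaps and across the face structure of $\hX_{\QAC}$.
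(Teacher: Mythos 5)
Your proposal is correct and follows essentially the same route as the paper's proof: localize with a partition of unity to the coordinate charts \eqref{gid.1}, identify the $\rho_i$ there with the functions $v_j=1/|\lambda_j|$ (and $1/|\zeta_\ell|$ for the last one), and use that the local product crepant resolution only involves the $\zeta_\ell$-coordinates, so these functions lift smoothly to $\hX_{\QAC}$. Your globalization and the check for $1/|\zeta_\ell|$ are just more explicit versions of the paper's brief ``suitable partition of unity'' remark.
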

\begin{proof}
Using a suitable partition of unity on $\hX_{\QAC}$, the problem becomes local on $\tX_{\sc}$, so we can work with the coordinate chart \eqref{gid.1}.  But in this chart, the choice of $\rho_i$ is equivalent to the choice of $v_i$, so we can set
$$
     \rho_i= v_i= \frac{1}{|\lambda_i|},
$$
which clearly lifts to be smooth on $\hX_{\QAC}$.
\end{proof}

Now, to describe the metric $g_C$ near $H_i$ in the coordinates \eqref{gid.1}, we can, as in Lemma~\ref{gi.3}, use the local invariance of $f$ under the action of $\Gamma_i$ to deduce that, at $W_i= \Fix(\Gamma_i)$, $f$ has a Taylor expansion of the form
\begin{equation}
 f\left(\frac{\zeta}{\lambda_1},\frac{\overline{\zeta}}{\overline{\lambda_1}}, \eta_1,\overline{\eta}_1\right)= f_{S_i}\left(\frac{\lambda_i}{\lambda_1},\frac{\overline{\lambda}_i}{\overline{\lambda}_1}, \frac{\lambda_i\eta_i}{\lambda_1},\frac{\overline{\lambda}_i\overline{\eta}_i}{\overline{\lambda}_1},\ldots, \frac{\lambda_2}{\lambda_1},\frac{\overline{\lambda}_2}{\overline{\lambda}_1}, \frac{\lambda_2\eta_2}{\lambda_1},\frac{\overline{\lambda}_2\overline{\eta}_2}{\overline{\lambda}_1}, \eta_1\right) 
   + \Hess(f)_{W_i}\left(\frac{u_i}{\lambda_1}, \frac{\overline{u}_i}{\overline{\lambda_1}}\right)+ \mathcal{O}\left(\left| \frac{u_i}{\lambda_1} \right|^3\right)
\label{gid.2}\end{equation}
with  $u_i=(\zeta_{\ell}, \lambda_{\ell}, \lambda_{\ell}\eta_{\ell}, \ldots, \lambda_{i+1}, \lambda_{i+1}\eta_{i+1})$.  In particular,
\begin{equation}
|\lambda_1|^2  f\left(\frac{\zeta}{\lambda_1},\frac{\overline{\zeta}}{\overline{\lambda_1}}, \eta_1,\overline{\eta}_1\right)= f_{E_i} + \mathcal{O}\left(\frac{|u_i|^3}{|\lambda_1|}\right)
\label{gid.3}\end{equation}
with
\begin{equation}
f_{E_i}= |\lambda_1|^2f_{S_i}\left(\frac{\lambda_i}{\lambda_1},\frac{\overline{\lambda}_i}{\overline{\lambda}_1}, \frac{\lambda_i\eta_i}{\lambda_1},\frac{\overline{\lambda}_i\overline{\eta}_i}{\overline{\lambda}_1},\ldots, \frac{\lambda_2}{\lambda_1},\frac{\overline{\lambda}_2}{\overline{\lambda}_1}, \frac{\lambda_2\eta_2}{\lambda_1},\frac{\overline{\lambda}_2\overline{\eta}_2}{\overline{\lambda}_1}, \eta_1,\overline{\eta}_1\right)+ \Hess(f)_{W_i}(u_i, \overline{u}_i).
\label{gid.4}\end{equation}
Since $\displaystyle \left|\frac{u_i}{\lambda_1}\right|= \mathcal{O}(w_i)$ with $\displaystyle w_i= \prod_{j\le i} x_i$, we see that
\begin{equation}
     \omega_{E_i}-\omega_C \in w_i\CI(\tX_{\sc}; \Lambda^2({}^{\phi}\!T\tX_{\sc})),
\label{gid.5}\end{equation}
where
\begin{equation}
\begin{aligned}
\omega_{E_i}:= & \ \frac{\sqrt{-1}}{2} \pa\db |\lambda_1|^2 f_{E_i} \\
                         = & \ \omega_{\phi_i} + \phi_i^*\omega_{S_i} + \nu_i
\end{aligned}
\label{gid.6}\end{equation}
with $ \nu_i\in w_i\CI(\tX_{\sc}; \Lambda^2({}^{\phi}\!T\tX_{\sc}))$ and
\begin{gather}
\omega_{\phi_i}:= \frac{\sqrt{-1}}2 \pa\db \Hess(f)_{W_i}(u_i,\overline{u}_i), \\
\omega_{S_i}:= \frac{\sqrt{-1}}2 \pa\db \left( |\lambda_1|^2f_{S_i}\left(\frac{\lambda_i}{\lambda_1},\frac{\overline{\lambda}_i}{\overline{\lambda}_1}, \frac{\lambda_i\eta_i}{\lambda_1},\frac{\overline{\lambda}_i\overline{\eta}_i}{\overline{\lambda}_1},\ldots, \frac{\lambda_2}{\lambda_1},\frac{\overline{\lambda}_2}{\overline{\lambda}_1}, \frac{\lambda_2\eta_2}{\lambda_1},\frac{\overline{\lambda}_2\overline{\eta}_2}{\overline{\lambda}_1}, \eta_1,\overline{\eta}_1\right) \right) .
\label{gid.6c}\end{gather}
In particular, since $\omega_C$ is a Kähler form, this implies that near $H_i$,  $\omega_{E_i}$ is the  Kähler form of a Kähler metric $g_{E_i}$ and  that $\omega_{\phi_i}$ is a Kähler form in each fibre of $\phi_i: H_i\to S_i$ of a corresponding family of Kähler metrics $g_{\phi_i}$ which are in fact Euclidean.   This also implies   that
\begin{equation}
  \left.\omega_C\right|_{H_i}= \left.  \omega_{E_i}\right|_{H_i}.
\label{gid.6b}\end{equation}
Moreover, since $\frac{w_j}{w_i}\in \CI(\tX_{\sc})$ for $i<j$, we also have that in this case
$$
  \omega_{E_i}-\omega_{E_j}= \omega_{E_i}-\omega_C  -(\omega_{E_j}-\omega_C) \in w_i\CI(\tX_{\sc}; \Lambda^2({}^{\phi}\!T\tX_{\sc})),
$$
which implies that
$$
          \left. \omega_{E_i} \right|_{H_i\cap H_j} =  \left. \omega_{E_j} \right|_{H_i\cap H_j}.
$$
\begin{lemma}
The metrics $g_{E_i}$ and the Ricci-flat Kähler cone metric $g_C$ are $\QAC$-metrics on $\tX_{\sc}$.
\label{gid.7}\end{lemma}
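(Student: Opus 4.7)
The plan is to show first that each $g_{E_i}$ is a smooth $\QAC$-metric in a neighborhood of $H_i$ using the explicit decomposition \eqref{gid.6}, and then to transfer the conclusion to $g_C$ using the estimate \eqref{gid.5}. Working locally in a coordinate chart of the form \eqref{gid.1}, I would first verify that the complex $1$-forms
$$
d\lambda_1,\ldots,d\lambda_\ell,\ \lambda_1\,d\eta_1,\ldots,\lambda_\ell\,d\eta_\ell,\ d\zeta_\ell,
$$
together with their complex conjugates, form a smooth local frame of ${}^{\phi}T^{*}\tX_{\sc}\otimes\bbC$. This is a direct check from Definition~\ref{mwfc.2} using the identifications $v_i=1/|\lambda_i|$ for $i\le\ell$ and $v_{\ell+1}=1/|\zeta_\ell|$ supplied by Lemma~\ref{bdfcr.1}, and it generalizes the depth-one frame \eqref{gi.7b2} used in Proposition~\ref{gi.7b}.

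In this $\QAC$-frame, each piece of the decomposition $\omega_{E_i}=\omega_{\phi_i}+\phi_i^{*}\omega_{S_i}+\nu_i$ from \eqref{gid.6} can be read off explicitly. The fibrewise Euclidean form $\omega_{\phi_i}=\tfrac{\sqrt{-1}}{2}\,\pa\db\,\Hess(f)_{W_i}(u_i,\bar u_i)$ has constant coefficients in the basis differentials $du_i^{a}\wedge d\bar u_i^{b}$, each of which lies in the above frame since $u_i=(\zeta_\ell,\lambda_\ell,\lambda_\ell\eta_\ell,\ldots,\lambda_{i+1},\lambda_{i+1}\eta_{i+1})$. The pullback $\phi_i^{*}\omega_{S_i}$ is a $\pa\db$-exact K\"ahler form on the orbifold with fibred corners $S_i$ of strictly smaller depth, so by induction on the depth $\ell$ (the base case $\ell=1$ being \eqref{gi.7bb}) it is a smooth combination of the frame elements $d\lambda_j\wedge d\bar\lambda_j$, $d\lambda_j\wedge \bar\lambda_j\,d\bar\eta_j$, and $\lambda_j\,d\eta_j\wedge\bar\lambda_j\,d\bar\eta_j$ for $j\le i$. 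Finally $\nu_i\in w_i\CI(\tX_{\sc};\Lambda^{2}({}^{\phi}T^{*}\tX_{\sc}))$ by construction. Hence $\omega_{E_i}$ defines a smooth section of $\Lambda^{1,1}({}^{\phi}T^{*}\tX_{\sc})$ near $H_i$.

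Positive-definiteness of the associated Hermitian form on ${}^{\phi}T\tX_{\sc}$ is then checked at $H_i$: in the fibre directions of $\phi_i$ it reduces to $\Hess(f)_{W_i}$, which is positive-definite since $f$ is positive with a strict minimum transverse to $W_i$ (visible in \eqref{gi.7bb}); in the base directions it reduces to $\omega_{S_i}$, positive by the inductive hypothesis. By continuity, $\omega_{E_i}$ stays positive-definite on a full neighborhood of $H_i$, so $g_{E_i}$ is a $\QAC$-metric there. Applying \eqref{gid.5}, the difference $\omega_C-\omega_{E_i}$ lies in $w_i\CI(\tX_{\sc};\Lambda^{2}({}^{\phi}T^{*}\tX_{\sc}))$, so $\omega_C$ is also a smooth $\QAC$ $(1,1)$-form in a neighborhood of $H_i$ which, by \eqref{gid.6b}, agrees with $\omega_{E_i}$ at $H_i$ and is therefore positive-definite there. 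Running this argument at each $H_1,\ldots,H_\ell$, and observing that near the maximal face $H_{\ell+1}$ the cone metric $g_C=dr^2+r^2g_Z$ is automatically a scattering metric in the sense of Example~\ref{mwfc.5} (and hence a $\QAC$-metric by Definition~\ref{mwfc.3}), yields the claim globally. Patching across corners is automatic from the compatibility $\omega_{E_i}|_{H_i\cap H_j}=\omega_{E_j}|_{H_i\cap H_j}$ established just before the statement.

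The main technical obstacle will be verifying that the proposed local complex frame really trivializes ${}^{\phi}T^{*}\tX_{\sc}\otimes\bbC$, which amounts to checking that the boundary defining functions $x_i=|\lambda_{i+1}|/|\lambda_i|$ (together with $x_\ell=|\zeta_\ell|/|\lambda_\ell|$ and $x_{\ell+1}=1/|\zeta_\ell|$) are $\QAC$-equivalent, in the sense of Lemma~\ref{mwfc.2ee}, to the abstract boundary defining functions of Lemma~\ref{cr.14}; this requires verifying that the logarithmic discrepancies restrict on each boundary face to pullbacks from the corresponding base.
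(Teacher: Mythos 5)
Your route is essentially the paper's: the proof of Lemma~\ref{gid.7} consists precisely of exhibiting $d\lambda_1,\lambda_1 d\eta_1,\ldots,d\lambda_{\ell},\lambda_{\ell}d\eta_{\ell},d\zeta_{\ell}$ (and conjugates) as a local basis of the complexified $\QAC$-cotangent bundle in the coordinates \eqref{gid.1}, and then reading off the claim from \eqref{gid.5} and the local description \eqref{gid.6}. Your additional verifications (the frame check via $v_i=1/|\lambda_i|$, the maximal face, corner compatibility) are elaborations of the same argument, and the ``technical obstacle'' you flag at the end is already settled in the text: the compatibility of these concrete boundary defining functions with the abstract ones is exactly the content of Lemma~\ref{bdfcr.1}, via Lemma~\ref{mwfc.2ee}.

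One justification, however, is wrong as written: the positive-definiteness of $\Hess(f)_{W_i}$ does not follow from ``$f$ is positive with a strict minimum transverse to $W_i$'' --- positivity of $f$ gives no control on the transverse complex Hessian, and nothing in the setup asserts that $f$ has a minimum along $W_i$. In the paper the logic runs the other way: $\omega_C$ is K\"ahler, and by \eqref{gi.7pr} (resp.\ \eqref{gid.5}) the difference $\omega_{E_i}-\omega_C$ vanishes at $H_i$ as a $\QAC$-form, so $\omega_{E_i}$ is positive near $H_i$; since its block in the fibre directions is $\Hess(f)_{W_i}$, the positivity of that Hessian is a \emph{consequence}, stated right after \eqref{gi.7bb} and again in the paragraph preceding the lemma, not an input. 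Relatedly, $\omega_{S_i}$ as defined in \eqref{gid.6c} is not a K\"ahler form on the compact base $S_i$: it scales like $|\lambda_1|^2$ and is the conical piece of the metric in the base directions, so the induction you invoke for $\phi_i^*\omega_{S_i}$ should simply be the direct computation, as in \eqref{gi.7bb}, that $\pa\db\bigl(|\lambda_1|^2 f_{S_i}\bigr)$ has bounded smooth coefficients in the frame elements indexed by $j\le i$. With these two repairs --- both available from the discussion immediately preceding the lemma --- your argument coincides with the paper's.
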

\begin{proof}
In terms of the holomorphic coordinates \eqref{gid.1}, a local basis of the complexification of ${}^{\phi}\!T^*\tX_{\sc}$, \cf \eqref{mwfc.2e}, is given by
$$
         d\lambda_1, d\overline{\lambda}_1, \lambda_1d\eta_1, \overline{\lambda}_1 d\overline{\eta}_1, \ldots, d\lambda_{\ell}, d\overline{\lambda}_{\ell}, \lambda_{\ell}d\eta_{\ell}, \overline{\lambda}_{\ell}d\overline{\eta}_{\ell}, d\zeta_{\ell}, d\overline{\zeta}_{\ell}.
$$
Thus, we see from \eqref{gid.5} and the local description \eqref{gid.6} that $g_{E_i}$ and $g_C$ are $\QAC$-metrics.
\label{gid.8}\end{proof}

Recall that the interior of the fibres of $\phi_i: H_i\to S_i$ are modeled on $\bbC^{m_i}/\Gamma_i$ for some finite subgroup $\Gamma_i\subset \SU(m_i)$, while the fibres of $\hphi_i: \hH_i\to S_i$ are modeled on a Kähler crepant resolution $Y_i$ of $\bbC^{m_i}/\Gamma_i$.  This suggests the following definition.
\begin{definition}
A K\"ahler $\QAC$-metric $g\in \CI_{\QCyl}(\hX; {}^{\hphi}\!T^*\hX_{\QAC}\otimes {}^{\hphi}\!T^*\hX_{\QAC})$ on $\hX$ with K\"ahler form $\omega$ is said to be \textbf{asymptotic with rate $\delta$} to the Ricci-flat Kähler cone metric $g_C$ if:
\begin{enumerate}
\item Near $\hH_{\max}$, $g- g_C\in \widehat{x}_{\max}^{\delta}\CI_{\QCyl}(\hX; {}^{\hphi}\!T^*\hX_{\QAC}\otimes {}^{\hphi}\!T^*\hX_{\QAC})$;
\item Near $\hH_i$, $\omega- (\omega_i + \hphi_i^*\omega_{S_i})\in \widehat{x}_{\max}^{\delta}\widehat{x}_{i}\CI_{\QCyl}(\hX; {}^{\hphi}\!T^*\hX_{\QAC}\otimes {}^{\hphi}\!T^*\hX_{\QAC})$
with $\omega_{S_i}$ as in \eqref{gid.6c} and with $\omega_i$ a closed $(1,1)$-form on $\hH_{i}$ which restricts on each fibre of $\hphi_i:\hH_i\to S_i$ to the K\"ahler form of a K\"ahler $\QAC$-metric asymptotic to $g_{\phi_i}$ with rate $\delta$.  Moreover, as a family of $(1,1)$-forms parametrized by $S_i$, $\omega_i$ is smooth up to $\pa S_i$.
\end{enumerate}
\label{gid.8a}\end{definition}
\begin{remark}
This definition is not circular.  Since the fibres of $\hphi_i:\hH_i\to S_i$ are of depth lower than those of $\hX_{\QAC}$, we can assume, proceeding by induction on the depth of $\hX_{\QAC}$, that the notion of a Kähler $\QAC$-metric asymptotic to $g_{\phi_i}$ with rate $\delta$ has already been defined.
\end{remark}

\section{Existence of K\"ahler $\QAC$-metrics asymptotic to the Ricci-flat Kähler cone metric}\label{kqac.0}

Before discussing examples of Kähler $\QAC$-metrics, we need to provide examples of orbifolds equipped with asymptotically conical Kähler metrics.
Let $g_C$ be a Ricci-flat Kähler cone metric defined on $C=L\setminus D$ as in \S~\ref{cr.0},  where $L$ is some holomorphic line bundle over a Kähler-Einstein Fano orbifold.  For $N>0$, set
$$
      D_N= \{\ell \in L \;| \; \|\ell \|_{h_L}\le N \}.
$$
Suppose that $X$ is a complex orbifold such that for some compact set $\cK\subset X$, $X\setminus \cK$ is biholomorphic to $L\setminus D_N$ for some $N>0$.

\begin{lemma}
Suppose that $\omega$ is a compactly supported closed $(1,1)$-form on $X$ which is positive on $\cK$.  Then there exists a Kähler form $\tomega$ on $X$ such that $(X\setminus \cK', \tomega)$ is isometric and biholomorphic to $(L\setminus D_{N'}, \omega_C)$ for some compact set $\cK'\subset X$ and some $N'>0$.
\label{kqac.1}\end{lemma}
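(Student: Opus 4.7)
The plan is to construct $\tomega$ in the form $\tomega=K\omega+\tomega_C$, where $K>0$ is a small constant and $\tomega_C$ is a global smooth closed non-negative $(1,1)$-form on $X$ that agrees with $\Phi^{*}\omega_C$ near infinity and vanishes on $\cK$; here $\Phi:X\setminus\cK\to L\setminus D_N$ is the given biholomorphism. The main ingredient is a convex cutoff applied to the K\"ahler potential $\rho:=r^{2}\circ\Phi=\|\cdot\|_{h_L}^{2/q}\circ\Phi$ of $\omega_C$, which is smooth on $X\setminus\cK$, strictly exceeds $N^{2/q}$ there, and tends to $N^{2/q}$ at the inner boundary.

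First I would pick a smooth non-decreasing convex function $\chi:\bbR\to\bbR$ with $\chi\equiv 0$ on $(-\infty,0]$, $\chi'(t)>0$ strictly for $t>0$, and $\chi(t)=t-c$ for $t\ge 1$ (some $c\in[0,1]$); such a $\chi$ exists, for instance by smoothly interpolating $e^{-1/t}$ near $0^{+}$ with a linear tail while preserving convexity, and is $C^{\infty}$-flat at $0$. Setting
\[
\tilde\rho:=\begin{cases}\chi(\rho-N^{2/q}) & \text{on } X\setminus\cK,\\[2pt] 0 & \text{on } \cK,\end{cases}
\]
the flatness of $\chi$ at $0$ combined with $\rho\to N^{2/q}$ at $\pa\cK$ makes $\tilde\rho$ smooth on $X$. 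Taking $\tomega_C:=\tfrac{\sqrt{-1}}{2}\pa\db\tilde\rho$, a direct computation using convexity yields
\[
\tomega_C=\chi'(\rho-N^{2/q})\,\omega_C+\chi''(\rho-N^{2/q})\,\tfrac{\sqrt{-1}}{2}\pa\rho\wedge\db\rho\;\ge\;\chi'(\rho-N^{2/q})\,\omega_C>0 \quad\text{on } X\setminus\cK,
\]
while $\tomega_C=0$ on $\cK$ and $\tomega_C=\omega_C$ wherever $\rho\ge N^{2/q}+1$.

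Next I would verify positivity of $\tomega=K\omega+\tomega_C$. By continuity of $\omega$, there is an open neighborhood $\cV\supset\cK$ on which $\omega$ is still positive, so $\tomega\ge K\omega>0$ on $\cV$ for any $K>0$ since $\tomega_C\ge 0$. On the compact set $(\operatorname{supp}\omega)\setminus\cV$, $\rho$ is bounded away from $N^{2/q}$, so there exist $\delta,C>0$ with $\tomega_C\ge\delta\omega_C$ and $\omega\ge-C\omega_C$; hence $\tomega\ge(\delta-KC)\omega_C>0$ provided $K<\delta/C$. On the remainder $X\setminus(\operatorname{supp}\omega\cup\cV)$, $\omega$ vanishes and $\tomega=\tomega_C>0$. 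Fixing any $K\in(0,\delta/C)$ thus makes $\tomega$ a K\"ahler form. For the identification at infinity, I would then choose $N'\ge(N^{2/q}+1)^{q/2}$ large enough that $\operatorname{supp}\omega\cap\Phi^{-1}(L\setminus D_{N'})=\emptyset$; on $\Phi^{-1}(L\setminus D_{N'})$, $\tilde\rho$ differs from $\rho$ by a constant, so $\tomega_C=\omega_C$ and therefore $\tomega=\Phi^{*}\omega_C$ there, and the compact set $\cK':=X\setminus\Phi^{-1}(L\setminus D_{N'})$ is as required.

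The main difficulty is strict positivity of $\tomega$ on the transition annulus between $\pa\cK$ and $\{\rho\ge N^{2/q}+1\}$: the convex cutoff guarantees $\tomega_C>0$ throughout $X\setminus\cK$ but $\tomega_C$ degenerates as one approaches $\pa\cK$, precisely where $K\omega$ must take over, while on the rest of the annulus the compactly supported $\omega$ may be sign-indefinite and must be dominated by $\tomega_C$. The balancing choice of $K$ (small enough to be dominated by $\tomega_C$ off $\cV$, positive so that $K\omega$ controls the neighborhood $\cV$) is what resolves this tension.
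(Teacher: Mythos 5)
Your overall route is the same as the paper's: truncate the K\"ahler potential $r^2=\|\cdot\|^{2/q}_{h_L}$ of $\omega_C$ by a convex non-decreasing function of one variable to produce a global non-negative closed $(1,1)$-form agreeing with $\omega_C$ near infinity, then add a small positive multiple of the compactly supported form $\omega$ and balance the two constants exactly as you do. However, there is a genuine gap at the step where you claim $\tilde\rho$ is smooth on $X$. You extend $\chi(\rho-N^{2/q})$ by zero across $\cK$ and justify smoothness by ``flatness of $\chi$ at $0$ combined with $\rho\to N^{2/q}$ at $\pa\cK$.'' First, the convergence $\rho\to N^{2/q}$ at $\pa\cK$ is itself only an (intended, but unstated) consequence of the ends of $X\setminus\cK$ and of $L\setminus D_N$ being matched in the natural way; more seriously, even granting it, smoothness of the extension requires every derivative of $\chi(\rho-N^{2/q})$ to tend to zero at $\pa\cK$. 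These derivatives involve terms such as $\chi'(\rho-N^{2/q})\,\pa\rho$ and $\chi''(\rho-N^{2/q})\,\pa\rho\wedge\db\rho$, and the hypotheses give no control whatsoever on the derivatives of $\rho$ near $\pa\cK$: the biholomorphism $\Phi$ is only defined on the open set $X\setminus\cK$ and need not extend to $\pa\cK$, so $|\pa\rho|$ may blow up faster than the super-polynomial decay of $\chi^{(k)}$ in $\rho-N^{2/q}$ can absorb. Since your $\chi$ is strictly increasing on $(0,\infty)$, the form $\tomega_C$ is nonzero arbitrarily close to $\cK$, so this regularity issue cannot be waved away.

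The fix is precisely the paper's choice of cutoff: take $\chi$ (the paper's $\eta_{\delta,a}$) \emph{constant} on $(-\infty,\delta]$ for some small $\delta>0$, not merely flat at $0$. Then the resulting form vanishes identically on the collar $\{\rho< N^{2/q}+\delta\}$, so it extends by zero across $\cK$ with no information needed about $\rho$ beyond the continuity statement that $\rho<N^{2/q}+\delta$ near $\pa\cK$. The price is that the cutoff form now vanishes (rather than merely degenerates) on this collar, but your positivity argument already accommodates this: choose $\delta$ small enough that the collar is contained in the neighborhood $\cV$ where $\omega>0$, and then your three-region analysis (on $\cV$, on the compact set $\operatorname{supp}\omega\setminus\cV$ where the cutoff form dominates a fixed multiple of $\omega_C$, and outside $\operatorname{supp}\omega\cup\cV$) goes through verbatim, recovering the paper's proof with $\tomega=K\omega+\tomega_C$ playing the role of $\nu+\epsilon\omega$.
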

\begin{proof}
By continuity, $\omega$ is positive in a small neighborhood $\cU$ of $\cK$.  Using the identification $X\setminus \cK\cong L\setminus D_N$, we will work on $L\setminus D_N$, so that $\omega$ will be positive on $\cU\cap(L\setminus D_N)$.  Now, let $\eta\in \CI(\bbR)$ be a non-decreasing convex function such that
$$
      \eta(t)=  \left\{ \begin{array}{ll}  t, \quad \mbox{if} \; t\ge2, \\  \frac32, \quad \mbox{if} \; t\le1,  \end{array} \right.
$$
and set $\eta_{\delta,a}(t)= \eta(\frac{t-a}{\delta})$ for $\delta>0$ and $a\in \bbR$, so that $\eta_{\delta,a}$ is also a non-decreasing convex function.  On $L\setminus D_N$, one computes that
\begin{equation}
\begin{aligned}
\frac{\sqrt{-1}}2\pa\db\eta_{\delta,a}(\|\cdot\|^{\frac{2}{q}}_{h_L})  
    &=  \frac{\sqrt{-1}}2 \eta_{\delta,a}''(\|\cdot\|^{\frac{2}{q}}_{h_L}) \pa\|\cdot\|^{\frac{2}{q}}_{h_L}\wedge \db\|\cdot\|^{\frac{2}{q}}_{h_L}  + \frac{\sqrt{-1}}2\eta_{\delta,a}' (\|\cdot\|^{\frac{2}{q}}_{h_L})\pa\db \|\cdot\|^{\frac{2}{q}}_{h_L} \\
      &\ge \frac{\sqrt{-1}}2\eta_{\delta,a}'(\|\cdot\|^{\frac{2}{q}}_{h_L}) \pa\db \|\cdot\|^{\frac{2}{q}}_{h_L} \ge 0,
 \end{aligned}
\label{kqac.2}\end{equation}
so that this $(1,1)$-form is non-negative.  Moreover, in the region where $\|\cdot\|^{\frac{2}{q}}_{h_L}\ge 2\delta+a$, it is equal to
$$
         \frac{\sqrt{-1}}2\pa\db \|\cdot\|^{\frac{2}{q}}_{h_L}= \omega_C,
$$
the Kähler form of the Ricci-flat Kähler cone metric $g_C$, whereas in the region where $\|\cdot\|^{\frac{2}{q}}_{h_L}\le \delta+a$, it vanishes.  In particular, choosing $\delta>0$ sufficiently small and $a=N^{\frac{2}{q}}$, we have that
$$
   \nu= \frac{\sqrt{-1}}2\pa\db \eta_{\delta,a}(\|\cdot\|_{h_L}^{\frac{2}{q}})
$$
is a non-negative closed $(1,1)$-form which vanishes on $D_N$, is strictly positive on $L\setminus (\cU\cap(L\setminus D_N))$, and is equal to $\omega_C$ outside a compact set.  Hence, it suffices to take
$$
   \tomega= \nu + \epsilon\omega
$$
with $\epsilon>0$ sufficiently small.
\end{proof}

We are in particular interested in the case $X=L$.  Notice then that $g_C$ is not smooth in the orbifold sense since it has a singularity along $D$, so we cannot simply take $g_C$ itself to obtain the conclusion of Lemma~\ref{kqac.1}.

\begin{proposition}
On $L$, there exists a Kähler metric smooth in the orbifold sense which is equal to $g_C$ outside a compact set.
\label{kqac.3}\end{proposition}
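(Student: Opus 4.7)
The idea is to construct a smooth orbifold K\"ahler form on $L$ that interpolates, via a cutoff applied to K\"ahler potentials, between an auxiliary K\"ahler form near the zero section $D$ and the cone form $\omega_C$ away from $D$. Set $s:=\|\cdot\|_{h_L}^2$. My first step will be to produce an auxiliary orbifold K\"ahler form on all of $L$: using \eqref{qr.2}, which gives $\sqrt{-1}\pa\db\log s = 2q\pi^*\omega_D$ on $L\setminus D$, one checks directly that for any constant $c>0$ the form
$$\omega_L := \pi^*\omega_D + \tfrac{c\sqrt{-1}}{2}\pa\db s$$
is a smooth orbifold K\"ahler form on $L$. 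On $L\setminus D$ it rewrites as $(1+cqs)\pi^*\omega_D + \tfrac{c}{2s}\sqrt{-1}\pa s\wedge\db s$, which is manifestly positive-definite; at a point of $D$ one has $\pa s = 0$ and $\pa\db s|_D = h\,dv\wedge d\overline{v}$ in a local trivialisation, so $\omega_L|_D = \pi^*\omega_D + \tfrac{c}{2}h\sqrt{-1}dv\wedge d\overline{v}$ is positive-definite in both the horizontal and vertical directions.

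Next, on $L\setminus D$ both K\"ahler forms admit global potentials, namely $\omega_L = \tfrac{\sqrt{-1}}{2}\pa\db\varphi_L$ with $\varphi_L := \tfrac{1}{q}\log s + cs$, and $\omega_C = \tfrac{\sqrt{-1}}{2}\pa\db\varphi_C$ with $\varphi_C := s^{1/q}$. I will pick $0<s_1<s_2$ and a smooth cutoff $\chi : [0,\infty)\to[0,1]$ with $\chi\equiv 0$ on $[0,s_1]$ and $\chi\equiv 1$ on $[s_2,\infty)$, and set
$$\widetilde\omega := \omega_L + \tfrac{\sqrt{-1}}{2}\pa\db\bigl(\chi(s)(\varphi_C-\varphi_L)\bigr).$$
The product $\chi(s)(\varphi_C-\varphi_L)$ extends orbifold-smoothly to $L$ because $\chi$ vanishes identically in a neighborhood of $D$, where $\varphi_L$ is singular. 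By construction $\widetilde\omega = \omega_L$ on $\{s\le s_1\}$ and $\widetilde\omega = \omega_C$ on $\{s\ge s_2\}$, so $\widetilde\omega$ is closed and orbifold-smooth on $L$ and coincides with the cone form $\omega_C$ outside a compact set, as required.

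The remaining, and main, technical point will be to verify that $\widetilde\omega$ is positive-definite on the transition annulus $\{s_1\le s\le s_2\}$. Expanding,
$$\widetilde\omega = (1-\chi)\omega_L + \chi\omega_C + \mathcal R,$$
where $\mathcal R$ collects the terms in which $\chi'$ and $\chi''$ are paired with $\psi:=\varphi_C-\varphi_L$ and its first derivative. The convex combination $(1-\chi)\omega_L+\chi\omega_C$ is K\"ahler on the annulus with explicit positive lower bounds coming from the formulas above. The hard step will be to estimate $\mathcal R$ against this combination: writing $\chi(s) = \eta\bigl((s-s_1)/(s_2-s_1)\bigr)$ for a fixed model cutoff $\eta$, one finds $|\mathcal R| = O\bigl(|\psi|/(s_2-s_1)^2 + |\psi'|/(s_2-s_1)\bigr)$ on the annulus, while $|\psi|$ grows at worst linearly in $s$ and $|\psi'|$ stays bounded there. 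Consequently, taking $s_2$ sufficiently large and $s_2-s_1$ of order $s_2^{3/4}$ will make $\mathcal R$ strictly dominated by the convex combination, and hence $\widetilde\omega$ positive-definite throughout the annulus. This is in the same spirit as the Thurston-type gluing used in the proof of Lemma~\ref{kqac.1}.
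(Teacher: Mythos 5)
Your first step is sound (your $\omega_L$ is a legitimate orbifold K\"ahler form on $L$, playing the role that $-\sqrt{-1}\Theta_{\hat h_{p_L}}$ plays in the paper, and your potentials $\varphi_L=\tfrac1q\log s+cs$, $\varphi_C=s^{1/q}$ are correct), but the positivity claim on the transition annulus — which you yourself identify as the main point — is wrong as stated, and with your parameters the glued form is in fact not positive. Since $\varphi_L$, $\varphi_C$ and $\chi$ are all functions of $s$ alone, one can compute exactly: writing $\Phi=(1-\chi)\varphi_L+\chi\varphi_C$ and using $\tfrac{\sqrt{-1}}{2}\pa\db s=qs\,\pi^*\omega_D+\tfrac{1}{2s}\sqrt{-1}\pa s\wedge\db s$, one gets
$$\widetilde\omega \;=\; qs\,\Phi'(s)\,\pi^*\omega_D\;+\;\tfrac12\Bigl(\tfrac{\Phi'(s)}{s}+\Phi''(s)\Bigr)\sqrt{-1}\,\pa s\wedge\db s,$$
and (the two summands being block-diagonal with respect to the Chern horizontal/vertical splitting) positivity is equivalent to $\Phi'>0$ and $\Phi'+s\Phi''>0$. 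Now $\Phi'=(1-\chi)\varphi_L'+\chi\varphi_C'+\chi'(\varphi_C-\varphi_L)$, the good part is bounded ($\varphi_L'=c+\tfrac1{qs}$, $\varphi_C'=\tfrac1q s^{1/q-1}$), while $\varphi_C-\varphi_L\approx -cs$ for large $s$ when $q\ge2$, so at the middle of your annulus $\chi'(\varphi_C-\varphi_L)\approx -c\,s_2/(s_2-s_1)=-c\,s_2^{1/4}$ with your choice $s_2-s_1\sim s_2^{3/4}$; hence $\Phi'<0$ there and $\widetilde\omega$ is negative on horizontal directions. (For $q=1$ the horizontal coefficient survives, but the vertical one fails through the term $\chi''(\varphi_C-\varphi_L)\sim\pm(1-c)s_2^{-1/2}$, which dwarfs the good vertical part of size $\sim 1/s_2$.) The root of the problem is that your estimate $|\mathcal R|=O(|\psi|/(s_2-s_1)^2+|\psi'|/(s_2-s_1))$ is not measured against the glued metric: $\pa\db s$ and $\pa s\wedge\db s$ are themselves of size $\sim s$ relative to $(1-\chi)\omega_L+\chi\omega_C$, and since $\varphi_L\sim cs$ grows much faster than $\varphi_C=s^{1/q}$, the difference of potentials is comparable to $s$. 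Pushing $s_2\to\infty$ therefore makes the ratio worse, not better.

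This is also not ``the same spirit as Lemma~\ref{kqac.1}'': that lemma deliberately avoids a linear interpolation of potentials. There the cone potential is composed with a \emph{convex non-decreasing} function $\eta_{\delta,a}$, so that $\tfrac{\sqrt{-1}}2\pa\db\,\eta_{\delta,a}(\|\cdot\|_{h_L}^{2/q})\ge \eta_{\delta,a}'\,\omega_C\ge 0$ — the cutoff error has a sign and no estimate is needed — and this form equals $\omega_C$ at infinity and vanishes near $D$; positivity near the zero section is then restored by adding $\epsilon$ times a compactly supported closed $(1,1)$-form positive there, which the paper produces as minus the curvature of a modified Hermitian metric on $p_L^*L$ (your $\omega_L$, suitably truncated to have compact support, would serve equally well). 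To repair your argument, either switch to this convexity trick, or keep the two-potential interpolation but glue over a very wide region (roughly $s_1\lesssim\sqrt{s_2/c}$, with a carefully chosen $\chi$ verifying the two scalar inequalities above) — in either case a genuinely different argument from the one you sketch is required.
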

\begin{proof}
Let $p_{L}: L\to D$ denote the bundle projection, let $\varphi\in H^0(L;p_{L}^*L)$ denote the tautological section and equip $p_{L}^*L$ with the Hermitian metric
$$
     h_{p_{L}}:= e^{ \| \varphi\|^2_{h_L}}p_{L}^*h_L.
$$
The curvature of $(p_{L}^*L,h_{p_{L}})$ is then given by
$$
   \sqrt{-1}\Theta_{h_{p_{L}}}= -\sqrt{-1}\pa\db \| \varphi\|^{2}_{h_L} -2qp_{L}^*\omega_D,
$$
where we use formula \eqref{qr.2} for the curvature of $L$.    Clearly, the restriction of the $(1,1)$-form $\sqrt{-1}\pa\db \| \varphi\|^{2}_{h_L}$
is positive on each fibre of $p_{L}: L\to D$.  Moreover,  $p_{L}^*\omega_D$ is positive in the directions transverse to the fibres of $p_{L}:L\to D$.  Since the non-vertical part of $\sqrt{-1}\pa\db \| \varphi\|^{2}_{h_L}$ is $\mathcal{O}(\|\varphi\|_{h_L})$, we see that $-\sqrt{-1}\Theta_{h_{p_{L}}}$ is positive in $D_{N}$ for $N>0$ sufficiently small.  Replacing $h_{p_{L}}$ with $\hat{h}_{p_{L}}$ such that $\hat{h}_{p_{L}}= h_{p_{L}}$ on $D_N$ and $h_{p_L}(\varphi,\varphi)\equiv 1$ outside a compact set, we see that
$$
      \omega:= -\sqrt{-1}\Theta_{\hat{h}_{p_{L}}}
$$
is a compactly supported closed $(1,1)$-form which is positive on $D_N$.  It then suffices to apply the previous lemma with $\omega$ to obtain the desired Kähler form.
\end{proof}

Suppose now that $X$ is a complex orbifold and $g$ is a complete Kähler metric on $X$ with Kähler form $\omega$, such that there is a biholomorphism $X\setminus \cK\cong L\setminus D_N$ for some $N>0$ and some compact set $\cK\subset X$ inducing at the same time an isometry between $g$ and $g_C$. Let $\tX_{\sc}$ be the corresponding orbifold with fibred corners given by \eqref{cr.6b} and let $H_1,\ldots, H_k$ be an exhaustive list of the boundary hypersurfaces of $\tX_{\sc}$ compatible with the partial order in the sense that $H_i<H_j\Longrightarrow i<j$.  The goal of this section is to construct examples of Kähler $\QAC$-metrics on the $\QAC$-resolution $\hX_{\QAC}$ of $\tX_{\sc}$.  Our strategy will consist of gluing local models of Kähler metrics to the K\"ahler metric $g$ at places where the singularities of $\tX_{\sc}$ are resolved by a local product Kähler crepant resolution. To do this in a systematic way, we introduce a natural space $\widehat{\cX}$ on which this gluing can be performed.  This is in fact where most of the effort will be put, for once this space is defined, the gluing construction becomes very simple; see the proof of Theorem~\ref{kqac.24} below.

To introduce this space $\widehat{\cX}$, we need to work with the orbifold with fibred corners $\tX_{\sc}$.  As an orbifold, $\tX_{\sc}$ is automatically a stratified space.  Let $\Sigma_{k+1}, \ldots, \Sigma_{\ell}$ be an exhaustive list of its strata compatible with the inclusion in the sense that
$$
      \Sigma_i\subset \overline{\Sigma}_j \; \Longrightarrow \; i<j.
$$
In particular, $\Sigma_{\ell}$ is the regular stratum.  Consider then the orbifold with corners
\begin{equation}
    \tX_{\sc}\times [0,1)
\label{kqac.4}\end{equation}
and set
\begin{equation}
  \cX = [\tX_{\sc}\times [0,1); \overline{\Sigma}_{k+1}\times \{0\}, \ldots , \overline{\Sigma}_{\ell-1}\times \{0\}].
\label{kqac.5}\end{equation}
Clearly, $\cX$ is an orbifold with corners.  Some of the boundary hypersurfaces come from the lift of old hypersurfaces $H_i\times [0,1)$ to $\cX$, namely
$$
\cH_i= \overline{\beta^{-1}(\overset{\circ}{H}_i\times (0,1))} \quad \mbox{for} \; i\in \{1,\ldots, k\},
$$
where $\beta:\cX\to \tX_{\sc}\times [0,1)$ is the blow-down map.
As is clear from the definition, $\cH_i$ is also naturally equipped with a fibre bundle structure
\begin{equation}
\xymatrix{
      \cV_i \ar[r] & \cH_i \ar[d]^{\varphi_i} \\
          & S_i,
} \quad  \begin{array}{c} \\ \\ \\ i \in \{1,\ldots, k \}, \\ \end{array}
\label{kqac.7}\end{equation}
where $\cV_i$ is obtained from $\tV_i\times [0,1)$ in the same way that $\cX$ was obtained from $\tX_{\sc}\times [0,1)$, namely by blowing up the strata of $\tV_i\times \{0\}$ in $\tV_i\times [0,1)$ in order of decreasing relative depth.

The other boundary hypersurfaces of $\cX$ come from the blow-up of  $\overline{\Sigma}_i\times \{0\}$  in $\tX_{\sc}\times [0,1)$, as well as the lift of the hypersurface $\tX_{\sc}\times \{0\}$.  Thus, for each $i\in\{k+1,\ldots,\ell\}$, $\cX$ has a boundary hypersurface $\cH_i$ associated to the stratum $\Sigma_i$.   As in the proof of Proposition~\ref{cr.6a}, this boundary hypersurface $\cH_i$ is naturally equipped with a fibre bundle structure
\begin{equation}
\xymatrix{
      \tV_i\ar[r] & \cH_i \ar[d]^{\varphi_i} \\
          & \tSigma_i,
} \quad \begin{array}{c} \\ \\ \\ i \in \{k+1,\ldots, \ell \}, \\ \end{array}
\label{kqac.8}\end{equation}
where $\tSigma_i$ is the manifold with fibred corners that resolved the stratified space $\overline{\Sigma}_i$ and $\tV_i$ is obtained from $\overline{V}_i= \overline{\bbC^{n+1-\dim_{\bbC}\Sigma_i}/\Delta_i}$ by blowing up the singular strata of $\pa \overline{V}_i$ in $\overline{V}_i$ in an order compatible with the partial order of the strata of $\pa\overline{V}_i$, where $\Delta_i\subset \GL(n+1-\dim_{\bbC}\Sigma_i,\bbC)$ is some finite subgroup.    In particular, the base $\tSigma_i$ is smooth as a manifold with fibred corners and only the fibres $\tV_i$ remain with orbifold singularities, \cf~the fibre bundle  \eqref{cr.7}.  Notice also that in the case that $i=\ell$, we have that $\tSigma_{\ell}=\cH_{\ell}$ and $\varphi_i$ is just the identity map.

\begin{lemma}
The orbifold with corners $\cX$ is in fact an orbifold with fibred corners $(\cX,\varphi)$, where  $\varphi=(\varphi_1,\ldots, \varphi_{\ell})$ is the collection of fibre bundle maps given by
\eqref{kqac.7} and \eqref{kqac.8}.
\label{kqac.9}\end{lemma}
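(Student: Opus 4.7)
The plan is to define a partial order on the boundary hypersurfaces of $\cX$ by combining the existing order on $\tX_{\sc}$ with the stratification of its interior singularities, and then to verify Definition~\ref{owfc.1} using the standard behaviour of iterated Melrose blow-ups along $p$-submanifolds. Explicitly, I would set $\cH_i<\cH_j$ for $i,j\in\{1,\ldots,k\}$ iff $H_i<H_j$ in $\tX_{\sc}$; set $\cH_a<\cH_b$ for $a,b\in\{k+1,\ldots,\ell\}$ iff $\Sigma_a\subsetneq\overline{\Sigma}_b$, so that $\cH_a<\cH_\ell$ for every $a>k$; and set $\cH_a<\cH_i$ for $a>k$, $i\le k$ iff $\overline{\Sigma}_a\cap\overset{\circ}{H}_i\ne\emptyset$.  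That $\cX$ is an orbifold with corners then follows inductively: the minimality of $\overline{\Sigma}_{k+1}$ in the stratum order makes $\overline{\Sigma}_{k+1}\times\{0\}$ a $p$-submanifold of $\tX_{\sc}\times[0,1)$, and each successive blow-up preserves $p$-submanifold status for the remaining stratum closures. The boundary hypersurfaces are then identified as in \eqref{kqac.7}--\eqref{kqac.8}: the fibre bundle structures on the old faces are inherited fibrewise from the analogous construction on $\tV_i\times[0,1)$, while on the new faces they arise from the natural projection of a blow-up front face onto the blown-up submanifold.

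Second, I would check the three bullets of Definition~\ref{owfc.1}. Total ordering of any pairwise-intersecting family of hypersurfaces follows from the analogous property of $\tX_{\sc}$ in the old-old case, from linearity of stratum-closure inclusions in any chain in the new-new case, and in the cross case from the observation that if $\overline{\Sigma}_a\cap\overset{\circ}{H}_i\ne\emptyset$ and $\Sigma_a\subsetneq\overline{\Sigma}_{a'}$, then $\overline{\Sigma}_{a'}\cap\overset{\circ}{H}_i\ne\emptyset$ as well. The submersion and boundary-hypersurface conditions are inherited from $\tX_{\sc}$ in the old-old case. In the new-new case $\cH_a<\cH_b$, the intersection $\cH_a\cap\cH_b$ is the lift of the front face of $\overline{\Sigma}_a\times\{0\}$ inside the front face of $\overline{\Sigma}_b\times\{0\}$, and $\varphi_b$ restricts on it to a surjective submersion onto the boundary hypersurface of $\tSigma_b$ associated to the sub-stratum $\Sigma_a\subsetneq\overline{\Sigma}_b$.

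I expect the main obstacle to be the careful bookkeeping of the cross case $\cH_a<\cH_i$ (with $a>k$, $i\le k$), where one must verify that $\varphi_a$ restricts on $\cH_a\cap\cH_i$ to a surjective submersion onto a boundary hypersurface of $\tSigma_a$, and that the resulting data is compatible with $\varphi_i:\cH_i\to S_i$ via some submersion between the relevant bases. This reduces, by induction on the depth of $\tX_{\sc}$, to a compatibility of the orbifold stratification of each $H_i$ with the fibre bundle $\phi_i:H_i\to S_i$ already encoded in the orbifold with fibred corners structure of $\tX_{\sc}$, namely that the intersection of a stratum closure $\overline{\Sigma}_a$ with $H_i$ is the preimage under $\phi_i$ of a union of strata in $S_i$ induced by the stratified structure on the base.
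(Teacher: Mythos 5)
Your overall strategy is the same as the paper's: the entire proof there consists of declaring the partial order \eqref{kqac.10}, namely $\cH_i<\cH_j$ if and only if $i<j$ and $\cH_i\cap\cH_j\ne\emptyset$, and observing that the conditions of Definition~\ref{owfc.1} hold for $\varphi=(\varphi_1,\ldots,\varphi_\ell)$. The genuine problem with your proposal is that your partial order is reversed in the mixed case, and with that reversal the axioms fail. You set $\cH_a<\cH_i$ for $a>k$, $i\le k$ whenever $\overline{\Sigma}_a\cap\overset{\circ}{H}_i\ne\emptyset$; since the labelling puts the old faces first, the paper's order gives the opposite, $\cH_i<\cH_a$. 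To see your direction cannot work, take $a=\ell$: the regular stratum is dense, so your rule yields $\cH_\ell<\cH_i$ for every $i\le k$, but $\varphi_\ell=\Id$ on $\cH_\ell=\tSigma_\ell$, so the second bullet of Definition~\ref{owfc.1} would force $\varphi_\ell|_{\cH_\ell\cap\cH_i}$ to be a surjective submersion onto all of $\cH_\ell$, which is absurd since $\cH_\ell\cap\cH_i$ is a proper boundary hypersurface of $\cH_\ell$. The same failure occurs for $k<a<\ell$: the fibre $\tV_a$ of $\varphi_a$ over a point of $\tSigma_a$ lying over the interior of $\tX_{\sc}$ does not meet $\cH_i$ at all, so $\varphi_a(\cH_a\cap\cH_i)$ is only the boundary hypersurface of $\tSigma_a$ sitting over $\overline{\Sigma}_a\cap H_i$, never all of $\tSigma_a$; dually, $\varphi_i(\cH_a\cap\cH_i)$ is the image $\phi_i(\overline{\Sigma}_a\cap H_i)$, which is typically all of $S_i$ (for instance when $\overline{\Sigma}_a$ meets $H_i$ in the cone-point section of $\phi_i$, as in the basic $\QALE$ model) and in any case need not be a boundary hypersurface of $S_i$, so the requirement that it be one fails as well.

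With the correct order $\cH_i<\cH_a$ for $i\le k<a$ everything fits: on the corner, the fibres of $\varphi_a$ (copies of $\tV_a$, the compactified normal cones to $\Sigma_a$) are contained in the fibres of $\varphi_i$ (the $\cV_i$, which absorb the $\varepsilon$-direction and the subsequent blow-ups), so $\varphi_i$ factors through $\varphi_a$ as Definition~\ref{owfc.1} demands; $\varphi_a(\cH_i\cap\cH_a)$ is a boundary hypersurface of $\tSigma_a$; and the third bullet holds for the base $\tSigma_a$ because its boundary hypersurfaces are exactly those arising from the deeper strata $\Sigma_b\subset\overline{\Sigma}_a$ with $k<b<a$ and from $\overline{\Sigma}_a\cap H_i$ with $i\le k$, i.e.\ from hypersurfaces lying below $\cH_a$. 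Relatedly, the reduction in your last paragraph, that $\overline{\Sigma}_a\cap H_i$ should be a $\phi_i$-preimage of strata of $S_i$, is not what happens (in the model case it is a section, one point in each fibre); what is true, and all that is needed once the order is fixed, is that $\overline{\Sigma}_a\cap H_i$ fibres over $S_i$ with fibre a singular stratum closure of $\tV_i$. Your old--old and new--new cases, and your identification of the fibre bundle structures \eqref{kqac.7}--\eqref{kqac.8}, are consistent with the paper.
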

\begin{proof}
It suffices to observe that for the partial order on the boundary hypersurfaces given by
\begin{equation}
  \cH_i< \cH_j \; \Longleftrightarrow \; i<j \; \mbox{and} \; \cH_i\cap \cH_j\ne \emptyset,
\label{kqac.10}\end{equation}
all the compatibility conditions of Definition~\ref{mwfc.1} are satisfied for the collection of bundle maps $\varphi=(\varphi_1,\ldots, \varphi_{\ell})$.
\end{proof}
As for $\tX_{\sc}$, there are natural choices of boundary defining functions for the boundary hypersurfaces of $\cX$. Indeed, for $\cH_i$ with $i\le k$, we simply take
$$
      r_i:= \beta^*\pr_1^* x_i,
$$
where $\pr_1: \tX_{\sc}\times [0,1)\to \tX_{\sc}$ is the projection on the first factor and $x_i\in \CI(\tX_{\sc})$ is a boundary defining function for $H_i$ as specified in \S~\ref{cr.0}.  For $i>k$, we proceed as follows.  Consider the partial blow-down map
$$
      \beta_i: \cX\to [\tX_{\sc}\times [0,1); \overline{\Sigma}_{k+1}\times \{0\},\ldots, \overline{\Sigma}_i\times \{0\}]
$$
and denote by $B_i$ the boundary hypersurface of $[\tX_{\sc}\times [0,1); \overline{\Sigma}_{k+1},\ldots, \overline{\Sigma}_i]$ given by the lift of $\tX_{\sc}\times \{0\}$ with the convention that
$B_k:=\tX_{\sc}\times \{0\}$ in $\tX_{\sc}\times [0,1)$.  Let also $W_i$ denote the boundary hypersurface of $[\tX_{\sc}\times [0,1); \overline{\Sigma}_{k+1}\times \{0\},\ldots, \overline{\Sigma}_i\times \{0\}]$ corresponding to the blow-up of $\overline{\Sigma}_i\times \{0\}$.
Set 
\begin{equation}
\rho_k :=\pr_2 \in \CI(\tX_{\sc}\times [0,1)),
\label{kqac.10c}\end{equation} 
where $\pr_2: \tX_{\sc}\times [0,1)\to [0,1)$ is the projection on the second factor.  Starting with $i=k+1$, choose then recursively a boundary defining function $\rho_i$ for $B_i$ such that, outside a small neighborhood $\cU_i$ of $W_i$ disjoint from the boundary hypersurfaces not intersecting $W_i$, $\rho_i$ is identified with the lift of $\rho_{i-1}$.  Then for $i>k$, we can take for $\cH_i$ the boundary defining function
\begin{equation}
    r_i := \left\{ \begin{array}{ll}\frac{\beta_{i-1}^*\rho_{i-1}}{\beta_i^*\rho_i}, & k<i<\ell, \\  \rho_i, & i=\ell,  \end{array}  \right.
\label{kqac.10b}\end{equation}
with the convention that $\beta_k:=\beta$.  
\begin{lemma}
The $\QAC$-equivalence class of the boundary defining functions $r_1,\ldots,r_{\ell}$ does not depend on the choice of the functions $\rho_i$ for $i>k$.  Hence the canonical choice \eqref{kqac.10c} yields  a natural Lie algebra of $\QAC$-vector fields $\cV_{\QAC}(\cX)$ on $\cX$.
\label{kqac.11}\end{lemma}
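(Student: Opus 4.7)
The plan is to directly adapt the proof of Lemma~\ref{cr.14} to the orbifold with fibred corners $(\cX,\varphi)$ and invoke Lemma~\ref{mwfc.2ee} as the key criterion. Let $\{\rho_i\}_{k\le i\le \ell}$ and $\{\rho_i'\}_{k\le i\le \ell}$ be two admissible choices of boundary defining functions with $\rho_k=\rho_k'=\pr_2$ fixed, and denote by $r_i$ and $r_i'$ the induced boundary defining functions of $\cH_i$ as in \eqref{kqac.10b}. Since $r_i=r_i'=\beta^*\pr_1^* x_i$ for $i\le k$, the only faces whose defining functions can possibly differ between the two collections are those indexed by $i>k$, so
$$
f_i := \log\!\left(\frac{v_i'}{v_i}\right) = \sum_{\cH_j\ge \cH_i,\ j>k}\log\!\left(\frac{r_j'}{r_j}\right) \in \CI(\cX).
$$

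To apply Lemma~\ref{mwfc.2ee}, one must verify that for every $i\in\{1,\ldots,\ell\}$ and every $\cH_m\ge \cH_i$, the restriction $f_i\vert_{\cH_m}$ is the pullback under $\varphi_m$ of a smooth function on the base $S_m$ (where, for $m>k$, we set $S_m:=\tSigma_m$). Following the pattern of the proof of Lemma~\ref{cr.14}, I would establish the two analogues of \eqref{cr.15} and \eqref{cr.16}. First, for $\cH_m>\cH_j$ with $j>k$, the quotient $\beta_j^*\rho_j/\beta_j^*\rho_j'$ is a smooth positive function on $\cX$ whose restriction to $\cH_m$, arising from an earlier blow-up of $\overline{\Sigma}_m\times\{0\}$, must be constant along the fibres of $\varphi_m$, hence of the form $\varphi_m^* h_{jm}$ with $h_{jm}\in\CI(S_m)$. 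Second, for $\cH_m\ge \cH_j$, the hypothesis that $\rho_j$ coincides with $\beta_{j,j-1}^*\rho_{j-1}$ outside the neighborhood $\cU_j$ yields that $\left.\beta_{j-1}^*\rho_{j-1}/v_j\right|_{\cH_m}$ is itself a pullback $\varphi_m^* f_{jm}$ for some $f_{jm}\in\CI(S_m)$; this is the step that rigidifies the behavior of the $\rho_j$'s away from the blown-up stratum and makes their logarithmic quotients land in pullbacks.

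Combining these two observations telescopically, exactly as in the proof of Lemma~\ref{cr.14}, forces $f_i\vert_{\cH_m}$ to be a pullback from $S_m$ for every $\cH_m\ge \cH_i$, whereupon Lemma~\ref{mwfc.2ee} delivers the claimed $\QAC$-equivalence of the two collections. The main subtlety to watch out for is that the compatibility identities hold smoothly up to the codimension-two (and higher) corners where several boundary hypersurfaces of $\cX$ meet; this is, however, built into the construction, since each neighborhood $\cU_j$ is chosen to be disjoint from the boundary hypersurfaces of $[\tX_{\sc}\times [0,1);\overline{\Sigma}_{k+1}\times\{0\},\ldots,\overline{\Sigma}_j\times\{0\}]$ not meeting $W_j$, so the local identifications pass smoothly to neighborhoods of those corners and the restriction arguments produce honest smooth pullbacks on the bases $S_m$.
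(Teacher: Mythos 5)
Your proposal is correct and follows exactly the route the paper intends: the paper's proof of this lemma simply says to proceed as in the proof of Lemma~\ref{cr.14}, and your argument is precisely that adaptation, establishing the analogues of \eqref{cr.15} and \eqref{cr.16} and invoking Lemma~\ref{mwfc.2ee}. Nothing differs in substance; you have merely written out the details the paper leaves to the reader.
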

\begin{proof}
One proceeds as in the proof of Lemma~\ref{cr.14}.  We leave the details to the reader.
\end{proof}
Using the function $\varepsilon:= \beta^*\rho_k= \beta^*\pr_2\in \CI(\cX)$, we consider the following Lie subalgebra of $\cV_{\QAC}(\cX)$,
\begin{equation}
   \cV_{\QAC,\varepsilon}(\cX):= \{ \xi\in \cV_{\QAC}(\cX)\; | \; \xi \varepsilon\equiv 0\},
\label{kqac.12}\end{equation}
which corresponds to the Lie algebra of $\QAC$-vector fields tangent to the level sets of $\varepsilon$.  As for $\cV_{\QAC}(\cX)$, there exists a natural vector bundle $\cE\to \cX$ and a natural map $\iota_{\varepsilon}: \cE\to T\cX$ such that there is a canonical identification
$$
         \cV_{\QAC,\varepsilon}(\cX)= (\iota_{\varepsilon})_* \CI(\cX;\cE).
$$
 In fact, $\cE$ is naturally a vector subbundle of ${}^{\varphi}\! T\cX$, which induces a natural map
\begin{equation}
          {}^{\varphi}\! T^*\cX \to \cE^*.
\label{kqac.13}\end{equation}
This means in particular that a smooth $\QAC$-metric naturally restricts to define an element of $\CI(\cX;\cE^*\otimes \cE^*)$.

We are interested in the pull-back $g_{\varepsilon}:= \beta^*\pr_1^* g$ to $\cX$ of the smooth K\"ahler $\QAC$-metric $g$ on $\tX_{\sc}$.

\begin{lemma}
The pull-back $g_{\varepsilon}:= \iota_{\varepsilon}^*\beta^*\pr_1^* g$ is such that $\displaystyle \frac{g_{\varepsilon}}{\varepsilon^2}\in \CI(\cX;\cE^*\otimes \cE^*)$.
\label{kqac.14}\end{lemma}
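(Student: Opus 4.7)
The proof is local on $\cX$. Away from the zero locus $\{\varepsilon=0\}$, the function $\varepsilon$ is bounded below by a positive constant on compact subsets and $g_\varepsilon$ is manifestly smooth, so $g_\varepsilon/\varepsilon^2$ is smooth there. It remains to examine the situation near $\{\varepsilon=0\}$. Set $\pi := \pr_1\circ\beta:\cX\to \tX_{\sc}$.

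Work in a coordinate system of the form \eqref{mwfc.2b} near a corner of $\cX$ meeting $\{\varepsilon=0\}$, with compatible boundary defining functions $x_1,\ldots,x_m$. By the construction of $\cX$ in \eqref{kqac.5}, locally $\varepsilon = \psi\prod_{i\in I} x_i$ where $\psi>0$ is smooth and $I\subset\{1,\ldots,m\}$ indexes the boundary hypersurfaces in the corner contained in $\cH_\ell \cup \bigcup_{k<j<\ell} \cH_j$. By bilinearity of $g$ and the existence of a local $\CI(\tX_{\sc})$-basis for ${}^{\phi}\!T^*\tX_{\sc}$, the lemma reduces to the following key claim:
\begin{equation*}
\text{for every } \xi\in\cV_{\QAC,\varepsilon}(\cX),\quad d\pi(\xi) = \varepsilon\cdot \iota_\phi(\tilde\xi) \text{ for some } \tilde\xi \in \CI(\cX;\pi^*{}^{\phi}\!T\tX_{\sc}).
\end{equation*}

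I plan to check this claim on a local basis of $\cV_{\QAC,\varepsilon}(\cX)$ obtained from the $\QAC$-frame \eqref{mwfc.2c} by standard linear algebra (correcting those frame elements that fail to annihilate $\varepsilon$ by suitable combinations of the other frame elements). Two mechanisms will jointly supply the required factor of $\varepsilon$. First, the weight prefactors $V_i = \prod_{\alpha\ge i} x_\alpha$ of the frame elements already contain the boundary defining functions $x_\alpha$ with $\alpha\in I$ and $\alpha\ge i$, providing a partial factor of $\varepsilon$. Second, for frame elements whose derivative part acts in directions angular to an exceptional divisor produced by a blow-up step in \eqref{kqac.5}, the pushforward $d\pi$ collapses the blown-up sphere fibres onto a single point and therefore vanishes to first order in the corresponding boundary defining function, supplying the remaining $x_\alpha$-factors with $\alpha\in I$ and $\alpha<i$. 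Following the order of the iterated blow-ups in \eqref{kqac.5}, the combined factors match $\prod_{i\in I} x_i = \varepsilon/\psi$ up to smooth invertible terms. The main obstacle is to handle corners where several blow-up faces meet a boundary hypersurface of $\tX_{\sc}$ cleanly; this is done by induction on the depth of the iterated blow-ups.

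Granted the claim, for $\xi_1,\xi_2\in\cV_{\QAC,\varepsilon}(\cX)$ one has $d\pi(\xi_j) = \varepsilon\, \iota_\phi(\tilde\xi_j)$ and hence
\begin{equation*}
g_\varepsilon(\xi_1,\xi_2) = g\bigl(d\pi(\xi_1), d\pi(\xi_2)\bigr) = \varepsilon^2\, g\bigl(\iota_\phi(\tilde\xi_1), \iota_\phi(\tilde\xi_2)\bigr),
\end{equation*}
with $g(\iota_\phi(\tilde\xi_1),\iota_\phi(\tilde\xi_2)) \in \CI(\cX)$ since $g$ is a smooth $\QAC$-metric on $\tX_{\sc}$. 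We conclude $g_\varepsilon/\varepsilon^2 \in \CI(\cX;\cE^*\otimes \cE^*)$.
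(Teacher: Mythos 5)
Your reduction of the lemma to the key claim about $d\pi(\xi)$, and the derivation of the lemma from that claim in your last paragraph, are fine; moreover the claim itself is true, being exactly the dual formulation of what the paper verifies (the paper works with the coframe \eqref{mwfc.2e} and shows that $\varepsilon^{-1}\iota_{\varepsilon}^*\beta^*\pr_1^*s\in \CI(\cX;\cE^*)$ for every $s\in\CI(\tX_{\sc};{}^{\phi}\!T^*\tX_{\sc})$). The problem is that this claim \emph{is} the entire content of the lemma, and you do not prove it: ``I plan to check this claim on a local basis \ldots by standard linear algebra'', the two heuristic ``mechanisms'', and the concluding ``this is done by induction on the depth of the iterated blow-ups'' announce the work rather than carry it out. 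The correction of the frame \eqref{mwfc.2c} to a frame of $\cV_{\QAC,\varepsilon}(\cX)$, and the bookkeeping at corners where the old faces $\cH_i$, $i\le k$, meet several blow-up faces, are precisely where the factor of $\varepsilon$ has to be tracked, and none of it is displayed. Note also that a genuinely singular term must be cancelled somewhere: by Remark~\ref{kqac.16}, $\varepsilon^{-2}g_{\varepsilon}$ is \emph{not} smooth as a section of ${}^{\varphi}\!T^*\cX\otimes{}^{\varphi}\!T^*\cX$, because of a $d\varepsilon/\varepsilon^2$ contribution; a correct argument must isolate this term and use that it pairs to zero with $\varepsilon$-tangent vector fields, and your sketch never makes this step explicit.

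The gap can be closed by one structural observation that renders the blow-up-by-blow-up induction unnecessary, and it is what the paper does. With the boundary defining functions \eqref{kqac.10b}, chosen so that $t_i:=\prod_{\cH_j\ge \cH_i,\, j>k}r_j=\varepsilon$, the weight functions of $\cX$ and of $\tX_{\sc}$ are related by $w_i=\hat{v}_i\,\varepsilon$ with $\hat{v}_i=\beta^*\pr_1^*v_i$. Hence
\[
  \beta^*\pr_1^*\left(\frac{dv_i}{v_i^2}\right)=\frac{d\hat{v}_i}{\hat{v}_i^2}
  =\varepsilon\,\frac{dw_i}{w_i^2}-\frac{d\varepsilon}{\hat{v}_i\,\varepsilon},
  \qquad
  \beta^*\pr_1^*\left(\frac{dy_i^{n_i}}{v_i}\right)=\varepsilon\,\frac{d\hat{y}_i^{n_i}}{w_i},
  \qquad
  \beta^*\pr_1^* dz_q=\varepsilon\,\frac{d\hat{z}_q}{\varepsilon}.
\]
Pairing with $\xi\in\cV_{\QAC,\varepsilon}(\cX)$ annihilates the $d\varepsilon$-term, and the remaining pairings are with sections that restrict to smooth sections of $\cE^*$, so each pairing lies in $\varepsilon\,\CI(\cX)$; this is your key claim, proved uniformly at every corner, and it is exactly the computation \eqref{kqac.15} and its analogues in the paper's proof. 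In other words, your dual (vector-field) formulation is a perfectly viable route, but the factor of $\varepsilon$ should be extracted from the single identity $w_i=\hat{v}_i\varepsilon$ together with the constraint $\xi\varepsilon=0$, rather than from a case analysis of the iterated blow-ups that your proposal leaves undone.
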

\begin{proof}
It suffices to check that given a section $s\in \CI(\tX_{\sc};{}^{\phi}\!T^*\tX_{\sc})$, its pull-back $s_{\varepsilon}:=\iota^*_{\varepsilon}\beta^*\pr_1^*s$ is such that
$$
        \frac{s_{\varepsilon}}{\varepsilon}\in \CI(\cX;\cE^*).
$$
This can be seen by using the local basis of sections \eqref{mwfc.2e} of ${}^{\phi}\!T^*\tX_{\sc}$ on $\tX_{\sc}$.  Indeed, let us denote by
$$
\hat{v}_i:= \beta^*\pr_1^* v_i, \quad \hat{y}_{i}^{n_i}= \beta^*\pr_1^* y_i^{n_i}, \quad \hat{z}_q= \beta^*\pr_1^* z_q
$$
the pull-backs of the functions appearing in \eqref{mwfc.2e}.  Now, the function $\hat{v}_i$ should be compared with its analog on $\cX$, namely
$$
       w_i= \prod_{\cH_j\ge \cH_i} r_j = \hat{v_i} t_i \quad \mbox{with} \; t_i= \prod_{\cH_j\ge H_i, j>k} r_j.
$$
By the choice of the functions $\rho_i$ above, notice that we can assume that $t_i=\varepsilon$.  In this case, we compute that
\begin{equation}
       \frac{dw_i}{w_i^2}= \frac{d \hat{v_i}}{\varepsilon \hat{v}^2_i}+ \frac{d\varepsilon}{\hat{v_i}\varepsilon^2}\; \Longrightarrow \; \iota_{\varepsilon}^* \left( \frac{d \hat{v_i}}{\varepsilon \hat{v}_i^2}\right)=
       \iota_{\varepsilon}^*\left( \frac{dw_i}{w_i^2} \right)\in \CI(\cX;\cE^*).
\label{kqac.15}\end{equation}
Similarly,
$$
       \varepsilon^{-1} \iota_{\varepsilon}^* \beta^* \frac{dy_i^{n_i}}{v_i}= \iota_{\varepsilon}^* \frac{d\hat{y}^{n_i}_i}{w_i} \in \CI(\cX;\cE^*)
$$
 and
 $$
          \varepsilon^{-1} \iota_{\varepsilon}^* \beta^* dz_q= \iota^*_{\varepsilon}  \frac{d\hat{z}_q}{\varepsilon}\in \CI(\cX;\cE^*).
 $$
 Thus, this local computation shows that $\varepsilon^{-1}s_{\varepsilon}\in \CI(\cX;\cE^*)$, as desired.
\end{proof}
\begin{remark}
Notice that, because of the term $\frac{d\varepsilon}{\hat{v_i}\varepsilon^2}$ in \eqref{kqac.15}, $\varepsilon^{-2}g$  is not an element of 
$$\CI(\cX;{}^{\varphi}\!T^*\cX\otimes{}^{\varphi}\!T^*\cX),
$$ 
namely, it is singular as a section of ${}^{\varphi}\!T^*\cX\otimes{}^{\varphi}\!T^*\cX$ near $\beta^{-1}(\pa \tX_{\sc}\times\{0\})\subset \cX$.
\label{kqac.16}\end{remark}

To give a description of the restriction of $\varepsilon^{-2}g_{\varepsilon}$ to $\cH_i$ for $i>k$, we need first to give a description of the restriction of $\cE$ to $\cH_i$.  First, observe that the orbifold with fibred corners structure of $\cX$ naturally induces an orbifold with fibred corners structure on $\cH_i$ by considering the fibre bundle
$
      \varphi_j: \cH_j\cap \cH_i\to \tSigma_{ji}
$
on $\cH_j\cap \cH_i$ for $\cH_j>\cH_i$,
$
      \varphi_j: \cH_j\cap \cH_i\to \tSigma_j
$
on $\cH_j\cap\cH_i$ with $j>k$ and $\cH_j<\cH_i$, and
$
\varphi_j: \cH_j\cap \cH_i\to S_j
$
on $\cH_j\cap\cH_i$ with $j\le k$ and $\cH_j<\cH_i$.

Restricting the boundary defining functions of $\cX$ to $\cH_i$, we also obtain a Lie algebra of $\QFB$-vector fields and a corresponding $\QFB$-tangent bundle that we will denote by ${}^{\varphi}T\cH_i$.
Clearly, there is a natural map $\left.\cE\right|_{\cH_i}\to {}^{\varphi}T\cH_i$ and its kernel
$$
      N_i\cE:= \ker \left(\left. \cE\right|_{\cH_i}\to {}^{\varphi}T\cH_i\right)
$$
is a vector bundle on $\cH_i$.  On the other hand, $\Im \left( \left.\cE\right|_{\cH_i}\to {}^{\varphi}T\cH_i\right)$ is also a vector bundle, namely the vertical tangent bundle ${}^{\varphi}\!T(\cH_i/\tSigma_i)$ which, on each fibre $\varphi_i^{-1}(p)$ of \eqref{kqac.8}, restricts to define the $\QFB$-tangent bundle of that fibre. Consequently, there is a natural short exact sequence of vector bundles
\begin{equation}
\xymatrix{
  0\ar[r] & N_i\cE \ar[r] & \left. \cE\right|_{\cH_i} \ar[r] & {}^{\varphi}\!T(\cH_i/\tSigma_i) \ar[r] & 0.
}
\label{kqac.17}\end{equation}
Since there is a natural inclusion ${}^{\varphi}\!T(\cH_i/\tSigma_i)\subset \left.\cE\right|_{\cH_i}$, this short exact sequence splits and yields a natural decomposition
\begin{equation}
        \left.\cE\right|_{\cH_i}= N_i\cE\oplus {}^{\varphi}\!T(\cH_i/\tSigma_i).
\label{kqac.17b}\end{equation}
The vertical tangent bundle ${}^{\varphi}\!T(\cH_i/\tSigma_i)$ also naturally fits into another natural short exact sequence, namely
\begin{equation}
\xymatrix{
  0 \ar[r] & {}^{\varphi}\!T(\cH_i/\tSigma_i) \ar[r] & {}^{\varphi}\!T\cH_i \ar[r] & \varphi_i^* ({}^{\varphi}T\tSigma_i) \ar[r] & 0,
    }
\label{kqac.18}\end{equation}
where ${}^{\varphi}T\tSigma_i$ is the $\QFB$-tangent bundle of $\tSigma_i$.  Notice that this tangent bundle is well-defined thanks to the fact that the boundary defining functions of $\cX$ are compatible with the collection of fibre bundle maps $\varphi$; see the discussion just below Definition~\ref{mwfc.1b}. In particular, this yields the natural identification
$$
    \varphi_i^* ({}^{\varphi}T\tSigma_i) ={}^{\varphi}\!T\cH_i/{}^{\varphi}\!T(\cH_i/\tSigma_i).
$$
On the other hand, multiplication by the boundary defining function $r_i$ induces the identification
$$
          {}^{\varphi}\!T\cH_i/{}^{\varphi}\!T(\cH_i/\tSigma_i)\cong  \left. N_i\cE \right|_{\cH_i},
 $$
 so that  there is a canonical identification 
\begin{equation}
    \left. N_i\cE \right|_{\cH_i}\cong \varphi_i^* ({}^{\varphi}T\tSigma_i).
\label{kqac.19}\end{equation}
Summing up, we have a canonical decomposition
\begin{equation}
\left.\cE\right|_{\cH_i}=  \varphi_i^*({}^{\varphi}T\tSigma_i)  \oplus {}^{\varphi}\!T(\cH_i/\tSigma_i).
\label{kqac.19b}\end{equation}
Now, proceeding as in \S~\ref{gi.0} but replacing $|\lambda_1|$ with $\varepsilon$, we can show that in terms of this decomposition, the restriction of $\varepsilon^{-2}g_{\varepsilon}$ to $\cH_i$  takes the form
\begin{equation}
    \left.  \frac{g_{\varepsilon}}{\varepsilon^2}\right|_{\cH_i}= g_{\varphi_i} + \varphi_i^* g_{\tSigma_i},
\label{kqac.20}\end{equation}
where $g_{\tSigma_i}\in \CI(\tSigma_i; N^*\tSigma_i \otimes N^*\tSigma_i)$ and where $g_{\varphi_i}$ is on each fibre $\tV_i$ of \eqref{kqac.8} a $\QAC$-metric induced by a corresponding Euclidean metric on
$\overline{V}_i$.  In terms of the form $\omega_{\varepsilon}=\beta^*\pr_1^*\omega$, we have a corresponding decomposition
\begin{equation}
    \left.  \frac{\omega_{\varepsilon}}{\varepsilon^2}\right|_{\cH_i}= \omega_{\varphi_i} + \varphi_i^* \omega_{\tSigma_i},
\label{kqac.20}\end{equation}
where $\omega_{\varphi_i}$ and $\omega_{\tSigma_i}$ are closed $(1,1)$-forms.

We can finally introduce the space of deformations that will allow us to obtain Kähler $\QAC$-metrics on $\hX_{\QAC}$.

\begin{lemma}
The local product Kähler crepant resolution $\hX_{\QAC}$ of $\tX_{\sc}$ extends to give a resolution of $\cX$ by a manifold with fibred corners $\widehat{\cX}$.
\label{kqac.21}\end{lemma}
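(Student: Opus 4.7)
The plan is to proceed by induction on the number of singular orbifold strata of $\tX_{\sc}$, mimicking the strategy used in Theorem~\ref{cr.8}. In the base case where $\tX_{\sc}$ has no orbifold singularities, one has $\hX_{\QAC}=\tX_{\sc}$ and $\cX=\tX_{\sc}\times[0,1)$ is already a manifold with fibred corners, so one takes $\widehat{\cX}=\cX$.

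For the inductive step, single out a minimal singular stratum $\Sigma_{k+1}$ of $\tX_{\sc}$. The local product assumption on the K\"ahler crepant resolution $\beta_X:\hX\to X$ implies that a tubular neighborhood of $\overline{\Sigma}_{k+1}\times\{0\}$ inside $\tX_{\sc}\times[0,1)$ is covered by orbifold charts of the form $(\bbC^{N}/\Delta_{k+1})\times W\times[0,1)$ on which the resolution is modeled by $Y_{k+1}\times W\times[0,1)$, where $Y_{k+1}\to\bbC^{N}/\Delta_{k+1}$ is the local crepant resolution and $\Delta_{k+1}$ acts freely on $\bbC^{N}\setminus\{0\}$ because $\Sigma_{k+1}$ is minimal.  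After blowing up $\overline{\Sigma}_{k+1}\times\{0\}$ to form the face $\cH_{k+1}$ of $\cX$ with fibre $\tV_{k+1}=\overline{\bbC^{N}/\Delta_{k+1}}$ over $\tSigma_{k+1}$, I would replace this fibre by the radial compactification $\overline{Y}_{k+1}$ of $Y_{k+1}$ to obtain a local model for a partial resolution $\cX'\to\cX$. The local models glue globally thanks to the compatibility of $\beta_X$ with the orbifold transition maps; this gluing step is the main technical point, but is handled exactly as in Theorem~\ref{cr.8}.

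The remaining orbifold strata of $\cX'$ correspond bijectively to those strata of $\tX_{\sc}$ strictly larger than $\Sigma_{k+1}$, so $\cX'$ has strictly fewer singular strata than $\cX$, and the inductive hypothesis applies. The resulting manifold with corners $\widehat{\cX}$ comes equipped with a resolution map $\widehat{\cX}\to\cX$, and by construction the face of $\widehat{\cX}$ coming from the lift of $\tX_{\sc}\times\{0\}$ (after all the blow-ups) is canonically identified with $\hX_{\QAC}$, so the resolution genuinely extends the one for $\tX_{\sc}$.

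It remains to verify that $\widehat{\cX}$ satisfies the axioms of Definition~\ref{mwfc.1}. The old faces $\cH_i$ for $i\leq k$ retain their fibre bundle structure $\varphi_i:\cH_i\to S_i$, with the orbifold fibres $\cV_i$ replaced by their $\QAC$-resolutions as in Theorem~\ref{cr.8}. The new faces $\widehat{\cH}_i$ for $k<i<\ell$ fibre as $\widehat{\varphi}_i:\widehat{\cH}_i\to\tSigma_i$ with fibre the $\QAC$-resolution $\widehat{V}_i$ of $\tV_i$, while $\widehat{\cH}_\ell$ is the resolved lift of $\tX_{\sc}\times\{0\}$, canonically identified with $\hX_{\QAC}$. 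The partial order \eqref{kqac.10} lifts unchanged to $\widehat{\cX}$, and the three compatibility conditions of Definition~\ref{mwfc.1} follow from the corresponding conditions for $\cX$ together with the fact that the resolution is performed fibrewise in a manner compatible with the blow-up operations.
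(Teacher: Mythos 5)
Your proof fails at the key local step, and the failure occurs precisely in the arbitrary-depth situation that the lemma is needed for. The claim that ``$\Delta_{k+1}$ acts freely on $\bbC^N\setminus\{0\}$ because $\Sigma_{k+1}$ is minimal'' is false: minimality of a stratum means it lies in the closures of the other strata, not that it has no shallower strata adjacent to it. For any singularity of depth at least two (e.g.\ locally $\bbC^2/\bbZ_2\times\bbC^2/\bbZ_2$, whose deepest stratum has normal group $\bbZ_2\times\bbZ_2$ fixing entire coordinate subspaces away from the origin), the minimal stratum has a normal slice on which the local group acts non-freely off the origin. Consequently the fibre of the face $\cH_{k+1}$ of $\cX$ is not $\overline{\bbC^N/\Delta_{k+1}}$ but the blown-up space $\tV_{k+1}$ of \eqref{kqac.8}, and replacing it by the \emph{radial} compactification $\overline{Y}_{k+1}$ of the crepant resolution does not produce a manifold with fibred corners: when the action is non-free away from $0$, the boundary of $\overline{Y}_{k+1}$ still carries orbifold singularities and the fibred-corner structure at the intersections with the deeper faces is not obtained. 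The correct fibre, as in \eqref{kqac.22}--\eqref{kqac.23}, is a local product crepant ($\QAC$-type) resolution $\mathcal{Y}_{k+1}$ of $\tV_{k+1}$, i.e.\ the resolution must itself be combined with the boundary blow-ups of the fibre, which is exactly the content one is supposed to carry over from Theorem~\ref{cr.8}.

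There is also a structural problem with your induction. You apply the inductive hypothesis to the partially resolved space $\cX'$, but $\cX'$ is not of the form covered by the lemma (it is not $[\tX_{\sc}'\times[0,1);\dots]$ for some orbifold $X'$ equipped with a local product K\"ahler crepant resolution), so the hypothesis does not literally apply; you would have to formulate and prove a more general statement about orbifolds with fibred corners carrying compatible local-product resolution data. The paper's template, which its proof of this lemma points to, avoids this by inducting on the \emph{depth} and applying the inductive hypothesis to the fibres $\cV_i$ and $\tV_i$ of the boundary fibrations \eqref{kqac.7}--\eqref{kqac.8}, which are lower-depth objects of the same kind; the local product property of $\beta_X:\hX\to X$ then lets one resolve each boundary hypersurface fibrewise and check the conditions of Definition~\ref{mwfc.1} exactly as in Theorem~\ref{cr.8}. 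Your argument as written is only complete when all singular strata of $\tX_{\sc}$ have depth one; to repair it you should either restructure the induction as above or prove the strengthened statement your recursion actually requires.
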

\begin{proof}
The proof is similar to the proof of Theorem~\ref{cr.8} and is left as an exercise.
\end{proof}
On the resolution $\widehat{\cX}$, the boundary hypersurface $\cH_i$ is replaced by a boundary hypersurface $\widehat{\cH}_i$ that is a resolution of $\cH_i$.  Moreover, the fibre bundles \eqref{kqac.7} and \eqref{kqac.8} are replaced by
\begin{equation}
\xymatrix{
      \mathcal{Y}_i \ar[r] & \widehat{\cH}_i \ar[d]^{\hvarphi_i} \\
          & S_i,
} \quad  \begin{array}{c} \\ \\ \\ i \in \{1,\ldots, k \}, \\ \end{array}
\label{kqac.22}\end{equation}
where $\mathcal{Y}_i$ is a local product crepant resolution of $\cV_i$ and
\begin{equation}
\xymatrix{
      \mathcal{Y}_i\ar[r] & \widehat{\cH}_i \ar[d]^{\hvarphi_i} \\
          & \tSigma_i,
} \quad \begin{array}{c} \\ \\ \\ i \in \{k+1,\ldots, \ell \}, \\ \end{array}
\label{kqac.23}\end{equation}
with $\mathcal{Y}_i$ a local product crepant resolution of $\tV_i$.  Notice also that the function $\varepsilon$ on $\cX$ naturally extends to a smooth function on $\widehat{\cX}$, which we also denote by $\varepsilon$.  Similarly, as in Lemma~\ref{bdfcr.1}, the boundary defining functions $r_i$ defined in \eqref{kqac.10b} can be chosen to lift to smooth boundary defining functions on $\widehat{\cX}$, yielding a natural Lie algebra $\cV_{\QAC}(\widehat{\cX})$ of $\QAC$-vector fields.    Hence, we can introduce a Lie subalgebra of $\cV_{\QAC}(\widehat{\cX})$, namely,
\begin{equation}
   \cV_{\QAC,\varepsilon}(\widehat{\cX}):= \{ \xi\in \cV_{\QAC}(\widehat{\cX})\; | \; \xi \varepsilon\equiv 0\},
\label{kqac.23b}\end{equation}
and a corresponding vector bundle $\widehat{\cE}\to \widehat{\cX}$ with a map $\iota: \widehat{\cE} \to T\widehat{\cX}$ inducing a canonical identification
$$
         \iota_*\CI(\widehat{\cX};\widehat{\cE})=  \cV_{\QAC,\varepsilon}(\widehat{\cX}).
$$
As the next theorem shows, the deformation space allows us to formulate a criterion for the existence of Kähler $\QAC$-metrics on $\hX_{\QAC}$.
\begin{theorem}
Suppose that for each $i\in \{k+1,\ldots, \ell\}$, we can find a smooth closed $(1,1)$-form $\omega_{\hvarphi_i}$ on $\widehat{\cH}_i$ that  restricts on each fibre of $\hvarphi_i: \widehat{\cH}_i\to \tSigma_i$ to the Kähler form of a $\QAC$-metric asymptotic to $\omega_{\varphi_i}$ with rate $\delta>0$ in the sense of Definition~\ref{gid.8a}. (Notice that this is trivial for $i=\ell$ since the fibres of $\hvarphi_i=\varphi_i$ are points.) Suppose moreover that the forms
$$
      \omega_i:= \omega_{\hvarphi_i} + \hvarphi_i^*\omega_{\tSigma_i}
$$
on $\widehat{\cH}_i$ are compatible in the sense that   $\left.  \omega_i\right|_{\widehat{\cH}_i\cap \widehat{\cH}_j}= \left.  \omega_j\right|_{\widehat{\cH}_i\cap \widehat{\cH}_j}$ for all $i,j\in \{k+1,\ldots, \ell\}$.  Then $\hX_{\QAC}$ admits a smooth Kähler $\QAC$-metric asymptotic to $g_C$ with rate $\delta$.
\label{kqac.24}\end{theorem}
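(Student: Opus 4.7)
The plan is to carry out the gluing on the deformation space $\widehat{\cX}$ exactly as outlined in the introduction: I would first construct a global smooth closed $(1,1)$-form $\widehat{\omega}$ on $\widehat{\cX}$, thought of as a section of $\Lambda^2\widehat{\cE}^*$, such that $\widehat{\omega}|_{\widehat{\cH}_i}=\omega_i$ for each $i\in\{k+1,\ldots,\ell\}$ and such that $\widehat{\omega}$ also matches the pullback from $\tX_{\sc}$ of the K\"ahler form of $g$ on the faces $\widehat{\cH}_i$ with $i\le k$. The desired metric would then be obtained as the restriction of $\widehat{\omega}$ to the slice $\{\varepsilon=\varepsilon_0\}\cong \hX_{\QAC}$ for a suitably small $\varepsilon_0>0$.

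To build $\widehat{\omega}$, I would induct on the relative depth of the faces $\widehat{\cH}_i$, starting from a form that agrees with the pullback of the K\"ahler form of $g$ on the face $\widehat{\cH}_\ell$ associated to the regular stratum. At each inductive step, the current form would already realise the correct restrictions on all faces $\widehat{\cH}_j$ with $j>i$; the compatibility condition $\omega_i|_{\widehat{\cH}_i\cap \widehat{\cH}_j}=\omega_j|_{\widehat{\cH}_i\cap \widehat{\cH}_j}$ ensures that the discrepancy between its restriction to $\widehat{\cH}_i$ and the target $\omega_i$ is a closed $(1,1)$-form vanishing on $\widehat{\cH}_i\cap(\bigcup_{j>i}\widehat{\cH}_j)$. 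Applying a local $\sqrt{-1}\pa\db$-lemma on $\widehat{\cH}_i$, combined with the collar neighborhood of Lemma~\ref{mwfc.1c} and a cutoff function adapted to the partial order, would produce a potential that extends into the interior of $\widehat{\cX}$; taking $\sqrt{-1}\pa\db$ of this extended potential gives the correction needed, preserving closedness and leaving the previously treated faces unchanged.

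Once $\widehat{\omega}$ has been built, positivity of its restriction to $\{\varepsilon=\varepsilon_0\}$ would follow by continuity and compactness. At every point of $\{\varepsilon=0\}\cap \widehat{\cH}_i$, the local product structure from Lemma~\ref{mwfc.1c} decomposes $\widehat{\omega}$ into the fibre part $\omega_{\hvarphi_i}$---positive since it is K\"ahler on each fibre $\mathcal{Y}_i$---and the base part $\hvarphi_i^*\omega_{\tSigma_i}$, positive in a transverse direction; together these give positivity on the full $\widehat{\cE}$. Compactness of $\widehat{\cX}$ then propagates positivity to a neighborhood $\{\varepsilon\le \varepsilon_0\}$, and restriction to $\{\varepsilon=\varepsilon_0\}$ yields a smooth K\"ahler form on $\hX_{\QAC}$. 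The resulting metric is a $\QAC$-metric by construction, and the prescribed asymptotic rate $\delta$ to $g_C$ at every face of $\hX_{\QAC}$ follows from the asymptotic hypothesis $\omega_{\hvarphi_i}\to\omega_{\varphi_i}$ at rate $\delta$ (Definition~\ref{gid.8a}), together with the identification of $g$ with $g_C$ outside a compact set of $X$.

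The hard part will be the inductive construction of $\widehat{\omega}$: ensuring that the local $\sqrt{-1}\pa\db$-potentials introduced at successive stages match consistently across the various corners of $\widehat{\cX}$, so that closedness, smoothness, and the prescribed boundary values on every face are simultaneously maintained. The compatibility conditions on intersections stated in the theorem are precisely what allow this induction to close up, and a careful choice of cutoff functions adapted to the partial order on boundary hypersurfaces will be essential to preserve the restrictions already achieved at earlier stages.
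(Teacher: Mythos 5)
Your overall skeleton is the same as the paper's: build a global section $\widehat{\omega}$ of $\Lambda^2\widehat{\cE}^*$ on $\widehat{\cX}$ that is closed, of type $(1,1)$ on the level sets of $\varepsilon$, restricts to $\omega_i$ on each $\widehat{\cH}_i$, and then obtain the metric by restricting to $\{\varepsilon=c\}\cong\hX_{\QAC}$ for small $c$, where positivity follows by continuity/compactness from positivity of the $\omega_i$ on the faces together with the identification $\left.\widehat{\cE}\right|_{\{\varepsilon=c\}}\cong {}^{\hphi}\!T\hX_{\QAC}$. The paper's proof is exactly this, with the existence of the extension asserted from the corner compatibility.

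However, your proposed mechanism for that extension step has a genuine gap. At the inductive stage for $\widehat{\cH}_i$ you want to write the discrepancy between the already-constructed form (which starts from the pullback of the K\"ahler form of $g$ from the orbifold) and the target $\omega_i=\omega_{\hvarphi_i}+\hvarphi_i^*\omega_{\tSigma_i}$ as $\sqrt{-1}\pa\db$ of a potential on $\widehat{\cH}_i$. On each fibre of $\hvarphi_i$ this discrepancy is essentially the difference between the K\"ahler form of the fibrewise $\QAC$/$\ALE$ metric and the pullback of the orbifold Euclidean form $\omega_{\varphi_i}$; this difference decays at the fibre's infinity but represents a nontrivial cohomology class (it pairs nontrivially with the exceptional divisors of the crepant resolution, e.g.\ already for Eguchi--Hanson on the resolution of $\bbC^2/\bbZ_2$), so it is not $\sqrt{-1}\pa\db$ of any global function, and no ``local'' $\pa\db$-lemma patched by cutoffs can produce it: cutting off local potentials changes the form. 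This is precisely the obstruction the paper emphasizes in the introduction as the reason one cannot work with global potentials and must instead introduce $\widehat{\cX}$ and prescribe the closed forms themselves. The repair is to extend the closed $(1,1)$-forms $\omega_i$ directly (e.g.\ via collar projections as in Lemma~\ref{mwfc.1c}), using potentials or primitives only for the corrections needed near the corners $\widehat{\cH}_i\cap\widehat{\cH}_j$, where the compatibility hypothesis makes the relevant differences vanish on the corner and hence exact there with controlled primitive --- or, as in the paper's applications (Corollaries~\ref{kqac.25} and \ref{kqac.26}), to arrange the models to agree on whole neighborhoods of the corners so that no cohomologically nontrivial interpolation is ever required. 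With the extension step reformulated in this way, the rest of your argument (positivity on $\{\varepsilon=0\}$ in the splitting into fibre and base parts, continuity, and the rate-$\delta$ asymptotics from Definition~\ref{gid.8a}) matches the paper's proof.
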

\begin{proof}
Thanks to the compatibility condition, we can find $\homega\in \CI(\widehat{\cX}; \widehat{\cE}^* \wedge \widehat{\cE}^*)$  such that $\homega$ is a closed $(1,1)$-form on each level set of the function $\varepsilon$ whose restriction to $\cH_i$ is $\omega_i$ for each $i\in\{k+1,\ldots,\ell\}$.  By continuity, this means that for $c>0$ sufficiently small, the restriction of $\homega$ to the level set
$\{\varepsilon=c\}\cong \hX_{\QAC}$ is positive-definite as a section of $\widehat{\cE}^* \wedge \widehat{\cE}^*$.   Since $\left. \widehat{\cE}\right|_{\{\varepsilon=c\}}\cong {}^{\hphi}\!T\hX_{\QAC}$, we see that $\left. \homega\right|_{\{\varepsilon=c\}}$ is the desired Kähler form.
\end{proof}

\begin{corollary}
If the compact K\"ahler-Einstein Fano orbifold $(D,g_D)$ has only isolated singularities of complex codimension at least two with each locally admitting a K\"ahler crepant resolution, then $D$ admits a K\"ahler crepant resolution $\hD$ and the $\QAC$-compactification $\hX_{\QAC}$ of $K_{\hD}$ admits a K\"ahler $\QAC$-metric equal to $g_C$ in a neighborhood of the maximal boundary hypersurface of $\hX_{\QAC}$, in particular, asymptotic to $g_C$ with rate $\delta$ for any $\delta>0$.
\label{kqac.25}\end{corollary}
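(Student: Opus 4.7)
The strategy is to reduce to Theorem~\ref{kqac.24} by constructing the required fibrewise K\"ahler forms. First, one produces a global K\"ahler crepant resolution $\pi_D: \hD \to D$ by gluing the assumed local resolutions (trivial since the singular points of $D$ are isolated) and then running the argument of Proposition~\ref{kqac.3}: take $\pi_D^*\omega_D + \epsilon \sum_i \nu_i$, where $\omega_D$ is the K\"ahler-Einstein form on $D$ and $\nu_i$ is a compactly supported closed positive $(1,1)$-form on a neighbourhood of the $i$th exceptional divisor, obtained from the local K\"ahler structure on the crepant resolution $Y_i$ of $\bbC^n/\Gamma_i$. By Example~\ref{exa.1} and \eqref{cr.1}, $K_{\hD} \to K_D$ is then a local product K\"ahler crepant resolution, so $\hX_{\QAC}$ is defined, together with the structures of \S\ref{cr.0} and \S\ref{gi.0}. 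Because the singularities of $D$ are isolated, each singular stratum of $\pa X_{\sc}=Z$ is a circle, each subsequent orbifold stratum $\Sigma_i$ of $\tX_{\sc}$ is again a circle, and hence for $k+1\le i\le \ell-1$ the fibre bundle $\hvarphi_i: \widehat{\cH}_i\to \tSigma_i$ has $\bbS^1$ base and fibre $\overline{Y}_i$, with $\Gamma_i\subset \SU(n)$ acting freely on $\bbC^n\setminus\{0\}$.

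The heart of the construction lies in the fibrewise K\"ahler forms. Applying Lemma~\ref{kqac.1} with $X=Y_i$ and the Ricci-flat K\"ahler cone $g_C$ replaced by the flat Euclidean cone on $(\bbC^n/\Gamma_i)\setminus\{0\}$ (realised as $L\setminus D$ with $L=\mathcal{O}(-1)/\Gamma_i$ over the K\"ahler-Einstein Fano orbifold $\bbC\bbP^{n-1}/\Gamma_i$), one obtains a K\"ahler form $\omega_{Y_i}$ on $Y_i$ equal to the Euclidean form $\omega_E$ outside a compact set. Averaging over the natural $\bbS^1$-action $\lambda_i\mapsto e^{\sqrt{-1}\theta}\lambda_i$, which preserves both $\omega_E$ and the resolution $Y_i$, one may assume that $\omega_{Y_i}$ is $\bbS^1$-invariant, so that it defines a smooth closed $(1,1)$-form $\omega_{\hvarphi_i}$ on $\widehat{\cH}_i$ whose fibrewise restriction is $\omega_{Y_i}$. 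On the remaining face $\widehat{\cH}_\ell$ (the regular face), the required data are supplied by the pullback to $\widehat{\cX}$ of the K\"ahler form on $K_D$ produced by Proposition~\ref{kqac.3}, which already coincides with $\omega_C$ outside a compact set.

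Finally one verifies the compatibility condition of Theorem~\ref{kqac.24}. The faces $\widehat{\cH}_i$ and $\widehat{\cH}_j$ with $i\ne j$ in $\{k+1,\ldots,\ell-1\}$ sit over distinct isolated singular points of $D$ and are therefore disjoint, so only the intersections $\widehat{\cH}_i\cap\widehat{\cH}_\ell$ need checking. On these, $\omega_{Y_i}$ coincides with $\omega_E$ and the chosen metric on $K_D$ coincides with $g_C$, so Corollary~\ref{gi.7c} together with the decomposition \eqref{kqac.20} shows that $\omega_i$ and $\omega_\ell$ admit matching ``Euclidean fibre plus base'' decompositions. Theorem~\ref{kqac.24} then yields a K\"ahler $\QAC$-metric on $\hX_{\QAC}$ equal to $g_C$ near $\hH_{\max}$, and thus asymptotic to $g_C$ with any rate $\delta>0$. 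The principal obstacle I foresee is precisely this compatibility step, since combining the iterated local models of \S\ref{gi.0} with the chain of blow-ups defining $\widehat{\cX}$ and tracking the pullback identifications carefully enough to match all terms on $\widehat{\cH}_i\cap\widehat{\cH}_\ell$ is the delicate part of the argument.
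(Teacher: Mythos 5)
Your overall strategy coincides with the paper's: produce via Proposition~\ref{kqac.3} a K\"ahler metric on $K_D$ equal to $g_C$ outside a compact set, feed it into the deformation space $\widehat{\cX}$, build the fibrewise forms $\omega_{\hvarphi_i}$ with Lemma~\ref{kqac.1}, arrange agreement with the model near the maximal face, and invoke Theorem~\ref{kqac.24}. However, there is a concrete geometric error in your identification of the faces on which Theorem~\ref{kqac.24} requires data. The strata $\Sigma_i$ of $\tX_{\sc}$ with $k+1\le i\le \ell-1$ are not circles: since each $\Gamma_i\subset\SU(n)$ acts trivially on the fibre coordinate of $K_D$, the orbifold singular set of $K_D$ consists of the whole fibres of $K_D$ over the isolated singular points of $D$, i.e.\ copies of $\bbC$, so that $\tSigma_i\cong\overline{\bbC}$ is a disk (this is exactly what the paper's proof records). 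The circles you describe are the strata $s_i$ of $\pa X_{\sc}=Z$, whose associated faces are the $\cH_i$ with $i\le k$, and Theorem~\ref{kqac.24} asks for no data there. The distinction is not mere bookkeeping: the compatibility condition involves $\omega_i=\omega_{\hvarphi_i}+\hvarphi_i^*\omega_{\tSigma_i}$, and the base contribution $\omega_{\tSigma_i}$ appearing in the decomposition \eqref{kqac.20} is a genuinely nontrivial object living over $\tSigma_i\cong\overline{\bbC}$ (it encodes the conical direction along the singular fibre of $K_D$), which has no counterpart over a circle; your $\bbS^1$-averaging device is therefore aimed at the wrong structure. Fortunately it is also unnecessary: because the singularities of $D$ are isolated, the bundles $\hvarphi_i:\widehat{\cH}_i\to\tSigma_i$ are trivial, $\widehat{\cH}_i\cong\overline{\bbC}\times\overline{Y}_i$, so one applies Lemma~\ref{kqac.1} fibrewise — with respect to the specific Euclidean structure $\omega_{\varphi_i}$ coming from $g_{\varepsilon}$, not an arbitrary flat form — and extends by pullback, choosing $\omega_{\hvarphi_i}$ \emph{equal} to $\omega_{\varphi_i}$ in a neighborhood of $\widehat{\cH}_i\cap\widehat{\cH}_{\ell}$. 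With that choice the compatibility at the corner, which you single out as the delicate step, is immediate; this is precisely how the paper argues.

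Two smaller points. First, your front-loading of the K\"ahlerness of $\hD$ (gluing $\pi_D^*\omega_D$ with compactly supported forms positive near the exceptional divisors) is a legitimate alternative, but it is not needed: the construction of $\hX_{\QAC}$ and Theorem~\ref{kqac.24} only use the \emph{local} K\"ahler crepant resolutions, and the paper obtains the K\"ahler metric on $\hD$ at the end by restricting the K\"ahler $\QAC$-metric to the zero section of $K_{\hD}$. Second, Theorem~\ref{kqac.24} as stated only gives a metric asymptotic to $g_C$ with rate $\delta$; to conclude, as the corollary asserts, that the metric is \emph{equal} to $g_C$ near $\hH_{\max}$, you must note explicitly that the deformation $\homega$ can be chosen to coincide with $\omega_C$ in a neighborhood of the maximal hypersurface, which your write-up asserts but does not justify.
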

\begin{proof}
By Proposition~\ref{kqac.3}, $K_{D}$ admits a smooth K\"ahler metric $g$ which is equal to $g_C$ outside a compact set, so we have a corresponding metric $g_{\varepsilon}=\iota_{\varepsilon}^*\beta^*\pr_1^*g$ on $\cX$ to which we can hope to apply Theorem~\ref{kqac.24}.    Now, since the singularities of $D$ are isolated, the fibre bundles $\hvarphi_i: \widehat{\cH}_i\to \tSigma_i$ are trivial and $\tSigma_i\cong \overline{\bbC}$.    Thus, using Lemma~\ref{kqac.1}, we can construct for each $i$ the forms $\omega_{\hvarphi_i}$ in the statement of Theorem~\ref{kqac.24} which are equal to $\omega_{\varphi_i}$ in a neighborhood of $\widehat{\cH}_i\cap \widehat{\cH}_{\ell}$, where $\widehat{\cH}_{\ell}$ is the maximal boundary hypersurface of $\widehat{\cX}$.  The result then follows by applying Theorem~\ref{kqac.24}.  Notice in particular that  without loss of generality, the deformation $\left. \homega\right|_{\{\varepsilon=c\}}$ can be chosen to be equal to $\omega_C$ in a neighborhood of the maximal hypersurface of $\hX_{\QAC}$.  Moreover, by restricting to the zero section of $K_{\hD}$, we also obtain a K\"ahler metric on $\hD$, as claimed.
\end{proof}

\begin{corollary}
If the compact K\"ahler-Einstein Fano orbifold $(D,g_D)$ is of the form
$$
            (D,g_D)= (D_1\times \cdots \times D_q, g_1\times \cdots \times g_q)
$$
with each $D_i$ an orbifold as in the previous corollary, then the $\QAC$-compactification $\hX_{\QAC}$ of $K_{\hD}$ admits a K\"ahler $\QAC$-metric equal to $g_C$ in a neighborhood of the maximal boundary hypersurface of $\hX_{\QAC}$, in particular, asymptotic to $g_C$ with rate $\delta$ for any $\delta>0$.
\label{kqac.26}\end{corollary}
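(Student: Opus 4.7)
The plan is to apply Theorem~\ref{kqac.24} using the product structure of $D$. First, I would invoke Corollary~\ref{kqac.25} on each factor $(D_j, g_j)$ to obtain a K\"ahler crepant resolution $\hD_j$ and a smooth K\"ahler $\QAC$-metric on the $\QAC$-compactification of $K_{\hD_j}$ which equals the associated Calabi-Yau cone metric $g_{C_j}$ in a neighborhood of its maximal boundary hypersurface. Setting $\hD := \hD_1 \times \cdots \times \hD_q$ gives a K\"ahler crepant resolution of $D$, and $K_{\hD}$ is correspondingly a local product K\"ahler crepant resolution of $K_D$, which is the setup required to build $\widehat{\cX}$ in \S~\ref{kqac.0}.

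Next, I would describe the singular strata. Since each $D_j$ has only isolated singularities of complex codimension at least $2$, a singular stratum of $D$ is indexed by a nonempty subset $I \subseteq \{1,\ldots,q\}$ together with, for each $j \in I$, a choice of singular point $p_j \in D_j^{\sing}$; the stratum itself has the form $\{p_j\}_{j \in I} \times \prod_{j \notin I} D_j$, and these descend through the Reeb fibration to the corresponding strata of $Z = \pa X_{\sc}$. Accordingly, for each such $I$, the resolved boundary hypersurface $\widehat{\cH}_I$ of $\widehat{\cX}$ fibres over a base $\tSigma_I$ built essentially from the $D_j$ factors with $j \notin I$ (together with the cone-direction parameter), with typical fibre a product of the crepant resolutions $Y_j$ of $\bbC^{n_j}/\Gamma_j$ for $j \in I$ times a Euclidean factor.

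The $(1,1)$-forms $\omega_{\hvarphi_I}$ required by Theorem~\ref{kqac.24} are then constructed, factor by factor, from the K\"ahler $\QAC$-forms produced by Corollary~\ref{kqac.25}: on the product of crepant resolutions $\prod_{j \in I} Y_j$ appearing as the typical fibre, the form is the sum of the pullbacks of the fibrewise K\"ahler $\QAC$-forms from each $K_{\hD_j}$, plus the Euclidean contribution along the transverse direction. Because each of these factor forms is equal to its cone model in a neighborhood of the corresponding maximal face, the compatibility condition $\left.\omega_I\right|_{\widehat{\cH}_I \cap \widehat{\cH}_{I'}} = \left.\omega_{I'}\right|_{\widehat{\cH}_I \cap \widehat{\cH}_{I'}}$ on any intersection (which forces $I \subseteq I'$ or $I' \subseteq I$) reduces to the factorwise agreement of the forms with the cone metric near the maximal face. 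Theorem~\ref{kqac.24} then yields a K\"ahler $\QAC$-metric on $\hX_{\QAC}$ equal to $g_C$ in a neighborhood of the maximal hypersurface, asymptotic to $g_C$ with any rate $\delta > 0$. The main obstacle is simply the bookkeeping required to describe how the product of crepant resolutions matches up with the iterated blow-up defining $\widehat{\cX}$; once this is set up, the compatibility check is essentially tautological by the product nature of the construction.
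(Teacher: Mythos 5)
Your proposal follows essentially the same route as the paper: reduce to Theorem~\ref{kqac.24}, build the fibrewise forms $\omega_{\hvarphi_i}$ factor by factor, and check compatibility by arranging that each factor form agrees with its Euclidean model near infinity. Your detour through Corollary~\ref{kqac.25} applied to each $(D_j,g_j)$ is only superficially different from what the paper does, since the fibrewise forms extracted from that corollary are exactly the forms produced by Lemma~\ref{kqac.1} on the crepant resolutions $Y_j$ of $\bbC^{n_j}/\Gamma_j$, which is how the paper constructs $\omega_{\hvarphi_i}$ directly; the "consistent choices'' compatibility check is the same in both arguments.

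The one step you omit that the factorwise argument cannot supply is the background smooth orbifold K\"ahler metric $g$ on $K_D$ equal to $g_C$ outside a compact set. Theorem~\ref{kqac.24} is formulated under the standing hypothesis of \S~\ref{kqac.0} that such a $g$ exists: it is $g_\varepsilon=\iota_\varepsilon^*\beta^*\pr_1^*g$ that furnishes the data $\omega_{\varphi_i}$ and $\omega_{\tSigma_i}$ appearing in the theorem, and it is also what makes the resulting metric equal to $g_C$ near the maximal boundary hypersurface. Since $g_C$ from the Calabi ansatz degenerates along the zero section $D\subset K_D$, this metric has to be constructed, and your per-factor invocations of Corollary~\ref{kqac.25} do not provide it, because the total space of $K_D\cong \pr_1^*K_{D_1}\otimes\cdots\otimes \pr_q^*K_{D_q}$ is not the product of the total spaces $K_{D_j}$. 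The paper obtains $g$ from Proposition~\ref{kqac.3}, which applies to any line bundle over a K\"ahler--Einstein Fano orbifold and hence covers the product case directly; citing it repairs the omission. Two minor bookkeeping slips: the stratum associated to $I$ is $\{p_j\}_{j\in I}\times\prod_{j\notin I}D_j^{\reg}$ (the product you wrote is its closure), and the typical fibre of $\hvarphi_I$ is a $\QAC$-resolution of $\prod_{j\in I}\bbC^{n_j}/\Gamma_j$ with no additional Euclidean factor, the transverse directions (including the cone direction) being part of the base $\tSigma_I$.
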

\begin{proof}
Again, we can use Proposition~\ref{kqac.3} to construct a smooth K\"ahler metric $g$ on the orbifold $K_D$ equal to $g_C$ outside a compact set, so we have a corresponding metric $g_{\varepsilon}$ on $\cX$ to which we can hope to apply Theorem~\ref{kqac.24}.  We still also know that the fibre bundles $\hvarphi_i: \widehat{\cH}_i\to \tSigma_i$ are all trivial.  For those for which the fibres are manifolds with boundary, we can proceed as before using Lemma~\ref{kqac.1} to construct the form $\omega_{\hvarphi_i}$.  For the other fibre bundles, the fibres are $\QAC$-resolutions of spaces of the form
$$
    \bbC^{n_1}/\Gamma_1\times \cdots \times \bbC^{n_r}/\Gamma_r
$$
with $\Gamma_j\subset \SU(n_j)$ a finite subgroup acting freely on $\bbC^{n_j}\setminus \{0\},$ hence we can apply Lemma~\ref{kqac.1} on the crepant resolution of each factor to obtain the form $\omega_{\hvarphi_i}$.  It suffices to make consistent choices to ensure that the compatibility conditions of Theorem~\ref{kqac.24} are satisfied.  Hence the result follows by applying Theorem~\ref{kqac.24}.  Again, we can do this in such a way that the resulting K\"ahler form is equal to $\omega_C$ in a neighborhood of the maximal boundary hypersurface of $\hX_{\QAC}$.
\end{proof}

\section{Solving the complex Monge-Ampère equation}\label{ma.0}

Let $X$ be a Kähler orbifold as in \S~\ref{cr.0} and let $\mu$ be the complex codimension of the singular set of $X_{\sc}$.  In other words,
\begin{equation}
   \mu= \min_{H_i<H_{\max}}  {m_i}
\label{ma.1f}\end{equation}
with $m_i$ the complex dimension of the fibres of the fibre bundle $\hphi_i: \hH_i\to S_i$.
Notice that by the hypotheses of \S~\ref{cr.0}, we are assuming in particular that
\begin{equation}
  2\le \mu\le n.
\label{ma.1c}\end{equation}

Now let $\omega_0$ be the Kähler form of a Kähler $\QAC$-metric $g_0$ on $\hX_{\QAC}$ asymptotic to the Ricci-flat Kähler cone metric $g_C$ with rate $\epsilon>0$ for some $\epsilon>0$.
Notice that the examples of Kähler $\QAC$-metrics of Corollary~\ref{kqac.26} are in fact asymptotic to $g_C$ with a rate of $\alpha$ as large as we want, but their Ricci potentials do not necessarily decay near  $\pa\hX_{\QAC}\setminus \hH_{\max}$.  In fact, knowing that $g_0$ is asymptotic to $g_C$ at rate $\epsilon$ implies that its Ricci potential
\begin{equation}
     r_0:= \log\left( \frac{(\omega_0^{n+1})^p}{c_p\Omega_{\hX}^p\wedge \overline{\Omega_{\hX}^p}} \right)
\label{ma.1d}\end{equation}
is in $x_{\max}^{\epsilon}\CI_{\QCyl}(\hX)$, where $\Omega_{\hX}^p$ is the lift of $\Omega_X^p$ to the local product Kähler crepant resolution $\hX$ of $X$.  Hence, the Ricci potential decays near $H_{\max}$, but not necessarily near the other boundary hypersurfaces of $\hX_{\QAC}$.  However, by Definition~\ref{gid.8a}, we see that $r_0$ has well-defined restrictions at the other boundary hypersurfaces in the  following sense.
\begin{definition}
For $\hH_i<\hH_{\max}$, let $\CI_{\QCyl}(\hH_i/S_i)$ be the space of smooth functions on 
$$
     \hH_i\setminus \left(  \bigcup_{\hH_j>\hH_i} \hH_j\cap \hH_i\right)
$$
which restrict on each fibre $\phi_i^{-1}(s)$ of $\hphi_i: \hH_i\setminus \left(  \bigcup_{\hH_j>H_i} \hH_j\cap \hH_i\right)\to S_i$ to a function in $\CI_{\QCyl}(\hphi_i^{-1}(s))$.  Then a function $f\in x^{\alpha}_{\max}\CI_{\QCyl}(\hX)$ is said to \textbf{restrict to $\pa\hX_{\QAC}$} if for each $\hH_i<\hH_{\max}$, there exists $f_i\in x_{\max}^{\alpha}\CI_{\QCyl}(\hH_i/S_i)$ such that 
$$
f-f_i\in x_{\max}^{\alpha}x_i\CI_{\QCyl}(\hX).
$$
We denote by $x^{\alpha}_{\max}\CI_{\QCyl,r}(\hX)$ the space of functions in $x^{\alpha}_{\max}\CI_{\QCyl}(\hX)$ that restrict to $\pa\hX_{\QAC}$.
\label{rest.1}\end{definition}

To obtain a Ricci-flat $\QAC$-metric on $\hX_{\QAC}$, we then need to solve the complex Monge-Amp\`ere equation
 \begin{equation}
    \log\left( \frac{(\omega_0+ \sqrt{-1}\pa\db u)^{n+1}}{\omega_0^{n+1}} \right)= -r_0.
\label{ma.1rf}\end{equation}
 In fact, this is a particular case of the more general complex Monge-Ampère equation
  
\begin{equation}
    \log\left( \frac{(\omega_0+ \sqrt{-1}\pa\db u)^{n+1}}{\omega_0^{n+1}} \right)= f, \quad f\in x_{\max}^{\alpha}\CI_{\QCyl,r}(\hX).
\label{ma.1}\end{equation}
 Since it does not require any further work, we will solve equation \eqref{ma.1} with $f\in x_{\max}^{\alpha}\CI_{\QCyl,r}(\hX)$ not necessarily equal to $-r_0$.    

To begin, we make the simplifying assumption that 
\begin{equation}
       4\le \alpha \le 2\mu
\label{ma.1cc}\end{equation}
and that the restriction of $f$ to $\pa \hX_{\QAC}$ is zero, so that in fact $f\in x^{\alpha}_{\max}x_{\sing}\CI_{\QCyl}(\hX)$, where 
$$
x_{\sing}= \prod_{H_i < H_{\max}} x_i= \frac{x}{x_{\max}}
$$
is the product of all of the boundary defining functions except the one of the maximal boundary hypersurface.  As we shall soon see, it is always possible to reduce the problem to this simpler setting.  Now, to solve \eqref{ma.1} for $f\in x^{\alpha}_{\max}x_{\sing}\CI_{\QCyl}(\hX)$, our strategy is to modify the metric so that \eqref{ma.1} is replaced with a complex Monge-Ampère equation with a new $f$ decaying faster at infinity.  In order to do this, we follow the strategy of \cite[Lemma~2.12]{CH2013} using the following Fredholm theory result.
\begin{theorem}[\cite{DM2014}]
For all $s\in \bbN_0$ and $\gamma\in(0,1)$, the Laplacian $\Delta$ of a $\QAC$-metric on $\hX_{\QAC}$  induces an isomorphism
\begin{equation}
    \Delta:  x^{-\delta} x_{\sing}^{\tau} \cC^{s+2,\gamma}_{\QCyl}(\hX)\to  x^{2-\delta} x_{\sing}^{\tau-2} \cC^{s,\gamma}_{\QCyl}(\hX)
\label{ma.3}\end{equation}
 provided $-2n<\delta<0$ and $2-2\mu< \tau < 0$, where we recall that $\hX= \hX_{\QAC}\setminus \pa \hX_{\QAC}$.
 \label{ma.1b}\end{theorem}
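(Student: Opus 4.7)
The plan is to derive this statement directly from the Fredholm theory of \cite{DM2014}, of which it is essentially a specialization to the Laplacian of a $\QAC$-metric, phrased in the weighted $\QCyl$-H\"older spaces of \S~\ref{mwfc.0}. First I would match conventions: by Remark~\ref{mwfc.11}, $x$ corresponds (up to a bounded factor) to the reciprocal of the radial function of \cite{DM2014}, while $x_{\sing}=x/x_{\max}$ packages together all boundary defining functions of non-maximal hypersurfaces. Since $g_{\QCyl}=x_{\max}^{2}g_{\QAC}$, conformal rescaling yields $\Delta=\Delta_{g}\in x_{\max}^{2}\Diff^{2}_{\QCyl}(\hX_{\QAC})+\Diff^{1}_{\QCyl}$, so that the gain of two powers of $x_{\max}$ in the target, together with the identity $x^{2-\delta}x_{\sing}^{\tau-2}=x_{\max}^{2}\cdot x^{-\delta}x_{\sing}^{\tau}$, is exactly the expected mapping behavior of an elliptic second-order $\QCyl$-operator.

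The admissible weight ranges should come from indicial analysis at each boundary hypersurface. Near $H_{\max}$ the metric $g$ is asymptotically conical in real dimension $2n+2$, so writing $u\sim r^{\alpha}$ and imposing $\Delta u=0$ at leading order yields $\alpha(\alpha+2n)=0$, giving indicial roots $0$ and $-2n$; avoiding both produces the range $-2n<\delta<0$. Near a singular face $H_{i}$ of relative depth at least one, a local model for $\Delta$ decouples into a scaled base Laplacian along $S_{i}$ plus a fibrewise Laplacian on the $\ALE$-model of complex dimension $m_{i}$, and the latter has indicial roots $0$ and $2-2m_{i}$; minimizing over $i$ gives $2-2\mu\le\tau\le 0$, with the endpoints admissible because the limiting indicial families remain invertible in the appropriate quotient sense.

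With the weights thus identified, I would invoke the main Fredholm theorem of \cite{DM2014} to conclude that $\Delta$ in \eqref{ma.3} is Fredholm of index zero. Injectivity would follow from the maximum principle, using that $g$ is complete by Proposition~\ref{bg.1} and that $\delta<0$ forces any harmonic element of the source space to decay at infinity and hence vanish identically; surjectivity would then follow from the index-zero property together with injectivity. The main obstacle, which is the substantive content of \cite{DM2014}, is the analytic input: establishing sharp Gaussian heat kernel bounds on $\QAC$-manifolds via the Grigor'yan--Saloff-Coste method, after verifying volume-doubling and Poincar\'e inequalities on balls of every scale adapted to the layered fibred-corner geometry. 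Once those estimates are in hand, passage to weighted $\QCyl$-H\"older mapping properties via Schauder estimates in the bounded geometry of $g_{\QCyl}$ (Proposition~\ref{bg.2}) is standard.
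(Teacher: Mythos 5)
Your overall framing (translate conventions, then lean on the Fredholm theory of \cite{DM2014}) matches the spirit of the paper, whose proof is essentially a citation: it identifies $x=\rho^{-1}$ and $x_{\sing}=w_1$ in the notation of \cite{DM2014}, obtains Fredholmness from \cite[Theorem~7.6]{DM2014}, and then deduces invertibility from \cite[Theorem~6.10]{DM2014} together with the proof of \cite[Theorem~7.6]{DM2014}, using that $\pa\hX_{\QAC}$ is connected. Where you diverge is in how you promote ``Fredholm'' to ``isomorphism'', and that is where your argument has genuine gaps.

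First, the injectivity-by-maximum-principle step does not go through on the stated weight range. Since $x=x_{\max}x_{\sing}$, the source space is $x^{-\delta}x_{\sing}^{\tau}\cC^{s+2,\gamma}_{\QCyl}(\hX)=x_{\max}^{-\delta}x_{\sing}^{\tau-\delta}\cC^{s+2,\gamma}_{\QCyl}(\hX)$, and with $\delta$ close to $0$ and $\tau$ as negative as $2-2\mu\le -2$ the exponent $\tau-\delta$ is negative: elements of the source decay at $\hH_{\max}$ but are allowed to \emph{grow} as one approaches the non-maximal boundary hypersurfaces. A harmonic element of such a space need not attain its supremum or tend to zero ``at infinity'' in all directions, so the naive maximum-principle argument collapses precisely in the range of weights that makes the theorem useful (this is the content of Joyce's conjecture that \cite{DM2014} proves by heat-kernel methods, not by barriers). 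Second, the claim that the operator is ``Fredholm of index zero'' is asserted, not derived: in these doubly weighted spaces the index depends on the position of the pair $(\delta,\tau)$ relative to the indicial data at \emph{each} boundary face, the endpoints $\tau=0$ and $\tau=2-2\mu$ are borderline cases that your ``invertible in the appropriate quotient sense'' remark does not actually control, and in H\"older spaces there is no direct duality argument to pin the index down. The paper avoids both issues by quoting the isomorphism statement \cite[Theorem~6.10]{DM2014} (together with the argument in the proof of \cite[Theorem~7.6]{DM2014} and connectedness of the boundary) rather than reconstructing injectivity and surjectivity by hand; if you want a self-contained route you would need to reproduce the weighted heat-kernel/Schauder analysis of \cite{DM2014} at every face, which your sketch defers to but does not replace.
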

 \begin{proof}
 Recall that in terms of the notation of \cite{DM2014}, $x=\rho^{-1}$ and $x_{\sing}=w_1$ is the function defined in Remark~\ref{mwfc.11}.
 Thus, by \cite[Theorem~{7.6}]{DM2014}, we know that the map \eqref{ma.3} is Fredholm.  Since $\pa\hX_{\QAC}$ is connected, we can deduce from \cite[Theorem~{6.10}]{DM2014} and the proof of \cite[Theorem~{7.6}]{DM2014} that the map \eqref{ma.3} is in fact an isomorphism.
 \end{proof}

\begin{lemma}
If $f\in x_{\max}^{\alpha} x_{\sing}\CI_{\QCyl}(\hX)$ with $\alpha$ as in \eqref{ma.1cc}, then  there exists $v\in x_{\max}^{\alpha-2}x_{\sing}\CI_{\QCyl}(\hX)$ such that $\tomega_0=\omega_0 + \sqrt{-1}\pa\db v$ is the K\"ahler form of a $\QAC$-metric $\widetilde{g}_0$ asymptotic to the Ricci-flat Kähler cone metric $g_C$ with rate $\alpha$ and 
$$
\widetilde{f}:=f-\log\left(\frac{\tomega_0^{n+1}}{\omega_0^{n+1}}\right) \in x_{\max}^{2\alpha}x_{\sing}^{3}\CI_{\QCyl}(\hX)\subset x_{\max}^{\alpha+1}x_{\sing}^{3}\CI_{\QCyl}(\hX).
$$
\label{ma.2}\end{lemma}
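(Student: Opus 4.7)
The plan is to adapt the strategy of \cite[Lemma~2.12]{CH2013} to the $\QAC$ setting: pick $v$ by solving a linear Poisson equation so that the leading term in the Monge--Amp\`ere expansion of $\log(\tomega_0^{n+1}/\omega_0^{n+1})$ cancels $f$, and then show that the remaining residual $\widetilde f$ is quadratic and higher in $\sqrt{-1}\pa\db v$, hence decays at roughly twice the rate of $f$.

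First I would apply Theorem~\ref{ma.1b} with weights $\delta=2-\alpha$ and $\tau=3-\alpha$. The hypothesis $4\le\alpha\le 2\mu$ gives $2-2\mu\le 2-\alpha\le -2<0$ and $2-2\mu\le 3-\alpha\le -1\le 0$, so both weights lie in the admissible range and $\Delta_{g_0}$ restricts to an isomorphism $x_{\max}^{\alpha-2}x_{\sing}\,\cC^{s+2,\gamma}_{\QCyl}(\hX)\to x_{\max}^{\alpha}x_{\sing}\,\cC^{s,\gamma}_{\QCyl}(\hX)$ for each $s\in\bbN_0$ and $\gamma\in(0,1)$. Solving $\Delta_{g_0}v=2f$ and intersecting over all $s$ produces $v\in x_{\max}^{\alpha-2}x_{\sing}\,\CI_{\QCyl}(\hX)$, and by the K\"ahler identity $\tr_{g_0}(\sqrt{-1}\pa\db v)=\tfrac{1}{2}\Delta_{g_0}v=f$.

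Next I would verify that $\tomega_0=\omega_0+\sqrt{-1}\pa\db v$ is K\"ahler and asymptotic to $g_C$ at rate $\alpha$. The conversion $|\cdot|_{g_{\QAC}}=x_{\max}^{2}|\cdot|_{g_{\QCyl}}$ on $(1,1)$-forms, combined with Lemma~\ref{mwfc.19} and the weighted class of $v$, yields $|\sqrt{-1}\pa\db v|_{g_0}\le C x_{\max}^{\alpha}x_{\sing}$, vanishing as one approaches $\pa\hX_{\QAC}$, so $\tomega_0>0$ near infinity. On a compact interior region where the perturbation may fail to be small, I would replace $v$ by $\chi v$ for a smooth cutoff $\chi\equiv 1$ near $\pa\hX_{\QAC}$ and vanishing on the relevant compact set; the cutoff error is compactly supported and so lies in every weighted class, hence does not affect the asymptotic class. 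Condition~(1) of Definition~\ref{gid.8a} follows from the QAC-norm estimate above, while condition~(2) is preserved because the extra factor $x_{\sing}$ in $v$ makes $\sqrt{-1}\pa\db v$ restrict trivially to each non-maximal $\hH_i$.

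Finally, setting $A:=g_0^{-1}(\sqrt{-1}\pa\db v)$, the Jacobi expansion
\[
\log\det(I+A)=\tr A-\tfrac{1}{2}\tr(A^{2})+\tfrac{1}{3}\tr(A^{3})-\cdots
\]
together with $\tr A=f$ gives $\widetilde f=\tfrac{1}{2}\tr(A^{2})-\tfrac{1}{3}\tr(A^{3})+\cdots$, entirely quadratic and higher in $A$. I expect the main obstacle to be the verification of the sharp weight $\widetilde f\in x_{\max}^{2\alpha}x_{\sing}^{4}\CI_{\QCyl}(\hX)$: the direct Hilbert--Schmidt bound $|\widetilde f|\le C|A|_{g_0}^{2}\le C x_{\max}^{2\alpha}x_{\sing}^{2}$ only delivers $x_{\sing}^{2}$, so the additional $x_{\sing}^{2}$ has to come from a careful block decomposition of $\pa\db v$ in the horizontal/vertical splitting $\omega_0\sim\omega_i+\hphi_i^{*}\omega_{S_i}$ near each non-maximal face $\hH_i$, using that $v$ vanishes linearly in $x_i$ at $\hH_i$ and the local QAC form-basis of Section~\ref{gi.0} to force the dominant components of $A$ to carry an additional $x_i$ factor. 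The weaker inclusion $x_{\max}^{2\alpha}x_{\sing}^{4}\CI_{\QCyl}\subset x_{\max}^{\alpha+1}x_{\sing}^{3}\CI_{\QCyl}$ is then automatic since $\alpha\ge 1$ and $x_{\sing}$ is bounded, and the $\QCyl$-smoothness of $\widetilde f$ follows from the smoothness of $A\mapsto\log\det(I+A)$ together with $v\in\CI_{\QCyl}$.
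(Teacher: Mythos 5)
Your construction stops one step short of the stated decay, and the gap you flag at the end is a real one, not a technicality. After solving $\Delta_{g_0}v=2f$ with the weights $(\delta,\tau)=(2-\alpha,3-\alpha)$, the best you can say is $v\in x_{\max}^{\alpha-2}x_{\sing}\CI_{\QCyl}(\hX)$ and hence $|\sqrt{-1}\pa\db v|_{g_0}=O(x_{\max}^{\alpha}x_{\sing})$, so the quadratic-and-higher residual $\widetilde f=\tfrac12\tr(A^2)-\cdots$ is only $O(x_{\max}^{2\alpha}x_{\sing}^{2})$. The $x_{\sing}$-decay of the solution is dictated by the Fredholm weight $\tau$ in Theorem~\ref{ma.1b}, and there is no structural reason for the dominant components of $A$ at the non-maximal faces to square to something two orders smaller: the hoped-for ``block decomposition'' cancellation is not substantiated, and without it your argument proves membership in $x_{\max}^{2\alpha}x_{\sing}^{2}\CI_{\QCyl}(\hX)$, not $x_{\max}^{2\alpha}x_{\sing}^{4}\CI_{\QCyl}(\hX)$.

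The paper closes exactly this gap by iterating the linear correction rather than refining the pointwise analysis. First it produces $v_1$ as you do, with residual $f_1\in x^{2\alpha}x_{\sing}^{2-2\alpha}\CI_{\QCyl}(\hX)=x_{\max}^{2\alpha}x_{\sing}^{2}\CI_{\QCyl}(\hX)\subset x^{\alpha}x_{\sing}^{2-\alpha}\CI_{\QCyl}(\hX)$; then it applies Theorem~\ref{ma.1b} a second time, for the metric $g_1$ and with the shifted weight $\tau=4-\alpha$ (still admissible because $\alpha\ge 4$ gives $\tau\le 0$ and $\alpha\le 2\mu$ gives $\tau\ge 2-2\mu$; this is where the hypothesis $\alpha\ge 4$ enters essentially), obtaining $v_2\in x_{\max}^{\alpha-2}x_{\sing}^{2}\CI_{\QCyl}(\hX)$ whose own quadratic residual lies in $x^{2\alpha}x_{\sing}^{4-2\alpha}\CI_{\QCyl}(\hX)=x_{\max}^{2\alpha}x_{\sing}^{4}\CI_{\QCyl}(\hX)$, and sets $v=v_1+v_2$. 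To repair your proof you should add this second correction (or some equivalent bootstrapping in the $x_{\sing}$ weight); the rest of your outline (choice of weights, truncation to keep positivity, the argument that $\pa\db v\in x_{\max}^{\alpha}x_{i}\CI_{\QCyl}$ preserves condition~(2) of Definition~\ref{gid.8a}) is consistent with what the paper does.
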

\begin{proof}
We follow the strategy of \cite[Lemma~2.12]{CH2013} using Theorem~\ref{ma.1b}.  As the reader will see, the inequality \eqref{ma.1c} will be used in an essential way in the proof.
Since
$$
f\in x_{\sing}x_{\max}^{\alpha}\CI_{\QCyl}(\hX)= x^{\alpha} x_{\sing}^{1-\alpha} \CI_{\QCyl}(\hX),
$$
we see, by taking $\tau= 3-\alpha$ and $\delta= 2-\alpha\le-2$ in Theorem~\ref{ma.1b}, that there exists a unique $u\in x^{-\delta} x_{\sing}^{\tau} \CI_{\QCyl}(\hX)$ such that
$$
   \Delta_{g_0}u= 2f,
$$
where $\Delta_{g_0}= g_0^{ij}\nabla_i\nabla_j$ is the Laplacian associated to $g_0$.
Since $\delta<0$ and $\tau-\delta=1>0$, $u$ is decaying at infinity, hence $\omega_0+ \sqrt{-1}\pa\db u$ is still positive-definite outside a compact set.  Moreover, we can truncate $u$ to obtain a new function $v_1$ equal to $u$ outside a compact set such that $\omega_1:=\omega_0+ \sqrt{-1}\pa\db v_1$ is positive-definite everywhere.  Now, one computes that
\begin{equation}
\begin{aligned}
   (\omega_0+ \sqrt{-1}\pa\db v_1)^{n+1} &= (1+ \frac12 \Delta_{g_0} v_1)\omega_0^{n+1} + \frac{(n+1)!}{2!(n+1-2)!} \omega_0^{n+1-2}(\sqrt{-1}\pa\db v_1)^2 + \cdots + (\sqrt{-1}\pa\db v_1)^{n+1} \\
    &= (1+f ) \omega_0^{n+1}  + x^{4-2\delta}x_{\sing}^{2\tau-4}\CI_{\QCyl}(\hX; \Lambda^{2n+2}({}^{\hphi\!}T^*\hX_{\QAC})),
 \end{aligned}
\end{equation}
which implies  that
$$
   f_1:= f- \log\left(  \frac{\omega_1^{n+1}}{\omega_0^{n+1}}\right)\in  x^{4-2\delta}x_{\sing}^{2\tau-4} \CI_{\QCyl}(\hX)= x^{2\alpha}x_{\sing}^{2-2\alpha} \CI_{\QCyl}(\hX)\subset  x^{\alpha}x_{\sing}^{2-\alpha} \CI_{\QCyl}(\hX).
 $$
In particular, we see that $f_1$ decays faster than $f$ at infinity.  Repeating the above argument  with $\omega_1$ and $f_1$ in place of $\omega_0$ and $f$, this time using the isomorphism \eqref{ma.3} with $g_1$ instead of $g_0$ and with $\delta=2-\alpha$ as before, but with $\tau= \frac72-\alpha$, we can find $v_2\in  x^{-\delta}x_{\sing}^{\tau}\CI_{\QCyl}(\hX)$ such that
$\omega_2:= \omega_1+ \sqrt{-1}\pa\db v_2$ is positive-definite with 
$$
f_2:= f- \log\left(  \frac{\omega_2^{n+1}}{\omega_0^{n+1}}\right)= f_1- \log\left(  \frac{\omega_2^{n+1}}{\omega_1^{n+1}}\right)  
  \in x^{4-2\delta}x_{\sing}^{2\tau-4}\CI_{\QCyl}(\hX)= x^{2\alpha}x_{\sing}^{3-2\alpha}\CI_{\QCyl}(\hX)=x_{\max}^{2\alpha}x_{\sing}^{3}\CI_{\QCyl}(\hX).
$$
Thus, it suffices again to take $v=v_1+v_2$ to obtain the result.

\end{proof}

For the Kähler metric $\tomega_0$ and the function $\widetilde{f}$, we can now appeal to the result of Tian-Yau \cite{Tian-Yau1991} or its parabolic version \cite{Chau-Tam} to solve the complex Monge-Ampère equation.
\label{ma.4}

\begin{theorem}
For the Kähler form $\tomega_0$ and the function $\widetilde{f}$ given by Lemma~\ref{ma.2}, the complex Monge-Amp\`ere equation
\begin{equation}
\log\left( \frac{(\tomega_0+ \sqrt{-1}\pa\db u)^{n+1}}{\tomega_0^{n+1}} \right)= -\widetilde{f}\label{ma.4a}\end{equation}
has a unique solution $u$ in $x^{\alpha-1}_{\max}x_{\sing}^2\CI_{\QCyl}(\hX)$.
\label{ma.5}\end{theorem}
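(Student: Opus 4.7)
The plan is to apply the continuity method à la Tian-Yau \cite{Tian-Yau1991}. Introduce the family
\begin{equation*}
(\ast)_t \qquad \log\left( \frac{(\tomega_0+ \sqrt{-1}\pa\db u_t)^{n+1}}{\tomega_0^{n+1}} \right)= -t\widetilde{f},\qquad t\in[0,1],
\end{equation*}
and let $T\subset [0,1]$ be the set of $t$ for which $(\ast)_t$ admits a solution $u_t$ in $x^{\alpha-1}_{\max}x^{3}_{\sing}\cC^{k+2,\gamma}_{\QCyl}(\hX)$ (for some fixed large $k$) with $\tomega_0+\sqrt{-1}\pa\db u_t>0$. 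Since $0\in T$ with $u_0=0$, the set $T$ is non-empty, and it remains to show that $T$ is both open and closed, hence equal to $[0,1]$. Openness is an application of the inverse function theorem: the linearisation at $u_t$ is $\tfrac12\Delta_{\tomega_t}$, where $\tomega_t := \tomega_0+\sqrt{-1}\pa\db u_t$ is still a K\"ahler $\QAC$-metric (since $u_t$ decays at infinity), and by Theorem~\ref{ma.1b} applied to $\tomega_t$ this operator is an isomorphism $x^{\alpha-1}_{\max}x^{3}_{\sing}\cC^{k+2,\gamma}_{\QCyl}(\hX) \to x^{\alpha+1}_{\max}x^{3}_{\sing}\cC^{k,\gamma}_{\QCyl}(\hX)$. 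The choice of target weight is consistent with the right-hand side, since $\widetilde{f}\in x^{2\alpha}_{\max}x^{4}_{\sing}\CI_{\QCyl}(\hX)\subset x^{\alpha+1}_{\max}x^{3}_{\sing}\CI_{\QCyl}(\hX)$ by our hypothesis $\alpha\geq 4$.

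The main obstacle is closedness, which requires $t$-uniform a priori estimates. For the $C^0$ estimate I would run a weighted Moser iteration modelled on \cite[\S9.5]{Joyce} and \cite{DM2014}: testing $(\ast)_t$ against $|u_t|^{p-2}u_t$ multiplied by a suitable power of $x_{\max}x_{\sing}^{-1}$, and using the concavity of $\log\det$ to dominate the left-hand side by a multiple of $-\int |\nabla u_t|^2_{\tomega_0}|u_t|^{p-2}\,\dvol_{\tomega_0}$, leads, after feeding the resulting integral inequality into the Sobolev inequality of Lemma~\ref{si.1}, to a weighted $L^\infty$ bound $\|x^{-(\alpha-1)}_{\max}x^{-3}_{\sing}u_t\|_{L^\infty}\leq C$ with $C$ independent of $t$. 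Second-order control comes from Yau's pointwise Laplacian inequality, applied using the bounded geometry of $\tomega_0$ (Proposition~\ref{bg.1}) to bound the bisectional curvature of $\tomega_0$. Calabi's $C^3$ estimate combined with local Schauder theory on geodesic $\tomega_0$-balls of fixed radius---again uniform by bounded geometry---then upgrades this to uniform $\cC^{k,\gamma}_{\QCyl}$ bounds for all $k$. Finally, to recover the claimed weighted decay I would rewrite $(\ast)_t$ as
\begin{equation*}
\Delta_{\tomega_0} u_t = -2t\widetilde{f} + Q(\sqrt{-1}\pa\db u_t),
\end{equation*}
where $Q$ is smooth in its argument and vanishes quadratically at zero. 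Starting from the coarse decay given by the Moser step, one bootstraps by iterating Theorem~\ref{ma.1b} in the spirit of Lemma~\ref{ma.2}; the assumption $\alpha\geq 4$ ensures that the quadratic remainder $Q(\sqrt{-1}\pa\db u_t)$ has strictly better decay than $\widetilde{f}$ at every step, so after finitely many iterations $u_t$ lies in $x^{\alpha-1}_{\max}x^{3}_{\sing}\CI_{\QCyl}(\hX)$.

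Uniqueness within this space is a standard maximum-principle argument: if $u_1,u_2$ both solve $(\ast)_1$, then $w:=u_1-u_2$ satisfies $\Delta_{g'}w=0$ for the averaged K\"ahler metric
\begin{equation*}
g' := \int_0^1\bigl(\tomega_0+\sqrt{-1}\pa\db(su_1+(1-s)u_2)\bigr)\,ds,
\end{equation*}
which remains a complete $\QAC$-metric by Proposition~\ref{bg.1}. Since $w\to 0$ at infinity, either the maximum principle on a complete manifold with bounded geometry or the injectivity half of Theorem~\ref{ma.1b} applied to $g'$ forces $w\equiv 0$. As outlined, the only delicate point is the bootstrap producing the precise weighted decay, since the $C^0$ Moser estimate and Yau-Calabi machinery are by now essentially routine in the $\QAC$ setting thanks to Lemma~\ref{si.1} and Proposition~\ref{bg.1}.
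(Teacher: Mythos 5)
Your proposal follows the paper's proof in all essentials: the continuity method in $t$, openness from the isomorphism of Theorem~\ref{ma.1b} (your weight bookkeeping, $\delta=1-\alpha$, $\tau=4-\alpha$, giving the target $x_{\max}^{\alpha+1}x_{\sing}^{3}\cC^{k,\gamma}_{\QCyl}(\hX)$, is exactly the one the paper uses), closedness from a Moser $\cC^0$ bound via the Sobolev inequality of Lemma~\ref{si.1}, Yau's second-order estimate and Evans--Krylov on the bounded geometry of Proposition~\ref{bg.1}, a bootstrap through the linear theory to recover the weighted decay, and uniqueness via an averaged linearized operator. So the route is the same; but two steps are stated more strongly than what the method actually delivers, and both need the paper's intermediate devices.

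First, the weighted Moser iteration does not produce the uniform bound $\|x_{\max}^{-(\alpha-1)}x_{\sing}^{-3}u_t\|_{L^\infty}\le C$. As in Joyce's argument it only yields a coarse decay $u\in x^{\nu}\cC^0(\hX)$ for some $0<\nu<1$ (with $x=x_{\max}x_{\sing}$), and the sharp weights $x_{\max}^{\alpha-1}x_{\sing}^{3}$ are recovered only afterwards from the linear isomorphism \eqref{ma.3}, applied first with $(\delta,\tau)=(-\nu,0)$ and then with $(1-\alpha,4-\alpha)$; your later sentence about bootstrapping ``from the coarse decay'' is the correct statement, so that should replace the stronger claim. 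Second, local Schauder estimates on geodesic balls of the $\QAC$ metric control $\cC^{k,\gamma}_{\QAC}$ norms, not $\cC^{k,\gamma}_{\QCyl}$ norms --- the inclusion \eqref{mwfc.18} goes the other way --- so your assertion that bounded-geometry Schauder theory directly gives uniform $\cC^{k,\gamma}_{\QCyl}$ bounds does not stand as written. To run the bootstrap in the $\QCyl$ H\"older scale required by Theorem~\ref{ma.1b}, one must first convert $\QAC$ regularity into $\QCyl$ H\"older control via Lemma~\ref{mwfc.19}, and then rewrite the equation as the $\QCyl$-elliptic equation $(x_{\max}^{-2}\Delta_u)u=x_{\max}^{-2}t\widetilde{f}$, applying Schauder estimates for the $\QCyl$ metric, whose bounded geometry is Proposition~\ref{bg.2}. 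With these two repairs (and noting that your ``averaged metric'' $g'$ should really be the averaged second-order operator $\int_0^1\Delta_{\tomega_0+s\sqrt{-1}\pa\db(u_1-u_2)+\sqrt{-1}\pa\db u_2}\,ds$ rather than the Laplacian of an averaged K\"ahler form), your argument coincides with the paper's.
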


\begin{proof}
This is very similar to what has been done for asymptotically conical metrics in \cite{Joyce, Goto}. We will therefore go over the argument putting emphasis on the new features.  The idea is to  apply the continuity method to
\begin{equation}
\log\left( \frac{(\tomega_0+ \sqrt{-1}\pa\db u_t)^{n+1}}{\tomega_0^{n+1}} \right)= t\widetilde{f}
\label{ma.6}\end{equation}
for $t\in[0,1]$.
That is, we will show that the set 
$$
    S=\{ s\in [0,1] \; |\; \exists u_s\in x^{\alpha-1}_{\max}x_{\sing}^3\CI_{\QCyl}(\hX) \mbox{ a solution of \eqref{ma.6} for} \; t=s\}
$$
is in fact all of $[0,1]$ by showing that it is non-empty, open and closed.  Clearly, $u_0=0$ is a solution of \eqref{ma.4a} for $t=0$, so that $S$ is non-empty.  The openness of $S$ follows from Theorem~\ref{ma.1b}.

For closedness, suppose that $[0,\tau)\subset S$ for some $0<\tau\le 1$.  We need to show that \eqref{ma.6} has a solution for $t=\tau$. For this, we need to derive a priori estimates for solutions of \eqref{ma.6}. We do this as follows.  First of all, thanks to the Sobolev inequality \eqref{ma.4b}, we can apply a Moser iteration to obtain an a priori $\cC^0$-bound on a solution $u_t$ of \eqref{ma.6}.  Yau's method then provides a uniform bound on $\sqrt{-1}\pa\db u_t$.  By the result of Evans-Krylov, this yields an a priori $\cC^{2,\gamma}$-bound on solutions, where the H\"older norm is defined in term of $\tg_0$. If $\{t_i\}$ is a strictly increasing sequence with $t_i\nearrow \tau$  and $\{u_{t_i}\}$ is a corresponding sequence of solutions of \eqref{ma.6} for $t=t_1,t_2,\ldots,$ then using the Arzela-Ascoli theorem, one can extract a subsequence that converges in $\cC^{2}_{\QAC}(\hX)$ to some function $u$.      Clearly then, $u$ is solution of \eqref{ma.6} for $t=\tau$.   Bootstrapping, we thus see that $u\in\cC^{\infty}_{\QAC}(\hX)$.

To see that $u$ is in fact in $x^{\alpha-1}_{\max}x_{\sing}^3\CI_{\QCyl}(\hX)$, we need to work slightly harder. First, using Moser iteration with weights as in \cite[\S8.6.2]{Joyce} and the fact that $\widetilde{f}\in x_{\max}^{\alpha+1}x_{\sing}^{3}\CI_{\QCyl}(\hX)\subset x^3\CI_{\QCyl}(\hX)$, we obtain an a priori bound in $x^{\nu}\cC^0(\hX)$ for some $0<\nu<3-2=1$ for the solutions $u_{t_i}$.  Strictly speaking, the argument in \cite[\S8.6.2]{Joyce} is written for ALE-metrics, but as subsequently explained in \cite[\S9.6.2]{Joyce}, since we have the Sobolev inequality \eqref{ma.4b}, the argument also works for the $\QAC$-metric $\tg_0$ and only involves minor notational changes.  This a priori bound thus implies that $u\in x^{\nu}\cC^0(\hX)\cap\CI_{\QAC}(\hX)$.

To improve the statement about the regularity of $u$, we now work directly with equation \eqref{ma.6} for $t=\tau$.  Notice first that the equation can be rewritten as

$$
    \tau \widetilde{f}= \int_0^1 \frac{\pa}{\pa t} \log\left( \frac{(\tomega_0+ t\sqrt{-1}\pa\db u)^{n+1}}{\tomega_0^{n+1}} \right)dt = \int_0^1 \left( \frac{(n+1)\tomega_{u,t}^n\wedge \sqrt{-1}\pa\db u}{\tomega^{n+1}_{u,t}} \right)dt,
 $$
where $\tomega_{u,t}= \tomega_0+ t\sqrt{-1}\pa\db u.$  In other words, the complex Monge-Ampère equation can be rewritten as
\begin{equation}
    \Delta_u u= \tau\widetilde{f},
\label{ma.6bb}\end{equation}
where
$$
       \Delta_u v= \int_0^1 \left( \frac{(n+1)\tomega_{u,t}^n\wedge \sqrt{-1}\pa\db v}{\tomega^{n+1}_{u,t}} \right)dt = \frac12 \int_0^1 \left(\Delta_{\tomega_{u,t}}v\right) dt
       $$
with $\Delta_{\tomega_{u,t}}$ the Laplacian associated to the Kähler form $\tomega_{u,t}$.

Since a $\QAC$-metric has bounded geometry by Proposition~\ref{bg.1} or \cite[Remark~2.20]{DM2014}, applying the Schauder estimate to equation \eqref{ma.6bb}, we find that in fact $u\in x^{\nu}\CI_{\QAC}(\hX)$.  Since
$x^{\nu}\cC^{1}_{\QAC}(\hX) \subset \cC^{0,\gamma}_{\QCyl}(\hX)$ for $\gamma\le \nu$ by Lemma~\ref{mwfc.19}, we see in particular that $\|\pa\db u\|_{g_0}\in \cC^{0,\gamma}_{\QCyl}(\hX)$.  Rewriting \eqref{ma.6bb} in terms of an elliptic $\QCyl$-operator, that is,
\begin{equation}
      (x_{\max}^{-2}\Delta_u) u= x_{\max}^{-2}\tau\widetilde{f},
\label{ma.6b}\end{equation}
we can, thanks to Proposition~\ref{bg.2}, apply the Schauder estimate once again and bootstrap to see that $u\in x^{\nu}\CI_{\QCyl}(\hX)$.  Finally, using the inclusion $ x_{\max}^{\alpha-1}x_{\sing}^2\CI_{\QCyl}(\hX)\subset x^{\nu}x_{\sing}^{-\nu}\CI_{\QCyl}(\hX)$, we can apply the isomorphism \eqref{ma.3} with $(\delta,\tau)$ equal to $(-\nu,-\nu)$ and $(1-\alpha,3-\alpha)$ to conclude that $u\in x^{\alpha-1}_{\max}x_{\sing}^2\CI_{\QCyl}(\hX)$.  This shows that the set $S$ is closed and completes the proof of existence.

For uniqueness, we can proceed as  in \cite[Proposition~7.13]{Aubin}, but using the isomorphism \eqref{ma.3} instead of the maximum principle.
\end{proof}
This means that for $f\in x_{\max}^{\alpha}x_{\sing}\CI_{\QCyl}(\hX)$, we can solve the original equation \eqref{ma.1}.

\begin{corollary}
When $f\in x_{\max}^{\alpha}x_{\sing}\CI_{\QCyl}(\hX)$, the complex Monge-Ampère equation \eqref{ma.1} has a unique solution $u\in x_{\max}^{\alpha-2}x_{\sing}\CI_{\QCyl}(\hX)$.
\label{ma.7}\end{corollary}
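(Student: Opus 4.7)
}
The idea is to reduce the general equation \eqref{ma.1} to the faster-decaying setting in which Theorem~\ref{ma.5} already applies, by absorbing the slow part of the right-hand side into a correction of the background K\"ahler form, much as is done for the Ricci potential in the proof of Theorem~\ref{int.5}. Concretely, the plan is to apply Lemma~\ref{ma.2} to the function $f\in x_{\max}^{\alpha}x_{\sing}\CI_{\QCyl}(\hX)$ (which is exactly the hypothesis of that lemma), obtaining a potential $v\in x_{\max}^{\alpha-2}x_{\sing}\CI_{\QCyl}(\hX)$ such that $\tomega_0=\omega_0+\sqrt{-1}\pa\db v$ is the K\"ahler form of a $\QAC$-metric asymptotic to $g_C$ with rate $\alpha$, and such that
$$
   \widetilde{f}:=f-\log\!\left(\frac{\tomega_0^{n+1}}{\omega_0^{n+1}}\right)\in x_{\max}^{2\alpha}x_{\sing}^{4}\CI_{\QCyl}(\hX)\subset x_{\max}^{\alpha+1}x_{\sing}^{3}\CI_{\QCyl}(\hX).
$$

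Next I would apply Theorem~\ref{ma.5} (whose proof goes through with the right-hand side $-\widetilde f$ replaced by any function in $x_{\max}^{\alpha+1}x_{\sing}^{3}\CI_{\QCyl}(\hX)$, since $\widetilde f$ enters the argument only through this space) to produce a unique $w\in x_{\max}^{\alpha-1}x_{\sing}^{3}\CI_{\QCyl}(\hX)$ solving
$$
   \log\!\left(\frac{(\tomega_0+\sqrt{-1}\pa\db w)^{n+1}}{\tomega_0^{n+1}}\right)=\widetilde{f}.
$$
Setting $u:=v+w$, so that $\omega_0+\sqrt{-1}\pa\db u=\tomega_0+\sqrt{-1}\pa\db w$, one computes
$$
   \log\!\left(\frac{(\omega_0+\sqrt{-1}\pa\db u)^{n+1}}{\omega_0^{n+1}}\right)=\log\!\left(\frac{(\tomega_0+\sqrt{-1}\pa\db w)^{n+1}}{\tomega_0^{n+1}}\right)+\log\!\left(\frac{\tomega_0^{n+1}}{\omega_0^{n+1}}\right)=\widetilde{f}+(f-\widetilde f)=f,
$$
so $u$ solves \eqref{ma.1}. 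The inclusions $x_{\max}^{\alpha-1}x_{\sing}^{3}\CI_{\QCyl}(\hX)\subset x_{\max}^{\alpha-2}x_{\sing}\CI_{\QCyl}(\hX)$ (since $x_{\max},x_{\sing}\le 1$) guarantee that $u=v+w$ lies in $x_{\max}^{\alpha-2}x_{\sing}\CI_{\QCyl}(\hX)$ as required.

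For uniqueness, which is the step I expect to require the most care since it does not follow automatically from Theorem~\ref{ma.5} (whose uniqueness lives in the smaller space $x_{\max}^{\alpha-1}x_{\sing}^{3}\CI_{\QCyl}(\hX)$), I would argue as in \cite[Proposition~7.13]{Aubin}: given two solutions $u_1,u_2\in x_{\max}^{\alpha-2}x_{\sing}\CI_{\QCyl}(\hX)$ of \eqref{ma.1}, set $\psi:=u_1-u_2$, so that
$$
   0=\log\!\left(\frac{(\omega_0+\sqrt{-1}\pa\db u_1)^{n+1}}{(\omega_0+\sqrt{-1}\pa\db u_2)^{n+1}}\right)=\int_{0}^{1}\!\frac{d}{dt}\log\!\left(\frac{(\omega_0+\sqrt{-1}\pa\db(u_2+t\psi))^{n+1}}{\omega_0^{n+1}}\right)\!dt=L_{u_1,u_2}\psi,
$$
where $L_{u_1,u_2}$ is the averaged Laplacian appearing in \eqref{ma.6bb}, which is a second order elliptic $\QAC$-operator whose principal symbol is that of a $\QAC$-Laplacian and whose lower-order terms decay at $\pa\hX_{\QAC}$. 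Using the isomorphism of Theorem~\ref{ma.1b} for $\Delta_{\tg_0}$ together with a perturbation argument (the difference between $L_{u_1,u_2}$ and $\frac{1}{2}\Delta_{\tg_0}$ being a small perturbation in the relevant weighted space once $u_1,u_2$ are known to be decaying), one concludes that $L_{u_1,u_2}$ is also an isomorphism on the weighted H\"older spaces containing $\psi$, forcing $\psi=0$. The main subtlety will be to arrange the weights so that both Theorem~\ref{ma.1b} applies and $\psi$ lies in the corresponding weighted space; this is why the rate condition $4\le\alpha\le 2\mu$ from \eqref{ma.1cc} and the inductive improvement in Lemma~\ref{ma.2} are used.
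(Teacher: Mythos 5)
Your proposal follows essentially the same route as the paper: existence by combining Lemma~\ref{ma.2} with Theorem~\ref{ma.5} (writing $u=v+w$ with $v$ from the lemma), and uniqueness by arguing as in \cite[Proposition~7.13]{Aubin} with the isomorphism \eqref{ma.3} of Theorem~\ref{ma.1b} in place of the maximum principle. The only difference is that you make explicit details the paper leaves implicit, such as the sign of the right-hand side in the reduced equation and the linearized operator in the uniqueness step.
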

\begin{proof}
Applying Lemma~\ref{ma.2}, this amounts to solving the complex Monge-Ampère equation \eqref{ma.4a}, so existence follows from Theorem~\ref{ma.5}.  On the other hand, uniqueness follows again by proceeding as in \cite[Proposition~7.13]{Aubin}, but using the isomorphism \eqref{ma.3} instead of the maximum principle. \end{proof}

The decay of the Ricci potential at infinity, and more generally of the function $f$, is a strong assumption.  Still, since it is satisfied by the examples of the previous section in the case that $\hX_{\QAC}$ is a manifold with boundary, we will be able to relax this assumption by proceeding by induction on the depth of $\hX_{\QAC}$ and using the existence result of Corollary~\ref{ma.7}. 
\begin{proposition}
Given $f\in x_{\max}^{\beta}\CI_{\QCyl,r}(\hX)$ with $\beta\ge 4$ and $\beta\ne 2\mu$, there exists a function $u\in x^{\alpha-2}_{\max}\CI_{\QCyl,r}(\hX)$ with $\alpha=\min\{2\mu,\beta\}$ such that:
\begin{enumerate}
\item $\omega:= \omega_0+ \frac{\sqrt{-1}}2 \pa\db u$ is the K\"ahler form of a K\"ahler $\QAC$-metric $\widetilde{g}$  asymptotic to $g_C$ with rate $\alpha$;
 \item $f-\log\left(  \frac{\omega^{n+1}}{\omega_0^{n+1}}\right)\in x_{\max}^{\alpha}x_{\sing}\CI_{\QCyl}(\hX)$.
 \end{enumerate}
\label{gid.9}\end{proposition}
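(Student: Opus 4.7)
The plan is to induct on the depth $d$ of $\hX_{\QAC}$. When $d=1$ there are no non-maximal boundary hypersurfaces, $x_{\sing}\equiv 1$, and $\alpha=\beta$. I would apply Theorem~\ref{ma.1b} with $\delta=2-\alpha$ and $\tau=0$ to $f\in x_{\max}^\beta\CI_{\QCyl}(\hX)$ to obtain $u\in x_{\max}^{\alpha-2}\CI_{\QCyl}(\hX)$ solving $\Delta_{g_0}u=2f$, and then the expansion of Lemma~\ref{ma.2} shows that all contributions to $\log((\omega_0+\sqrt{-1}\pa\db u)^{n+1}/\omega_0^{n+1})$ beyond the linear term lie in $x_{\max}^{2\alpha-4}\CI_{\QCyl}\subseteq x_{\max}^\alpha\CI_{\QCyl}$ thanks to $\alpha\geq 4$. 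This settles (2), and (1) follows because $\sqrt{-1}\pa\db u$ is $O(x_{\max}^\alpha)$ in the $\QAC$-norm and is precisely designed to cancel the leading obstruction to rate-$\alpha$ asymptotics.

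For the inductive step, assume the proposition for all $\QAC$-resolutions of depth strictly less than $d$. For each non-maximal hypersurface $\hH_i$, the fibres $\hY_i$ of $\hvarphi_i:\hH_i\to\tSigma_i$ are local-product crepant resolutions of $\bbC^{m_i}/\Gamma_i$; these are themselves $\QAC$-resolutions of depth $<d$ whose singularities come precisely from the deeper faces $\hH_j>\hH_i$, so the codimension of singularities inside $\hY_i$ satisfies $\mu_i\geq\mu$ and $\alpha_i:=\min(2\mu_i,\beta)\geq\alpha$. The restriction of $(\omega_0,f)$ to a fibre over $s\in S_i$ satisfies the hypotheses of the proposition on $\hY_i$, and the induction hypothesis yields a smooth family of fibrewise corrections $\{u_i^s\}_{s\in S_i}$ lying in $(x_{\max}|_{\hY_i})^{\alpha_i-2}\CI_{\QCyl,r}(\hY_i)$.

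I would then order the non-maximal hypersurfaces $\hH_{i_1}>\hH_{i_2}>\cdots$ from deepest to shallowest and build $u$ iteratively as $u=\sum_m\chi_m\widetilde{u}_{i_m}$, where $\widetilde{u}_{i_m}$ extends the fibrewise data from $\hH_{i_m}$ into a tubular neighbourhood via Lemma~\ref{mwfc.1c} (constantly in the normal direction) and $\chi_m$ is a cutoff supported there. The compatibility axiom $\phi_{ji}\circ\phi_j=\phi_i$ on $H_i\cap H_j$ of Definition~\ref{mwfc.1} guarantees that under this ordering the correction at $\hH_{i_m}$ restricts on every already-treated intersection to the $\phi_j$-pullback of the datum on the deeper face, which is zero once that deeper face has been corrected; thus no earlier cancellation is disturbed.

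The main technical obstacle is verifying that the fibrewise Monge-Amp\`ere solutions produced by the induction hypothesis at distinct faces $\hH_i$ and $\hH_j$ agree on $\hH_i\cap\hH_j$ to the required order $x_{\max}^\alpha x_{\sing}$. This I would handle by combining the partial-order compatibility of Definition~\ref{mwfc.1} with the Taylor expansion of $\omega_0$ near $\hH_i\cap\hH_j$ worked out in \S\ref{gi.0} (in particular the identity $\omega_{E_i}|_{H_i\cap H_j}=\omega_{E_j}|_{H_i\cap H_j}$ established there): the fibrewise Monge-Amp\`ere problems at neighbouring faces then share identical boundary data, and their solutions match compatibly by the uniqueness in Corollary~\ref{ma.7} applied inductively. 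Once $u$ is assembled, (2) follows by a face-by-face Taylor expansion of $\log(\omega^{n+1}/\omega_0^{n+1})$, and (1) from $u\in x_{\max}^{\alpha-2}\CI_{\QCyl,r}(\hX)$.
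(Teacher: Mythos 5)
Your overall plan (face-by-face fibrewise corrections glued with cutoffs) is in the spirit of the paper's proof, but as written it has a genuine gap at its core. Conclusion (2) demands that the global error $f-\log\bigl(\omega^{n+1}/\omega_0^{n+1}\bigr)$ carry a factor of $x_i$ at every non-maximal face, i.e.\ that its restriction to each $\hH_i$ vanish identically; this forces you to solve the fibrewise complex Monge--Amp\`ere equation \emph{exactly} on the fibres of $\hphi_i:\hH_i\to S_i$. Applying the Proposition itself inductively to a fibre $\hY_i$ does not do this: it only pushes the fibrewise error into $x_{\max}^{\alpha_i}x_{\sing}\CI_{\QCyl}(\hY_i)$, leaving it nonzero in the interior of the fibre, so the restriction of the global error to $\hH_i$ does not vanish and no factor $x_i$ is gained. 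Worse, for a face whose fibres are ALE (depth one) the inductive hypothesis is vacuous ($x_{\sing}\equiv 1$ on such a fibre), so your scheme produces no correction there at all; yet it is exactly this ALE step, where the fibrewise equation is solved by the Joyce/Tian--Yau type existence theorem (\cite[\S 8.5]{Joyce}, \cite[Theorem~2.1]{CH2013}), that produces the cap $\alpha=\min\{2\mu,\beta\}$, which your argument never accounts for. The paper instead solves the fibrewise equation exactly at every step: via Corollary~\ref{ma.7} when the fibres have higher depth, and via the ALE result when they do not.

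Two further points. First, the order of the recursion is the reverse of what you state: the boundary hypersurfaces of a fibre of $\hphi_i$ are the intersections $\hH_i\cap\hH_j$ with $\hH_j>\hH_i$, i.e.\ with the \emph{shallower} faces, so the fibrewise datum has the decay needed to apply Corollary~\ref{ma.7} only if those shallower faces have already been treated; the paper accordingly starts at $\hH_{\max}$ and proceeds in increasing relative depth. Doing the corrections sequentially in that order also removes your matching problem: after the step at a shallower face the error vanishes there, so the fibrewise datum at the next face restricts to zero on the relevant corner, and the fibrewise solution, lying in $\bigl(\prod_{i<j\le\ell}x_j\bigr)x_{\max}^{\alpha-2}\CI_{\QCyl}$, does not disturb the decay already achieved -- no separate uniqueness/compatibility argument on intersections is needed. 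Second, you never verify that $\omega_0+\sqrt{-1}\pa\db(\chi_m\widetilde u_{i_m})$ remains positive after multiplying the extended fibrewise solution by a cutoff; this is not automatic and occupies a substantial portion of the paper's proof, which uses the convexity of $\omega_i+t\sqrt{-1}\pa\db v_i$ together with a two-region estimate, first choosing the region parameter $\nu$ small and then the cutoff scale $N$ large.
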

\begin{proof}
Let $\hH_1,\ldots,\hH_{\ell+1}$ be an exhaustive list of the boundary hypersurfaces of $\hX_{\QAC}$ compatible with the partial order in the sense that
$$
    \hH_i< \hH_j \; \Longrightarrow \; i<j.
$$
To construct $\omega$, we will recursively construct the restrictions $\left. u\right|_{\hH_i}$ proceeding in order of increasing relative depth, that is, in decreasing order with respect to the index $i$.  More precisely, for each $i\in\{1,\ldots,\ell+1\}$, we will show that there exists a function $u_i\in x_{\max}^{\alpha-2}\CI_{\QCyl,r}(\hX)$ such that $\omega+ \sqrt{-1} \pa\db u_i$ is the K\"ahler form of a $\QAC$-metric asymptotic to $g_C$ with rate $\alpha$ and with 
\begin{equation}
  f-\log\left(  \frac{(\omega_0+ \sqrt{-1}\pa\db u_i)^{n+1}}{\omega^{n+1}}\right)\in x_{\max}^{\alpha}\left( \prod_{i\le j<\ell+1} x_j \right)\CI_{\QCyl}(\hX).
\label{gid.9b}\end{equation}

For the maximal boundary hypersurface $\hH_{\ell+1}$, we simply take $u_{\ell+1}=0$.  Suppose now that for some $i$, we can find $u_{i+1} \in x_{\max}^{\alpha-2}\CI_{\QCyl,r}(\hX_{\QAC}\setminus\pa\hX_{\QAC})$ such that $\omega:=\omega_0+ \sqrt{-1} \pa\db u_{i+1}$ is the K\"ahler form of a $\QAC$-metric asymptotic to $g_C$ with rate $\alpha$ such that 
$$
 f-\log\left(  \frac{(\omega+ \sqrt{-1}\pa\db u_{i+1})^{n+1}}{\omega^{n+1}}\right)\in x_{\max}^{\alpha}\left( \prod_{i<j<\ell+1} x_j \right)\CI_{\QCyl}(\hX).
$$
In terms of  $f_i:= \left.f \right|_{\hH_i}$ and the restrictions of $\omega_0$ and $\omega$ to $\hH_i$ respectively given by
 \begin{equation}
\left. \omega_0\right|_{\hH_i}=\frac{\sqrt{-1}}2f_{E_i} d\lambda_1\wedge d\overline{\lambda}_1 + \omega_{0,i} + \phi_i^*\omega_{S_i} \quad \mbox{and} \quad  \left. \omega\right|_{\hH_i}=\frac{\sqrt{-1}}2f_{E_i} d\lambda_1\wedge d\overline{\lambda}_1 + \omega_{i} + \phi_i^*\omega_{S_i},
\label{ma.9}\end{equation}
this means that 
$$
    \widetilde{f}_i:= f_i - \log\left( \frac{\omega_i^{m_i}}{\omega_{0,i}^{m_i}} \right) \in x^{\alpha}_{\max}\left( \prod_{i<j<\ell+1} x_j \right)  \CI_{\QCyl}(\hphi_i^{-1}(s))
$$
in each fibre $\hphi_i^{-1}(s)$ of $\hphi_i: \hH_i\to S_i$, where $m_i$ is the complex dimension of the fibres of $\hphi_i$.      Moreover, $\omega_i$ is a closed $(1,1)$-form which restricts on each fibre of $\hphi_i:\hH_i\to S_i$ to the Kähler form of a Kähler $\QAC$-metric asymptotic to the Euclidean metric $g_{\hphi_i}$ with rate $\alpha$.  We then need to distinguish two cases.

\noindent\textbf{Case 1: The relative depth of $\hH_i$ is strictly bigger than $1$.} In this case, we apply Corollary~\ref{ma.7} to each fibre $\hY_i$ of $\hphi_i: \hH_i\to S_i$ to obtain a unique function
$$
v_i\in \left(  \prod_{i<j\le \ell} x_j\right)x_{\max}^{\alpha-2}\CI_{\QCyl}(\hY_i)
$$
 such that 
\begin{equation}
 \log\left( \frac{(\omega_i+\sqrt{-1}\pa\db v_i)^{n_i}}{\omega_i^{n_i}} \right)= \widetilde{f}_i.
\label{ma.9c}\end{equation} 
On $\hH_i$, this yields a function that we will also denote by $v_i$.  Notice that for all $t\in [0,1]$,
 \begin{equation}
      \omega_i+t \sqrt{-1}\pa\db(v_i)= (1-t)\omega_i + t(\omega_i + \sqrt{-1}\pa\db v_i)
 \label{ma.9b}\end{equation}
 is a convex sum of two Kähler forms, hence is itself a Kähler form.  This fact can be used to extend $v_i$ to a function $v\in  \left(  \prod_{i<j\le \ell} x_j\right) x_{\max}^{\alpha-2}\CI_{\QCyl}(\hX)$ such that
 $\tomega+\sqrt{-1}\pa\db v$ is the Kähler form of a $\QAC$-metric.  To see this, let $c_i: \hH_i\times [0,\epsilon)\to \hX_{\QAC}$ be a collar neighborhood of $\hH_i$ in $\hX_{\QAC}$ as in Lemma~\ref{mwfc.1c}, so that $x_i\circ c_i: \hH_1\times [0,\epsilon)\to [0,\epsilon)$ is the projection onto the second factor. If $\psi\in \CI(\bbR)$ is a cut-off function that takes values in $[0,1]$ with $\psi(t)\equiv 1$ for $t<1$ and $\psi(t)\equiv 0$ for $t>2$, then, given the local descriptions \eqref{ma.9} and \eqref{mwfc.2e}, it suffices to take
$$
        v= (c_i)_*( \psi(N x_i) v_i)
$$
for $N>0$ a constant chosen sufficiently large so that the $(1,1)$-form $\tomega+ \sqrt{-1}\pa\db v$ remains positive-definite.  Indeed, set $\rho_i= \prod_{i<j\le \ell+1}x_j$.  Then using local coordinates as in
\eqref{mwfc.2c}, one computes that
$$
       dx_i\in x_i\rho_i\CI_{\QCyl}(\hX;{}^{\hphi}T^*\hX_{\QAC}), \quad \pa\db x_i\in x_i\rho_i\CI_{\QCyl}(\hX;\Lambda^2({}^{\hphi}T^*\hX_{\QAC})).$$
On the other hand,
$$
\sqrt{-1}\pa\db\left( \psi(Nx_i) v_i\right)= \sqrt{-1}\psi(Nx_i)\pa\db v_i + Q
$$
with
$$
   Q=\sqrt{-1}v_i\left( N^2\psi''(Nx_i)\pa x_i\wedge \db x_i + N \psi'(Nx_i)\pa\db x_i \right) + \sqrt{-1}N\psi'(Nx_i)\left( \pa v_i\wedge \db x_i + \pa x_i \wedge \db v_i \right).
$$
Since $x_i\le 2N^{-1}$ on the support of $\psi(Nx_i)$, for $\nu>0$ to be taken small, we see that in the region $\rho_i<\nu$,
$$
   \| \sqrt{-1}\pa\db\left( \psi(Nx_i) v_i\right)\|_{g_0} \le C\nu
$$
for a constant $C$ independent of $N\ge 1$.  Taking $\nu$ small enough, we can thus ensure that $\omega+ \sqrt{-1}\pa\db v>0$ in the region $\rho_i<\nu$ whatever the choice of $N\ge 1$.  Keeping $\nu>0$ fixed, we now adjust the choice of $N\ge 1$ to ensure that $\omega+\sqrt{-1}\pa\db v$ is also positive-definite in the region $\rho_i\ge \nu$.  In this region, one easily computes that
$$
        dx_i\in x_i^2\CI_{\QCyl}(\hX;{}^{\hphi}T^*\hX_{\QAC}), \quad \pa\db x_i \in x_i^2\CI_{\QCyl}(\hX; \Lambda^2({}^{\hphi}T^*\hX_{\QAC})).
$$
Hence, since again $x_i\le 2N^{-1}$ on the support of $\psi(Nx_i)$, we see that
$$
      \| Q\|_g \le \frac{C_{\nu}}{N}  \quad \mbox{in the region} \; \rho_i\ge \nu
$$
for some constant $C_{\nu}>0$ depending on $\nu$.  Thus, $Q$ can be taken as small as we want by taking $N$ sufficiently large.  On the other hand, by construction, the term $\sqrt{-1}\psi(Nx_i)\pa\db v_i$ is not expected to be small in the region $\rho_i\ge \nu$, but by the convexity property \eqref{ma.9b}, we can still ensure that
$$
        \omega+ \sqrt{-1}\psi(Nx_i)\pa\db v_i >0 \quad \mbox{in the region} \; \rho_i\ge \nu
$$
provided that $N\ge 1$ is taken large enough. Summing up, we can therefore ensure that $\omega+ \sqrt{-1}\pa\db v>0$ provided that $N\ge 1$ is large enough, as claimed.  Notice then that by  \eqref{ma.9c}, it suffices to  take $u_i= u_{i+1}+ v\in x_{\max}^{\alpha}\CI_{\QCyl,r}(\hX)$ to obtain the desired function for which $\omega+ \sqrt{-1}\pa\db u_i$ is the Kähler form of a $\QAC$-metric asymptotic to $g_C$ with rate $\alpha$ and
$$
 f-\log\left(  \frac{(\omega+ \sqrt{-1}\pa\db u_i)^{n+1}}{\omega^{n+1}}\right)\in x_{\max}^{\alpha}\left( \prod_{i\le j<\ell+1} x_j \right)\CI_{\QCyl}(\hX),
$$ 
thereby completing the inductive step.  

\noindent\textbf{Case 2: The relative depth of $\hH_i$ is equal to $1$.}  In this case, the fibres of $\hphi_i: \hH_i\to S_i$ are necessarily manifolds with boundary and the corresponding fibrewise Kähler metrics are $\ALE$.  Instead of applying Corollary~\ref{ma.7}, we  then apply standard results about $\ALE$ metrics to find $v_i$ in \eqref{ma.9c}, namely \cite[\S8.5]{Joyce}, see also \cite[Theorem~{2.1}]{CH2013}.  In particular, when $\beta>2\mu$, it is in this step that $\beta$ is replaced by $\alpha=2\mu$.  We then proceed as in Case 1 to extend $v_i$ to a function on $\hX$  and obtain $u_i$.

\end{proof}

This leads to the main result of this section.

\begin{theorem}
Let $g_0$ be a Kähler $\QAC$-metric on $\hX_{\QAC}$ asymptotic to the Ricci-flat Kähler cone metric $g_C$ with rate $\epsilon>0$.  Let $\omega_0$ be its Kähler form.
Given $f\in x^{\beta}_{\max}\CI_{\QCyl,r}(\hX)$ with $\beta\ge 4$ such that $\beta\ne 2\mu$, there exists a unique $u\in x^{\alpha-2}_{\max}\CI_{\QCyl,r}(\hX)$ with $\alpha=\min\{2\mu,\beta\}$ solving the complex Monge-Ampère equation
$$
 \log\left( \frac{(\omega_0+ \sqrt{-1}\pa\db u)^{n+1}}{\omega_0^{n+1}} \right)= f.
 $$
\label{ma.10}\end{theorem}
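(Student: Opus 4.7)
My plan is to chain together the two preparatory results already established, namely Proposition~\ref{gid.9} (which reduces the problem to one with rapidly decaying right-hand side at the non-maximal faces) and Corollary~\ref{ma.7} (which solves the equation once such decay is in hand). Roughly speaking, I will write the unknown $u$ as $u = u_1 + u_2$, where $u_1$ is the correction produced by Proposition~\ref{gid.9} and $u_2$ is the solution of a corrected complex Monge-Amp\`ere equation furnished by Corollary~\ref{ma.7}.

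More precisely, first I would apply Proposition~\ref{gid.9} to the given $f\in x_{\max}^{\beta}\CI_{\QCyl,r}(\hX)$ to produce $u_1\in x_{\max}^{\alpha-2}\CI_{\QCyl,r}(\hX)$ such that $\omega_1:= \omega_0+\sqrt{-1}\pa\db u_1$ is the K\"ahler form of a K\"ahler $\QAC$-metric asymptotic to $g_C$ with rate $\alpha=\min\{2\mu,\beta\}$, and such that
$$
  f_1 := f - \log\left( \frac{\omega_1^{n+1}}{\omega_0^{n+1}}\right)\in x_{\max}^{\alpha}x_{\sing}\CI_{\QCyl}(\hX).
$$
The decay condition on $f_1$ is precisely the hypothesis of Corollary~\ref{ma.7} applied with base K\"ahler form $\omega_1$ in place of $\omega_0$ (the Fredholm theorem \ref{ma.1b} is insensitive to which particular $\QAC$-metric we start with, and Lemma~\ref{ma.2} applies verbatim since $\omega_1$ is asymptotic to $g_C$ at a rate $\alpha\ge 4$). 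This yields a unique $u_2\in x_{\max}^{\alpha-2}x_{\sing}\CI_{\QCyl}(\hX)$ solving
$$
   \log\left(\frac{(\omega_1+\sqrt{-1}\pa\db u_2)^{n+1}}{\omega_1^{n+1}}\right) = f_1.
$$
Setting $u:= u_1+u_2$ and adding the two defining relations, one immediately gets
$$
   \log\left(\frac{(\omega_0+\sqrt{-1}\pa\db u)^{n+1}}{\omega_0^{n+1}}\right)= \log\left(\frac{(\omega_1+\sqrt{-1}\pa\db u_2)^{n+1}}{\omega_1^{n+1}}\right)+\log\left(\frac{\omega_1^{n+1}}{\omega_0^{n+1}}\right)= f_1 + (f-f_1)= f.
$$
Since $u_1\in x_{\max}^{\alpha-2}\CI_{\QCyl,r}(\hX)$ and $u_2\in x_{\max}^{\alpha-2}x_{\sing}\CI_{\QCyl}(\hX)\subset x_{\max}^{\alpha-2}\CI_{\QCyl,r}(\hX)$ (with trivial restriction at non-maximal faces), the sum $u$ lies in $x_{\max}^{\alpha-2}\CI_{\QCyl,r}(\hX)$ as required.

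Uniqueness is the last step and is handled by the same mechanism used inside the proof of Corollary~\ref{ma.7}: if $u$ and $u'$ are two solutions in $x_{\max}^{\alpha-2}\CI_{\QCyl,r}(\hX)$, their difference $w=u-u'$ satisfies a linear equation of the form $\Delta_w w=0$ for a uniformly elliptic averaged Laplacian (as in the proof of Theorem~\ref{ma.5}), obtained by integrating $\pa_t\log((\omega_0+t\sqrt{-1}\pa\db u+(1-t)\sqrt{-1}\pa\db u')^{n+1}/\omega_0^{n+1})$ from $0$ to $1$. Because $w$ decays at the maximal face and is bounded at the non-maximal faces, one may pick weights $-2n<\delta<0$ and $2-2\mu\le\tau\le 0$ for which $w$ lies in the domain of the isomorphism \eqref{ma.3} associated to any of the intermediate $\QAC$-metrics; the Fredholm result of Theorem~\ref{ma.1b}, combined with an interpolation argument \`a la \cite[Proposition~7.13]{Aubin}, then forces $w\equiv 0$. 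The main obstacle I anticipate is not the algebraic chaining described above, but rather the bookkeeping to justify that $u_2$ indeed lies in the right weighted space and that the restrictions of $u_1$ at the non-maximal faces are not disturbed by the addition of $u_2$; this is exactly what the factor $x_{\sing}$ in the decay of $u_2$ guarantees, so that the required restriction behavior of $u$ at $\pa \hX_{\QAC}$ is inherited unchanged from $u_1$.
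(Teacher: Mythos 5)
Your proposal is correct and follows essentially the same route as the paper: existence by chaining Proposition~\ref{gid.9} with Corollary~\ref{ma.7} (applied to the corrected K\"ahler form), and uniqueness by the Aubin-type argument with the isomorphism \eqref{ma.3} in place of the maximum principle. The only difference is that you spell out the bookkeeping (the decomposition $u=u_1+u_2$ and the role of the $x_{\sing}$ factor) that the paper leaves implicit.
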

\begin{proof}
The existence is obtained by combining Proposition~\ref{gid.9} and Corollary~\ref{ma.7}.  Uniqueness can be seen by proceeding as in \cite[Proposition~{7.13}]{Aubin}, but using the isomorphism \eqref{ma.3} instead of the maximum principle.
\end{proof}

Taking $f$ to be the Ricci potential of $g_0$ gives the following result.  

\begin{corollary}
Let $g_0$ with K\"ahler form $\omega_0$ be a $\QAC$-metric on $\hX_{\QAC}$ asymptotic to the Ricci-flat Kähler cone metric $g_C$ with rate $\epsilon\ge 4$ such that $\epsilon\ne 2\mu$.
Then there exists a unique $u\in x^{\alpha-2}_{\max}\CI_{\QCyl,r}(\hX)$ with $\alpha=\min\{2\mu,\epsilon\}$ solving the complex Monge-Ampère equation
$$
 \log\left( \frac{(\omega_0+ \sqrt{-1}\pa\db u)^{n+1}}{\omega_0^{n+1}} \right)= -r_0,
 $$
where $r_0$ is the Ricci potential defined in \eqref{ma.1d}.  In particular, $\omega_0+ \sqrt{-1}\pa\db u$ is the Kähler form of a Ricci-flat Kähler $\QAC$-metric.
\label{ma.11}\end{corollary}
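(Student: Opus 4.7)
The plan is to reduce Corollary~\ref{ma.11} to a direct application of Theorem~\ref{ma.10} with $f=-r_0$. Everything boils down to checking that the Ricci potential $r_0$ defined in \eqref{ma.1d} lies in $x_{\max}^{\epsilon}\CI_{\QCyl,r}(\hX)$, so that the hypotheses of Theorem~\ref{ma.10} are satisfied with $\beta=\epsilon$.

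First I would verify the decay near the maximal boundary hypersurface $\hH_{\max}$. On the end $X\setminus\cK$, the given biholomorphism identifies $\Omega_X^p$ with $\Omega_C^p$, so after lifting via the local product crepant resolution $\beta_X:\hX\to X$, the form $\Omega_{\hX}^p$ agrees with $\Omega_C^p$ in the asymptotic region. By Definition~\ref{qr.1}(iv), $(\omega_C^{n+1})^p = c_p\,\Omega_C^p\wedge\overline{\Omega_C^p}$, while by Definition~\ref{gid.8a}(1) we have $g_0-g_C\in x_{\max}^{\epsilon}\CI_{\QCyl}$. Expanding the logarithm in \eqref{ma.1d} gives
$$
r_0 = \log\!\left(\frac{(\omega_0^{n+1})^p}{(\omega_C^{n+1})^p}\right)\in x_{\max}^{\epsilon}\CI_{\QCyl}(\hX)
$$
in a neighborhood of $\hH_{\max}$.

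Second, I would check that $r_0$ restricts to $\pa\hX_{\QAC}$ in the sense of Definition~\ref{rest.1} at each non-maximal face $\hH_i$. By Definition~\ref{gid.8a}(2), we can write $\omega_0 = \omega_i + \hphi_i^*\omega_{S_i} + E_i$ near $\hH_i$, with $E_i\in x_{\max}^{\epsilon}\widehat{x}_i\CI_{\QCyl}$, and where $\omega_i$ restricts on each fibre of $\hphi_i:\hH_i\to S_i$ to the K\"ahler form of a K\"ahler $\QAC$-metric asymptotic to $g_{\phi_i}$. The holomorphic volume form $\Omega_{\hX}$ is nowhere vanishing, so its local expression in the fibre-base coordinates \eqref{gid.1} and their lifts under the crepant resolution is smooth on $\hX_{\QAC}$, and consequently $(\omega_0^{n+1})^p/(c_p\Omega_{\hX}^p\wedge\overline{\Omega_{\hX}^p})$ admits the required expansion: its leading term along $\hH_i$ is the corresponding fibrewise Ricci potential $r_{0,i}$ of $\omega_i+\hphi_i^*\omega_{S_i}$ (which itself lies in $x_{\max}^{\epsilon}\CI_{\QCyl}(\hH_i/S_i)$ by the same asymptotic calculation applied fibrewise and on $S_i$), and the remainder $r_0-r_{0,i}$ lies in $x_{\max}^{\epsilon}\widehat{x}_i\CI_{\QCyl}$. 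Thus $r_0\in x_{\max}^{\epsilon}\CI_{\QCyl,r}(\hX)$.

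With $r_0$ verified to be an admissible right-hand side and $\epsilon\ge 4$, $\epsilon\ne 2\mu$ by hypothesis, Theorem~\ref{ma.10} applied with $f=-r_0$ and $\beta=\epsilon$ yields a unique $u\in x_{\max}^{\alpha-2}\CI_{\QCyl,r}(\hX)$ with $\alpha=\min\{2\mu,\epsilon\}$ solving the stated Monge-Amp\`ere equation. Unwinding \eqref{ma.1d}, this equation rewrites as $((\omega_0+\sqrt{-1}\pa\db u)^{n+1})^p = c_p\,\Omega_{\hX}^p\wedge\overline{\Omega_{\hX}^p}$, which forces the resulting metric to be Ricci-flat; combined with the smallness of $u$ at infinity, $\omega_0+\sqrt{-1}\pa\db u$ is the K\"ahler form of a genuine $\QAC$-metric (asymptotic to $g_C$ with rate $\alpha$).

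The main obstacle is the second step: showing that $r_0$ genuinely restricts to $\pa\hX_{\QAC}$ as in Definition~\ref{rest.1}, since this requires carefully tracking how the splitting $\omega_0\approx\omega_i+\hphi_i^*\omega_{S_i}$ near $\hH_i$ interacts with $\Omega_{\hX}$ in the holomorphic coordinates \eqref{gid.1}, and in particular verifying that no anomalous boundary-hypersurface terms appear in the fibrewise Ricci potential that would prevent it from lying in $x_{\max}^{\epsilon}\CI_{\QCyl}(\hH_i/S_i)$.
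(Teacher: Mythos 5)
Your proposal is correct and matches the paper's own (very short) derivation: the paper obtains Corollary~\ref{ma.11} by applying Theorem~\ref{ma.10} with $f=-r_0$, having already noted in the discussion around \eqref{ma.1d} that the asymptotics of Definition~\ref{gid.8a} put $r_0$ in $x_{\max}^{\epsilon}\CI_{\QCyl,r}(\hX)$. Your additional verification that $r_0$ decays at $\hH_{\max}$ and restricts to the non-maximal faces is exactly the content the paper asserts there, so the two arguments coincide.
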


\begin{remark} If we can take $p=1$ in the definition of $X$ and $\hX$ near equation \eqref{bih.1}, then $\Omega_{\hX}^1$ is a nowhere vanishing parallel holomorphic volume form on $\hX$ and the Ricci-flat Kähler $\QAC$-metric of Corollary~\ref{ma.11} is in fact Calabi-Yau.   
\end{remark}

\appendix

\section{More examples of K\"ahler-Einstein orbifolds admitting a crepant resolution}\label{ap.0}
\begin{center}   
\textit{by Ronan J. Conlon, Frédéric Rochon, and Lars Sektnan }
\end{center}

\medskip

It is possible to slightly widen  the situations where we can apply Theorem~\ref{kqac.24}, yielding in turn more examples to which  Theorem~\ref{ma.10} can be applied.  In this appendix, we will be interested in the case where $(D,g_D)$ is a Kähler-Einstein Fano orbifold with non-isolated singularities of depth 1 having a non-trivial normal bundle, a situation not covered by Corollary~\ref{kqac.26}.  In fact, we will be very specific and only consider orbifold singularities that are locally modelled on 
\begin{equation}
     \bbC^{n-m_i}\times (\bbC^{m_i}/\bbZ_{m_i})
\label{ap.1}\end{equation}
with the generator $e^{\frac{2\pi \sqrt{-1}}{m_i}}$ of $\bbZ_{m_i}$ acting on $\bbC^{m_i}$ via complex multiplication.  For singularities of this kind, it is  well-known that $\pi_i: K_{\bbC\bbP^{m_i-1}}\to \bbC^{m_i}/\bbZ_{m_i}$ is a crepant resolution.
\begin{theorem}
Let $(D,g_D)$ be a compact Kähler-Einstein Fano orbifold with at most depth one singularities. Assume that the isolated singularities can be resolved by a Kähler crepant resolution and that the non-isolated singularities are locally of the form \eqref{ap.1}.  Then $D$ admits a Kähler crepant resolution $\hD$ and the $\QAC$-compactification $\hX_{\QAC}$ of $K_{\hD}$ admits a Calabi-Yau $\QAC$-metric asymptotic to $g_C$ with rate $2\mu$, where $\mu$ is the complex codimension of the singular set of $D$.  
\label{ap.2}\end{theorem}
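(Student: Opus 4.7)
The overall strategy is to reduce to Corollary~\ref{ma.11} by constructing a Kähler crepant resolution $\hD$ of $D$ together with a Kähler $\QAC$-metric on the $\QAC$-compactification $\hX_{\QAC}$ of $K_{\hD}$ asymptotic to $g_C$ with rate strictly greater than $2\mu$. Once this data is in place, Corollary~\ref{ma.11} will produce a Calabi-Yau $\QAC$-metric in the prescribed Kähler class with rate $2\mu$, as desired.

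First, I would construct $\hD$ by resolving the singular strata of $D$. For each isolated singularity, I use the hypothesis that a local Kähler crepant resolution exists and glue it in via a standard cut-off construction on the Kähler potential. For a non-isolated singular stratum $\Sigma_i \subset D$, by assumption a tubular neighborhood of $\Sigma_i$ in $D$ is modelled on the quotient of a rank $m_i$ holomorphic orbi-vector bundle $N_i \to \Sigma_i$ by the fibrewise $\bbZ_{m_i}$-action. Since $\pi_i \colon K_{\bbC\bbP^{m_i-1}} \to \bbC^{m_i}/\bbZ_{m_i}$ is a crepant resolution and since this construction is $\GL(m_i,\bbC)$-equivariant, it globalizes: replacing the zero section of $N_i/\bbZ_{m_i}$ by $\bbP(N_i)$ yields a fibrewise crepant resolution of a neighborhood of $\Sigma_i$, and the normal bundle of $\bbP(N_i)$ in $\hD$ is the relative $\cO(-m_i)$. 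Gluing these local resolutions together gives a Kähler crepant resolution $\hD$ that is local product in the sense of Joyce, so that $K_{\hD}$ is automatically a local product Kähler crepant resolution of $K_D$, and the quasi-regular Calabi-Yau cone $(C,g_C,J_C)$ given by the Calabi ansatz on $K_D \setminus D$ satisfies the hypotheses of \S\ref{cr.0}.

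Next, I would construct a Kähler $\QAC$-metric on $\hX_{\QAC}$ asymptotic to $g_C$ at arbitrarily large rate by invoking Theorem~\ref{kqac.24}. Using Proposition~\ref{kqac.3}, I produce a Kähler metric on the orbifold $K_D$ equal to $g_C$ outside a compact set, giving the metric $g_\varepsilon$ on $\cX$ of \S\ref{kqac.0}. It then remains to build the closed $(1,1)$-forms $\omega_{\hvarphi_i}$ on each $\widehat{\cH}_i$ whose fibrewise restrictions are Kähler $\QAC$ and asymptotic to the Euclidean model $\omega_{\varphi_i}$. For $\widehat{\cH}_i$ associated to an isolated singularity, the argument from Corollary~\ref{kqac.25} applies verbatim. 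The nontrivial case is that of a positive-dimensional stratum $\Sigma_i$, where $\hvarphi_i \colon \widehat{\cH}_i \to \tSigma_i$ is now a genuine fibre bundle with fibre equal to the $\QAC$-compactification of $K_{\bbC\bbP^{m_i-1}}$. Here I would apply the Calabi ansatz to the relative canonical line bundle $K_{\bbP(N_i)/\tSigma_i}$, producing a Hermitian metric on $K_{\bbP(N_i)/\tSigma_i}$ whose curvature gives a vertically positive closed $(1,1)$-form on the total space; restricting to each fibre recovers (up to a truncation via Lemma~\ref{kqac.1}) the Eguchi-Hanson-type ALE Kähler form on $K_{\bbC\bbP^{m_i-1}}$ asymptotic to the Euclidean metric on $\bbC^{m_i}/\bbZ_{m_i}$ at the maximal rate $2m_i$. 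Choosing these constructions consistently at overlaps (which is possible because the depth of $D$ is at most one, so boundary hypersurfaces of different singular strata only meet the maximal one) gives the compatibility hypothesis of Theorem~\ref{kqac.24}, producing a Kähler $\QAC$-metric $g_0$ on $\hX_{\QAC}$ equal to $g_C$ near $\hH_{\max}$, in particular asymptotic to $g_C$ at any chosen rate $\epsilon \ge 4$ with $\epsilon \ne 2\mu$.

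Finally, since $K_{\hD}$ carries the canonical holomorphic volume form $\Omega_{\hX} = d\varpi$ extending $\Omega_C$ with $p=1$, I would apply Corollary~\ref{ma.11} to $g_0$ to obtain $u \in x_{\max}^{2\mu-2}\CI_{\QCyl,r}(\hX)$ solving the complex Monge-Ampère equation with right-hand side the Ricci potential of $g_0$, so that $\omega_0 + \sqrt{-1}\,\pa\db u$ is the Kähler form of a Calabi-Yau $\QAC$-metric asymptotic to $g_C$ with rate $2\mu$. The main obstacle in this plan is the construction of $\omega_{\hvarphi_i}$ for positive-dimensional non-isolated strata: the non-triviality of the normal bundle $N_i$ prevents using the Cartesian product argument of Corollary~\ref{kqac.26}, so the fibrewise Calabi ansatz for the relative canonical bundle of $\bbP(N_i) \to \tSigma_i$ must be set up globally and matched to $\omega_{\varphi_i}$ on $\hH_i \cap \hH_{\max}$ with sufficient precision to guarantee compatibility across the overlap with $\hH_{\max}$.
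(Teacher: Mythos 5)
Your plan follows essentially the same route as the paper's proof: isolated singularities are treated as in Corollary~\ref{kqac.25}, the fibrewise $(1,1)$-forms over a positive-dimensional stratum are produced by a curvature/Calabi-ansatz construction on the line bundle over the projectivized normal bundle $\bbP(N_i)\to\sigma_i$ and truncated via Lemma~\ref{kqac.1}, the pieces are glued with Theorem~\ref{kqac.24} (compatibility being easy since the depth is one), and one concludes with Theorem~\ref{ma.10}/Corollary~\ref{ma.11} — exactly as in the appendix, where the key observation is likewise that $\hvarphi_i$ is pulled back from a $\overline{K}_{\bbC\bbP^{m_i-1}}$-bundle over $\sigma_i$ and the proof of Proposition~\ref{kqac.3} is reused on it. The only cosmetic difference is that you phrase the fibrewise model via the relative canonical bundle $K_{\bbP(N_i)/\tSigma_i}$, which coincides with the actual normal bundle of $\bbP(N_i)$ in $\hD$ only up to a twist by a line bundle pulled back from the base; this is harmless, since only closedness, vertical positivity near the zero section, and the fibrewise restriction enter the argument.
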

\begin{proof}
The idea is to first construct a Kähler $\QAC$-metric on $\hX_{\QAC}$ by applying Theorem~\ref{kqac.24}.  By Proposition~\ref{kqac.3}, $K_{D}$ admits a smooth K\"ahler metric $g$ which is equal to $g_C$ outside a compact set, so we have a corresponding metric $g_{\varepsilon}=\iota_{\varepsilon}^*\beta^*\pr_1^*g$ on $\cX$ to which we can hope to apply Theorem~\ref{kqac.24}.  In order to do that, we need to also provide a smooth closed $(1,1)$-form $\omega_{\hvarphi_i}$ on $\widehat{\cH}_i$ as in the statement of Theorem~\ref{kqac.24}.  For a boundary hypersurface $\widehat{\cH}_i$ associated to an isolated singularity of $D$, we can proceed exactly as in the proof of Corollary~\ref{kqac.25} to construct the form $\omega_{\hvarphi_i}$. For a singular stratum of $D$  locally modelled on $\bbC^{n-m_i}\times (\bbC^{m_i}/\bbZ_{m_i})$ as in \eqref{ap.1}, the corresponding crepant resolution is locally of the form $\bbC^{n-m_i}\times K_{\bbC\bbP^{m_i-1}}$, so that the corresponding fibre bundle $\hvarphi_i: \widehat{\cH}_i\to \tSigma_i$ is a $\overline{K}_{\bbC\bbP^{m_i-1}}$-bundle, where $\overline{K}_{\bbC\bbP^{m_i-1}}$ is the radial compactification (or equivalently in this case the $\QAC$-compactification) of $K_{\bbC\bbP^{m_i-1}}$ with respect to the corresponding Calabi-Yau cone metric.  In fact, $\tSigma_i$ itself is a fibre bundle over the corresponding singular stratum of $D$.  For convenience, we will denote this singular stratum by $\sigma_i$ and the corresponding fibre bundle  by
$$
    \nu_i: \tSigma_i\to \sigma_i, \quad \dim_{\bbC}\sigma_i=n-m_i.
$$
Now, what is important for us is that $\hvarphi_i:\widehat{\cH}_i\to \tSigma_i$ is in fact the pull-back of a $\overline{K}_{\bbC\bbP^{m_i-1}}$-bundle over $\sigma_i$,
$$
             \overline{h}_i\to \sigma_i,
$$
for $h_i\to \sigma_i$ a $K_{\bbC\bbP^{m_i-1}}$-bundle  with $h_i= \overline{h_i}\setminus \pa \overline{h_i}$.  

Alternatively, we can regard $h_i$ as a complex line bundle $L_i$ over a $\bbC\bbP^{m_i-1}$-bundle 
$\mathcal{P}_i\to \sigma_i$.  By the proof of Proposition~\ref{kqac.3}, we can therefore find a smooth closed $(1,1)$-form $\omega_i$ on the total space of $h_i$ which has compact support and is positive definite in a neighborhood of $\mathcal{P}_i$ in $h_i$.  Pulling back this form to $\widehat{\cH}_i$, one can then apply Lemma~\ref{kqac.1} fibrewise to $\hvarphi_i: \widehat{\cH}_i\to \tSigma_i$ to obtain the closed $(1,1)$-form required in Theorem~\ref{kqac.24}.  This theorem thus yields a Kähler $\QAC$-metric on $\hX_{\QAC}$ asymptotic to $g_C$ with rate $\alpha$ for any $\alpha>0$.  Restricting this metric to $\hD\subset K_{\hD}$ shows in particular that $\hD$ is a Kähler crepant resolution of $D$.  Finally, applying Theorem~\ref{ma.10}, we obtain the desired Calabi-Yau $\QAC$-metric.
\end{proof}

\begin{example}
In the previous theorem, one can take $D=\bbC\bbP^{2m-1}/\bbZ_{m}$ with the generator $e^{\frac{2\pi\sqrt{-1}}m}$ of $\bbZ_{m}$ acting on $\bbC\bbP^{2m-1}$ by
$$
e^{\frac{2\pi\sqrt{-1}}m}\cdot [z_0:\cdots: z_{2m-1}]= [e^{\frac{2\pi\sqrt{-1}}m}z_0:\cdots:e^{\frac{2\pi\sqrt{-1}}m}z_{m-1}:z_m:\cdots: z_{2m-1}].
$$
The fixed points of this action are given by the disjoint union of 
$$
    \sigma_1:= \{ [z_0:\cdots:z_{2m-1}]\in \bbC\bbP^{2m-1}\; | \; z_0=\cdots=z_{m-1}=0\}
$$
and 
$$
    \sigma_2:= \{ [z_0:\cdots:z_{2m-1}]\in \bbC\bbP^{2m-1}\; | \; z_m=\cdots=z_{2m-1}=0\},
$$
so that $D$ has two disjoint singular strata $\sigma_1$ and $\sigma_2$ each locally modelled on
$\bbC^{m-1}\times \left(\bbC^m/\bbZ_{m}\right)$.  Since $\bbZ_m$ acts isometrically on $\bbC\bbP^{2m-1}$ with respect to the Fubini-Study metric, the orbifold $D$ is naturally a  Kähler-Einstein
Fano orbifold.  
\label{ap.3}\end{example}
More generally, proceeding as in Corollary~\ref{kqac.26}, we can extend Theorem~\ref{ap.2} to apply to a Kähler-Einstein Fano orbifold $(D,g_D)$ of the form
$$
    (D,g_D)= (D_1\times \cdots\times D_q, g_1\times \cdots g_q),
$$
where each $(D_i,g_i)$ a Kähler-Einstein Fano orbifold as in the statement of Theorem~\ref{ap.2}.

\bibliography{QAC-CY}
\bibliographystyle{amsplain}

\end{document}